\numberwithin{equation}{section}
\numberwithin{figure}{section}
\newcommand{\coord}{\rm coord}
\newcommand{\aff}{\rm aff}
\newcommand{\Gr}{{\rm G r}}
\newcommand{\bfzero}{{\bf 0}}
\newcommand{\Lap}{\underline{\Delta}}
\newcommand{\Spec}{{\mathrm{Spec}}}
\newcommand{\mult}{{\mathrm{mult}}}
\renewcommand{\c}{{\circ}}
\newcommand{\Omb}{\Omega^{\bullet}}
\newcommand{\Cobar}{{\rm Cobar}}
\renewcommand{\Bar}{{\rm Bar}}
\newcommand{\Gra}{{\mathsf{Gra}}}
\newcommand{\GC}{{\mathsf{GC}}}
\newcommand{\gra}{{\mathsf{gra}}}
\newcommand{\fGC}{{\mathsf{fGC}}}
\newcommand{\fgraphs}{{\mathsf{fgraphs}}}
\newcommand{\graphs}{{\mathsf{graphs}}}
\newcommand{\GRT}{{\bf GRT}}
\newcommand{\Cbu}{C^{\bullet}}
\newcommand{\cCb}{\check{C}^{\bullet}}
\newcommand{\cC}{\check{C}}
\newcommand{\cpa}{\check{\partial}}
\newcommand{\cmB}{\check{{\mathfrak{B}}}}
\newcommand{\Def}{{\mathbf{Def}}}
\newcommand{\bDel}{{\mathbf{\Delta}}}
\newcommand{\sfDel}{{\mathsf{\Delta}}}
\newcommand{\sfD}{{\mathsf{D}}}
\newcommand{\sfA}{{\mathsf{A}}}
\newcommand{\Der}{{\mathrm{Der}}}
\newcommand{\coDer}{\mathsf{coDer}}
\newcommand{\Hom}{{\mathrm{Hom}}}
\newcommand{\Ih}{{\mathrm{Ih}}}
\newcommand{\ad}{{\mathrm{ad}}}
\newcommand{\AW}{{\mathrm{AW}}}
\newcommand{\id}{{\mathrm{id}}}
\newcommand{\Td}{{\mathrm{Td}}}
\newcommand{\Gal}{{\mathrm{Gal}}}
\newcommand{\hs}{{\heartsuit}}
\newcommand{\ds}{{\diamondsuit}}
\newcommand{\conn}{{\mathrm{conn}}}
\newcommand{\Conv}{{\mathrm{Conv}}}
\newcommand{\Sh}{{\mathrm{Sh}}}
\newcommand{\Mat}{{\rm Mat}}
\newcommand{\GL}{{\mathrm{GL}}}
\newcommand{\sgn}{{\mathrm{sgn}}}
\renewcommand{\L}{\langle}
\newcommand{\R}{\rangle}
\newcommand{\bs}{{\bf s}}
\newcommand{\bsi}{{\bf s}^{-1}\,}
\newcommand{\TS}{{\mathrm{TS}}}
\newcommand{\Ger}{{\mathsf{Ger}}}
\newcommand{\Lie}{{\mathsf{Lie}}}
\newcommand{\coLie}{{\mathsf{coLie}}}
\newcommand{\Com}{{\mathsf{Com}}}
\newcommand{\coCom}{{\mathsf{coCom}}}
\newcommand{\bul}{{\bullet}}
\newcommand{\al}{{\alpha}}
\newcommand{\la}{{\lambda}}
\newcommand{\io}{{\iota}}
\newcommand{\om}{{\omega}}
\newcommand{\Om}{{\Omega}}
\newcommand{\Te}{{\Theta}}
\newcommand{\si}{{\sigma}}
\newcommand{\ga}{{\gamma}}
\newcommand{\vf}{{\varphi}}
\newcommand{\ve}{{\varepsilon}}
\newcommand{\ka}{{\kappa}}
\newcommand{\vr}{{\varrho}}
\newcommand{\G}{{\Gamma}}
\newcommand{\bd}{{\bf d}}
\newcommand{\grt}{{\mathfrak{grt}}}
\newcommand{\lie}{{\mathfrak{lie}}}
\newcommand{\mt}{{\mathfrak{t}}}
\newcommand{\mgl}{{\mathfrak{gl}}}
\newcommand{\mmu}{{\mathfrak{u}}}
\newcommand{\mv}{{\mathfrak{v}}}
\newcommand{\ma}{{\mathfrak{a}}}
\newcommand{\md}{{\mathfrak{d}}}
\newcommand{\mI}{{\mathfrak{I}}}
\newcommand{\mT}{{\mathfrak{T}}}
\newcommand{\mS}{{\mathfrak{S}}}
\newcommand{\mR}{{\mathfrak{R}}}
\newcommand{\mB}{{\mathfrak{B}}}
\newcommand{\msA}{{\mathscr{A}}}
\newcommand{\msQ}{{\mathscr{Q}}}
\newcommand{\bv}{\overline{\mv}}
\newcommand{\pa}{{\partial}}
\newcommand{\tr}{{\mathrm{tr}}}
\newcommand{\cF}{{\mathcal{F}}}
\newcommand{\cD}{{\mathcal{D}}}
\newcommand{\cQ}{{\mathcal{Q}}}
\newcommand{\cA}{{\mathcal {A}}}
\newcommand{\cI}{{\mathcal {I}}}
\newcommand{\cS}{{\mathcal{S}}}
\newcommand{\cB}{{\mathcal{B}}}
\newcommand{\cT}{{\mathcal{T}}}
\newcommand{\cK}{{\mathcal{K}}}
\newcommand{\FR}{{\mathcal{FR}}}
\newcommand{\cV}{{\mathcal{V}}}
\newcommand{\cO}{{\mathcal O}}
\newcommand{\cN}{{\mathcal N}}
\newcommand{\bbA}{{\mathbb A}}
\newcommand{\bbC}{{\mathbb C}}
\newcommand{\bbR}{{\mathbb R}}
\newcommand{\bbZ}{{\mathbb Z}}
\newcommand{\bbK}{{\mathbb K}}
\newcommand{\bbQ}{{\mathbb Q}}
\newcommand{\ui}{\underline{i}}
\newcommand{\uj}{\underline{j}}
\newcommand{\uk}{\underline{k}}
\newcommand{\La}{{\Lambda}}
\newcommand{\te}{\theta}
\newcommand{\de}{{\delta}}
\newcommand{\D}{{\Delta}}
\newcommand{\wt}[1]{{\,\widetilde{#1}\,}}
\newcommand{\wh}[1]{{\,\widehat{#1}\,}}
\newcommand{\und}[1]{{\underline{#1}}}
\newcommand{\tR}{{\widetilde{R}}}
\newcommand{\tf}{{\widetilde{f}}}
\newcommand{\tg}{{\widetilde{g}}}
\newcommand{\dimens}{{\mathrm{dim}}}
\newcommand{\End}{{\mathrm{End}}}
\newcommand{\Tpoly}{\cT_\poly}
\newcommand{\poly}{{\mathrm{poly}}}
\date{}
\newcommand{\ed}{{\bullet\hspace{-0.05cm}-\hspace{-0.05cm}\bullet}}
\newcommand{\bb}{{\bullet\, \bullet}}
\newtheorem{defi}{Definition}[section]
\newtheorem{defp}[defi]{Proposition-Definition}
\newtheorem{thm}[defi]{Theorem}
\newtheorem{cor}[defi]{Corollary}
\newtheorem{prop}[defi]{Proposition}
\newtheorem{claim}[defi]{Claim}
\newtheorem{pty}[defi]{Property}
\newtheorem{example}[defi]{Example}
\newtheorem{remark}[defi]{Remark}
\newtheorem{warning}[defi]{Warning}
\newcommand\qedsymbol{\hbox{$\Box$}}
\newcommand\qed{\relax\ifmmode\Box\else
  {\unskip\nobreak\hfil\penalty50\hskip1em\null\nobreak\hfil\qedsymbol
  \parfillskip=\z@\finalhyphendemerits=0\endgraf}\fi}
\newcommand\subqedsymbol{\hbox{$\triangledown$}}
\newcommand\subqed{\relax\ifmmode\triangledown\else
  {\unskip\nobreak\hfil\penalty50\hskip1em\null\nobreak\hfil\subqedsymbol
  \parfillskip=\z@\finalhyphendemerits=0\endgraf}\fi}
\newenvironment{proof}[1][{}]{\par\noindent Proof{#1}. }{\qed}
\title{Kontsevich's graph complex, GRT, and the deformation complex of the sheaf of polyvector fields}
\author{V.A. Dolgushev, C.L. Rogers, and T.H. Willwacher}
\begin{document}

\maketitle

\begin{flushright}
{\it To the memory of Boris Vasilievich Fedosov}
\end{flushright}
~\\[0.26cm]

\begin{abstract}
We generalize Kontsevich's construction \cite{K-conj} of 
$L_{\infty}$-derivations of polyvector fields from the affine space 
to an arbitrary smooth algebraic variety. More precisely, 
we construct a map (in the homotopy category) from Kontsevich's 
graph complex to the deformation complex of the sheaf of polyvector 
fields on a smooth algebraic variety. 
We show that the action of Deligne-Drinfeld elements of the Grothendieck-Teichm\"uller
Lie algebra on the cohomology of the sheaf of 
polyvector fields coincides with the action of 
odd components of the Chern character.  
Using this result, we deduce that the $\hat{A}$-genus in the
Calaque-Van den Bergh formula \cite{Damien} for the isomorphism between 
harmonic and Hochschild structures can be replaced by 
a generalized  $\hat{A}$-genus. 
\end{abstract}

~\\
{\it Keywords:} Deformation theory, the Grothendieck-Teichm\"uller
Lie algebra\\ and graph complexes.\\[0.3cm]
{\it AMS MSC 2010:} 14D15, 18G55, 53D55.

\tableofcontents

\section{Introduction}
Inspired by Grothendieck's lego-game from \cite{Esq},  V. Drinfeld introduced 
in \cite{Drinfeld} a pro-unipotent algebraic group which he called 
the Grothendieck-Teichm\"uller group $\GRT$. This group is closely connected with 
the  absolute Galois group $\Gal(\overline{\bbQ}\,/\, \bbQ)$, 
it appears naturally in the study of moduli of algebraic curves, 
solutions of the Kashiwara-Vergne problem \cite{AT}, theory of motives \cite{Brown}, 
\cite{DG}, \cite{Furusho} and formal quantization 
procedures \cite{exhausting}, \cite{stable1}, \cite{char-classes}, \cite{Thomas}. 
The Lie algebra $\grt$ of $\GRT$ carries a natural grading by positive integers. 
Furthermore, according to \cite[Proposition 6.3]{Drinfeld}, $\grt$ has a non-zero 
vector $\si_{n}$ for every odd degree $n\ge 3$. We call $\si_n$'s Deligne-Drinfeld 
elements of $\grt$. 

In paper \cite{Thomas}, the third author established a link between the graph 
complex $\GC$ introduced in \cite{K-conj} by M. Kontsevich and the Lie 
algebra $\grt$ of the Grothendieck-Teichm\"uller group. More precisely, in 
\cite{Thomas}, it was shown that 
\begin{equation}
\label{H-GC-grt}
H^0(\GC) \cong \grt\,.
\end{equation}
 
In paper \cite{K-motives}, M. Kontsevich conjectured that the Grothendieck-Teichm\"uller 
group reveals itself in the extended moduli \cite{BK} of deformations of
an algebraic variety $X$ via the action of odd components  of 
the Chern character of $X$ on the cohomology of the sheaf of polyvector 
fields
\begin{equation}
\label{HTpoly-intro}
H^{\bul}(X, \cT_{\poly})\,.
\end{equation}
In this paper, we use the isomorphism \eqref{H-GC-grt} to
establish this fact for an arbitrary smooth algebraic variety $X$ 
over an algebraically closed field $\bbK$ of characteristic zero.    

More precisely, we define a map 
(in the homotopy category of dg Lie algebras) from $\GC$ to the deformation 
complex of the sheaf of polyvector fields $\cT_{\poly}$ on an arbitrary smooth 
algebraic variety $X$. This result generalizes Kontsevich's construction 
\cite[Section 5]{K-conj} from the case of affine space to the case of an 
arbitrary smooth algebraic variety.

Using a link \cite{Thomas} between the graph complex $\GC$ and 
the deformation complex of the operad $\Ger$, we prove that 
every cocycle  $\ga \in \GC$ gives us a derivation of the 
Gerstenhaber algebra \eqref{HTpoly-intro}.

Combining these results with the isomorphism \eqref{H-GC-grt}, 
we get a natural action of the Lie algebra $\grt$ on the cohomology \eqref{HTpoly-intro} 
of the sheaf of polyvector fields. In addition, we deduce that this action is compatible with
the Gerstenhaber algebra structure on \eqref{HTpoly-intro}.

We show that the action of Deligne-Drinfeld element $\si_n$ ($n$ odd $\ge 3$)  
of $\grt$ on  \eqref{HTpoly-intro} is given by a non-zero multiple of the 
contraction with the $n$-th component of the Chern character of $X$. 
This result confirms that the Grothendieck-Teichm\"uller
group indeed reveals itself in the extended moduli of deformations of $X$ in the 
way predicted by M. Kontsevich in \cite{K-motives}. Our results imply 
that the contraction of polyvector fields with any odd component of the Chern 
character induces a derivation of  \eqref{HTpoly-intro} with respect to 
the cup-product. This statement was formulated in \cite[Theorem 9]{K-motives} 
without a proof. 

We prove that the $\hat{A}$-genus in the
Calaque-Van den Bergh formula \cite{Damien} for the isomorphism between 
harmonic and Hochschild structures can be replaced by 
a generalized  $\hat{A}$-genus.
 
We give examples of algebraic varieties for which 
odd components of the Chern character act non-trivially on 
\eqref{HTpoly-intro}.  In particular, using  Theorem \ref{thm:main}, we 
show that smooth Calabi-Yau complete intersections in projective spaces 
provide us with a large supply of non-trivial representations of 
the Grothendieck-Teichm\"uller Lie algebra $\grt$.
This situation is strikingly different from what we have in the classical Duflo theory and in the 
classical Poisson geometry. Indeed, as remarked by M. Duflo (see \cite[Section 4.6]{K-motives}), the action of $\grt$ on Duflo isomorphisms is trivial for all (non-graded) Lie algebras. Furthermore, the authors are still unaware of any instance of a (non-graded) Poisson structure on which $\grt$ acts non-trivially. 
    
Finally, we show how Corollary \ref{cor:main} allows us to get 
some information about the Gerstenhaber algebra structure on 
\eqref{HTpoly-intro} when $X$ is a complete intersection in a projective 
space.

\paragraph*{Recent related results} We would like to mention two papers \cite{Alm-SM} 
and \cite{Jost} in which similar results were obtained.   

In paper \cite{Alm-SM}, J. Alm and S. Merkulov proved that, for an 
arbitrary formal Poisson structure $\pi$ on a smooth real manifold $M$
and an arbitrary element $g$ of the group $\GRT$, the Poisson 
cohomology $H^{\bul}(M, g(\pi))$ of $g(\pi)$ is isomorphic to the 
Poisson cohomology $H^{\bul}(M, \pi)$ of $\pi$ as a graded 
associative algebra.  
 
In paper \cite{Jost}, C. Jost described a large class of $L_{\infty}$-automorphisms of 
the Schouten algebra of polyvector fields on $\bbR^d$ which can be ``extended'' to 
$L_{\infty}$-automorphisms of the Schouten algebra of polyvector fields on an 
arbitrary smooth real manifold. Combining this result with the isomorphism \eqref{H-GC-grt}, 
C. Jost constructed an action the group $\GRT$ by $L_{\infty}$-automorphisms  
on the Schouten algebra of polyvector fields on an 
arbitrary smooth real manifold.

\paragraph*{Structure of the paper} 
In the remaining subsections of the Introduction, we fix notation 
and conventions.

Sections \ref{sec:cO-coord-aff} and \ref{sec:Fed-tensor} are 
devoted to the Fedosov resolution of the sheaf of tensor fields 
on a smooth algebraic variety. 

The key idea of this construction \cite{FTHC}, \cite{CEFT} has various 
incarnations and it is often referred to as the Gelfand-Fuchs 
trick \cite{GelFuchs} or Gelfand-Kazhdan formal geometry \cite{GelKazh} 
or mixed resolutions \cite{Ye}.  

The version given here is a modification of the construction 
proposed in \cite{VdB} by M. Van den Bergh.
The important advantage of our version is that we managed
to streamline  Van den Bergh's approach by avoiding completely 
the use of formal schemes and the use of jets. 
We believe that our modification of Van den Bergh's construction 
will be useful far beyond the scope of our paper. 

In Section \ref{sec:Atiyah}, we describe a convenient explicit representative 
of the Atiyah class of $X$ in the Fedosov resolution of the tensor algebra. 
In this section, we also observe that the Fedosov resolution allows us 
to represent this class by a global section of some sheaf unlike 
the conventional representative which is given by a 1-cochain 
in the \v{C}ech complex. 
  
In Section \ref{sec:fGC}, we recall the operad $\Gra$, the full graph 
complex $\fGC$, and Kontsevich's graph complex \cite[Section 5]{K-conj} $\GC$. 
We state the results of the third author from \cite{Thomas} which are used later in the text and 
introduce a couple of dg Lie algebras related to the full graph complex $\fGC$. 

Section \ref{sec:GC-DefTpoly} is devoted to the construction of 
a map $\Te$ of dg Lie algebras from Kontsevich's graph complex $\GC$ to 
the deformation complex of the dg sheaf $\FR$ which is quasi-isomorphic 
to the sheaf of polyvector fields on $X$. In this section, we consider 
the sheaf $\FR$ primarily with the Schouten-Nijenhuis bracket forgetting 
the cup product structure. However, in technical Section \ref{sec:aux}, we
extend the map $\Te$ to a map from an auxiliary dg Lie algebra linked to $\fGC$
to the deformation complex of $\FR$, where $\FR$ is viewed as a sheaf of 
dg Gerstenhaber algebras.   
 
In Section \ref{sec:cD-ga-deriv}, we prove that for every cocycle $\ga \in \GC$ the 
cocycle $\Te(\ga)$ induces a derivation of the Gerstenhaber algebra 
$H^{\bul}(X, \cT_{\poly})$. 

In Section \ref{sec:DD-action}, we give a geometric description of 
the action of Deligne-Drinfeld elements of $\grt$ on the Gerstenhaber algebra 
$H^{\bul}(X, \cT_{\poly})$. In this section, we also prove that the contraction with 
odd components of the Chern character induces derivations of the Gerstenhaber 
algebra $H^{\bul}(X, \cT_{\poly})$. 

In Section \ref{sec:harm-Hoch}, we generalize the result \cite{Damien} 
of D. Calaque and M. Van den Bergh on harmonic and Hochschild structures 
of a smooth algebraic variety. 

In Section \ref{sec:examples}, we give several examples which show 
that Theorem \ref{thm:main} and Theorem \ref{thm:A-hat} are non-trivial. 
Many of these examples can be found among complete 
intersections in a projective space. 

At the end of the paper we have several appendices. 

In Appendix \ref{app:Def-comp}, we briefly recall the notion of 
a homotopy $O$-algebra and the notion of the deformation complex of 
an $O$-algebra.

In Appendix \ref{app:sheaves}, we recall necessary constructions 
related to sheaves of algebras over an operad. More precisely, 
we review the Thom-Sullivan normalization and use it to define derived 
global sections for a dg sheaf $\cA$ of operadic algebras and the deformation 
complex of $\cA$. Although the Thom-Sullivan normalization is extremely convenient 
for proving general facts about derived global sections and the deformation
complex, in the bulk of our paper, we use the conventional Cech resolution.
This use is justified by Propositions \ref{prop:BG} and \ref{prop:O-deriv}.

In Appendix \ref{app:twisting},  we briefly recall twisting of shifted
Lie algebras and Gerstenhaber algebras 
by Maurer-Cartan elements. In this appendix, 
we also extend the twisting operation to a subspace 
of cochains in the deformation complexes of such algebras. 

Most of the material given in the appendices is standard and 
well known to specialists. However, various statements are 
hard to find in the literature in the desired generality. So we added these
appendices for convenience of the reader.

\subsection{Notation and conventions}
Throughout the paper $\bbK$ is an algebraically 
closed field of characteristic zero.

The notation $S_{n}$ is reserved for the symmetric group 
on $n$ letters and  $\Sh_{p_1, \dots, p_k}$ denotes 
the subset of $(p_1, \dots, p_k)$-shuffles 
in $S_n$, i.e.  $\Sh_{p_1, \dots, p_k}$ consists of 
elements $\si \in S_n$, $n= p_1 +p_2 + \dots + p_k$ such that 
$$
\begin{array}{c}
\si(1) <  \si(2) < \dots < \si(p_1),  \\[0.3cm]
\si(p_1+1) <  \si(p_1+2) < \dots < \si(p_1+p_2), \\[0.3cm]
\dots   \\[0.3cm]
\si(n-p_k+1) <  \si(n-p_k+2) < \dots < \si(n)\,.
\end{array}
$$

For algebraic structures considered in this paper, we use 
the following symmetric monoidal categories for which we 
tacitly assume the usual Koszul rule of signs: 
\begin{itemize}

\item  the category of $\bbZ$-graded $\bbK$-vector spaces,

\item the category of (possibly) unbounded cochain 
complexes of $\bbK$-vector spaces, 

\item the category of sheaves of $\bbZ$-graded $\bbK$-vector spaces, 

\item the category of sheaves of (possibly) unbounded cochain 
complexes of $\bbK$-vector spaces. 

\end{itemize}
In particular, we frequently use the ubiquitous
combination ``dg'' (differential graded) to refer to algebraic objects 
in the category of cochain complexes or the category of sheaves of cochain 
complexes. We often consider a graded vector space (resp. a sheaf  of graded vector spaces)
as the cochain complex (resp. the sheaf of cochain complexes) with the zero differential. 

For a homogeneous vector $v$ in 
a cochain complex $\cV$, $|v|$ denotes the degree of $v$\,.
Furthermore, we denote 
by $\bs$ (resp. $\bs^{-1}$) the operation of suspension (resp. desuspension), i.e.
$$
(\bs\, \cV)^{\bul} = \cV^{\bul - 1}\,, \qquad 
(\bs^{-1} \, \cV)^{\bul} = \cV^{\bul + 1}\,.
$$

We reserve the  notation $S(\cV)$ (resp. $\und{S}(\cV)$) for the symmetric algebra 
(resp. the truncated symmetric algebra) in $\cV$: 
\begin{equation}
\label{S-cV}
S(\cV) = \bbK ~\oplus~ \bigoplus_{n \ge 1} \big( \cV^{\otimes \, n} \big)_{S_n}\,,
\end{equation}
\begin{equation}
\label{und-S-cV}
\und{S}(\cV) = \bigoplus_{n \ge 1} \big( \cV^{\otimes \, n} \big)_{S_n}\,.
\end{equation}
 
The notation $\Lie$ (resp. $\Com$, $\Ger$) is reserved for the 
operad  governing Lie algebras (resp. commutative 
(and associative) algebras without unit, Gerstenhaber algebras without unit).
Dually, the notation $\coLie$ (resp. $\coCom$) is reserved for the 
cooperad governing Lie coalgebras (resp. cocommutative (and coassociative)
coalgebras without counit).

For an operad $O$ (resp. a cooperad $C$) and a cochain complex (or a sheaf 
of cochain complexes) $\cV$, the notation 
$O(\cV)$ (resp. $C(\cV)$) is reserved for the free $O$-algebra (resp. cofree $C$-coalgebra). 
Namely, 
\begin{equation}
\label{O-cV}
O(\cV) : = \bigoplus_{n \ge 0} \Big( O(n) \otimes \cV^{\otimes \, n} \Big)_{S_n}\,,
\end{equation}
\begin{equation}
\label{C-cV}
C(\cV) : = \bigoplus_{n \ge 0} \Big( C(n) \otimes \cV^{\otimes \, n} \Big)^{S_n}\,.
\end{equation}
For example\footnote{In our paper, we often identify invariants and coinvariants using
the fact that the underlying field $\bbK$ has characteristic zero.}
\begin{equation}
\label{coCom-cV}
\coCom(\cV) = \und{S}(\cV)\,. 
\end{equation}

For an augmented operad $O$ (resp. a coaugmented cooperad $C$) the 
notation $O_{\c}$ (resp. $C_{\c}$) is reserved for the kernel 
(resp. the cokernel) of the augmentation (resp. the coaugmentation).
For example, 
$$
\coCom_{\c}(n) = \begin{cases}
  \bbK \qquad {\rm if} ~~ n \ge 2  \,, \\
  \bfzero \qquad {\rm otherwise}\,.
\end{cases}
$$

We denote by $\La$ the endomorphism operad of the $1$-dimensional
vector space $\bs^{-1} \bbK$ placed in degree $-1$
\begin{equation}
\label{La}
\La = \End_{\bs^{-1} \bbK}\,.
\end{equation}
In other words,
$$
\La(n) = \bs^{1-n} \sgn_n\,,
$$
where $\sgn_n$ is the sign representation for the symmetric group $S_n$
We observe that the collection $\La$ is also naturally a cooperad. 

For a dg operad (resp. a dg cooperad) $P$ in we denote by $\La P$ the 
dg operad (resp. the dg cooperad) which is obtained from $P$ via tensoring  
with $\La$, i.e.
\begin{equation}
\label{La-P}
\La P (n) = \bs^{1-n} P(n) \otimes \sgn_n\,.
\end{equation}
For example, an algebra over $\La\Lie$ is
a graded vector space $V$ equipped with the binary operation: 
$$
\{\,,\, \} : V \otimes  V \to V 
$$
of degree $-1$ satisfying the identities:  
$$
\{v_1,v_2\} = (-1)^{|v_1| |v_2|} \{v_2, v_1\}
$$
$$
\{\{v_1, v_2\} , v_3\} +  
(-1)^{|v_1|(|v_2|+|v_3|)} \{\{v_2, v_3\} , v_1\} +
(-1)^{|v_3|(|v_1|+|v_2|)} \{\{v_3, v_1\} , v_2\}  = 0\,,
$$
where $v_1, v_2, v_3 $ are homogeneous vectors in $V$\,.

$\Ger^{\vee}$ denotes the Koszul dual cooperad \cite{Fresse}, 
\cite{GJ}, \cite{GK} for $\Ger$\,. It is known \cite{Hinich} that 
\begin{equation}
\label{Ger-vee}
\Ger^{\vee} = \La^2 \Ger^{*}\,,
\end{equation}
where $\Ger^{*}$ is obtained from the operad 
$\Ger$ by taking the linear dual.
In other words, algebras over the linear dual 
$(\Ger^{\vee})^*$ are very much like Gerstenhaber algebras 
except that the bracket carries degree $1$ and the multiplication 
carries degree $2$\,.  

The notation $\Cobar$ is reserved for the cobar construction
(see \cite[Section 3.7]{notes}).

A graph $\G$ is a pair $(V(\G), E(\G))$, 
where $V(\G)$ is a finite non-empty 
set and $E(\G)$ is a set of unordered pairs 
of elements of $V(\G)$. Elements of $V(\G)$ are called vertices 
and elements of $E(\G)$ are called edges. 
We say that a  graph $\G$ is {\it labeled} 
if it is equipped with a bijection between the set $V(\G)$ and 
the set of numbers $\{1,2, \dots, |V(\G)|\}$\,. We allow a graph 
with the empty set of edges. 
An {\it orientation} of $\G$ is a choice of directions on all edges of $\G$.

In this paper, $X$ denotes a smooth algebraic variety over $\bbK$. 
We denote by $\cO_X$ the 
structure sheaf on $X$, by $\cT_X$ (resp. $\cT^*_X$) the tangent 
(resp. cotangent) sheaf and by  
\begin{equation}
\label{T-poly-intro}
\cT_{\poly} = S_{\cO_X}(\bs \cT_X)
\end{equation}
the sheaf of polyvector fields.

\subsection{Trimming operadic algebras}
\label{sec:trimming}
Here we present a special construction which is used 
throughout the text. 

Let $\cV$ be an algebra over a (possibly colored) dg operad $O$ with the 
underlying graded operad $\wt{O}$\,. Furthermore, 
let $\{i_{\mv}\}_{\mv\in \cS}$ be a set of degree $-1$ derivations 
of the $\wt{O}$-algebra $\cV$\,. 

Let $\cV^{[\cS]}$ be the subcomplex of {\it basic elements} of $\cV$ 
with respect to $\cS$, i.e.
\begin{equation}
\label{cV-trimmed}
\cV^{[\cS]} : = \{w \in \cV ~ | ~ \forall~ \mv \in \cS \quad  i_\mv (w) = (\pa \circ i_{\mv} + i_{\mv} \circ \pa )(w) = 0\}\,,
\end{equation}
where $\pa$ is the differential on $\cV$\,.
It is easy to see the $O$-algebra structure on $\cV$ descends to the subcomplex 
of basic elements.

We call the construction of the $O$-algebra $\cV^{[\cS]}$ from an $O$-algebra 
$\cV$ and a set of degree $-1$ derivations  $\{i_{\mv}\}_{\mv\in \cS}$ {\it trimming}.

~\\

\noindent
\textbf{Memorial note:} Unfortunately, none of the authors met  
Boris Vasilievich Fedosov personally. However, we were 
influenced greatly by his works. We devote this paper in his memory.   

~\\

\noindent
\textbf{Acknowledgements:} V.A.D. acknowledges NSF grants 
DMS-0856196, DMS-1161867, the UC Regent's Faculty Fellowship, 
and the grant FASI RF 14.740.11.0347. C.L.R.'s work was partially supported 
by the NSF grant DMS-0856196, and by the German Research Foundation
(Deutsche Forschungsgemeinschaft (DFG)) through the Institutional Strategy 
of the University of G\"{o}ttingen.
T.W. was supported by a Fellowship of the Harvard Society of Fellows and by the Swiss National Science Foundation, grant PDAMP2\_137151. 
We would like to thank Damien Calaque 
for reminding us a well known fact about the Schouten-Nijenhuis bracket 
on the sheaf of polyvector fields on a Calabi-Yau variety.  We also thank the 
anonymous referees for carefully reading our manuscript and for many
useful suggestions.  V.A.D. would like 
to thank Oren Ben-Bassat and Tony Pantev for discussions.  
V.A.D.'s part of the work on this paper was
done when his parents-in-law Anna and Leonid Vainer were 
baby sitting Gary and Nathan.
V.A.D. would like to thank his parents-in-law for their help. 
V.A.D. is also thankful to the Free Library of Philadelphia 
in which a part of this paper was typed.

\section{The sheaves $\cO^{\coord}_X$ and  $\cO^{\aff}_X$ }
\label{sec:cO-coord-aff}
Let $X$ be a smooth algebraic variety of dimension $d$ over 
an algebraically closed field $\bbK$ of characteristic zero.
 
In this section, we recall the constructions of the sheaves $\cO^{\coord}_X$
and $\cO^{\aff}_X$ associated to the structure sheaf $\cO_X$ 
on $X$. We use these sheaves in Section \ref{sec:Fed-tensor} to 
construct the Fedosov resolution of the sheaf of tensor fields 
on $X$.

\subsection{The sheaf of $\cO_X$-algebras $\cO^{\bd}_X$}

Let $R$ be a commutative $\bbK$-algebra with identity. 

Let $\ui, \uj, \dots$ denote multi-indices 
$\ui= (i_1, i_2, \dots, i_d)$, $\uj= (j_1, j_2, \dots, j_d)$,
with $i_s, j_t$ being non-negative integers.  
For every multi-index $\ui$ the notation $|\,\ui\,|$ is reserved 
for the length of the multi-index $\ui$\,, namely 
\begin{equation}
\label{length}
|\, \ui\, | = i_1+ i_2 + \dots + i_d\,.
\end{equation}

\begin{defi}
\label{dfn:R-bd} 
Let us consider pairs
\begin{equation}
\label{pairs}
(f, \ui)
\end{equation}
with $f \in R$ and $\ui$ being a multi-index\,. 
We set $R^{\bd}$ to be the quotient of the 
free commutative algebra over $R$ 
generated by pairs (\ref{pairs}) with the respect 
to the ideal generated by relations\footnote{Here we use the obvious 
structure of Abelian semigroup on the set of multi-indices.}
\begin{equation}
\label{relations-unfolded}
\begin{array}{cc}
 ((f+g), \ui) = (f, \ui) + (g, \ui)\,,  &  
 \displaystyle (fg, {\ui}) =  \sum_{\uj+ \uk = \ui} (f, {\uj}) (g, {\uk})\,,    \\[0.5cm]
(f, (0,0, \dots, 0)) = f\,,  &    (\la, \ui) = 0 ~{\rm whenever}~ | \,\ui\, | \ge 1  
\end{array}
\end{equation}
$$
\forall \quad  f,g \in R,  \quad  \la \in \bbK\,.
$$
\end{defi}

It is convenient to use the notation $f_{\ui}$ for 
the pair $(f, \ui)$\,. So, from now on, we switch to 
this notation. 

It is also convenient to assign to each $f\in R$
the  following formal Taylor power series  
\begin{equation}
\label{f-tilde}
\tf =  \sum_{\ui} f_{\ui} t^{\ui} \in R^{\bd} [[t^1, t^2, \dots, t^d]]\,,
\end{equation}
where $t^{\ui}= (t^1)^{i_1} (t^2)^{i_2} \dots (t^d)^{i_d}$ and the 
summation goes over all multi-indices $\ui$\,. 

Using the notation $\wt{f}$ it is possible to 
rewrite the relations (\ref{relations-unfolded})
in the following condensed form 
\begin{equation}
\label{relations-R-d}
\wt{(f+g)} = \tf + \tg\,, \qquad 
\wt{fg} = \tf\, \tg\,, \qquad 
\wt{f}\, \Big|_{t=0} = f\,,
\qquad 
\wt{\la} = \la, 
\end{equation}
with $f,g \in R$ and $\la\in \bbK$\,.

Relations (\ref{relations-R-d}) imply that 
the formula
\begin{equation}
\label{homo:I}
I(f) =  \wt{f}
\end{equation}
defines an injective homomorphism of $\bbK$-algebras
from $R$ to $R^{\bd}[[t^1, t^2, \dots, t^d]]$\,.

The construction of the $R$-algebra $R^{\bd}$ 
from a $\bbK$-algebra $R$ is functorial   
in the following sense:  for every map of $\bbK$-algebras $\vf: R \to \tR$ we have 
a map (of $\bbK$-algebras)
$$
\vf^{\bd} : R^{\bd} \to \tR^{\bd}
$$ 
which makes the following diagram commutative. 
\begin{equation}
\label{bd-functorial}
\xymatrix@M=0.4pc{
R \ar[d] \ar[r]^{\vf} & \tR\ar[d] \\
R^{\bd} \ar[r]^{\vf^{\bd}} & \tR^{\bd}
}
\end{equation}

Let $f$ be a non-zero element of $R$\,. 
Applying the functoriality (\ref{bd-functorial}) to the 
natural map from $R$ to its localization $R_f$ we get the commutative diagram 
\begin{equation}
\label{R-R-f}
\xymatrix@M=0.4pc{
R \ar[d] \ar[r] & R_f \ar[d]\\
R^{\bd} \ar[r] & (R_f)^{\bd}
}
\end{equation}
of maps of $\bbK$-algebras.

Using the lower horizontal arrow in (\ref{R-R-f}) we produce 
the obvious map of $R_f$-modules: 
\begin{equation}
\label{psi-f}
\psi_{f} :  \left(R^{\bd}\right)_f  \to (R_f)^{\bd}\,.
\end{equation}
We claim that 
\begin{prop}
\label{prop:localize}
For every commutative $\bbK$-algebra $R$ and any non-zero element $f\in R$
the map $\psi_f$ (\ref{psi-f}) is an isomorphism of 
$R_f$-modules. 
\end{prop}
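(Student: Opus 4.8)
The plan is to exhibit an explicit inverse to $\psi_f$, or equivalently to check that $\psi_f$ is bijective by working with the explicit generators-and-relations description of both sides from Definition \ref{dfn:R-bd}. The key observation is that the functor $R \mapsto R^{\bd}$ has a transparent "Taylor coefficient" structure: as an $R$-module, $R^{\bd}$ is built from the symbols $f_{\ui}$, and localizing at $f$ should commute with taking these symbols because the localization of a commutative algebra is flat and the relations \eqref{relations-unfolded} are "polynomial" in nature.

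First I would construct a candidate inverse $\phi_f : (R_f)^{\bd} \to (R^{\bd})_f$ at the level of generators. A generator of $(R_f)^{\bd}$ has the form $(g/f^n)_{\ui}$ for $g \in R$, $n \ge 0$, and a multi-index $\ui$. Using the second relation in \eqref{relations-unfolded} applied to $g = (g/f^n)\cdot f^n$, one can solve for $(g/f^n)_{\ui}$ in terms of the symbols $(f)_{\uk}$ and $g_{\uj}$; concretely, working with the formal Taylor series \eqref{f-tilde}, $\wt{g/f^n} = \wt{g}\cdot \wt{f}^{-n}$, and since $\wt{f}|_{t=0} = f$ is invertible in $(R^{\bd})_f$, the series $\wt{f}^{-n}$ makes sense with coefficients in $(R^{\bd})_f$. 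This gives a well-defined formula for $\phi_f$ on generators, and I would then verify it respects the four relations in \eqref{relations-unfolded} — additivity is clear, the product rule follows from $\wt{g_1 g_2} = \wt{g_1}\wt{g_2}$ and multiplicativity of the assignment $g \mapsto \wt g$, the normalization $(g)_{(0,\dots,0)} = g$ is immediate, and the scalar relation $(\la)_{\ui} = 0$ for $|\ui| \ge 1$ holds since $\wt\la = \la$ is constant. Hence $\phi_f$ descends to a map of $R_f$-algebras (a fortiori of $R_f$-modules).

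Next I would check $\psi_f \circ \phi_f = \id$ and $\phi_f \circ \psi_f = \id$. On generators both composites are computed directly from the defining formulas: $\psi_f$ sends $f_{\ui} \in (R^{\bd})_f$ to $f_{\ui} \in (R_f)^{\bd}$ (the image of $f$ under $R \to R_f$, with the same multi-index), so chasing a generator $(g/f^n)_{\ui}$ through $\phi_f$ and back reproduces it using the same Taylor-series identity $\wt{g/f^n}\cdot \wt{f}^{n} = \wt g$ now read inside $(R_f)^{\bd}$; the other composite is checked by applying both sides to the $R$-module generators $f_{\ui}$ of $(R^{\bd})_f$. Since $\psi_f$ and $\phi_f$ are mutually inverse $R_f$-module maps, $\psi_f$ is an isomorphism.

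The main obstacle I anticipate is making the manipulation "$\wt{f}^{-1}$ has coefficients in the localization $(R^{\bd})_f$" completely rigorous and well-defined: one must check that inverting the single element $f \in R$ suffices to invert the formal power series $\wt f = f + (\text{higher order in } t)$, i.e. that $\wt f$ is invertible in $(R^{\bd})_f[[t^1,\dots,t^d]]$ with a genuinely well-defined (convergent in the $t$-adic topology) inverse, and that the resulting coefficient-by-coefficient formulas for $\phi_f$ land in $(R^{\bd})_f$ rather than requiring further localization. Once the bookkeeping around these formal series is set up carefully, verifying the relations and the two composite identities is routine. An alternative, possibly cleaner, route would be to identify $R^{\bd}$ with a colimit/symmetric-algebra construction that manifestly commutes with the flat base change $R \to R_f$, but I expect the explicit-inverse argument to be the most self-contained given what is available in the excerpt.
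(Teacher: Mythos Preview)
Your proposal is correct and takes essentially the same approach as the paper: both construct an explicit $R_f$-algebra inverse $\nu_f : (R_f)^{\bd} \to (R^{\bd})_f$ by exploiting that $\wt f$ is invertible in $(R^{\bd})_f[[t^1,\dots,t^d]]$ (since its constant term $f$ is a unit in $(R^{\bd})_f$), reading off the coefficients of $\wt f^{\,-1}$, and then checking the defining relations \eqref{relations-unfolded}. The only cosmetic difference is that the paper defines the inverse on the smaller generating set $\{a_{\ui} : a \in R\} \cup \{(f^{-1})_{\ui}\}$ rather than on all $(g/f^n)_{\ui}$ at once, which slightly reduces the relation-checking, but the idea is identical and your anticipated ``main obstacle'' is exactly the point the paper dispatches first.
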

\begin{proof}
Since $f$ is invertible in $\left(R^{\bd}\right)_f$\,,
the element $\tf$ is invertible in the algebra $\big(R^{\bd} \big)_f [[t^1, \dots, t^d]]$\,.
Let us denote by $f^*_{\ui} \in \big(R^{\bd} \big)_f$ the coefficient in front of $t^{\ui}$ in 
the series $(\,\tf\,)^{-1} \in \big(R^{\bd} \big)_f[[t^1, \dots, t^d]]$\,.  For example,
$$
f^*_{(0,0, \dots, 0)} = \frac{1}{f}\,,
$$
and 
$$
f^*_{(1,0,\dots, 0)} = -\frac{f_{(1,0,\dots, 0)}}{f^2} \,. 
$$

Next, we claim that the formulas 
\begin{equation}
\label{nu-f}
\nu_f (a_{\ui}) := a_{\ui}\,, \qquad 
\nu_f \big( (f^{-1})_{\ui}  \big) : = f^*_{\ui}\,, \qquad a \in R 
\end{equation}
define a homomorphism of $R_f$-algebras 
$$
\nu_f :  \big(R_f \big)^{\bd}  \to \left(R^{\bd}\right)_f \,.
$$ 

Indeed, formulas (\ref{nu-f}) define $\nu_f$ on generators 
of $\big( R_f \big)^{\bd}$ and it is not hard to check $\nu_f$ respects all the 
defining relations. 
 
Furthermore it is not hard to see that $\nu_f$ is the inverse of 
$\psi_f$ (\ref{psi-f})\,.
\end{proof}

Proposition \ref{prop:localize} implies the following. 
\begin{cor}
\label{cor:cO-bd-sheaf}
Let $X$ be an algebraic variety over $\bbK$ and $U$ be an 
affine open subset of $X$ with $R_U= \cO_X(U)$\,. Furthermore, 
let $(R_U^{\bd})\,\tilde{}$ be the 
quasicoherent sheaf on $U$ corresponding to the $R_U$-module 
$R_U^{\bd}$\,. Then the formula 
$$
\cO^{\bd}_X \, \Big|_{U} : = (R_U^{\bd})\,\tilde{}
$$
defines a quasicoherent sheaf of $\cO_X$-algebras.  ~~$\Box$
\end{cor}

Let us remark that the definition of the sheaf $\cO^{\bd}_X$
makes perfect sense for an arbitrary (not necessarily smooth)
algebraic variety $X$\,.

For a smooth algebraic variety $X$ we have the following statement.
\begin{prop}
\label{prop:cO-bd-free}
Let $X$ be a smooth algebraic variety over $\bbK$
of dimension $d$\,. Furthermore, let  $U$ be an affine subset of $X$
which admits\footnote{Equivalently, we could say that $U$ admits an \'etale map to the 
affine space $\bbA^d_{\bbK}$\,.} 
a global system of parameters: 
\begin{equation}
\label{glob-param-here}
x^1, x^2, \dots, x^d \in \cO_X(U)\,.
\end{equation}
Then $\cO^{\bd}_X (U)$ is isomorphic to the polynomial algebra
over $\cO_X(U)$ in the generators 
\begin{equation}
\label{R-d-generators}
\big\{ x^a_{\ui} \big\}_{|\ui| \ge 1, ~ 1 \le a \le d}\,.
\end{equation}
\end{prop}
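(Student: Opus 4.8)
The plan is to reduce the statement to a purely algebraic computation over the coordinate ring $R_U = \cO_X(U)$ and then exploit the fact that a global system of parameters $x^1, \dots, x^d$ gives $\cO_X(U)$ the structure of a (possibly infinite, but nicely behaved) free module, or better yet, that the map $\cO_X(U) \to \cO_X(U)^{\bd}$ behaves like a Taylor expansion. First I would note that by construction $R_U^{\bd}$ is a quotient of the free commutative $R_U$-algebra on the symbols $f_{\ui}$, $f \in R_U$, $|\ui| \ge 0$, modulo the relations in \eqref{relations-unfolded}. The relation $(f,(0,\dots,0)) = f$ identifies the degree-zero symbols with $R_U$ itself, and the relations $(\la,\ui) = 0$ for $|\ui| \ge 1$ together with additivity and the Leibniz-type rule $\wt{fg} = \tf\,\tg$ show that the assignment $f \mapsto \tf$ is a $\bbK$-algebra map. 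So the symbols $f_{\ui}$ for $|\ui| \ge 1$ are determined, via additivity and the product rule, by their values on a set of algebra generators of $R_U$ over $\bbK$.

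The key step is to show that when $U$ admits the global parameters \eqref{glob-param-here}, the $x^a_{\ui}$ with $|\ui| \ge 1$ generate $\cO^{\bd}_X(U)$ as an $\cO_X(U)$-algebra and that they are algebraically independent over $\cO_X(U)$. For the generation claim I would argue as follows: smoothness of $X$ and the existence of the global system of parameters means (as the footnote indicates) that the map $U \to \bbA^d_{\bbK}$ given by $x^1, \dots, x^d$ is \'etale; equivalently $\Omega^1_{\cO_X(U)/\bbK}$ is free with basis $dx^1, \dots, dx^d$, and in fact $\cO_X(U)$ is \'etale over $\bbK[x^1,\dots,x^d]$. Then for any $f \in \cO_X(U)$ the symbol $\tf$ is forced: the universal Taylor expansion $f \mapsto \tf \in \cO^{\bd}_X(U)[[t]]$ is characterized by being a $\bbK$-algebra section of evaluation at $t = 0$ that sends $x^a$ to $x^a + t^a$ (since $\wt{x^a} = x^a + \sum_{|\ui|\ge 1} x^a_{\ui} t^{\ui}$, and one must check $x^a_{\ui}$ for $|\ui|\ge 1$ can be taken to be new free variables — this is precisely where \'etaleness enters, via the infinitesimal lifting property: the completion of $\cO_X(U)\otimes \bbK[[t]]$ along $t$ admits a unique $\cO_X(U)$-point over the $\bbK[x]$-point $x^a \mapsto x^a + t^a$). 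Concretely I would construct the isomorphism explicitly in both directions: map the polynomial ring $\cO_X(U)[\{x^a_{\ui}\}]$ to $\cO^{\bd}_X(U)$ by sending $x^a_{\ui}$ to the class of $(x^a, \ui)$, and map back by sending $f_{\ui}$ (for general $f$) to the degree-$\ui$ Taylor coefficient of $f(x^1 + t^1, \dots, x^d + t^d)$ expressed via the formal chain rule in the $x^a_{\ui}$'s; one checks this respects the defining relations \eqref{relations-unfolded} using $\wt{x^a} = x^a + t^a$ and multiplicativity.

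The cleanest route, which I would actually follow, is first to prove the proposition for $R = \bbK[x^1,\dots,x^d]$ directly — there $R^{\bd}$ is visibly the polynomial algebra on $R$ together with the $x^a_{\ui}$, because the relations let one reduce every $f_{\ui}$ to a polynomial in the $x^b_{\uk}$ by repeated use of additivity and $\wt{fg} = \tf\,\tg$, and algebraic independence follows by counting / by exhibiting the evident $\bbK[x]$-algebra map to $\bbK[x][\{x^a_{\ui}\}]$ and its inverse. Then for general smooth $U$ with parameters $x^a$, write $S = \bbK[x^1,\dots,x^d] \to R_U = \cO_X(U)$, which is \'etale, so $\Omega^1_{R_U/S} = 0$ and $R_U$ is flat over $S$. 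I would show the natural map $R_U \otimes_S S^{\bd} \to R_U^{\bd}$ is an isomorphism: it is the map induced by functoriality \eqref{bd-functorial} plus the $R_U$-module structure, and its inverse is constructed using the étale (formally étale) lifting that extends the section $S \to S^{\bd}[[t]]$, $x^a \mapsto x^a + t^a$, uniquely along $S \to R_U$ to a section $R_U \to R_U^{\bd}[[t]]$, because $R_U^{\bd}[[t]] \to R_U^{\bd}$ (reduction mod $t$) is a nilpotent-in-the-topological-sense thickening and $R_U/S$ is formally étale; the universal property of $R_U^{\bd}$ then yields the comparison map the other way. Since $S^{\bd} \cong S[\{x^a_{\ui}\}_{|\ui|\ge 1}]$, base change gives $R_U^{\bd} \cong R_U[\{x^a_{\ui}\}_{|\ui|\ge 1}]$, which is the claim.

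The main obstacle I anticipate is making the infinitesimal-lifting / formally-\'etale argument rigorous with the ring $R_U^{\bd}[[t^1,\dots,t^d]]$, which is a power series ring in several variables rather than a single square-zero (or even nilpotent) thickening: one has to lift through the whole tower $R_U^{\bd}[t]/(t)^N$ compatibly and pass to the limit, checking that formal \'etaleness of $R_U/S$ (equivalently $\Omega^1_{R_U/S} = 0$ together with flatness, coming from smoothness of $X$ and the global parameters) really does give existence and uniqueness of the lift at each finite stage. Equivalently, and perhaps more in the spirit of the paper, one can avoid the general nonsense entirely and just verify by hand that the two explicit mutually inverse $R_U$-algebra maps described above are well defined — the only nontrivial point being that the Taylor-coefficient formula respects the product relation $\wt{fg} = \tf\,\tg$, which is exactly the statement that composition of the formal substitution $x^a \mapsto x^a + t^a$ is an algebra homomorphism. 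That verification is routine once set up, so the real content is simply recording that a global system of parameters lets every element of $\cO_X(U)$ be "Taylor expanded" in the $x^a_{\ui}$.
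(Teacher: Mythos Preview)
Your proposal is correct and takes a genuinely different route from the paper. The paper argues directly by introducing increasing filtrations
\[
F^m = R\big[\{x^a_{\ui}\}_{1\le |\ui|\le m}\big]\,,\qquad
F^m R^{\bd} = R\big[\{f_{\ui}\}_{f\in R,\, |\ui|\le m}\big]\big/(\text{relations of length}\le m)\,,
\]
together with ideals $I^m\subset F^m$ and $\wt I^m\subset F^m R^{\bd}$, and proves by induction on $m$ that the obvious map $\vr: F^m\to F^m R^{\bd}$ is an isomorphism. The inductive step identifies the quotient $F^m R^{\bd}/\wt I^{m-1}$ with the symmetric algebra over $R$ on $N(d,m)$ copies of $\Om^1_{\bbK}(R)$; since the global parameters make $\Om^1_{\bbK}(R)$ free on $dx^1,\dots,dx^d$, this matches $F^m/I^{m-1}$, and the five lemma applied to the diagram
\[
\xymatrix{
0 \ar[r] & I^{m-1} \ar[r]\ar[d]^{\vr} & F^m \ar[r]\ar[d]^{\vr} & F^m/I^{m-1} \ar[r]\ar[d] & 0\\
0 \ar[r] & \wt I^{m-1} \ar[r] & F^m R^{\bd} \ar[r] & F^m R^{\bd}/\wt I^{m-1} \ar[r] & 0
}
\]
finishes the step. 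Your approach instead exploits the universal property of $R^{\bd}$ (an $R$-algebra map $R^{\bd}\to A$ is the same as a $\bbK$-algebra map $R\to A[[t]]$ reducing to the structure map at $t=0$), proves the polynomial case $S=\bbK[x^1,\dots,x^d]$ directly, and then uses formal \'etaleness of $R_U/S$ to lift the section $S\to S^{\bd}[[t]]$ uniquely through the tower $A[t]/(t)^N$ with $A=R_U\otimes_S S^{\bd}$, yielding the inverse to the natural map $R_U\otimes_S S^{\bd}\to R_U^{\bd}$. Both arguments ultimately rest on the same fact---that the parameters trivialize $\Om^1_{R_U/\bbK}$---but yours packages it as a single base-change statement, while the paper's filtered argument is more elementary and gives finer control (e.g.\ the explicit formula $f_{\ui}=\sum_a f_a\,x^a_{\ui}+\cdots$ modulo $\wt I^{m-1}$), which is in keeping with the explicit, coordinate-based style used later in Sections~\ref{subsec:cO-coord} and~\ref{sec:Fed-tensor}.
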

\begin{proof}
Let us set 
$$
R : = \cO_X(U)\,.
$$
Our goal is to show that the obvious map of commutative $R$-algebras
\begin{equation}
\label{obv-map}
\vr :  R \big[ \{ x^a_{\ui}  \}_{|\ui| \ge 1, ~ 1 \le a \le d} \big]
\to R^{\bd}  
\end{equation}
is an isomorphism. 

For this purpose we introduce increasing filtrations 
on both algebras $R \big[ \{ x^a_{\ui}  \}_{|\ui| \ge 1, ~ 1 \le a \le d} \big]$
and $R^{\bd}$\,.
$$
R = F^0  \subset F^1 \subset F^2 \subset F^3 \subset \dots \quad \subset 
R \big[ \{ x^a_{\ui}  \}_{|\ui| \ge 1, ~ 1 \le a \le d} \big]\,,
$$
$$
R = F^0 R^{\bd} \subset  F^1 R^{\bd} \subset F^2 R^{\bd} \subset \dots
\quad \subset R^{\bd}\,,
$$
where 
$$ 
F^m =  R \big[ \{ x^a_{\ui}  \}_{1\le |\ui| \le m, ~ 1 \le a \le d} \big] 
$$
and $F^m R^{\bd}$ is the quotient of the polynomial algebra
$$
R\big[ \{f_{\ui} \}_{f\in R, ~ |\ui| \le m} \big]
$$
by the relations of $R^{\bd}$ (\ref{relations-unfolded}) involving 
only the elements $f_{\ui}$ with $|\ui| \le m$\,.

Furthermore, for each $m \ge 0$ we introduce the ideal 
$\wt{I}^m$ (resp. $I^m$) of the $R$-algebra $F^m R^{\bd}$
(resp. $F^m$). The ideal $\wt{I}^m \subset F^m R^{\bd}$ is 
generated by elements $f_{\ui}$ where $f\in R$ and $1 \le |\ui| \le m$. 
The ideal $I^m \subset F^m$ is generated by elements $x^a_{\ui}$ 
for $1 \le a \le d$ and $|\ui| \le m$\,. For $m = 0$ we set 
$$
I^0 = \wt{I}^0 = \bfzero\,.
$$
 
It is clear that the map $\vr$ (\ref{obv-map}) is compatible with 
the filtrations and moreover
$$
\vr( I^m )  \subset  \wt{I}^m\,.
$$ 

Let us prove, by induction, that for each $m \ge 0$ the map $\vr$ (\ref{obv-map}) 
gives us an isomorphism from $F^m$ to $F^m R^{\bd}$ and 
$$
\vr(I^m) = \wt{I}^m\,.
$$

For $m=0$ the statement is obvious. 
Let assume that the desired statement holds for $m-1$\,.  

The $R$-algebra $F^m R^{\bd}$ is obtained from $F^{m-1} R^{\bd}$
via adjoining elements $f_{\ui}$ with $|\ui|=m$ and imposing the 
relations 
\begin{equation}
\label{relations-at-m}
\begin{array}{c}
(f + g)_{\ui} = f_{\ui} + g_{\ui} \\[0.3cm]
\la_{\ui} =0  \qquad \forall \quad \la \in \bbK\\[0.3cm]
(fg)_{\ui} = f_{\ui} g + f g_{\ui} + \dots 
\end{array}
\end{equation}
where $\dots$ stands for a sum of elements in the ideal $\wt{I}^{m-1}$\,.

Therefore, the quotient $R$-algebra 
\begin{equation}
\label{F-m-slash-I}
F^m R^{\bd} \big/ \wt{I}^{m-1}
\end{equation}
is isomorphic to the symmetric algebra
(over $R$) on $N(d,m)$-copies of the module 
$\Om^1_{\bbK}(R)$ of K\"ahler differentials
\begin{equation}
\label{Symm-Om-copies}
S_R\big(\, (\Om^1_{\bbK}(R))^{\oplus~ N(d,m)} \, \big)\,,
\end{equation}
where $N(d,m)$ is the total number\footnote{$N(d,m)$ is also 
the number of integer points inside the $(d-1)$-simplex 
$\{ (u_1, u_2, \dots, u_d), ~~ u_i\ge 0, ~~ u_1 + u_2 + \dots + u_d = m\}$\,.} 
of multi-indices $\ui$ of length $m$\,.

Let us consider the following commutative diagram of maps of commutative rings: 
\begin{equation}
\label{I-F-FR-d}
\xymatrix@M=0.4pc{
0 \ar[r] & I^{m-1} \ar[r] \ar[d]^{\vr} & F^m \ar[r] \ar[d]^{\vr}  &  F^m \big/ I^{m-1} \ar[r] \ar[d] & 0 \\ 
0 \ar[r] & \wt{I}^{m-1} \ar[r]  & F^m R^{\bd} \ar[r]   & F^m R^{\bd} \big/ \wt{I}^{m-1} \ar[r] & 0  
}
\end{equation}
Since $R$ has a global system of parameters (\ref{glob-param-here}),
$\Om^1_{\bbK}(R)$ is a free\footnote{This is an easy exercise from 
algebraic geometry.} $R$-module on the $1$-forms $d x^a$\,. 
Therefore, the right most vertical arrow in \eqref{I-F-FR-d} is an 
isomorphism. 

On the other hand, the left most vertical arrow is also an isomorphism 
by the induction assumption.

Thus, by the five lemma, the middle vertical arrow 
in \eqref{I-F-FR-d} is also an isomorphism.   

It remains to show that for every $f \in R$ and for every multi-index $\ui$ of length 
$m$ the element $f_{\ui}$ belongs to $\vr(I^m)$\,.

Since variables $x^1, x^2, \dots, x^d$ form a global system of parameters for $R$,
for each $f\in R$ there exists a unique collection of elements $\{ f_a \}_{1 \le a \le d} \in R$
such that 
$$
d f = \sum_{a=1}^d  f_a \, d x^a\,.
$$
Hence, using the above isomorphism between the quotient \eqref{F-m-slash-I}
and the symmetric algebra \eqref{Symm-Om-copies} we deduce that, for 
every multi-index $\ui$ of length $m$, 
\begin{equation}
\label{f-ui-x-ui}
f_{\ui} =  \sum_{a=1}^d  f_a \, x^a_{\ui} + \dots\,,
\end{equation}
where $\dots$ stands for a sum of elements in the ideal $\wt{I}^{m-1}$\,.

Thus, due to the inductive assumption, $f_{\ui}$ belongs to $\vr(I^m)$\,.

Proposition \ref{prop:cO-bd-free} is proven. 
\end{proof}

From now on, we assume that $X$ is a smooth algebraic variety over $\bbK$
of dimension $d$\,.

\subsection{The sheaf of $\cO_X$-algebras $\cO^{\coord}_X$} 
\label{subsec:cO-coord} 

Let $R$ be the ring of functions $\cO(U)$
on a smooth affine variety $U$ of dimension $d$\,. 
Let us assume that $U$ has a global system of parameters 
\begin{equation}
\label{system-param}
x^1, x^2, \dots, x^d \in R\,.
\end{equation}
For every $x^a$ we rewrite the formal series $\wt{x^a} \in R^{\bd}[[t^1, \dots, t^d]]$ 
as follows
\begin{equation}
\label{wt-x-a}
\wt{x^a} = x^a  + \sum_{b=1}^{d} x^a_{(b)} t^b + \sum_{\ui,~ |\,\ui \,| > 1}
x^a_{\ui} t^{\ui} 
\end{equation}
where $(b)$ denotes the multi-index $(0, \dots, 0, 1, 0, \dots 0)$
with $1$ placed in the $b$-th spot and $|\,\ui\,|$ is the 
length (\ref{length}) of the multi-index $\ui$\,.

\begin{defp}
\label{dfn:R-coord}
We define the $\bbK$-algebra $R^{\coord}$ as 
the localization of $R^{\bd}$ with respect to the element 
$$
\det || x^a_{(b)} ||\,.
$$ 
This definition does not depend on the choice 
of the system of parameters (\ref{system-param}).
\end{defp}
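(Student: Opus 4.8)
The claim to prove is the last sentence of Proposition-Definition~\ref{dfn:R-coord}: that the localization of $R^{\bd}$ at $\det\|x^a_{(b)}\|$ does not depend on the choice of the global system of parameters $x^1,\dots,x^d$. The plan is to compare two such localizations directly via the change-of-parameters Jacobian. First I would fix a second global system of parameters $y^1,\dots,y^d\in R$ and note that, since both $\{dx^a\}$ and $\{dy^a\}$ are $R$-bases of the free module $\Om^1_{\bbK}(R)$, the $d\times d$ matrix $J = \|{\partial y^a}/{\partial x^b}\|$ defined by $dy^a = \sum_b (\partial y^a/\partial x^b)\, dx^b$ is invertible over $R$, i.e.\ $\det J\in R^{\times}$.

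The key computational step is to express the "degree-one" coefficients $y^a_{(b)}$ in terms of the $x^c_{(b)}$'s inside $R^{\bd}$. Applying the algebra homomorphism $I\colon R\to R^{\bd}[[t^1,\dots,t^d]]$ of \eqref{homo:I} to $y^a = y^a(x^1,\dots,x^d)$ and using that $I$ is a ring map compatible with the filtration, one gets $\wt{y^a} = y^a + \sum_b (\partial y^a/\partial x^b)\,\wt{(x^b)}\big|_{\text{linear part}} + (\text{higher order in }t)$; more precisely, reading off the coefficient of $t^c$ on both sides and using $\wt{x^b} = x^b + \sum_c x^b_{(c)} t^c + \cdots$ gives
\begin{equation}
\label{eq:y-x-jacobian}
y^a_{(c)} = \sum_{b=1}^d \big(\partial y^a/\partial x^b\big)\, x^b_{(c)}
\end{equation}
as an identity in $R^{\bd}$. (The fact that $I$ is a ring homomorphism is exactly what makes the chain rule hold here; this is the one place where a small amount of care with the relations \eqref{relations-R-d} is needed.) Taking determinants of \eqref{eq:y-x-jacobian} yields
\begin{equation}
\label{eq:det-transform}
\det\|y^a_{(c)}\| = \det J \cdot \det\|x^b_{(c)}\|
\end{equation}
in $R^{\bd}$, where $\det J\in R^{\times}\subset (R^{\bd})^{\times}$ by the previous paragraph.

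From \eqref{eq:det-transform} the conclusion is immediate: inverting $\det\|x^b_{(c)}\|$ in $R^{\bd}$ automatically inverts $\det\|y^a_{(c)}\|$ and conversely, since the two elements differ by the already-invertible unit $\det J$. Hence the two localizations coincide as subrings (or rather, as $R^{\bd}$-algebras) of the common further localization, and $R^{\coord}$ is well defined. I expect the main obstacle to be purely bookkeeping: verifying \eqref{eq:y-x-jacobian} rigorously, i.e.\ checking that the naive chain-rule manipulation is legitimate given only the defining relations \eqref{relations-unfolded}, rather than any genuine conceptual difficulty. One clean way to organize this is to observe that $I$ extends to $R[[t^1,\dots,t^d]]\to R^{\bd}[[t^1,\dots,t^d]]$ and that $\wt{(\,\cdot\,)}$ is characterized as "substitute $t^a\mapsto \wt{x^a}-x^a$ shifted", so that the composition law for Taylor expansions under coordinate change is forced; then \eqref{eq:y-x-jacobian} is just the linearization of that composition law, and everything else follows formally.
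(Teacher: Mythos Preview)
Your proposal is correct and follows essentially the same route as the paper: both arguments introduce the Jacobian matrix $\Lambda$ (your $J$) with $dy^a=\Lambda^a_b\,dx^b$, establish the identity $y^a_{(b)}=\Lambda^a_c\,x^c_{(b)}$, take determinants, and conclude from $\det\Lambda\in R^\times$. The paper's justification of the key identity is phrased slightly more cleanly---it observes directly that $f\mapsto f_{(b)}$ is a $\bbK$-linear derivation $R\to R^{\bd}$, so the universal property of K\"ahler differentials gives $f_{(b)}=f_a\,x^a_{(b)}$ where $df=f_a\,dx^a$---which is exactly the content of your informal ``chain rule via $I$ being a ring map'' step, and avoids the need to speak of $y^a$ as a function of the $x^b$.
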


\begin{proof}
Since $x^1, x^2, \dots, x^d \in R$ form a global system of parameters 
on $U$ the module $\Om^1_{\bbK}(R)$ of K\"ahler differentials is freely 
generated by $\{ d x^a \}_{1\le a \le d}$\,. In particular, for every 
$f \in R$ the element $d f$ can be written uniquely in the form 
\begin{equation}
\label{df-decompos}
d  f = f_a \, d x^a\,, \qquad f_a \in R\,. 
\end{equation}

Let $y^1, \dots, y^d \in R$ be another global system of parameters.
The above observation implies that there exist elements 
$\La^a_b \in R$ such that 
\begin{equation}
\label{dy-dx-La}
d y^a = \La^a_b \,  d x^b\,.  
\end{equation}
Furthermore, since  the elements $y^1, \dots, y^d$ also 
form a global system of parameters, the $R$-valued matrix 
$||\La^a_b||$ has to be invertible. 

In order to prove the proposition, we observe that the 
operation 
\begin{equation}
\label{md-b}
\md_b (f) =  f_{(b)} : R \to R^{\bd}
\end{equation}
is a $\bbK$-linear derivation of $R$-modules. 

Therefore, for every $f\in R$ we have  
\begin{equation}
\label{md-f}
\md_b(f) = f_a \, x^a_{(b)}\,, 
\end{equation}
where $f_a \in R$ are the coefficients in the 
decomposition (\ref{df-decompos})\,.

Thus, for $y^a$ we have
\begin{equation}
\label{xab-La-yab}
y^a_{(b)} = \La^a_c \, x^c_{(b)} 
\end{equation}
and hence   
$$
\det || y^a_{(b)} || = \det ||\La^a_b|| \, \det || x^a_{(b)} ||\,.
$$ 

The desired statement follows immediately from the 
fact that the matrix $||\La^a_b||$ is invertible.
\end{proof}

Let $X$ be an arbitrary smooth algebraic variety over $\bbK$ and $U$ be an affine 
open subset of $X$ equipped with a global system of parameters. 
Furthermore, let $R_U = \cO_X(U)$ and $\left(R_U^{\coord}\right)\tilde{}\,$ be the quasi-coherent sheaf on $U$ corresponding to the $R_U$-module 
$R_U^{\coord}$\,. Combining Corollary \ref{cor:cO-bd-sheaf} with 
Proposition-Definition \ref{dfn:R-coord} we see that the formula 
$$
\cO^{\coord}_X \Big |_{U} := \left(R_U^{\coord}\right)\,\tilde{} 
$$ 
defines a  quasi-coherent sheaf $\cO^{\coord}_X$ of $\cO_X$-algebras over $X$\,.

Proposition \ref{prop:cO-bd-free} implies the following statement: 
\begin{cor}
\label{cor:cO-coord}
If $X$ is a smooth algebraic variety over $\bbK$
of dimension $d$ and  $U$ is an affine subset of $X$
which admits a global system of parameters 
\begin{equation}
\label{glob-param-cO-coord}
x^1, x^2, \dots, x^d \in \cO_X(U),
\end{equation}
then $ \cO^{\coord}_X(U) $ is isomorphic to the quotient 
\begin{equation}
\label{cO-coord}
\cO_X(U) \big[ \{ x^a_{\ui}  \}_{|\ui| \ge 1, ~ 1 \le a \le d}
 \cup \{ K \} \big] \big/ \L K\det ||x^a_{(b)} ||-1 \R
\end{equation}
of the polynomial algebra over $\cO_X(U) $ in the variables
$$
 \{ x^a_{\ui}  \}_{|\ui| \ge 1, ~ 1 \le a \le d}~ \cup ~ \{ K \}
$$
with respect to the ideal generated by the element $ K\det ||x^a_{(b)} || - 1$\,. $~~~\Box$
\end{cor}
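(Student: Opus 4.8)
The plan is to obtain the claim by simply stacking the two results already at our disposal. By Proposition \ref{prop:cO-bd-free}, the $\cO_X(U)$-algebra $\cO^{\bd}_X(U)=R_U^{\bd}$ is free on the generators $\{x^a_{\ui}\}_{|\ui|\ge 1,\ 1\le a\le d}$, these generators being the images of the pairs $(x^a,\ui)$. On the other hand, by Proposition-Definition \ref{dfn:R-coord} the algebra $\cO^{\coord}_X(U)=R_U^{\coord}$ is, by definition, the localization of $R_U^{\bd}$ at the single element $\det||x^a_{(b)}||$, and the same Proposition-Definition records that this localization does not depend on the chosen system of parameters.

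First I would invoke the elementary fact that, for a commutative ring $A$ and an element $s\in A$, there is a canonical isomorphism $A_s\cong A[K]/\L Ks-1\R$ sending $1/s$ to the class of $K$. Taking $A=R_U^{\bd}$ and $s=\det||x^a_{(b)}||$ yields
\[
\cO^{\coord}_X(U)\;\cong\; R_U^{\bd}[K]\big/\L K\det||x^a_{(b)}||-1\R .
\]
Then I would substitute the presentation of $R_U^{\bd}$ from Proposition \ref{prop:cO-bd-free}. Since the elements $x^a_{(b)}$ occurring in $\det||x^a_{(b)}||$ are precisely the polynomial generators indexed by the length-one multi-indices $\ui=(b)$, the distinguished element $\det||x^a_{(b)}||$ of $R_U^{\bd}$ is carried to the honest $d\times d$ determinant in those variables. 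Hence the right-hand side above becomes
\[
\cO_X(U)\big[\{x^a_{\ui}\}_{|\ui|\ge1,\ 1\le a\le d}\cup\{K\}\big]\big/\L K\det||x^a_{(b)}||-1\R,
\]
which is exactly the algebra \eqref{cO-coord}. The independence of this identification from the system of parameters is inherited from Proposition-Definition \ref{dfn:R-coord}, and can, if desired, be re-derived from the transformation rule \eqref{xab-La-yab}.

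There is no genuinely hard step here; the only point requiring a little care is the bookkeeping in the second paragraph, namely confirming that under the identification of Proposition \ref{prop:cO-bd-free} the localizing element $\det||x^a_{(b)}||\in R_U^{\bd}$ really corresponds to the minor built out of the degree-one generators, and not to some more complicated expression. This is immediate from the construction, since the $x^a_{\ui}$ appearing in the polynomial presentation are by definition the images of the pairs $(x^a,\ui)$, so in particular $x^a_{(b)}$ maps to $x^a_{(b)}$.
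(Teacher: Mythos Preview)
Your proposal is correct and matches the paper's approach: the paper states this corollary with a $\Box$ and the preamble ``Proposition~\ref{prop:cO-bd-free} implies the following statement,'' so the intended argument is precisely the combination you give --- the polynomial presentation of $R^{\bd}_U$ from Proposition~\ref{prop:cO-bd-free}, the definition of $R^{\coord}_U$ as the localization at $\det\|x^a_{(b)}\|$ from Proposition-Definition~\ref{dfn:R-coord}, and the standard identification $A_s\cong A[K]/(Ks-1)$.
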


\begin{remark}
\label{rem:cO-coord}
Following \cite[Section 6.1]{VdB}, one can think of $\cO^{\coord}_X(U)$ as
the ring of functions of the (infinite dimensional) affine scheme of formal coordinate 
systems on $U$\,.
\end{remark}

\subsection{The sheaf of $\cO_X$-algebras $\cO^{\aff}_X$} 
\label{subsec:cO-aff}
Let us start by observing that there is an obvious bijection 
between the set of multi-indices 
$$
\{\ui = (i_1, i _2, \dots, i_d) ~|~ i_s \ge 0, ~ |\,\ui \,| \ge 1 \}
$$
and the set of symmetric multi-indices 
\begin{equation}
\label{symm-multi}
\big\{ (a_1, a_2, \dots, a_k) ~|~ 1 \le a_t \le d, ~ k \ge 1 \big\}  
 \Big/    (\dots, a_s, \dots a_t,  \dots) =  (\dots, a_t, \dots a_s,  \dots)\,.
\end{equation}
This bijection assigns to the multi-index $\ui$ the symmetric multi-index 
\begin{equation}
\label{bijection-ui-ua}
(\underbrace{1,1,\dots, 1}_{i_1 ~{\rm times}}, \underbrace{2,2,\dots, 2}_{i_2 ~{\rm times}}, 
\dots, \underbrace{d,d,\dots, d}_{i_d ~{\rm times}} )\,.  
\end{equation}
Notice that $k$ in (\ref{symm-multi}) is exactly the length $|\,\ui\,|$ of 
the multi-index $\ui$\,.
 
For a symmetric multi-index  $(a_1, a_2, \dots, a_k)$ corresponding 
to $\ui = (i_1, i _2, \dots, i_d)$ and $f \in R$ we 
set\footnote{Here $R$ is the algebra of functions on a smooth affine variety $U$\,.}
\begin{equation}
\label{f-a1-dots-ak}
f_{(a_1, a_2, \dots, a_k)} : = i_1! \,i_2! \,\dots\, i_d! \,\, f_{\ui}\,. 
\end{equation}

It is clear that the $R$-algebra $R^{\bd}$ is the 
quotient of the free $R$-algebra in elements  
\begin{equation}
\label{set-f-a1-dots-ak}
\big\{ f_{(a_1, a_2, \dots, a_k)}  \big\}_{ 1 \le a_t \le d, ~ k \ge 1 }
\end{equation}
with respect to the ideal generated by the relations
\begin{equation}
\label{rel-f-a1-dots-ak}
f_{( \dots, a_i, a_{i+1}, \dots )} = 
f_{ (\dots, a_{i+1}, a_{i}, \dots )}\,, 
\qquad 
\la_{(a_1, a_2, \dots, a_k)} = 0\,,
\end{equation}
\begin{equation}
\label{rel1-f-a1-dots-ak}
\wt{(f+g)} = \wt{f} + \wt{g}\,,  \qquad
\wt{fg} = \wt{f} \, \wt{g}\,,
\end{equation}
where $\la \in \bbK$, $f,g \in R$ and 
\begin{equation}
\label{wt-f-fas}
\wt{f} = f +  \sum_{k \ge 1} 
\sum_{1 \le a_t \le d} \frac{1}{k!} f_{(a_1, a_2, \dots, a_k)} 
t^{a_1} t^{a_2} \dots t^{a_k} \in R^{\bd} [[t^1, t^2, \dots, t^d]]\,.
\end{equation}
Equation \eqref{f-a1-dots-ak} allows us to switch back and forth 
between the sets of generators $\{f_{\ui} \}_{|\ui| \ge 1}$ and 
\eqref{set-f-a1-dots-ak}.

We claim that 
\begin{prop}
\label{prop:GL-d-acts}
The formula
\begin{equation}
\label{GL-d-acts}
h (f_{(a_1, a_2, \dots, a_k)}) = 
\sum_{1 \le b_1, \dots, b_k \le d } h^{b_1}_{a_1} h^{b_2}_{a_2}
\dots   h^{b_k}_{a_k}  f_{(b_1, b_2, \dots, b_k)}\,,
\end{equation}
$$
h = || h^b_a ||\in \GL_d(\bbK)
$$
defines a left action of the affine algebraic group $\GL_d(\bbK)$
on the $R$-algebra $R^{\bd}$
This action extends in the obvious way to $R^{\coord}$ and to the sheaf 
of $\cO_X$-algebras $\cO^{\coord}_X$ for any smooth algebraic variety 
$X$ over $\bbK$\,.
\end{prop}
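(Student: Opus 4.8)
The plan is to verify directly that the assignment \eqref{GL-d-acts} respects all the defining relations of $R^{\bd}$, after which the extensions to $R^{\coord}$ and to the sheaf $\cO^{\coord}_X$ follow with little effort. First I would package the action in terms of the formal Taylor series: for $h=\|h^b_a\|\in\GL_d(\bbK)$ and the coordinates $t=(t^1,\dots,t^d)$, set $(h\cdot t)^a := \sum_b h^a_b\, t^b$ (the linear change of the formal variables dual to \eqref{GL-d-acts}), so that formula \eqref{GL-d-acts} is equivalent to the single statement
\begin{equation}
\label{h-on-wt-f}
h\big(\wt{f}\big)(t^1,\dots,t^d) = \wt{f}\big( (h\cdot t)^1,\dots,(h\cdot t)^d\big)\,,
\end{equation}
where $h$ acts on the coefficients of the series $\wt f$ of \eqref{wt-f-fas} coefficient-by-coefficient via \eqref{GL-d-acts}. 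This reformulation makes the three algebra relations \eqref{rel-f-a1-dots-ak}--\eqref{rel1-f-a1-dots-ak} essentially automatic: symmetry of $f_{(a_1,\dots,a_k)}$ is preserved because \eqref{GL-d-acts} is manifestly symmetric in the indices $a_1,\dots,a_k$; the relation $\la_{(a_1,\dots,a_k)}=0$ is preserved because $\wt{\la}=\la$ has no higher-order terms, and substituting $(h\cdot t)$ for $t$ in a constant series changes nothing; additivity $\wt{(f+g)}=\wt f+\wt g$ is preserved since \eqref{GL-d-acts} is $\bbK$-linear in $f$; and multiplicativity $\wt{fg}=\wt f\,\wt g$ is preserved because substituting a fixed change of variables into a product of power series is the product of the substituted series. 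Finally $\wt f|_{t=0}=f$ is respected because the constant term of $\wt f\big((h\cdot t)\big)$ equals the constant term of $\wt f$, namely $f$. Hence the assignment descends to a well-defined $\bbK$-algebra endomorphism $h$ of $R^{\bd}$, and it is $R$-linear since it fixes $R\subset R^{\bd}$ (the $k=0$ piece).

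Next I would check that $h\mapsto(h\mbox{ acting on }R^{\bd})$ is a \emph{left} action, i.e. $(h_1 h_2)(w) = h_1(h_2(w))$ for $h_1,h_2\in\GL_d(\bbK)$ and the identity matrix acts trivially. The latter is clear from \eqref{GL-d-acts}. For the former, using \eqref{h-on-wt-f} twice: $h_1\big(h_2(\wt f)\big)(t)$ equals $h_1$ applied to the series $\wt f\big((h_2\cdot t)\big)$; writing $(h_2\cdot t)^a=\sum_b (h_2)^a_b t^b$ and noting that $h_1$ acts on coefficients while commuting with the purely formal substitution variables, we get $\wt f\big((h_2\cdot(h_1\cdot t))\big)$, and $(h_2\cdot(h_1\cdot t))^a = \sum_{b,c}(h_2)^a_b (h_1)^b_c t^c = ((h_2 h_1)\cdot t)^a$. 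To land on $(h_1 h_2)$ rather than $(h_2 h_1)$ one must be careful about which matrix multiplies $t$ from which side in the definition of $h\cdot t$: choosing the convention $(h\cdot t)^a=\sum_b h^a_b t^b$ as above and tracking the composition yields the left-action identity $(h_1 h_2)(w)=h_1(h_2(w))$ on all of $R^{\bd}$ by the universal property, since the two sides agree on the generators \eqref{set-f-a1-dots-ak}. (If the bookkeeping produces the opposite order, one instead uses $(h\cdot t)^a=\sum_b h^b_a t^b$; either way this is just a matter of fixing conventions consistently with \eqref{GL-d-acts}.)

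For the extension to $R^{\coord}$, recall from Proposition-Definition \ref{dfn:R-coord} that $R^{\coord}$ is the localization of $R^{\bd}$ at $\det\|x^a_{(b)}\|$. By \eqref{GL-d-acts} with $k=1$, $h\big(x^a_{(b)}\big)=\sum_c h^c_b\, x^a_{(c)}$, so $h$ sends the matrix $\|x^a_{(b)}\|$ to $\|x^a_{(b)}\|\cdot h$ (right multiplication by $h$), whence $h\big(\det\|x^a_{(b)}\|\big)=\det(h)\cdot\det\|x^a_{(b)}\|$, which is the original denominator up to the invertible scalar $\det(h)\in\bbK^\times$. Therefore the automorphism $h$ of $R^{\bd}$ extends uniquely to an automorphism of the localization $R^{\coord}$ (it sends the multiplicative set $\{(\det\|x^a_{(b)}\|)^n\}$ into a set of units), and the left-action identities persist. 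Since the $\GL_d(\bbK)$-action on $R^{\bd}$ and on $R^{\coord}$ is $R$-linear and compatible with localization along any $f\in R$ (both \eqref{GL-d-acts} and passing to $R_f$ only touch the "$f_{\ui}$" generators and commute with each other, as in the proof of Proposition \ref{prop:localize}), it glues: on an affine open $U\subset X$ with a global system of parameters, $\GL_d(\bbK)$ acts on $\cO^{\coord}_X(U)$, these actions agree on overlaps (by the independence-of-parameters statement in Proposition-Definition \ref{dfn:R-coord} together with \eqref{xab-La-yab}, under which $\GL_d(\bbK)$-action and change of parameters commute), and hence we obtain an action of $\GL_d(\bbK)$ on the sheaf of $\cO_X$-algebras $\cO^{\coord}_X$ for any smooth algebraic variety $X$ over $\bbK$. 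The only genuinely delicate point is keeping the left/right multiplication conventions consistent throughout so that the composition comes out as a left action and the denominator $\det\|x^a_{(b)}\|$ transforms by $\det(h)$ rather than $\det(h)^{-1}$; everything else is a routine verification on generators using the Taylor-series reformulation \eqref{h-on-wt-f}.
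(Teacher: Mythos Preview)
Your approach is essentially the same as the paper's very terse proof (which simply asserts the left-action identity by ``direct computation'', notes the relations are preserved, and checks that $\det\|x^a_{(b)}\|$ transforms by $\det(h)$). Your Taylor-series reformulation \eqref{h-on-wt-f} is a nice organizing device and makes the verification of the relations transparent; the paper does not spell this out.

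There is one genuine slip in your composition argument. With the convention $(h\cdot t)^a=\sum_b h^a_b t^b$ (which is indeed the one forced by \eqref{GL-d-acts}), applying $h_1$ to the series $\wt{f}((h_2\cdot t))$ does \emph{not} substitute $(h_1\cdot t)$ for $t$; the identity $h(\wt{f})(t)=\wt{f}((h\cdot t))$ holds only because the coefficients of $\wt{f}$ are the generators $f_{(c_1,\dots,c_k)}$ themselves. The correct computation is: in $\wt{f}((h_2\cdot t))=\sum_{k,c}\frac{1}{k!}f_{(c_1,\dots,c_k)}(h_2\cdot t)^{c_1}\cdots(h_2\cdot t)^{c_k}$, the factors $(h_2\cdot t)^{c_i}$ are polynomials in $t$ with \emph{scalar} coefficients and are fixed by $h_1$, while $h_1$ acts on each $f_{(c_1,\dots,c_k)}$. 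This yields $[h_1(\wt{f})]((h_2\cdot t))=\wt{f}\big((h_1\cdot(h_2\cdot t))\big)=\wt{f}\big(((h_1 h_2)\cdot t)\big)$, not $\wt{f}\big((h_2\cdot(h_1\cdot t))\big)$ as you wrote. So you land on $(h_1 h_2)$ directly, without any need to ``adjust conventions''; the hedging about switching to $(h\cdot t)^a=\sum_b h^b_a t^b$ is a red herring, since that alternative convention would break \eqref{h-on-wt-f} itself. The direct verification on generators that you also invoke is correct and suffices, and is exactly what the paper does.
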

\begin{proof}
A direct computation shows that for every pair $h, h' \in \GL_d(\bbK)$
$$
h \big( h'(f_{(a_1, a_2, \dots, a_k)}) \big) = h h' (f_{(a_1, a_2, \dots, a_k)})\,. 
$$

It is also clear that the ideal generated by relations \eqref{rel-f-a1-dots-ak} and
\eqref{rel1-f-a1-dots-ak} is closed with the respect to the action of  $\GL_d(\bbK)$\,.

Thus, formula \eqref{GL-d-acts} indeed defines a left action of $\GL_d(\bbK)$
on the $R$-algebra $R^{\bd}$\,.

Let us recall that $R^{\coord}$ is obtained from $R^{\bd}$ by localizing 
with respect to the element 
$$
\det ||x^a_{(b)}||\,, 
$$
where $x^1, \dots, x^d$ is the global system of parameters 
for $\Spec(R)$\,.

Since for every $h \in \GL_d(\bbK)$
$$
h (x^a_{(b)}) = \sum_{c=1}^d h^c_b \, x^a_{(c)}
$$
the action  \eqref{GL-d-acts} of $\GL_d(\bbK)$ extends to 
the localization $R^{\coord}$ of $R^{\bd}$\,.

This action also obviously extends to the sheaf $\cO^{\coord}_X$
of $\cO_X$-algebras on any smooth algebraic variety $X$ over $\bbK$.
\end{proof}
\begin{remark}
\label{rem:extending-scalars}
We would like to mention that formula (\ref{GL-d-acts})
makes sense if $h \in \GL_d(R^{\coord})$ and $f_{(a_1, a_2, \dots, a_k)}$
are considered as elements of $R^{\coord}$. 
\end{remark}

Using the action \eqref{GL-d-acts} of  $\GL_d(\bbK)$ on $\cO^{\coord}_X$, we define 
yet another sheaf of $\cO_X$-algebras $\cO^{\aff}_X$:
\begin{defi}
\label{dfn:cO-aff}
Let $X$ be a smooth algebraic variety over $\bbK$\,. 
The sheaf $\cO_X^{\aff}$ is the subsheaf of $\GL_d(\bbK)$-invariant  sections of 
$\cO_X^{\coord}$\,.
\end{defi}
Since any section of the structure sheaf $\cO_X$ is 
obviously invariant under the $\GL_d(\bbK)$-action, the sheaf 
$\cO_X^{\aff}$ is naturally a sheaf of $\cO_X$-algebras.

\begin{remark}
\label{rem:cO-aff}
Let $U$ be an open affine subset of $X$.
Following \cite[Section 6.3]{VdB}, one can think of $\cO^{\aff}_X(U)$ as
the ring of functions on the (infinite dimensional) affine scheme of formal affine
systems on $U$\,.
\end{remark}

Let us observe that the $R$-algebra $R^{\bd}[[t^1, \dots t^d]]$ carries two left  $\GL_d(\bbK)$-actions:
the first one is obtained by extending the action \eqref{GL-d-acts} by 
$R[[t^1, \dots t^d]]$-linearity; the second one is obtained by 
setting  
\begin{equation}
\label{msA-h}
\msA_h (t^a) = \sum_{b=1}^d (h^{-1})^a_b \, t^b\,, \qquad
\msA_h (f_{\ui}) = 0\,, \qquad \forall ~~\ui\,. 
\end{equation}

\begin{remark}
\label{rem:actions-commute}
It is easy to see that the above actions of 
$\GL_d(\bbK)$ on  $R^{\bd}[[t^1, \dots t^d]]$ 
commute and hence the formula
\begin{equation}
\label{combined-action}
F \in R^{\bd}[[t^1, \dots t^d]] ~ \mapsto ~
h \circ \msA_{h} (F) \in R^{\bd}[[t^1, \dots t^d]]
\end{equation}
defines another left action on the $R$-algebra $R^{\bd}[[t^1, \dots t^d]]$\,.
Let us also remark that for every $f \in R$ and $h \in \GL_d(\bbK)$ 
we have 
\begin{equation}
\label{h-msA-h-wt-f}
h \circ \msA_{h}  (\,\wt{f} \,) =\wt{f}\,.  
\end{equation}
\end{remark}

Differentiating the actions \eqref{GL-d-acts}, \eqref{msA-h} of $\GL_d(\bbK)$ 
on $R^{\bd}$ and $R^{\bd}[[t^1, \dots t^d]]$, respectively we obtain the corresponding 
actions of the Lie algebra  $\mgl_d(\bbK)$. The latter action is given by the assignment 
\begin{equation}
\label{gl-d}
\mv = ||\mv^a_b|| ~ \mapsto ~ - \mv^a_b t^b \frac{\pa}{\pa t^a} \in 
\Der_{R^{\bd}} \big( R^{\bd}[[t^1, \dots t^d]] \big)
\end{equation}
and the former 
$$ 
\mv  \mapsto \bv \in  \Der_{R} (R^{\bd}) 
$$ 
is defined by declaring that 
\begin{equation}
\label{gl-d-acts}
\sum_{\ui} \bv(f_{\ui}) t^{\ui} + \mv(\tf)  = 0\,, 
\qquad \forall~~ f \in R\,.
\end{equation}

The  actions \eqref{GL-d-acts}, \eqref{msA-h}, and 
\eqref{combined-action} of $\GL_d(\bbK)$ on $R^{\bd}[[t^1, \dots t^d]]$
extend in the obvious way to left actions on the 
$R$-algebra $R^{\coord}[[t^1, \dots t^d]]$\,.
By abuse of notation, we will denote by $\bv$ the 
$R$-derivation of $R^{\coord}$ corresponding to the action 
\eqref{GL-d-acts} of $\mv  \in \mgl_d(\bbK)$\,.

To give a local description of the sheaf of  $\cO_X$-algebras
$\cO_X^{\aff}$, we consider an affine subset $U \subset X$
which admits a global system of parameters 
\begin{equation}
\label{glob-param-cO-aff}
x^1, x^2, \dots, x^d \in \cO_X(U)\,.
\end{equation}

Let us denote by $u_x$ the invertible $d\times d$-matrix 
with entries $x^a_{(b)}$:
\begin{equation}
\label{u-x}
u_x = ||x^a_{(b)} || \in \GL_d(R^{\coord})\,.
\end{equation}

It is obvious that the elements\footnote{Here we use Remark \ref{rem:extending-scalars}.} 
\begin{equation}
\label{gener-cO-aff}
u^{-1}_x  (x^a_{\ui})\,, \qquad 1 \le a \le d, \qquad |\ui| \ge 2 
\end{equation}
are $\GL_d(\bbK)$-invariant.  In other words, 
$$
u^{-1}_x  (x^a_{\ui}) \subset  \cO^{\aff}_X(U)\,.
$$

For the sheaf $\cO^{\aff}_X$ we have:
\begin{prop}[Proposition 6.3.1, \cite{VdB}]
\label{prop:affine-space}
Let $X$ be a smooth algebraic variety over $\bbK$ of 
dimension $d$ and let $U$ be an affine subset of $X$ which 
admits a global system of parameters 
\eqref{glob-param-cO-aff}.  Then the map 
$$
\vr^{\aff}  : \cO_X(U)\Big[ \, \{ y^a_{\ui} \}_{ |\ui| \ge 2, ~  1 \le a \le d }\, \Big] \to  \cO^{\aff}_X(U) 
$$
\begin{equation}
\label{vr-aff}
\vr^{\aff} (y^a_{\ui})  : =  u^{-1}_x  (x^a_{\ui})
\end{equation}
is an isomorphism of $\cO_X(U)$-algebras. 
The sheaf $\cO^{\aff}_X$ can be equivalently defined 
as the sheaf of $\mgl_d(\bbK)$-invariant sections of 
$\cO^{\coord}_X$\,.
\end{prop}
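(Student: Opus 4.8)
The plan is to compute the algebra of $\GL_d(\bbK)$-invariant sections over $U$ directly, by choosing a system of polynomial generators for $R^{\coord} := \cO^{\coord}_X(U)$ on which the action of $\GL_d(\bbK)$ becomes as simple as possible; here and below we write $R := \cO_X(U)$, and we note that since $U$ is affine, $\Gamma(U,\cO^{\coord}_X)=R^{\coord}$ and $\cO^{\aff}_X(U)=(R^{\coord})^{\GL_d(\bbK)}$. Taking $k=1$ in \eqref{GL-d-acts} gives $h(x^a_{(b)}) = \sum_c h^c_b\, x^a_{(c)}$, that is $h\cdot u_x = u_x\, h$ for every $h\in\GL_d(\bbK)$, and since $h$ acts by $R$-algebra automorphisms this yields $h\cdot u_x^{-1} = h^{-1} u_x^{-1}$ in $\GL_d(R^{\coord})$ (making sense of $h$ on matrices over $R^{\coord}$ via Remark~\ref{rem:extending-scalars}). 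In particular the ``order $\le 1$'' subalgebra $R[\{x^a_{(b)}\},K]/(K\det|| x^a_{(b)}|| - 1)$ is the coordinate ring of $\GL_d$ over $R$, and the group acts on it by right translation.

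Next I would untwist the higher-order jets. Combining $h\cdot u_x^{-1} = h^{-1}u_x^{-1}$ with the tensorial transformation rule \eqref{GL-d-acts} for $x^a_{\ui}$ (converting to symmetric multi-indices by \eqref{f-a1-dots-ak}), the factors $h$ cancel against the factors $h^{-1}$ coming from $u_x^{-1}$, so that the elements
$$
z^a_{\ui} \;:=\; u_x^{-1}(x^a_{\ui}) \in R^{\coord}, \qquad 1 \le a \le d,\ |\ui| \ge 2,
$$
are $\GL_d(\bbK)$-invariant (and $z^a_{(b)} = \delta^a_b$ carries no information). By Corollary~\ref{cor:cO-coord}, $R^{\coord}$ is the polynomial algebra over $R$ on $\{x^a_{\ui}\}_{|\ui|\ge 1}$ and $K$ modulo the single relation $K\det|| x^a_{(b)}|| - 1$, which involves only the $x^a_{(b)}$ and $K$. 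Passing from $\{x^a_{\ui}\}_{|\ui|\ge 2}$ to $\{z^a_{\ui}\}_{|\ui|\ge 2}$ is, in each fixed jet-degree, an invertible $R[\{x^a_{(b)}\},K]$-linear substitution (its matrix is a tensor power of $u_x^{-1}$, invertible since $u_x\in\GL_d(R^{\coord})$), hence an $R$-algebra automorphism of $R^{\coord}$; it therefore produces an $R$-algebra isomorphism
$$
R^{\coord} \;\cong\; R[\{y^a_{\ui}\}_{|\ui|\ge 2}] \;\otimes_{\bbK}\; \bbK[\GL_d], \qquad y^a_{\ui}\longmapsto z^a_{\ui},
$$
under which $\GL_d(\bbK)$ acts trivially on the first tensor factor and by right translation on $\bbK[\GL_d] = R[\{x^a_{(b)}\},K]/(K\det||x^a_{(b)}||-1)$.

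Given this decomposition I would finish the first assertion by a linear-independence argument. Write $s\in R^{\coord}$ as a finite sum $s=\sum_j a_j\otimes b_j$ with $a_j\in R[\{y^a_{\ui}\}_{|\ui|\ge 2}]$ linearly independent over $\bbK$ and $b_j\in\bbK[\GL_d]$; if $s$ is $\GL_d(\bbK)$-invariant then $\sum_j a_j\otimes(h\cdot b_j-b_j)=0$ for all $h$, and linear independence of the $a_j$ over $\bbK$ forces $h\cdot b_j=b_j$ for every $j$ and every $h\in\GL_d(\bbK)$. A regular function $b$ on $\GL_d$ with $b(gh)=b(g)$ for all $\bbK$-points $g,h$ equals $b(1)$ on the Zariski-dense subset $\GL_d(\bbK)$ and hence is constant, so $s\in R[\{y^a_{\ui}\}_{|\ui|\ge 2}]$. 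Conversely $\vr^{\aff}$ does land in $\cO^{\aff}_X(U)=(R^{\coord})^{\GL_d(\bbK)}$ (the $z^a_{\ui}$ and $R$ are invariant), and it is injective because the $z^a_{\ui}$ with $|\ui|\ge 2$ belong to a polynomial generating set of $R^{\coord}$ over $R$ and are therefore algebraically independent. Hence $\vr^{\aff}$ is an isomorphism.

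For the last assertion, $(R^{\coord})^{\GL_d(\bbK)}\subseteq(R^{\coord})^{\mgl_d(\bbK)}$ is immediate by differentiating the action. For the reverse inclusion I would repeat the argument: in the decomposition above, the $\mgl_d(\bbK)$-action \eqref{gl-d-acts} is trivial on $R[\{y^a_{\ui}\}_{|\ui|\ge 2}]$ and acts on $\bbK[\GL_d]$ by the right-invariant vector fields, which trivialize the tangent bundle of the smooth connected variety $\GL_d$; since $\mathrm{char}\,\bbK=0$, a regular function killed by all of them has vanishing differential, hence is constant. Writing $s=\sum_j a_j\otimes b_j$ with the $a_j$ linearly independent over $\bbK$, $\mgl_d(\bbK)$-invariance forces each $b_j$ constant, so $(R^{\coord})^{\mgl_d(\bbK)}=R[\{y^a_{\ui}\}_{|\ui|\ge 2}]=(R^{\coord})^{\GL_d(\bbK)}$, i.e.\ $\cO^{\aff}_X$ is equally the sheaf of $\mgl_d(\bbK)$-invariant sections of $\cO^{\coord}_X$. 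The step I expect to be the main obstacle is the explicit computation in the second paragraph --- verifying that $u_x^{-1}(x^a_{\ui})$ is genuinely $\GL_d(\bbK)$-invariant for $|\ui|\ge 2$ and that the substitution $x^a_{\ui}\rightsquigarrow z^a_{\ui}$ identifies the complementary polynomial factor with $\bbK[\GL_d]$ carrying exactly the right-translation action; this requires careful bookkeeping of the index contractions in \eqref{GL-d-acts} against $h\cdot u_x^{-1}=h^{-1}u_x^{-1}$, and once those are settled everything else is formal.
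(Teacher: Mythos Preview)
Your proof is correct and follows essentially the same route as the paper's: both introduce the change of generators $y^a_{\ui}\mapsto u_x^{-1}(x^a_{\ui})$ to obtain the tensor decomposition $R^{\coord}\cong R[\{y^a_{\ui}\}_{|\ui|\ge 2}]\otimes_{\bbK}\cO(\GL_d)$ with $\GL_d(\bbK)$ acting trivially on the first factor and by right translation on the second, and then read off the invariants. Your write-up is in fact a bit more explicit than the paper's in justifying why the substitution is an isomorphism and why $(\cO(\GL_d))^{\GL_d(\bbK)}=(\cO(\GL_d))^{\mgl_d(\bbK)}=\bbK$, but the architecture is the same.
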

\begin{proof}
Let $R = \cO_X(U)$\,. Due to Corollary \ref{cor:cO-coord}, the commutative algebra
$R^{\coord}$ is isomorphic to the quotient 
\begin{equation}
\label{R-coord}
R \big[ \{ x^a_{\ui}  \}_{|\ui| \ge 1, ~ 1 \le a \le d}
 \cup \{ K \} \big] \big/ \L K\det ||x^a_{(b)} ||-1 \R
\end{equation}

The group $\GL_d(\bbK)$ acts on generators $x^a_{\ui}$ according 
to formula (\ref{GL-d-acts}) and $K$ transforms as 
$$
K \mapsto K / \det(h)\,, 
$$ 
where $h \in \GL_d(\bbK)$\,.

To describe the algebra $R^{\aff} = (R^{\coord})^{\GL_d(\bbK)}$ we consider the 
following isomorphism of $R$-algebras  
\begin{equation}
\label{R-coord-isom}
 \si: R \Big[ \{ y^a_{\ui}  \}_{|\ui| \ge 2, ~ 1 \le a \le d} \cup 
 \{ x^a_{(b)} \}_{1\le a, b \le d} 
 \cup \{ K \} \Big] \, \big/ \, \langle K\det ||x^a_{(b)} ||-1 \rangle 
 \to  R^{\coord}
\end{equation}
$$
\si ( y^a_{\ui}) = u^{-1}_x  (x^a_{\ui}), \qquad 
\si( x^a_{(b)} ) =  x^a_{(b)}, \qquad 
\si(K) = K \,. 
$$

The group $\GL_d(\bbK)$ acts on the generators $y^a_{\ui}$, $x^a_{(b)}$, 
$K$ in the following way
$$
y^a_{\ui} \mapsto y^a_{\ui}\,, \qquad 
x^a_{(b)} \mapsto  h^c_{b} \, x^a_{(c)}\,, \qquad
K \mapsto K / \det(h)\,,
$$ 
where $h \in \GL_d(\bbK)$\,.

Thus $R^{\coord}$ is isomorphic to 
\begin{equation}
\label{R-coord-new}
\cO(\GL_d(\bbK)) \otimes_{\bbK} 
R \Big[ \{ y^a_{\ui}  \}_{|\ui| \ge 2, ~ 1 \le a \le d} \Big]\,, 
\end{equation}
where $\cO(\GL_d(\bbK))$ is the algebra of regular functions 
on the algebraic group $\GL_d(\bbK)$ and the $\GL_d(\bbK)$-action 
on (\ref{R-coord-new}) is given by right translations on $\GL_d(\bbK)$\,.

Since $\big(\cO(\GL_d(\bbK)) \big)^{\GL_d(\bbK)} = \bbK$, we immediately 
conclude that equation \eqref{vr-aff}, indeed defines an isomorphism 
\begin{equation}
\label{R-aff}
R \Big[ \{ y^a_{\ui}  \}_{|\ui| \ge 2, ~ 1 \le a \le d} \Big] \cong R^{\aff}\,.
\end{equation}

It is also clear that  $\big(\cO(\GL_d(\bbK)) \big)^{\mgl_d(\bbK)} = \bbK$. 
Hence, 
$$
R^{\aff}  = \Big(  R^{\coord} \Big)^{\mgl_d(\bbK)}\,.
$$

Proposition \ref{prop:affine-space} is proven. 
\end{proof}

\subsection{The canonical flat connection on $\Omb(\cO^{\coord}_X)[[t^1, \dots t^d]]$}
Let us consider the algebra 
\begin{equation}
\label{C-coord}
\Omb_{\bbK}(\cO^{\coord}_X)
\end{equation}
of exterior forms of the sheaf $\cO^{\coord}_X$
of $\bbK$-algebras. We denote by $d$ the de Rham 
differential on $\Omb_{\bbK}(\cO^{\coord}_X)$\,.

We claim that 
\begin{thm}
\label{thm:om}
There exists a unique (degree $1$) 
$\Omb_{\bbK}(\cO^{\coord}_X)$-linear
continuous\footnote{$\Omb_{\bbK}(\cO^{\coord}_X)[[t^1, \dots t^d]]$ carries the obvious 
$t$-adic topology.}
derivation
$$
\om : \Omb_{\bbK}(\cO^{\coord}_X)[[t^1, \dots t^d]] \to  
\Omb_{\bbK}(\cO^{\coord}_X)[[t^1, \dots t^d]]
$$ 
such that
\begin{equation}
\label{tilde-f-is-flat}
d \wt{f} + \om(\wt{f}) = 0 
\end{equation}
for all local sections $f$ of the sheaf $\cO_X$\,. 
In addition, we have 
\begin{equation}
\label{om-is-flat}
(d + \om)^2 = 0\,.
\end{equation}
\end{thm}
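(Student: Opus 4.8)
The plan is to work locally on an affine open $U \subset X$ equipped with a global system of parameters $x^1,\dots,x^d$, and to use the explicit generators of $\cO^{\coord}_X(U)$ supplied by Corollary \ref{cor:cO-coord}. On such a $U$, the algebra $R^{\coord}[[t^1,\dots,t^d]]$ (with $R=\cO_X(U)$) contains the invertible series $\wt{x^a}$, and its Jacobian matrix $\partial \wt{x^a}/\partial t^b$ is invertible over $R^{\coord}[[t^1,\dots,t^d]]$ — indeed its value at $t=0$ is the matrix $u_x = \|x^a_{(b)}\|$, which is invertible by the very definition of $R^{\coord}$ as a localization. Since $\Omb_{\bbK}(\cO^{\coord}_X)$-linearity and $t$-adic continuity force $\om$ to be determined by its values $\om(t^a)$, and since the $\wt{x^a}$ form a ``formal coordinate system'' (a regular sequence topologically generating the $t$-adic completion together with $R^{\coord}$), equation \eqref{tilde-f-is-flat} applied to $f=x^a$ reads $d\wt{x^a} + \om(\wt{x^a}) = 0$. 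Writing $\om(t^b) = \sum_c \omega^b_c$ as a one-form-valued series and using that $\om$ is a derivation commuting with $d$-linear structure, this becomes a linear system for the $\om(t^b)$ whose coefficient matrix is the invertible Jacobian; hence $\om(t^b)$ is uniquely solved for, namely
\begin{equation}
\label{om-formula-sketch}
\om(t^b) = - \sum_a \left( \frac{\partial \wt{x}}{\partial t} \right)^{-1}{}^{\!b}_{\,a}\, d\wt{x^a}\,,
\end{equation}
and conversely this formula does define a continuous $\Omb_{\bbK}(\cO^{\coord}_X)$-linear derivation. First I would carry out this existence-and-uniqueness step carefully, checking that the formula is independent of the chosen parameters $x^1,\dots,x^d$ (using the transformation rule \eqref{xab-La-yab} and the invertibility of $\|\La^a_b\|$ from Proposition-Definition \ref{dfn:R-coord}), so that the locally defined $\om$ glue to a global derivation on $\Omb_{\bbK}(\cO^{\coord}_X)[[t^1,\dots,t^d]]$; and that \eqref{tilde-f-is-flat} then holds for all $f$, not just the $x^a$, because both sides are $\bbK$-linear, satisfy the Leibniz rule for products (using $\wt{fg}=\wt{f}\,\wt{g}$ and that $d+\om$ is a derivation), and vanish on constants.

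For the flatness identity \eqref{om-is-flat}: set $D = d + \om$, a degree $1$ derivation of $\Omb_{\bbK}(\cO^{\coord}_X)[[t^1,\dots,t^d]]$, so $D^2$ is a degree $2$ \emph{derivation} (the anticommutator of an odd derivation with itself is a derivation). Moreover $D^2$ is $\Omb_{\bbK}(\cO^{\coord}_X)$-linear: since $d^2 = 0$ and $\om$ is $\Omb_{\bbK}(\cO^{\coord}_X)$-linear, $D^2 = d\om + \om d + \om^2$ kills $\Omb_{\bbK}(\cO^{\coord}_X)$, hence is $\Omb_{\bbK}(\cO^{\coord}_X)$-linear and $t$-adically continuous. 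A continuous $\Omb_{\bbK}(\cO^{\coord}_X)$-linear derivation of $R^{\coord}[[t^1,\dots,t^d]]$-type algebras is determined by its values on the topological generators $\wt{x^a}$ (locally). But $D(\wt{x^a}) = 0$ by \eqref{tilde-f-is-flat}, so $D^2(\wt{x^a}) = D(0) = 0$. Hence $D^2$ annihilates a generating set and, being a continuous $\Omb_{\bbK}(\cO^{\coord}_X)$-linear derivation, vanishes identically — giving \eqref{om-is-flat}.

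The main obstacle is the first step: making precise the claim that the $\wt{x^a}$ (together with the base ring) \emph{topologically generate} $R^{\coord}[[t^1,\dots,t^d]]$ in a sense strong enough that a continuous $\Omb_{\bbK}(\cO^{\coord}_X)$-linear derivation vanishing on them vanishes entirely, and that the linear system for $\om(t^b)$ is genuinely governed by the invertible Jacobian. Concretely one must show that the $t$-adic completion of $R^{\coord}[[t^1,\dots,t^d]]$ is a power series ring in the ``new'' coordinates $\wt{x^a} - x^a$ over $R^{\coord}$ — equivalently, that the change of variables $t^b \mapsto \wt{x^a} - x^a$ is an automorphism of the complete topological $R^{\coord}$-algebra, which follows from the formal inverse function theorem once the Jacobian at $t=0$, namely $u_x$, is invertible. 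This is exactly where the passage from $R^{\bd}$ to $R^{\coord}$ (inverting $\det\|x^a_{(b)}\|$) is used, and it is the technical heart of the argument; everything else is a formal derivation-counting argument as above. I would conclude by remarking that, with this local model in hand, \eqref{om-formula-sketch} visibly patches across overlaps and descends to $\Omb_{\bbK}(\cO^{\coord}_X)[[t^1,\dots,t^d]]$, completing the proof of both uniqueness and \eqref{om-is-flat}.
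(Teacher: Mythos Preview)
Your proposal is correct and, for the existence/uniqueness step, essentially identical to the paper's: both solve the linear system $d\wt{x^a} + \om(\wt{x^a}) = 0$ for $\om(t^b)$ using the invertibility of the Jacobian $J_x = \|\partial\wt{x^a}/\partial t^b\|$ over $R^{\coord}[[t]]$, and both deduce independence of the choice of parameters from uniqueness of that solution.

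The genuine difference is in the proof of flatness \eqref{om-is-flat}. The paper verifies $d\om^a + \om^b\,\partial_{t^b}\om^a = 0$ by a direct computation using the identities $d(J_x^{-1})^a_b = -(J_x^{-1})^a_{a_1}\,\partial_{t^{b_1}}(d\wt{x^{a_1}})\,(J_x^{-1})^{b_1}_b$ and the symmetry of $\partial^2\wt{x^a}/\partial t^b\partial t^c$. Your argument is more conceptual: $D^2=(d+\om)^2$ is a continuous $\Omb(\cO^{\coord}_X)$-linear derivation, and since $D^2(\wt{x^a})=D(0)=0$ while $D^2(\wt{x^a})=\sum_b (J_x)^a_b\,D^2(t^b)$ with $J_x$ invertible, one gets $D^2(t^b)=0$ and hence $D^2=0$. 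This is cleaner and isolates exactly where the invertibility of $J_x$ is used; the paper's computation is more hands-on but avoids having to articulate the ``topological generation'' lemma you correctly flag.

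One small gap to patch: your Leibniz argument for extending \eqref{tilde-f-is-flat} from the $x^a$ to all $f\in R$ reads as if the $x^a$ generate $R$ as a $\bbK$-algebra, which need not hold for a smooth affine $R$ that is merely \'etale over $\bbA^d$. The correct statement (which the paper uses) is that $f\mapsto (d+\om)(\wt{f})$ is a $\bbK$-linear derivation of $R$ with values in the $R$-module $\Om^1(R^{\coord})[[t]]$ (via $a\cdot v = I(a)v$), hence factors through $\Om^1_{\bbK}(R)$; since the latter is \emph{freely} generated by $dx^1,\dots,dx^d$, vanishing on the $x^a$ forces vanishing on all of $R$.
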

\begin{proof}
A degree $1$ continuous $\Omb_{\bbK}(\cO^{\coord}_X)$-linear derivation
$$
\om : \Omb_{\bbK}(\cO^{\coord}_X)[[t^1, \dots, t^d]] \to  
\Omb_{\bbK}(\cO^{\coord}_X)[[t^1, \dots, t^d]]
$$ 
is uniquely determined by a collection of 1-forms: 
$$
\om^a \in \G(X, \Om^1(\cO^{\coord}_X))[[t^1, \dots, t^d]]
$$
via the equation 
$$
\om = \sum_{a=1}^d \om^a  \frac{\pa}{\pa t^a}\,.
$$

We will first define $\om^a$ in terms 
a local system of parameters. Next, we will prove 
equation (\ref{om-is-flat}) and equation (\ref{tilde-f-is-flat}). 
Finally, we will deduce that  $\om$ does not depend on 
the choice of the local system.

Let $U \subset X$ be an affine subset of $X$ 
with a global system of parameters $\{x^1, \dots, x^d\}$  and let
$R = \cO_X(U)$\,.

Since the matrix 
$$
||x^a_{(b)}|| 
$$
is invertible in $\Mat_d(R^{\coord})$, the matrix
\begin{equation}
\label{pa-x-pa-t}
J_x =
\Big | \Big| \frac{\pa \wt{x^a}}{\pa t^b} \Big | \Big| 
\end{equation}
is invertible in $\Mat_d(\,R^{\coord}[[t^1, \dots, t^d]]\,)$\,.

Using this observation, it is easy to see that 
\begin{equation}
\label{om-a-x}
\om^a = - (J_x^{-1})^a_b \sum_{\ui} d x^b_{\ui} t^{\ui}
\end{equation}
is the unique solution of the system of equations: 
\begin{equation}
\label{x-tilde-flat}
d \wt{x^a} + \om(\wt{x^a}) = 0\,, \qquad 1 \le a \le d\,. 
\end{equation}

To prove \eqref{tilde-f-is-flat} we observe that the map
\begin{equation}
\label{conn-circ-I}
(d + \om \cdot) \circ I : R \to  \Om^1(R^{\coord})[[t^1, \dots, t^d]]
\end{equation}
is a $\bbK$-linear derivation of $R$-modules where the 
$R$-module structure on the target of \eqref{conn-circ-I}
is defined by the formula
$$
a \cdot v = I(a) \, v \,, \qquad a\in R, \quad v \in \Om^1(R^{\coord})[[t^1, \dots, t^d]]\,.
$$ 

Therefore, for every $f\in \cO_X(U)$ we have 
$$
d \wt{f} + \om(\wt{f}) = I(f_a) \, (d \wt{x^a} + \om(\wt{x^a})) 
$$
where $f_a\in R$ are the coefficients in the 
decomposition (\ref{df-decompos})\,.

Thus equation \eqref{tilde-f-is-flat} follows from \eqref{x-tilde-flat}.

To prove equation (\ref{om-is-flat}), we remark that 
it is equivalent to  
\begin{equation}
\label{om-is-flat1}
d\, \om^a  + \om^b \wedge \frac{\pa  \om^a }{\pa t^b}   = 0 \,.
\end{equation}
The latter equation can be verified by a direct computation
using the obvious identities:
$$
d \, (J_x^{-1})^a_b = 
-  (J_x^{-1})^a_{a_1} \, \frac{\pa}{\pa t^{b_1}} \, d( \wt{x^{a_1}})\, (J_x^{-1})^{b_1}_b\,,
$$
$$
 \frac{\pa}{\pa t^c}  (J_x^{-1})^a_b = 
 -  (J_x^{-1})^a_{a_1} \,  \frac{\pa^2 \wt{x^{a_1}} }{ \pa t^c \pa t^{b_1}}   \,
 ( J_x^{-1})^{b_1}_b
$$
and the symmetry of the expression 
$$ 
\frac{\pa^2 \wt{x^{a}} }{ \pa t^b \pa t^c } 
$$
in the indices $b$ and $c$\,.

It remains to show that $\om$ does not depend on the 
choice of the system of parameters. 
Let $\{y^1, \dots, y^d\}$ be another local system of 
parameters. Equation (\ref{tilde-f-is-flat}) implies that 
\begin{equation}
\label{y-tilde-flat}
d \wt{y^a} + \om(\wt{y^a}) = 0\,, \qquad 1 \le a \le d\,. 
\end{equation}
But this system has the unique solution:
\begin{equation}
\label{om-a-y}
\om^a = - (J^{-1}_y)^a_b \sum_{\ui} d y^b_{\ui} t^{\ui}\,.
\end{equation}
Thus the construction of $\om$ does not depend on 
the choice of the local system of parameters and
the theorem is proven. 
\end{proof}

Let $R$ be a smooth affine $\bbK$-algebra.
Recall that $\mgl_d(\bbK)$ acts by $R$-derivations
on $R^{\bd}$ and hence by $R$-derivations on $R^{\coord}$\,. 
As above, we denote by $\bv$ the $R$-derivation of 
$R^{\coord}$ corresponding to the action \eqref{GL-d-acts} of 
$\mv \in \mgl_d(\bbK)$\,. Furthermore, we denote by $i_{\bv}$ the corresponding
contraction operator on $\Omb_{\bbK}(R^{\coord})$.  

Using Theorem \ref{thm:om} we deduce the following. 
\begin{cor}
\label{cor:gl-d-om}
Let $X$ be a smooth algebraic variety over $\bbK$  
of dimension $d$ and $\om$ be the derivation of 
$\Omb_{\bbK}(\cO^{\coord}_X)[[t^1, \dots, t^d]]$ from Theorem \ref{thm:om}. 
Then for every $\mv \in \mgl_d(\bbK)$ we have: 
\begin{equation}
\label{eq:gl-d-om}
i_{\bv}\, \om = - \mv^a_b t^b \frac{\pa}{\pa t^a}\,. 
\end{equation}
\end{cor}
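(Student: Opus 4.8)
The plan is to reduce \eqref{eq:gl-d-om} to a purely local computation. Since $\om$ is the unique derivation furnished by Theorem \ref{thm:om} and the $R$-derivation $\bv$ (hence the contraction $i_{\bv}$) is defined on $\cO^{\coord}_X$ itself, both sides of \eqref{eq:gl-d-om} are global operators, so it suffices to check the identity over an affine open $U\subset X$ carrying a global system of parameters $x^1,\dots,x^d$; such opens cover $X$. Write $R=\cO_X(U)$ and recall that $\om=\sum_a\om^a\,\pa/\pa t^a$ with each $\om^a$ a $1$-form (with coefficients in $R^{\coord}[[t^1,\dots,t^d]]$), that $i_{\bv}$ has degree $-1$, annihilates degree-$0$ elements, and satisfies $i_{\bv}(dg)=\bv(g)$, and that $d$, $\om$, $i_{\bv}$ are all extended $t$-adically (i.e. $t$-linearly) to the power-series completions. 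Contracting $\om$ with $\bv$ means contracting the $1$-form coefficients: $i_{\bv}\om=\sum_a (i_{\bv}\om^a)\,\pa/\pa t^a$. Since $-\mv^a_b t^b\,\pa/\pa t^a$ is by definition the operator attached to $\mv$ in \eqref{gl-d}, the corollary is equivalent to
$$
i_{\bv}(\om^a)=-\,\mv^a_b\,t^b,\qquad 1\le a\le d.
$$

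To prove this I would apply $i_{\bv}$ to the defining equation \eqref{x-tilde-flat}, i.e. to $d\wt{x^a}+\om(\wt{x^a})=0$. On the first term, $t$-linearity together with $i_{\bv}d=\bv$ on functions gives $i_{\bv}(d\wt{x^a})=\bv(\wt{x^a})$, and by \eqref{gl-d-acts}--\eqref{gl-d} this equals $\mv^b_c\,t^c\,\pa\wt{x^a}/\pa t^b$. On the second term, $\om(\wt{x^a})=\sum_c\om^c\,\pa\wt{x^a}/\pa t^c$ where each $\pa\wt{x^a}/\pa t^c$ is of degree $0$, so $i_{\bv}$ passes through the scalar factors: $i_{\bv}\big(\om(\wt{x^a})\big)=\sum_c (i_{\bv}\om^c)\,\pa\wt{x^a}/\pa t^c$. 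Adding the two contributions yields
$$
\sum_{b}\Big(\mv^b_c\,t^c+i_{\bv}(\om^b)\Big)\frac{\pa\wt{x^a}}{\pa t^b}=0
$$
for every $a$. The Jacobian matrix $J_x=\big\Vert\pa\wt{x^a}/\pa t^b\big\Vert$ is invertible over $R^{\coord}[[t^1,\dots,t^d]]$ (this was already used in the proof of Theorem \ref{thm:om}), so cancelling it gives $i_{\bv}(\om^b)=-\mv^b_c\,t^c$, which is exactly the required formula; re-indexing produces \eqref{eq:gl-d-om}. Independence of the local parameters is then automatic, since $\om$ itself is, by Theorem \ref{thm:om}.

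The calculation itself is short; the only delicate points — and hence the main things to get right — are bookkeeping: fixing that $\bv$ acts solely on the $R^{\coord}$-coefficients and commutes with multiplication by the $t^a$, verifying that $i_{\bv}$ really does commute past the degree-$0$ factors $\pa\wt{x^a}/\pa t^c$, and aligning the sign conventions of \eqref{gl-d} and \eqref{gl-d-acts} with the normalization in the statement. Once these are pinned down, invertibility of $J_x$ closes the argument.
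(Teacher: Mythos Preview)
Your argument is correct and essentially the same as the paper's: both work locally over an affine open with parameters, both use \eqref{gl-d-acts} to convert the $\bv$-action on coefficients into the $t$-derivative action, and both finish by invoking the invertibility of $J_x$. The only cosmetic difference is that the paper starts from the explicit formula \eqref{om-a-x} for $\om^a$ and computes $i_{\bv}(\om^a)$ directly, whereas you contract the defining equation \eqref{x-tilde-flat} and then cancel $J_x$; these are two sides of the same computation.
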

\begin{proof}
Using equations \eqref{gl-d}, (\ref{gl-d-acts}) and (\ref{om-a-x}) we deduce
\begin{align*}
i_{\bv} (\om^a) &=
- (J^{-1}_x )^a_b \sum_{\ui} \bv(x^b_{\ui}) t^{\ui}  
= -(J^{-1}_x)^a_b \, \mv^c_{c'} t^{c'}  \frac{\pa}{\pa t^c} \,  \sum_{\ui} x^b_{\ui} t^{\ui}
\\
&= 
- \mv^c_{c'} t^{c'} 
(J^{-1}_x)^a_b   \frac{\pa \wt{x^b}}{\pa t^c}  
= - \mv^c_{c'} t^{c'} \de^a_c = - \mv^a_{c'} t^{c'}\,. 
\end{align*}
Hence 
$$
i_{\bv} (\om^a) \frac{\pa}{\pa t^a} =   - \mv^a_{b} t^{b} \frac{\pa}{\pa t^a}
$$
and the corollary is proven.  
\end{proof}

\section{The Fedosov resolution of the tensor algebra of a smooth variety}
\label{sec:Fed-tensor}

In this section we present the Fedosov resolution of the 
sheaf of tensor fields on a smooth algebraic variety over 
an arbitrary algebraically closed field $\bbK$ of characteristic zero. 

Recall that $\cT_X$ (resp. $\cT^*_X$) denotes the tangent 
(resp. cotangent) sheaf on a smooth algebraic variety $X$\,.
We denote by $\cT^{ p,q}_X$ the sheaf of $p$-contravariant and 
$q$-covariant tensor fields on $X$, i.e. 
\begin{equation}
\label{cT-p-q}
\cT^{ p,q}_X : =\underbrace{\cT_X \otimes_{\cO_X} \dots \otimes_{\cO_X}
\cT_X}_{p \textrm{ times}} \otimes_{\cO_X}
\underbrace{  \cT^*_X  \otimes_{\cO_X} \dots  \otimes_{\cO_X} \cT^*_X }_{q \textrm{ times}} \,.
\end{equation}

For example, $\cT^{0,0}_X = \cO_X$; $\cT^{1,0}_X$ (resp. $\cT^{0,1}_X$) is 
the tangent (resp. cotangent) sheaf on $X$; and $\cT^{1,1}_X$ is the 
sheaf of endomorphisms of the tangent sheaf $\cT_X$ on $X$\,.

Under all possible contraction operations, the tensor product, 
and the action of the group $S_p \times S_q$,
the collection of sheaves \eqref{cT-p-q} carry an
algebraic structure. This algebraic structure is 
governed by a colored\footnote{The set of colors for 
$\mT$ is the set of pairs of non-negative integers $(p,q)$\,.} operad 
which we denote by $\mT$\,. We call the $\mT$-algebra 
\begin{equation}
\label{tensor-alg-X}
\Big\{ \cT^{ p,q}_X \Big\}_{p,q \ge 0}
\end{equation}
{\it the tensor algebra} of $X$\,. 

The goal of this section is to construct the Fedosov resolution 
of the tensor algebra for an arbitrary smooth algebraic variety $X$
of dimension $d$\,.

For this purpose, we let $P= \bbK[[t^1, \dots, t^d]]$ be the (topological) algebra 
of formal Taylor power series in auxiliary variables $t^1, \dots, t^d$\,. 

Next, we set 
\begin{equation}
\label{T-p-q}
T^{p,q} : = \underbrace{\Der_{\bbK}(P)  \otimes_{P} \dots \otimes_{P}
\Der_{\bbK}(P) }_{p \textrm{ times}}  \otimes_{P}
\underbrace{  \Om^1_{\bbK}(P)  \otimes_{P} \dots 
\otimes_{P}  \Om^1_{\bbK}(P) }_{q \textrm{ times}}\,,
\end{equation}
where $\Der_{\bbK}(P)$ is the $P$-module of derivations of $P$ 
and 
$$
 \Om^1_{\bbK}(P) =  \Hom_P ( \Der_{\bbK}(P), P)\,.
$$

In other words, elements of $T^{p,q}$ have the form 
$$
v =
\sum_{ 1\le  a_t, b_s \le d} v^{a_1 \dots a_p}_{b_1 \dots b_q} \,
\pa_{t^{a_1}} \otimes \dots \otimes \pa_{t^{a_p}} 
\otimes d t^{b_1} \otimes \dots \otimes d t^{b_q}\,,
$$
where the components $v^{a_1 \dots a_p}_{b_1 \dots b_q} \in P$\,.

For a derivation 
$$
w = w^c(t) \pa_{t^c} \in \Der_{\bbK}(P)
$$ 
and an element $v \in T^{p,q}$ we denote by $L_w (v)$
the Lie derivative of $v$ along $w$\,. Recall that 
$L_w(v)$ has the following components:
\begin{equation}
\label{L-w-v}
L_w (v)^{a_1 \dots a_p}_{b_1 \dots b_q} = 
\sum_{c=1}^d \, w^c(t) \, \pa_{t^c} v^{a_1 \dots a_p}_{b_1 \dots b_q}(t) -
\sum_{c=1}^d \sum_{i=1}^{p} \, (\pa_{t^c} w^{a_i}(t))\,
 v^{a_1 \dots a_{i-1} \,c\, a_{i+1} \dots a_p}_{b_1 \dots b_q}(t)
\end{equation}
$$
+ \sum_{c=1}^d \sum_{j=1}^{q} \, (\pa_{t^{b_j}} w^{c}(t))\,
 v^{a_1 \dots a_p}_{b_1 \dots b_{j-1} \, c\, b_{j+1} \dots b_q}(t)\,.
$$

It is clear that the collection 
\begin{equation}
\label{T-pq-collection}
\Big\{  T^{p,q} \Big\}_{p,q \ge 0}
\end{equation}
forms a $\mT$-algebra and $L_w$ is a derivation of 
the $\mT$-algebra \eqref{T-pq-collection} for all 
$w \in \Der_{\bbK}(P)$.
 
Let $\om$ be the global section of $\Om^1(\cO_X^{\coord}) \otimes  \Der_{\bbK}(P)$
defined in \eqref{om-a-x}. Due to Theorem  \ref{thm:om}, the sum 
\begin{equation}
\label{diff-Ccoord-Tpq}
d + L_{\om}
\end{equation}
is a differential on the sheaf of graded vector spaces
\begin{equation}
\label{C-coord-T-pq}
\Omb(\cO_X^{\coord}) \otimes T^{p,q}
\end{equation}
for every pair $p,q \ge 0$\,. (The  $\bbZ$-grading on \eqref{C-coord-T-pq}
comes from the exterior degree on  $\Omb(\cO_X^{\coord})$\,.) 

Since $L_w$ is a derivation of 
the $\mT$-algebra \eqref{T-pq-collection} for every 
$w \in \Der_{\bbK}(P)$, the collection 
\begin{equation}
\label{C-coord-T-pqc-collec}
\left\{ \Omb(\cO_X^{\coord})  \otimes  T^{p,q} \right\}_{p,q\ge 0}
\end{equation}
together with the differential $d + L_{\om}$ assembles into 
a sheaf of dg algebras over $\mT$\,.

For every $\mv \in \mgl_d(\bbK)$ the contraction $i_{\bv}$ 
defines a degree $-1$ derivation of the sheaf of 
$\mT$-algebras \eqref{C-coord-T-pqc-collec}. Furthermore, 
Corollary \ref{cor:gl-d-om} implies that 
\begin{equation}
\label{L-v}
[(d + L_{\om}), i_{\bv}]  = l_{\bv} - L_ {\mv^a_b t^b \pa_{t^a}}\,, 
\end{equation}
where $l_{\bv}$ denotes the action \eqref{GL-d-acts} of  
$\mv\in \mgl_d(\bbK)$ on $\Omb(\cO^{\coord}_X)$\,. 

Due to Proposition \ref{prop:GL-d-acts} and Remark 
\ref{rem:actions-commute}, the assignment 
$$
\mv =  ||\mv^a_b|| ~\mapsto~ l_{\bv} - L_ {\mv^a_b t^b \pa_{t^a}}
$$
defines an action on $\mgl_d(\bbK)$ on the $\mT$-algebra    
\eqref{C-coord-T-pqc-collec}.  Moreover, due to equation \eqref{L-v}, 
this action is compatible with the differential \eqref{diff-Ccoord-Tpq}.

Let us construct a map of sheaves of $\mT$-algebras
\begin{equation}
\label{tau-goal}
\tau : \cT^{p,q}_X  \to \left( \cO_X^{\coord}  \otimes  T^{p,q} \right)^{\mgl_d(\bbK)}\,.
\end{equation}

For this purpose we consider an affine subset $U\subset X$ which 
admits a system of parameters 
\begin{equation}
\label{system-param-U}
x^1, x^2, \dots, x^d \in R = \cO_X(U)\,.
\end{equation}
Furthermore, we denote by 
\begin{equation}
\label{pa-pa-x^a}
\pa_{x^1}, \pa_{x^2}, \dots, \pa_{x^d}
\end{equation}
the basis of derivations of $R$ which is dual to the basis 
of K\"ahler differentials 
\begin{equation}
\label{dx-a}
d x^1, d x^2, \dots, d x^d\,.
\end{equation}

Every tensor field $v \in \cT^{p,q}_X (U)$ can be uniquely 
written in the form
\begin{equation}
\label{ga-wrt-basis}
v =  \sum_{1 \le a_t, b_s \le d} 
v^{a_1 \dots a_p}_{b_1 \dots b_q} \,  
\pa_{x^{a_1}} \otimes \dots \otimes \pa_{x^{a_p}} 
\otimes d x^{b_1} \otimes \dots \otimes  d x^{b_q}\,, 
\end{equation}
where $v^{a_1 \dots a_p}_{b_1 \dots b_q} \in R$\,.

Next we set\footnote{We assume the summation over repeated indices.}  
\begin{equation}
\label{tau-dfn}
\tau(v) : =   
I(v^{a_1 \dots a_p}_{b_1 \dots b_q}) 
\, (J_x^{-1})^{a'_1}_{a_1} \dots  (J_x^{-1})^{a'_p}_{a_p} \,
(J_x)^{b_1}_{b'_1} \dots (J_x)^{b_q}_{b'_q}
\pa_{t^{a'_1}} \otimes \dots \otimes \pa_{t^{a'_p}} 
\otimes d t^{b'_1} \otimes \dots \otimes  d t^{b'_q} \,,
\end{equation}
where $I$ is defined in (\ref{homo:I}) and $J_x$ is defined in (\ref{pa-x-pa-t}).

We claim that
\begin{claim}
\label{clm:tau}
The right hand side of (\ref{tau-dfn}) does not depend 
on the choice of the system of parameters (\ref{system-param-U}). 
For every local section $v$ of the sheaf $\cT^{p,q}_X$
the image $\tau(v)$ is $\mgl_d(\bbK)$-invariant and 
$(d + L_{\om})$-closed.  
\end{claim}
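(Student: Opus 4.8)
The plan is to verify the three assertions --- independence of the chosen parameters, $\mgl_d(\bbK)$-invariance, and $(d+L_\om)$-closedness --- essentially by unwinding how the three pieces $I(v^{a_1\dots a_p}_{b_1\dots b_q})$, the Jacobian factors $J_x$, and the derivation $\om$ all transform, and then observing that the transformation laws cancel. First I would treat \emph{independence of parameters}. If $y^1,\dots,y^d$ is a second system of parameters, the change-of-basis matrix $\La=\|\La^a_b\|$ of \eqref{dy-dx-La} is invertible over $R$, and one has $\pa_{y^a}=(\La^{-1})^b_a\pa_{x^b}$ on derivations, while the components transform contravariantly in the upper indices and covariantly in the lower indices. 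On the Fedosov side, differentiating $\wt{y^a}=\sum_b \La^a_b\,\wt{x^b}$ (which holds because $I$ is an algebra map and $d$ applied to both sides agrees) gives $J_y = (I(\La))\,J_x$ as matrices over $R^{\coord}[[t]]$, hence $J_y^{-1}=J_x^{-1}\,I(\La^{-1})$. Substituting these into the $y$-version of \eqref{tau-dfn} and collecting the factors of $I(\La)$ and $I(\La^{-1})$, all of them cancel in pairs against the transformation of the components, leaving the $x$-expression; this is a routine index bookkeeping.

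Next, \emph{$\mgl_d(\bbK)$-invariance}. The group $\GL_d(\bbK)$ acts on $\cO^{\coord}_X\otimes T^{p,q}$ by $l_{\bv}-L_{\mv^a_b t^b\pa_{t^a}}$ (the combined action of Remark \ref{rem:actions-commute}, differentiated). I would show $\tau(v)$ is killed by this action by checking the two building blocks separately: the element $I(v^{a_1\dots a_p}_{b_1\dots b_q})=\wt{v^{a_1\dots a_p}_{b_1\dots b_q}}$ is $\GL_d(\bbK)$-invariant under the combined action $h\circ\msA_h$ by \eqref{h-msA-h-wt-f}; and the combination of Jacobian factors $(J_x^{-1})^{a'}_{a}$ times $(J_x)^{b}_{b'}$ paired with $\pa_{t^{a'}}\otimes dt^{b'}$ transforms precisely so as to cancel the $\mv^a_b t^b\pa_{t^a}$-part of the action against the $l_{\bv}$-part --- essentially because $J_x$ already encodes the $\pa\wt{x^a}/\pa t^b$ and $l_{\bv}$ acts on $x^a_{(b)}$, hence on $J_x$, by the contragredient of the $t$-rotation. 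Concretely this reduces, after differentiating \eqref{GL-d-acts} and using \eqref{gl-d}, to the identity $l_{\bv}(J_x) = J_x \cdot \bigl(\text{matrix of } \mv\text{-rotation in }t\bigr)$ up to the sign conventions, which is a direct computation.

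For \emph{$(d+L_\om)$-closedness} I would exploit that $d+L_\om$ is a differential (Theorem \ref{thm:om}) and is a derivation of the $\mT$-algebra structure on \eqref{C-coord-T-pqc-collec}. Because $\tau$ is built from $\tau(x^a)$, $\tau(\pa_{x^a})$ and $\tau(dx^a)$ by tensor products and $\cO^{\coord}_X$-linear combinations, and $d+L_\om$ is a derivation, it suffices to check closedness on these generators. The statement $d\wt{x^a}+\om(\wt{x^a})=0$ is exactly \eqref{x-tilde-flat}, i.e.\ $\tau(x^a)$ is $(d+L_\om)$-closed; for $\tau(dx^a)= (J_x)^a_b\,dt^b$ and $\tau(\pa_{x^a})=(J_x^{-1})^b_a\,\pa_{t^b}$ one differentiates $\tau(x^a)$ once more --- that is, applies $d+L_\om$ to the closed element $d\wt{x^a}$ or to the pairing $\langle \tau(dx^a),\tau(\pa_{x^b})\rangle = \de^a_b$ --- and uses that $(d+L_\om)^2=0$ together with the flatness identities for $J_x^{-1}$ recorded in the proof of Theorem \ref{thm:om}; closedness of the general $\tau(v)$ then follows since $d+L_\om$ annihilates the scalar $I(v^{a_1\dots a_p}_{b_1\dots b_q})$ part by \eqref{tilde-f-is-flat} and acts by the Leibniz rule.

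The main obstacle, as usual with this kind of statement, is not conceptual but the sign/transpose bookkeeping: getting the placement of $J_x$ versus $J_x^{-1}$ on the contravariant versus covariant slots exactly right, and matching the sign in \eqref{gl-d} and in $L_w$ \eqref{L-w-v} so that the $\GL_d$-action terms cancel rather than add. I would organize the computation by first establishing the three transformation laws --- (a) $J_y = I(\La)J_x$ under change of parameters, (b) $l_{\bv}(J_x)$ in terms of the $t$-rotation, (c) $(d+L_\om)$ applied to $(J_x)^a_b\,dt^b$ --- as standalone identities, and then all three parts of the claim drop out formally.
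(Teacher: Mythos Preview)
Your strategy is the same as the paper's: reduce to the building blocks $I(f)$, $(J_x^{-1})^{a'}_{a}\pa_{t^{a'}}$, $(J_x)^{b}_{b'}\,dt^{b'}$, check parameter-independence via $J_y=I(\La)J_x$, and verify $(d+L_\om)$-closedness on these blocks using \eqref{tilde-f-is-flat} together with the identities \eqref{pa-J}--\eqref{pa-J11}. The paper also regards $\mgl_d(\bbK)$-invariance as immediate from the defining formula (your more explicit version is fine).

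There is one genuine slip. The equation $\wt{y^a}=\sum_b \La^a_b\,\wt{x^b}$ that you ``differentiate'' is \emph{false} in general: we only have $dy^a=\La^a_b\,dx^b$ in $\Om^1_\bbK(R)$, not $y^a=\La^a_b\,x^b$ in $R$, so applying the ring homomorphism $I$ cannot produce that identity. What is true (and what the paper proves as \eqref{I-know}) is the Jacobian relation
\[
\frac{\pa\,\wt{y^a}}{\pa t^c}=I(\La^a_b)\,\frac{\pa\,\wt{x^b}}{\pa t^c},
\]
obtained by observing that $f\mapsto \pa_{t^c}\,\wt{f}$ is a $\bbK$-linear derivation $R\to R^{\coord}[[t]]$ for the $R$-module structure $a\cdot v=I(a)\,v$, so it factors through $\Om^1_\bbK(R)$ and hence through the relation $dy^a=\La^a_b\,dx^b$. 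Replace your justification with this derivation argument and the rest of your bookkeeping goes through unchanged.
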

\begin{proof}

Let 
$$
\{y^1, y^2, \dots, y^d\}
$$
be another system of parameters on $U$ and 
let $\La$ be the invertible $d\times d$ matrix with entries in $R$
from equation \eqref{dy-dx-La}. 

Then components ${}^*v^{a_1 \dots a_p}_{b_1 \dots b_q}$ of $v$ in the 
new basis 
$$
\Big\{ 
\pa_{y^{a_1}}  \otimes  \dots \otimes \pa_{y^{a_p}}
\otimes d y^{b_1} \otimes \dots \otimes d y^{b_q} 
\Big\}_{1 \le a_t, b_s \le d}
$$
are related to the component $v^{a_1 \dots a_p}_{b_1 \dots b_q}$ via the 
formula: 
\begin{equation}
\label{ga-new-ga-old}
{}^*v^{a_1 \dots a_p}_{b_1 \dots b_q} = \La^{a_1}_{a'_1}
\dots   \La^{a_p}_{a'_p}  
(\La^{-1})^{b'_1}_{b_1} \dots (\La^{-1})^{b_q}_{b'_q} ~ v^{a'_1 \dots a'_p}_{b'_1 \dots b'_q}\,.
\end{equation}

On the other hand, since $I$ (\ref{homo:I}) is a $\bbK$-algebra 
homomorphism, we get 
$$
I ({}^*v^{a_1 \dots a_p}_{b_1 \dots b_q} ) = 
I(\La^{a_1}_{a'_1})
\dots   I(\La^{a_p}_{a'_p})  
I\big((\La^{-1}\big)^{b'_1}_{b_1}) \dots  
I\big((\La^{-1}\big)^{b'_q}_{b_q}) ~ I(v^{a'_1 \dots a'_p}_{b'_1 \dots b'_q})\,.
$$ 

So the proof of independence of the right hand side of 
(\ref{tau-dfn}) on the choice of the system of parameters 
boils down to checking the equation
\begin{equation}
\label{I-know}
\frac{\pa \wt{y^a}}{\pa t^c} = I(\La^a_b) \, \frac{\pa \wt{x^b}}{\pa t^c}\,.
\end{equation}

To prove equation (\ref{I-know}) we notice that for 
every $1 \le c \le d$ the map 
\begin{equation}
\label{pa-t-circ-I}
\frac{\pa}{\pa t^c} \circ I  : R \to R^{\coord}[[t^1, t^2, \dots, t^d]] 
\end{equation}
is a $\bbK$-linear derivation of $R$-modules, where the 
$R$-module structure on the target of (\ref{pa-t-circ-I}) is 
define by the formula: 
\begin{equation}
\label{R-mod-Rcoord-ts}
a \cdot v = I(a)\, v\,, \qquad a\in R, \quad v \in R^{\coord}[[t^1, t^2, \dots, t^d]]\,. 
\end{equation}
Hence equation (\ref{dy-dx-La}) and the universality of K\"ahler differentials 
implies equation (\ref{I-know}). Thus the right hand side of \eqref{tau-dfn} 
is indeed independent on the choice of system of parameters. 

To prove that $\tau(v)$ is  $(d+ L_{\om})$-closed for every 
section $v$ of $\cT^{p,q}_X$, we give the following 
obvious identities of the matrix $J_x$
\begin{equation}
\label{pa-J}
\pa_{t^b}  \sum_{\ui} d x^a_{\ui} t^{\ui}  = d \, (J_x)^a_b\,,
\end{equation}
\begin{equation}
\label{pa-J1}
\pa_{t^c}\, (J^{-1}_x)^a_b = - (J^{-1}_x)^a_{a_1} \,  \big(  \pa_{t^c}(J_x)^{a_1}_{b_1} \big) \, (J^{-1}_x)^{b_1}_{b}\,,
\end{equation}
and
\begin{equation}
\label{pa-J11}
\pa_{t^b} (J_x)^a_c = \pa_{t^c} (J_x)^a_b\,.
\end{equation}

Using identities \eqref{pa-J}, \eqref{pa-J1} and \eqref{pa-J11}, it is easy to see that 
\begin{equation}
\label{Jpa-t-is-flat}
(d+ L_{\om}) \, \big( (J_x^{-1})^{a'}_{a}  \pa_{t^{a'}}  \big)  = 0
\end{equation}
and
\begin{equation}
\label{J-dt-is-flat}
(d+ L_{\om}) \, \big( (J_x)^{b}_{b'} d t^{b'}   \big)  = 0\,.
\end{equation}

On the other hand 
$$
(d+ L_{\om}) \big( I(v^{a_1 \dots a_p}_{b_1 \dots b_q}) \big) =
d \big( I(v^{a_1 \dots a_p}_{b_1 \dots b_q}) \big) + 
\om \big( I(v^{a_1 \dots a_p}_{b_1 \dots b_q}) \big)
= 0
$$
due to \eqref{tilde-f-is-flat}.

Thus $\tau (v)$ is indeed $(d+ L_{\om})$-closed for every tensor field $v$.

Finally, the $\mgl_d(\bbK)$-invariance of $\tau(v)$ is obvious from the defining equation. 

\end{proof}

Let us now consider $\mgl_d(\bbK)$ as the set of degree $-1$ derivations 
$i_{\bv}$, $\mv \in \mgl_d(\bbK)$ of the sheaf of dg $\mT$-algebras
\eqref{C-coord-T-pqc-collec} and apply trimming to the sheaf \eqref{C-coord-T-pqc-collec}
following Section \ref{sec:trimming}.

Namely, according to \eqref{cV-trimmed}, local sections of 
the sheaf
\begin{equation}
\label{C-coord-T-pq-invar}
\Big( \Omb(\cO_X^{\coord}) \otimes  T^{p,q} \Big)^{[\mgl_d(\bbK)]}
\end{equation}
are local sections $w$ of $\Omb(\cO_X^{\coord})   \otimes  T^{p,q} $
satisfying the conditions
\begin{equation}
\label{i-bv-w}
i_{\bv} (w) = 0\,,
\end{equation}
and
\begin{equation}
\label{Lie-bv-w}
\big((d + L_{\om}) \circ i_{\bv} + 
i_{\bv} \circ (d + L_{\om}) \big) (w) = 0\,.
\end{equation}

For example, if $w$ is a local section of  
$$
\Om^0(\cO_X^{\coord})  \otimes  T^{p,q}  = 
\cO_X^{\coord}  \otimes T^{p,q} 
$$
then equation \eqref{i-bv-w} holds automatically and hence, equation 
\eqref{L-v} implies that 
\begin{equation}
\label{invar-trimmed}
\Big( \Om^0(\cO_X^{\coord})  \otimes T^{p,q} \Big)^{[\mgl_d(\bbK)]} 
= \Big( \Om^0(\cO_X^{\coord}) \otimes T^{p,q} \Big)^{\mgl_d(\bbK)}\,.
\end{equation}

\begin{warning}
\label{warn:invar-and-more}
We would like to recall that the notation $\cV^{[\mgl_d(\bbK)]}$ is 
reserved for the dg subalgebra of $\mgl_d(\bbK)$-basic elements in $\cV$
(see eq. \eqref{cV-trimmed}).
On the other hand, we still use $\cV^{\mgl_d(\bbK)}$ to denote 
the subalgebra of $\mgl_d(\bbK)$-invariants.
\end{warning}

According to Section \ref{sec:trimming}, the $\mT$-algebra structure 
descends to the collection
\begin{equation}
\label{C-coord-T-pq-invar-collec}
\Big\{ \Big( \Omb(\cO_X^{\coord})  \otimes  T^{p,q} \Big)^{[\mgl_d(\bbK)]}
\Big\}_{p,q\ge 0}\,.
\end{equation}
In other words, the collection \eqref{C-coord-T-pq-invar-collec} is 
a sheaf of dg algebras over the operad $\mT$\,.

Due to Claim \ref{clm:tau} and equation \eqref{invar-trimmed}, 
formula \eqref{tau-dfn} defines a map of sheaves   
\begin{equation}
\label{tau-is}
\tau : \cT^{p,q}_X ~ \mapsto ~  \Big( \Omb(\cO_X^{\coord}) \otimes T^{p,q} \Big)^{[\mgl_d(\bbK)]}
\end{equation}
of dg algebras over $\mT$, where the sheaf $ \cT^{p,q}_X$ carries 
the zero differential.

Our goal is to show that \eqref{tau-is} is a quasi-isomorphism of sheaves of 
of $\mT$-algebras.  
\begin{thm}[Fedosov resolution of $\cT^{p,q}_X$]
\label{thm:Fedosov}
For every smooth algebraic variety $X$ over $\bbK$ 
the map \eqref{tau-is} defined by equation \eqref{tau-dfn} is a 
quasi-isomorphism from the tensor algebra \eqref{tensor-alg-X}
on $X$ to the dg $\mT$-algebra \eqref{C-coord-T-pq-invar-collec}. 
\end{thm}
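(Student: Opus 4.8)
The plan is to verify the quasi-isomorphism locally on affine opens $U\subset X$ admitting a global system of parameters $x^1,\dots,x^d$, since both the source and target of $\tau$ are sheaves and quasi-isomorphism of sheaves of cochain complexes is a local condition. Fix such a $U$, set $R=\cO_X(U)$, and recall from Proposition \ref{prop:cO-bd-free} and Corollary \ref{cor:cO-coord} the explicit polynomial presentation of $R^{\coord}$ in the generators $x^a_{\ui}$ together with the inverse of the determinant. The first step is to reduce the statement to a computation with the fiber at a point, or equivalently to set up a filtration/spectral-sequence argument: introduce the decreasing filtration on $\Omb(\cO_X^{\coord}) \otimes T^{p,q}$ by powers of the ideal generated by the $t$-variables together with the exterior-degree filtration, and observe that the differential $d+L_{\om}$ has $L_{\om}$ raising exterior degree by one while lowering $t$-adic order appropriately, so that the associated graded differential is governed by the "Koszul-type" part of $L_{\om}$.

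The heart of the argument is the classical Fedosov/Gelfand–Fuchs computation: after passing to $\mgl_d(\bbK)$-basic elements, the surviving cohomology of $(d+L_{\om})$ on $\Omb(\cO_X^{\coord})\otimes T^{p,q}$ is concentrated in exterior degree $0$ and consists exactly of the $(d+L_{\om})$-closed, $\mgl_d(\bbK)$-invariant elements of $\cO_X^{\coord}\otimes T^{p,q}$, which by the flatness Theorem \ref{thm:om} and equation \eqref{invar-trimmed} is precisely the image of $\tau$. Concretely, I would argue that $d+L_{\om}$ is, up to the $\GL_d$-equivariant change of trivialization provided by the invertible matrix $J_x$ of equation \eqref{pa-x-pa-t} (cf. the flatness identities \eqref{Jpa-t-is-flat}, \eqml{J-dt-is-flat}), conjugate to the "trivial" connection $d$ acting on $\Omb(R^{\coord})\otimes_R\big(\text{a free }R\text{-module}\big)$; the contractibility of the formal polydisc fibers (the $t$-variables) then kills all positive-exterior-degree cohomology, and the de Rham complex of $\cO_X^{\coord}$ over $\cO_X$ resolves the structure sheaf because $\cO_X^{\coord}\to \cO_X$ is a fibration with contractible (affine-space-like) fibers once one trivializes via the parameters. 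Compatibility with the colored operad $\mT$ is automatic: $\tau$ is built out of contractions and tensor products of the single map $I$ of \eqref{homo:I} and the matrices $J_x^{\pm1}$, all of which respect the $\mT$-structure, and Claim \ref{clm:tau} already records that $\tau$ lands in the basic subcomplex and is a chain map.

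The main obstacle I expect is the homological bookkeeping needed to show that passing to $\mgl_d(\bbK)$-\emph{basic} elements (not merely invariants) computes the correct cohomology — i.e. that the trimming operation of Section \ref{sec:trimming} is compatible with taking cohomology here. This is where one must use equation \eqml{L-v}, which identifies $[(d+L_{\om}),i_{\bv}]$ with the combined $\mgl_d$-action, to run a Chevalley–Eilenberg / BRST-type argument: the basic subcomplex of a contractible $\mgl_d$-module computes the Lie algebra cohomology with coefficients in that module, and since $\cO(\GL_d)$ appears (by the proof of Proposition \ref{prop:affine-space}) this cohomology is just the invariants with no higher terms. I would assemble this as a double-complex spectral sequence argument, being careful that $\mgl_d(\bbK)$ is not reductive as an abstract Lie algebra but that the relevant modules are locally finite and the $\GL_d$-action is algebraic, so that higher Lie algebra cohomology with these coefficients vanishes. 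Once this vanishing is in hand, the five-lemma-style comparison between the $E_1$-page and the target $\cT^{p,q}_X$ finishes the proof, and functoriality/gluing over the affine cover of $X$ upgrades the local statement to the global one.
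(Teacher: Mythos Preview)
Your opening moves match the paper: localize to an affine $U$ with global parameters, use the flatness identities \eqref{Jpa-t-is-flat} and \eqref{J-dt-is-flat} together with \eqref{simpler} to strip off the free factor $\tau(T^{p,q}(R))$, and thereby reduce to the scalar complex $\big(\Omega^\bullet(R^{\coord})[[t]],\,d+\omega\big)^{[\mgl_d]}$. From that point on, however, your route diverges from the paper and contains a genuine gap.

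The assertion that $d+L_\omega$ is ``conjugate to the trivial connection $d$'' via $J_x$ is not correct: the $J_x$-change of frame trivializes only the $T^{p,q}$-direction, while on the scalar coefficients in $R^{\coord}[[t]]$ the operator remains $d+\omega^a\partial_{t^a}$, which is not $d$. The paper's key device, which you are missing, is the nonlinear change of fiber coordinates $\psi(\theta^a):=I(x^a)-x^a$; this is invertible because $J_x$ is, it is $\mgl_d$-invariant, and it transforms $d+\omega$ into the explicit operator $D=d-\sum_a dx^a\,\partial_{\theta^a}$ on $\Omega^\bullet(R^{\aff})[[\theta]]$. After computing $\psi^{-1}\circ I$ explicitly (Claim~\ref{cl:psi-I}), the complex factors as the de~Rham complex of the polynomial ring $\bbK[\{y^a_{\ui}\}]$ tensored with $\big(\Omega^\bullet(R)[[\theta]],\,D\big)$, and the latter is dispatched by the filtration $F_m=\{\deg_\Omega+\deg_\theta\ge m\}$, whose associated graded is the standard Koszul complex over $R$. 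Your proposed filtration directly on $\Omega^\bullet(R^{\coord})[[t]]$ runs into the problem that $\omega^a=-(J_x^{-1})^a_b\sum_{\ui}dx^b_{\ui}t^{\ui}$ involves all of the infinitely many generators $x^b_{\ui}$, so the associated graded is a Koszul complex over $\Omega^\bullet(R^{\coord})$, not over $R$, and you would still owe an acyclicity argument for the next page --- essentially redoing the $\psi$-trick spectrally.

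Your concern about basic versus invariant elements is well placed but is resolved in the paper without any Chevalley--Eilenberg or BRST argument. Proposition~\ref{prop:affine-space} gives $R^{\coord}\cong R^{\aff}\otimes_\bbK\cO(\GL_d)$ with $\GL_d$ acting only on the second factor by translation; the horizontality condition $i_{\bv}=0$ therefore cuts the forms down to $\Omega^\bullet(R^{\aff})$ (equation \eqref{i-v-zero}), and the remaining invariance condition reduces the coefficients from $R^{\coord}[[t]]$ to $\big(R^{\coord}[[t]]\big)^{\mgl_d}\cong R^{\aff}[[\theta]]$ (equation \eqref{isom}), all purely algebraically. No appeal to reductivity or to vanishing of higher Lie algebra cohomology is needed.
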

\begin{proof} It is clear that the map $\tau$ intertwines all operations 
of the tensor algebra. Furthermore, by Claim \ref{clm:tau}, 
$\tau$ is compatible with the differential on
$$ 
\Big( \Omb(\cO_X^{\coord})  \otimes T^{p,q} \Big)^{[\mgl_d(\bbK)]}\,.
$$

Thus, it remains to prove that the complex of sheaves
\begin{equation}
\label{total-comp}
 \begin{tikzpicture}
\matrix (m) [matrix of math nodes, column sep=0.8em]
{  \cT^{p,q}_X &~ & \Big( \Om^0(\cO_X^{\coord}) \otimes T^{p,q} \Big)^{[\mgl_d(\bbK)]} 
&~ & \Big( \Om^1(\cO_X^{\coord}) \otimes T^{p,q} \Big)^{[\mgl_d(\bbK)]}  & ~
& ~\dots~ \\ };
\path[->,font=\scriptsize]
(m-1-1) edge node[auto] {$\tau$} (m-1-3)  (m-1-3) edge
 node[auto] {$d+ L_{\om}$} (m-1-5)  (m-1-5) edge node[auto] {$d+ L_{\om}$}  (m-1-7);
\end{tikzpicture} 
\end{equation}
is acyclic. 

Since acyclicity of a complex of sheaves is a local property, 
it is enough to prove that the complex 
\begin{equation}
\label{total-comp-U}
 \begin{tikzpicture}
\matrix (m) [matrix of math nodes, column sep=0.8em]
{  \cT^{p,q}_X(U) &~ & \Big( \Om^0(\cO_X^{\coord})(U) \otimes T^{p,q} \Big)^{[\mgl_d(\bbK)]} 
&~ & \Big( \Om^1(\cO_X^{\coord})(U)  \otimes T^{p,q} \Big)^{[\mgl_d(\bbK)]}  & ~
& ~\dots~ \\ };
\path[->,font=\scriptsize]
(m-1-1) edge node[auto] {$\tau$} (m-1-3)  (m-1-3) edge
 node[auto] {$d+ L_{\om}$} (m-1-5)  (m-1-5) edge node[auto] {$d+ L_{\om}$}  (m-1-7);
\end{tikzpicture} 
\end{equation}
is acyclic for ``small enough'' open neighborhood $U$ of 
every point of $X$\,.  

We will prove this fact for an arbitrary affine open subset 
$U \subset X$ which is equipped  a global 
system of parameters 
\begin{equation}
\label{system-param-here}
x^1, x^2, \dots, x^d \in \cO_X(U)\,.
\end{equation}

We set $R = \cO_X(U)$ and observe that,
for such an affine subset $U$, the complex in question is 
\begin{equation}
\label{total-comp-R}
 \begin{tikzpicture}
\matrix (m) [matrix of math nodes, column sep=0.8em]
{  T^{p,q}(R) &~ & \Big( \Om^0(R^{\coord})  \otimes  T^{p,q} \Big)^{[\mgl_d(\bbK)]} 
&~ & \Big( \Om^1(R^{\coord})  \otimes  T^{p,q} \Big)^{[\mgl_d(\bbK)]}  & ~
& ~\dots\,,~ \\ };
\path[->,font=\scriptsize]
(m-1-1) edge node[auto] {$\tau$} (m-1-3)  (m-1-3) edge
 node[auto] {$d+ L_{\om}$} (m-1-5)  (m-1-5) edge node[auto] {$d+ L_{\om}$}  (m-1-7);
\end{tikzpicture} 
\end{equation}
where 
\begin{equation}
\label{T-pq-R}
T^{p,q} (R) : = \underbrace{\Der(R)\otimes_R \dots \otimes_R \Der(R)}_{p \textrm{ times}}
\otimes_R
 \underbrace{\Om^1(R) \otimes_R \dots \otimes_R  \Om^1(R)}_{q \textrm{ times}}\,.  
\end{equation}

We remark that the map $\tau$ (\ref{tau-dfn})
gives us the following isomorphism of $\bbK$-vector spaces
\begin{equation}
\label{simpler}
 \Om^k(R^{\coord}) \otimes T^{p,q} \cong
\Om^k(R^{\coord})[[t^1, t^2, \dots, t^d]]  \otimes_{I(R)}  \tau (T^{p,q}(R))\,.
\end{equation}

Due to Proposition \ref{prop:affine-space}
$$
R^{\coord} \cong R^{\aff} \otimes \cO(\GL_d(\bbK)) 
$$
on which $\GL_d(\bbK)$ acts by right translations. 

Hence, 
\begin{equation}
\label{i-v-zero}
\Big\{ \eta \in  
\Om^k(R^{\coord})[[t^1, t^2, \dots, t^d]] ~~ \big|~~ i_{\bv} \eta = 0 
~~ \forall~ \mv \in \mgl_d(\bbK) \Big\} \cong 
\end{equation}
$$
\Hom_{R^{\aff}} \Big(  \wedge^{k}_{R^{\aff}} \Der(R^{\aff}),
R^{\coord}[[t^1, t^2, \dots, t^d]]  
\Big) \cong  
$$
$$
\cong R^{\coord}[[t^1, t^2, \dots, t^d]]  \otimes_{R^{\aff}}
\Om^k(R^{\aff}) \,.
$$

Thus, using \eqref{L-v}, we get
$$
\Big( \Om^k(R^{\coord}) \otimes T^{p,q}   \Big)^{[\mgl_d(\bbK)]} \cong
$$
\begin{equation}
\label{simpler1}
\big(R^{\coord}[[t^1, t^2, \dots, t^d]] \big)^{\mgl_d(\bbK)} \otimes_{R^{\aff}}
 \Om^k(R^{\aff})
 \otimes_{I(R)}  \tau (T^{p,q}(R))\,.
\end{equation}

Therefore, since $\tau(T^{p,q}(R))$ is a free $I(R)$-module, 
it suffices to prove the acyclicity of the complex 
\begin{equation}
\label{R-Xi}
\xymatrix@M=1.5pc{ 
R  \ar[r]^{I} & \Xi^{0} 
\ar[r]^{(d + \om) }
& \Xi^1  \ar[r]^{(d + \om)} &  \Xi^2  \ar[r]^{(d + \om)} &  \dots 
}
\end{equation}
with 
\begin{equation}
\label{Xi-k}
\Xi^k =  \big(R^{\coord}[[t^1, t^2, \dots, t^d]] \big)^{\mgl_d(\bbK)} \otimes_{R^{\aff}}
 \Om^k(R^{\aff})
\end{equation}

For this purpose we consider the continuous homomorphism 
of commutative $R^{\coord}$-algebras  
\begin{equation}
\label{R-coord-theta-t}
\psi : R^{\coord}[[\te^1, \te^2, \dots, \te^d]] \to 
 R^{\coord}[[t^1, t^2, \dots, t^d]]
\end{equation}
defined by the equation
\begin{equation}
\label{psi-dfn}
\psi (\te^a) = I(x^a)- x^a\,.
\end{equation}

Recall that  
$$
I(x^a)- x^a  = x^a_{(b)} t^b + \sum_{\ui, ~|\ui| \ge 2} x^a_{\ui} t^{\ui}
$$
and the matrix $|| x^a_{(b)}||$ is invertible in $\Mat_{d} (R^{\coord})$\,.
Hence, the homomorphism $\psi$ is an isomorphism. 

Furthermore, since $I(x^a)- x^a$ is $\mgl_d(\bbK)$-invariant for every $a$, 
the map $\psi$ induces an isomorphism between the 
$\bbK$-algebras: 
\begin{equation}
\label{isom}
\Big( R^{\coord} [[t^1, t^2, \dots, t^d]]  \Big)^{\mgl_d(\bbK)} 
\cong 
R^{\aff} [[\te^1, \te^2, \dots, \te^d]]\,.
\end{equation}

On the other hand, by Theorem \ref{thm:om}, 
$$
(d + \om) I(x^a)  = 0
$$
and hence
$$
(d + \om)  \psi(\te^a) = - d x^a\,.
$$
Thus, the cochain complex \eqref{R-Xi} is isomorphic to 
\begin{equation}
\label{R-Xi-simpler}
 \begin{tikzpicture}
\matrix (m) [matrix of math nodes, column sep=0.8em]
{R & ~ &   \Om^0(R^{\aff}) [[\te^1, \te^2, \dots, \te^d]] &~&
 \Om^1 (R^{\aff}) [[\te^1, \te^2, \dots, \te^d]] & ~ &~ \\};
\path[->,font=\scriptsize]
(m-1-1) edge node[auto] {$\psi^{-1} \circ I$} (m-1-3)  (m-1-3) edge
 node[auto] {$D$} (m-1-5)  (m-1-5) edge node[auto] {$D$}  (m-1-7);
\end{tikzpicture} 
\end{equation}
where 
\begin{equation}
\label{D-dfn}
D = d - \sum_{a=1}^d  d x^a \frac{\pa}{\pa \te^a}\,. 
\end{equation}

To deduce the acyclicity of the cochain complex \eqref{R-Xi-simpler},
we need the following technical claim which is proven in 
Subsection \ref{sec:psi-I} below. 
\begin{claim}
\label{cl:psi-I}
Let $U= \Spec(R)$ be a smooth affine variety over $\bbK$
of dimension $d$ with a global system of parameters
$$
x^1, x^2, \dots, x^d \in R\,.
$$
Let $\{ \pa_{x^1}, \pa_{x^2}, \dots, \pa_{x^d} \}$ be the basis 
of $\Der(R)$ dual to $\{d x^1, d x^2, \dots, d x^d\}$\,.
If 
$$
\psi : R^{\coord}[[\te^1, \te^2, \dots, \te^d]] \to  R^{\coord}[[t^1, t^2, \dots, t^d]]
$$
is the isomorphism defined by \eqref{psi-dfn}, then for every 
$f \in R$ 
\begin{equation}
\label{psi-I}
\psi^{-1} \circ I (f) = f + \sum_{k \ge 1} \frac{1}{k!} 
   \pa_{x^{a_1}} \pa_{x^{a_2}} \dots \pa_{x^{a_k}} (f) \te^{a_1} \te^{a_2} \dots \te^{a_k}\,,
\end{equation}
where indices $a_1, a_2, \dots, a_k$ run from $1$ to $d$\,.
\end{claim}

Due to Proposition \ref{prop:affine-space}
$$
R^{\aff} \cong R [\{y^a_{\ui}\}_{1\le a \le d,\, |\ui| \ge 2}]\,.  
$$
Hence,
\begin{equation}
\label{Xi-k-simpler1}
\Xi^k  \cong  \bigoplus_{p=0}^{k}  \Om^p(\bbK[\{y^a_{\ui}\}_{1\le a \le d,\, |\ui| \ge 2}] )
\otimes
\Om^{k-p}(R) 
[[\te^1, \te^2, \dots, \te^d]]
\end{equation}
and the complex  \eqref{R-Xi-simpler} is isomorphic to the tensor product of 
the de Rham complex 
\begin{equation}
\label{de-Rham-ys}
\left( \Om^{\bul}(\bbK[\{y^a_{\ui}\}_{1\le a \le d,\, |\ui| \ge 2}] ,\, d \right)
\end{equation}
and the cochain complex 
\begin{equation}
\label{Om-R-te-s}
 \begin{tikzpicture}
\matrix (m) [matrix of math nodes, column sep=0.8em]
{R & ~ &   \Om^0(R) [[\te^1, \dots, \te^d]] &~&
 \Om^1 (R) [[\te^1, \dots, \te^d]] & ~ &~ \\};
\path[->,font=\scriptsize]
(m-1-1) edge node[auto] {$\psi'$} (m-1-3)  (m-1-3) edge
 node[auto] {$D'$} (m-1-5)  (m-1-5) edge node[auto] {$D'$}  (m-1-7);
\end{tikzpicture} 
\end{equation}
where 
\begin{equation}
\label{psi-pr}
\psi' (f) = f + \sum_{k \ge 1} \frac{1}{k!} 
   \pa_{x^{a_1}} \pa_{x^{a_2}} \dots \pa_{x^{a_k}} (f) \te^{a_1} \te^{a_2} \dots \te^{a_k}\,,
\end{equation}
and 
\begin{equation}
\label{D-pr}
D' = d - \sum_{a=1}^d \, d x^a \frac{\pa}{\pa \te^a}\,. 
\end{equation}

Let us prove that the cochain complex \eqref{Om-R-te-s} is acyclic.
 
For this purpose we denote \eqref{Om-R-te-s} by $\cK$
and observe that it carries the descending filtration
\begin{equation}
\label{filtr-cK}
\cK = F_0 \cK \supset F_1 \cK \supset F_2 \cK  \supset \dots
\end{equation}
where $F_m \cK$ consists of series 
$$
\sum_{p+q \ge m} f_{a_1, \dots a_p; b_1 \dots b_q}
d x^{a_1} \dots d x^{a_p}  \te^{b_1} \dots  \te^{b_q}\,,
\qquad  f_{a_1, \dots a_p; b_1 \dots b_q} \in R\,. 
$$

The associated graded complex $\Gr(\cK)$ is isomorphic to 
\begin{equation}
\label{Gr-cK}
 \begin{tikzpicture}
\matrix (m) [matrix of math nodes, column sep=0.8em]
{R & ~ &   \Om^0(R) [\te^1, \dots, \te^d] &~&
 \Om^1 (R) [\te^1, \dots, \te^d] & ~ &~ \\};
\path[->,font=\scriptsize]
(m-1-1) edge node[auto] {$\Gr(\psi')$} (m-1-3)  (m-1-3) edge
 node[auto] {$\Gr(D')$} (m-1-5)  (m-1-5) edge node[auto] {$\Gr(D')$}  (m-1-7);
\end{tikzpicture} 
\end{equation}
where 
$$
\Gr(\psi') (f) = f\,,
$$
and 
$$
\Gr(D') = - \sum_{a=1}^d \, d x^a \frac{\pa}{\pa \te^a}\,.
$$
In other words, $\Gr(\cK$) is isomorphic to the Koszul complex for 
the polynomial algebra $R [\te^1, \dots, \te^d]$ over $R$.
Therefore $\Gr(\cK)$ is acyclic. 

Combining this observation with the fact that $\cK$ is complete 
with respect to filtration \eqref{filtr-cK}, we conclude that $\cK$ is 
also acyclic.

Theorem \ref{thm:Fedosov} is proven. 
\end{proof}

\subsection{Proof of Claim \ref{cl:psi-I}}
\label{sec:psi-I}
The proof of Theorem \ref{thm:Fedosov} depends on Claim  \ref{cl:psi-I}. 

To prove this claim we observe that, since $\psi$ intertwines the differentials 
$d + \om$ and $D$ \eqref{D-dfn}, we conclude that 
\begin{equation}
\label{D-closed}
D (\psi^{-1} \circ I(f)) = 0\,, \qquad \forall ~ f \in R\,.
\end{equation}

On the other hand, 
$$
D \left(
 f + \sum_{k \ge 1} \frac{1}{k!} 
   \pa_{x^{a_1}} \pa_{x^{a_2}} \dots \pa_{x^{a_k}} (f) \te^{a_1} \te^{a_2} \dots \te^{a_k}
\right) = 0 
$$ 
by construction.

Thus we need to prove that, if a sum 
\begin{equation}
\label{sum}
\sum_{k \ge 1} \frac{1}{k!} 
C_{a_1 a_2 \dots a_k} \te^{a_1} \te^{a_2} \dots \te^{a_k}\,,
\qquad C_{a_1 a_2 \dots a_k}  \in R^{\aff} 
\end{equation}
satisfies the equation 
\begin{equation}
\label{sum-D-closed}
D \left(
\sum_{k \ge 1} \frac{1}{k!} 
C_{a_1 a_2 \dots a_k} \te^{a_1} \te^{a_2} \dots \te^{a_k}
\right) = 0
\end{equation}
then the sum \eqref{sum} is zero. 

If  \eqref{sum} is non-zero then there exists a positive integer $r$ such that 
at least one coefficient $C_{a_1 a_2 \dots a_r}$ is non-zero and 
all coefficients  $C_{a_1 a_2 \dots a_k} = 0$ is $k < r$\,.  
Then, 
$$
D \left(
\sum_{k \ge 1} \frac{1}{k!} 
C_{a_1 a_2 \dots a_k} \te^{a_1} \te^{a_2} \dots \te^{a_k}
\right) = \frac{1}{(r-1)!} C_{a_1 a_2 \dots a_r} dx^{a_1} 
 \te^{a_2} \dots \te^{a_k} + \dots
$$
where $\dots$ is a sum of terms of degree in $\te$ greater than $r-1$\,.

Hence, if \eqref{sum} is non-zero, then  
$$
D \left(
\sum_{k \ge 1} \frac{1}{k!} 
C_{a_1 a_2 \dots a_k} \te^{a_1} \te^{a_2} \dots \te^{a_k}
\right) \neq 0\,.
$$

Thus $\psi^{-1} \circ I(f)$ must equal 
$$
f + \sum_{k \ge 1} \frac{1}{k!} 
   \pa_{x^{a_1}} \pa_{x^{a_2}} \dots \pa_{x^{a_k}} (f) \te^{a_1} \te^{a_2} \dots \te^{a_k}
$$
for every $f \in R$\,.

Claim  \ref{cl:psi-I} is proven. ~~~$\Box$

\subsection{Fedosov resolution of the Gerstenhaber algebra of polyvector fields}
\label{sec:Fed-polyvect}

Let us recall that antisymmetric contravariant tensor fields on $X$ are 
called {\it polyvector fields}. In other words, polyvector fields are sections
of the sheaf 
\begin{equation}
\label{T-poly}
\cT_{\poly} : = S_{\cO_{X}} (\bs \cT_X)\,. 
\end{equation}

Let us also recall that the Schouten-Nijenhuis bracket $\{~,~\}_{SN}$
and the obvious commutative multiplication equips $\cT_\poly$ with the structure 
of a sheaf of Gerstenhaber algebras. We denote by $\Ger$ the Gerstenhaber operad. 

Let us denote by $T_{\poly}(P)$ the Gerstenhaber algebra 
of poly-derivations of $P = \bbK[[t^1, t^2, \dots, t^d]]$, i.e.
\begin{equation}
\label{Tpoly-formal}
T_{\poly}(P) : = P ~ \oplus ~  \bigoplus_{m \ge 1}  \Big(
 \underbrace{\bs \Der(P)  \otimes_{P} \dots   \otimes_{P}
\bs \Der(P)}_{m \textrm{ times}} \Big)_{S_m}
\end{equation}

Next consider the subsheaf 
\begin{equation}
\label{OmcO-coord-T-poly}
\Omb(\cO_X^{\coord})  \otimes T_{\poly}(P)
\subset
\bigoplus_{p \ge 0} \bs^{p} \Omb(\cO_X^{\coord}) \otimes T^{p,0}.
\end{equation}

It is easy to see that the subsheaf \eqref{OmcO-coord-T-poly}
is closed with respect to the differential \eqref{diff-Ccoord-Tpq}. 
Furthermore, the restriction of the differential \eqref{diff-Ccoord-Tpq}
to  \eqref{OmcO-coord-T-poly} coincides with the differential 
\begin{equation}
\label{diff-resol-Tpoly}
d + \{ \om, ~ \}_{SN} \,.
\end{equation}
The sheaf \eqref{OmcO-coord-T-poly} with the differential \eqref{diff-resol-Tpoly}
is naturally a sheaf of dg Gerstenhaber algebras.

Since for every $\mv \in  \mgl_d(\bbK)$, the operation $i_{\bv}$ is 
a derivation of the  Gerstenhaber algebra structure on the sheaf  \eqref{OmcO-coord-T-poly}, 
we may form\footnote{See Section \ref{sec:trimming}.} the following sheaf of dg 
Gerstenhaber algebras
\begin{equation}
\label{Fed-T-poly}
\Big( \Omb(\cO_X^{\coord}) \otimes T_{\poly}(P) \Big)^{[\mgl_d(\bbK)]}\,.
\end{equation}

\begin{thm}[Fedosov resolution for polyvector fields]
\label{thm:Fed-Tpoly}
Let $X$ be a smooth algebraic variety of dimension $d$ over 
an algebraically closed field $\bbK$ of characteristic zero. 
Let us consider the sheaf $\cT_{\poly}$  \eqref{T-poly}  as a sheaf 
of dg  Gerstenhaber algebras with the zero differential. Then
the restriction of the map $\tau$ \eqref{tau-dfn} to $\cT_{\poly}$
\begin{equation}
\label{tau-polyvect}
\tau \Big|_{\cT_{\poly}} ~:~ \cT_{\poly} ~\to~ 
\Big( \Omb(\cO_X^{\coord}) \otimes T_{\poly}(P) \Big)^{[\mgl_d(\bbK)]}
\end{equation}
is a quasi-isomorphism of the sheaves of dg Gerstenhaber algebras.
\end{thm}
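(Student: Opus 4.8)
The plan is to obtain Theorem \ref{thm:Fed-Tpoly} from Theorem \ref{thm:Fedosov} by restricting $\tau$ to the antisymmetric part and checking a single extra compatibility, rather than rerunning the Fedosov argument. I would organize the argument in three steps: (i) show that $\tau|_{\cT_\poly}$ actually takes values in \eqref{Fed-T-poly}; (ii) show that it is a morphism of sheaves of dg Gerstenhaber algebras; (iii) deduce that it is a quasi-isomorphism. For step (i), formula \eqref{tau-dfn} with $q=0$ is $S_p$-equivariant in the $p$ tensor slots of a local section of $\cT^{p,0}_X$, so $\tau$ carries the antisymmetric part --- that is, the polyvector-degree-$p$ summand $\bs^p\wedge^p_{\cO_X}\cT_X$ of $\cT_\poly$ --- into the antisymmetric part of $\bs^p\Omb(\cO_X^{\coord})\otimes T^{p,0}$, which is exactly the corresponding piece of the subsheaf \eqref{OmcO-coord-T-poly}; and by Claim \ref{clm:tau} the image $\tau(v)$ is $\mgl_d(\bbK)$-invariant and $(d+L_\om)$-closed, hence $\mgl_d(\bbK)$-basic in the sense of \eqref{cV-trimmed} (the condition $i_{\bv}\tau(v)=0$ being vacuous in exterior degree $0$, as in \eqref{invar-trimmed}). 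Thus $\tau|_{\cT_\poly}$ does land in \eqref{Fed-T-poly}.

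For step (ii), note that $\cT_\poly$ has the zero differential and $\tau$ is already a chain map by Claim \ref{clm:tau}, so only the two products need to be checked. Compatibility with the commutative product is immediate: $\tau$ intertwines all operations of the operad $\mT$ by Theorem \ref{thm:Fedosov}, in particular the tensor product, and the commutative product on $\cT_\poly=S_{\cO_X}(\bs\cT_X)$ and on $T_\poly(P)$ is the induced one on the (anti)symmetric algebra, so this follows from the $S_p$-equivariance used in step (i). For the Schouten--Nijenhuis bracket I would use that $\{\,,\,\}_{SN}$ is the unique biderivation of the commutative algebra $\cT_\poly$ extending the Lie bracket $\cT_X\otimes\cT_X\to\cT_X$ and the action $\cT_X\otimes\cO_X\to\cO_X$ of vector fields on functions; since $\tau$ is multiplicative it then suffices to check intertwining on these two pieces of generators. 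For $v\in\cT_X(U)$ and $f\in\cO_X(U)$, writing $df=(\pa_{x^b}f)\,d x^b$, the identity $\pa_{t^{a'}}\wt{f}=I(\pa_{x^b}f)\,(J_x)^b_{a'}$ --- which holds because $\pa_{t^{a'}}\circ I$ is a $\bbK$-linear derivation of $R$-modules, cf.\ \eqref{pa-t-circ-I} --- together with $(J_x^{-1})^{a'}_a\,(J_x)^b_{a'}=\de^b_a$ gives $\{\tau(v),\tau(f)\}_{SN}=\tau(v)(\wt{f})=I(v^a\pa_{x^a}f)=\tau(\{v,f\}_{SN})$. The bracket of two vector fields is an analogous, only slightly longer, computation using the same facts; conceptually, $\tau$ amounts --- fibrewise over the scheme of formal coordinate systems of $U$, cf.\ Remark \ref{rem:cO-coord} --- to re-expressing a tensor field in the formal coordinates $t^1,\dots,t^d$, which preserves the Gerstenhaber operations. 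Combining this with the fact that the differential \eqref{diff-Ccoord-Tpq} restricts on \eqref{OmcO-coord-T-poly} to \eqref{diff-resol-Tpoly}, I conclude that $\tau|_{\cT_\poly}$ is a morphism of sheaves of dg Gerstenhaber algebras.

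For step (iii), I would argue that the polyvector resolution is a direct summand of the tensor resolution already treated in Theorem \ref{thm:Fedosov}. The $S_p$-action permuting the tensor slots of $T^{p,0}$ commutes with the $\mgl_d(\bbK)$-action $l_{\bv}-L_{\mv^a_b t^b\pa_{t^a}}$, with the contractions $i_{\bv}$, and with the differential $d+L_\om$; hence it descends to the trimmed complex $\big(\Omb(\cO_X^{\coord})\otimes T^{p,0}\big)^{[\mgl_d(\bbK)]}$, and the associated idempotent in $\bbK[S_p]$ defines an exact endofunctor on complexes of sheaves since $\bbK$ has characteristic zero. Applying $\bigoplus_{p\ge0}\bs^p$ of this idempotent to the complexes \eqref{total-comp} with $q=0$, which are acyclic by Theorem \ref{thm:Fedosov}, produces an acyclic complex of sheaves which, by steps (i)--(ii), is precisely
\[
\cT_\poly\ \xrightarrow{\ \tau\ }\ \big(\Omb(\cO_X^{\coord})\otimes T_\poly(P)\big)^{[\mgl_d(\bbK)]}\ \xrightarrow{\ d+\{\om,\,\}_{SN}\ }\ \cdots,
\]
and therefore $\tau|_{\cT_\poly}$ is a quasi-isomorphism. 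An alternative, should bookkeeping with suspension signs become awkward, is to observe that the proof of Theorem \ref{thm:Fedosov} applies verbatim with $T^{p,q}$ replaced by $T_\poly(P)$, the only property used there being that $\tau(T^{p,q}(R))$ is a free $I(R)$-module, which holds equally for the module of poly-derivations of $R$ when $U=\Spec R$ has a global system of parameters (then $\Der(R)$ is $R$-free of rank $d$). I do not expect a serious obstacle: the one genuine computation is the Schouten--Nijenhuis identity on generators in step (ii), and everything else is formal given Theorem \ref{thm:Fedosov}.
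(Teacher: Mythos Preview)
Your proposal is correct and follows essentially the same approach as the paper: the quasi-isomorphism is deduced from Theorem~\ref{thm:Fedosov} by passing to the antisymmetric/$S_p$-invariant part (using that cohomology commutes with taking invariants in characteristic zero), and the Gerstenhaber compatibility is checked on generators. The paper's verification of the bracket is organized slightly differently---it works with the specific basis $\pa_{x^a}$ rather than general vector fields, and gives the explicit computation for $\{\tau(\pa_{x^a}),\tau(\pa_{x^b})\}_{SN}=0$ via identities \eqref{pa-J1} and \eqref{pa-J11}---but the strategy is the same.
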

\begin{proof}
Due to Theorem \ref{thm:Fedosov}, the map 
\begin{equation}
\label{tau-contra-m}
\tau :  \cT^{m,0}_X  \to
\Big( \Omb(\cO_X^{\coord}) \otimes T^{m,0}(P) \Big)^{[\mgl_d(\bbK)]}
\end{equation}
is a quasi-isomorphism of sheaves for every $m \ge 0$. Hence so is the map 
\begin{equation}
\label{sus-tau-contra-m}
\tau :  \bs^m \, \cT^{m,0}_X  \to
\Big( \Omb(\cO_X^{\coord})  \otimes \bs^m \, T^{m,0}(P) \Big)^{[\mgl_d(\bbK)]}
\end{equation}

Let us denote by $\sgn_m$ the sign representation of $S_m$ and 
observe that
$$
\cT^m_{\poly} = \Big( \sgn_m\, \bs^m \, \cT^{m,0}_X \Big)_{S_m}
$$
and 
$$
\Big( \Omb(\cO_X^{\coord})  \otimes T^m_{\poly}(P) \Big)^{[\mgl_d(\bbK)]} ~ = ~
\left( \sgn_m \Big( \Omb(\cO_X^{\coord}) \otimes \bs^m \, T^{m,0}(P) \Big)^{[\mgl_d(\bbK)]} 
\right)_{S_m}\,.
$$

Thus the map 
$$
\tau : \cT^m_{\poly} \to 
\Big( \Omb(\cO_X^{\coord}) \otimes T^m_{\poly}(P) \Big)^{[\mgl_d(\bbK)]}
$$
is a quasi-isomorphism for every $m$, since, in characteristic zero, 
the cohomology commutes with taking invariants.

It remains to prove that the map \eqref{tau-polyvect} is compatible with
the Gerstenhaber algebra structures. 

To prove this property, we consider an affine open subset $U \subset X$, 
set $R = \cO_X(U)$ and assume that $U$ has a global system of 
parameters \eqref{system-param-U}. 

It is obvious from the definition of $\tau$ \eqref{tau-dfn} that 
for every pair of sections $v_1, v_2 \in \cT_{\poly}(U)$
$$
\tau( v_1 \cdot v_2) =  \tau(v_1) \cdot  \tau(v_2)\,.
$$

To prove the compatibility of $\tau$ with the bracket $\{~,~\}_{SN}$, we 
observe that the  Gerstenhaber algebra $\cT_{\poly}(U)$ is generated by 
$f \in R$ and the derivations \eqref{pa-pa-x^a}. Thus, it suffices to prove that 
\begin{equation}
\label{f1f2}
\{ \tau(f_1),  \tau(f_2) \}_{SN} =0\,,
\end{equation}
\begin{equation}
\label{pa-xa-f}
\{ \tau(\pa_{x^a}),  \tau(f) \}_{SN} = \tau (\pa_{x^a}(f) ) \,,
\end{equation}
and 
\begin{equation}
\label{pa-xa-xb}
\{ \tau(\pa_{x^a}),  \tau(\pa_{x^b}) \}_{SN} = 0 
\end{equation}
for all $f, f_1, f_2 \in R$, and $1 \le a, b \le d$\,.

Since 
$$
\tau(f) = \wt{f}= f + \sum_{|\ui| \ge 1} f_{\ui} t^{\ui} 
$$
equation \eqref{f1f2} holds obviously. 

To prove equation \eqref{pa-xa-f}, we observe that the 
operation 
$$
f \mapsto \pa_{t^b} \wt{f} : R \to R^{\coord}[[t^1, \dots, t^d]] 
$$
is a $\bbK$-linear derivation of $R$-modules with $R^{\coord}[[t^1, \dots, t^d]] $
carrying the $R$-module structure defined in \eqref{R-mod-Rcoord-ts}.

Hence, 
\begin{equation}
\label{pa-t-wt-f}
\pa_{t^b} \wt{f} = I(\pa_{x^a}(f)) \frac{\pa \wt{x^a}}{\pa t^b} \,.
\end{equation}

Using equation \eqref{pa-t-wt-f} we deduce 
$$
\{ \tau(\pa_{x^a}),  \tau(f) \}_{SN} = 
(J^{-1}_x)_a^{b} \pa_{t^{b}} \wt{f} = (J^{-1}_x)_a^{b} 
 I(\pa_{x^c}(f)) \frac{\pa \wt{x^c}}{\pa t^b} =  I(\pa_{x^a}(f))\,. 
$$
Thus equation \eqref{pa-xa-f} holds.

Using equations \eqref{pa-J1} and \eqref{pa-J11}, it is easy to see that 
$$
\{ \tau(\pa_{x^a}),  \tau(\pa_{x^b}) \}_{SN} = 
 \Big( (J^{-1}_x)_a^{c}\, \pa_{t^{c}} (J^{-1}_x)_b^{c'}  -
 (J^{-1}_x)_b^{c}\, \pa_{t^{c}} (J^{-1}_x)_a^{c'} 
\Big) \pa_{t^{c'}} = 0\,. 
$$
Thus equation \eqref{pa-xa-xb} also holds.

Theorem \ref{thm:Fed-Tpoly} is proven.  
\end{proof}

\section{Atiyah class via Fedosov resolution}
\label{sec:Atiyah}

Let us cover our variety $X$ by affine open subsets $\{ U_{\al} \}_{\al \in \cI}$
each of which has a global system of parameters: 
\begin{equation}
\label{system-param-alpha}
x^1_{\al}, x^2_{\al}, \dots, x^d_{\al} \in \cO_X(U_{\al})\,.
\end{equation}

For each $\al \in \cI$, the module $\Om^1(\cO_X(U_{\al}))$ of 
K\"ahler differentials is freely generated by the forms
\begin{equation}
\label{basis-U-alpha}
d \, x^1_{\al},~ d \, x^2_{\al}, ~\dots,~ d \, x^d_{\al}\,. 
\end{equation}

Therefore, for every non-empty intersection\footnote{Let us recall that a variety is necessarily 
a separated scheme. Hence intersection of two affine subsets is again an affine 
subset.}  $U_{\al \beta} = U_{\al} \cap U_{\beta}$ there exists a unique non-degenerate matrix
$$
|| (\La_{\al \beta})^{a}_b || \in \Mat_d\big( \cO_X (U_{\al} \cap U_{\beta}) \big)
$$
such that 
\begin{equation}
\label{dx-alpha-beta}
d x^a_{\al} =  (\La_{\al \beta})^{a}_b \,  d x^b_{\beta}\,.
\end{equation}
In particular, $\La_{\beta \al} = (\La_{\al \beta})^{-1}$\,.

It is easy to see that the collection of tensor fields 
\begin{equation}
\label{Atiyah-cocycle}
(\La_{\beta \al})^{b_1}_a  \, d (\La_{\al \beta})^a_{b_2} \, 
 \pa_{x^{b_1}_{\beta}} \otimes d x^{b_2}_{\beta}  \in 
 \G(U_{\al \beta}, \cT^{1,2}_X)
\end{equation}
is a $1$-cocycle in the \v{C}ech complex 
\begin{equation}
\label{Cech-cT1-2}
\cCb(X, \cT^{1,2}_X)
\end{equation}
for the sheaf  $\cT^{1,2}_X$\,. Furthermore, the cohomology 
class of the cocycle \eqref{Atiyah-cocycle} does not depend on 
the choice of the systems of parameters on affine subsets $U_{\al}$\,.

According to \cite{Atiyah}, the cocycle \eqref{Atiyah-cocycle}
is trivial if and only if the tangent sheaf $\cT_X$ admits an algebraic 
connection. We refer to the cohomology class of  \eqref{Atiyah-cocycle}
as the {\it Atiyah class} of the tangent sheaf $\cT_X$ and denote
the cocycle \eqref{Atiyah-cocycle} by $\sfA$.

We claim that 
\begin{thm}
\label{thm:At-Fed}
Let $\om^a \pa_{t^a}$ be the global section of the sheaf
$$
 \Om^1(\cO_X^{\coord}) \otimes T^{1,0}(P)
$$
introduced in Theorem \ref{thm:om}. Then
\begin{equation}
\label{At-rep-ve}
\sfA_{\om} : = - \frac{\pa^2 \om^a}{\pa t^{b_1} \pa t^{b_2}} \,
\pa_{t^a} \otimes d t^{b_1}  \otimes d t^{b_2} 
\end{equation}
is a global $(d+ L_{\om})$-closed section of the sheaf 
\begin{equation}
\label{Om1T1-2-INV} 
\big( \Om^1(\cO_X^{\coord})  \otimes T^{1,2}(P) \big)^{[\mgl_d(\bbK)]}\,.
\end{equation}
Furthermore, $\sfA_{\om}$ is cohomologous to the cocycle 
$\tau(\sfA)$ in the \v{C}ech complex 
\begin{equation}
\label{Cech-Fed-resolv}
\cCb \Big( X,  \big( \Omb(\cO_X^{\coord})  \otimes T^{1,2}(P) \big)^{[\mgl_d(\bbK)]}   \Big)\,.
\end{equation}
\end{thm}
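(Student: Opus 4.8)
The plan is to prove the two assertions separately. For the first, $\sfA_{\om}$ is global because, by Theorem \ref{thm:om}, $\om$ itself is a global section of $\Om^1(\cO^{\coord}_X)\otimes T^{1,0}(P)$ (its defining property makes no reference to a local system of parameters), and $\sfA_{\om}$ is obtained from $\om$ by differentiation in the $t$-variables alone. To see $(d+L_{\om})\sfA_{\om}=0$, I would differentiate the flatness identity \eqref{om-is-flat1}, written as $d\om^a+\om^c\wedge\frac{\pa\om^a}{\pa t^c}=0$, twice with respect to $t^{b_1}$ and $t^{b_2}$; using that $\frac{\pa^2\om^a}{\pa t^b\pa t^c}$ is symmetric in $b,c$, the resulting identity among $2$-forms has five terms which match, term by term, the five terms produced when one writes out $(d+L_{\om})\big(\frac{\pa^2\om^a}{\pa t^{b_1}\pa t^{b_2}}\,\pa_{t^a}\otimes dt^{b_1}\otimes dt^{b_2}\big)$ via formula \eqref{L-w-v}, and they cancel in pairs. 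For the $\mgl_d(\bbK)$-basic condition, since the contraction $i_{\bv}$ acts only on the $\Om^{\bullet}(\cO^{\coord}_X)$-factor, Corollary \ref{cor:gl-d-om} gives $i_{\bv}\om^a=-\mv^a_c t^c$, whence $i_{\bv}\sfA_{\om}=-\frac{\pa^2(i_{\bv}\om^a)}{\pa t^{b_1}\pa t^{b_2}}\,\pa_{t^a}\otimes dt^{b_1}\otimes dt^{b_2}=0$; combined with $(d+L_{\om})\sfA_{\om}=0$ this yields both conditions in \eqref{cV-trimmed}, so $\sfA_{\om}$ is a global section of \eqref{Om1T1-2-INV}.

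For the second assertion I would work in the total complex of the \v{C}ech double complex $\cCb\big(X,\,(\Omb(\cO^{\coord}_X)\otimes T^{1,2}(P))^{[\mgl_d(\bbK)]}\big)$ for the cover $\{U_{\al}\}$. Both $\sfA_{\om}$ (in \v{C}ech degree $0$) and $\tau(\sfA)$ (in \v{C}ech degree $1$) are cocycles of total degree $1$: $\sfA_{\om}$ is $(d+L_{\om})$-closed by the first part and \v{C}ech-closed as a global section, while $\tau(\sfA)$ is \v{C}ech-closed because $\sfA$ is a \v{C}ech cocycle and $(d+L_{\om})$-closed because $\tau$ of any local section of $\cT^{1,2}_X$ is $(d+L_{\om})$-closed (Claim \ref{clm:tau}, together with \eqref{invar-trimmed}). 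By Theorem \ref{thm:Fedosov} with $(p,q)=(1,2)$, the complex $(\Omb(\cO^{\coord}_X)(U_{\al})\otimes T^{1,2}(P))^{[\mgl_d(\bbK)]}$ is a resolution of $\cT^{1,2}_X(U_{\al})$, hence acyclic in exterior degrees $\ge 1$, so there is a $b_{\al}\in (\Om^0(\cO^{\coord}_X)(U_{\al})\otimes T^{1,2}(P))^{\mgl_d(\bbK)}$ with $(d+L_{\om})(b_{\al})=\sfA_{\om}|_{U_{\al}}$. Then on $U_{\al}\cap U_{\beta}$ we have $(d+L_{\om})(b_{\beta}-b_{\al})=0$, and since the kernel of $d+L_{\om}$ in exterior degree $0$ is exactly $\tau(\cT^{1,2}_X)$ and $\tau$ is injective, $b_{\beta}-b_{\al}=\tau(c_{\al\beta})$ for a \v{C}ech $1$-cocycle $c=(c_{\al\beta})$ of $\cT^{1,2}_X$. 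The total differential of the \v{C}ech $0$-cochain $(b_{\al})$ is then $\sfA_{\om}$ in \v{C}ech degree $0$ and $\tau(c)$ (up to the sign convention of the total complex) in \v{C}ech degree $1$, so $\sfA_{\om}$ and $\tau(c)$ are cohomologous in the total complex; replacing $b_{\al}$ by $b_{\al}+\tau(\phi_{\al})$ with $\phi_{\al}\in\cT^{1,2}_X(U_{\al})$ changes neither $(d+L_{\om})(b_{\al})$ nor the class $[c]\in H^1(X,\cT^{1,2}_X)$, so $[c]$ is canonical and it remains to show $[c]=[\sfA]$: then $\tau(c)$ and $\tau(\sfA)$ differ by $\tau$ of a \v{C}ech coboundary, which is a total coboundary, and the theorem follows.

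Proving $[c]=[\sfA]$ is the main obstacle, and I would carry it out by making the choice of $b_{\al}$ explicit. Using $\om^a=-(J_{x_{\al}}^{-1})^a_b\,d\wt{x^b_{\al}}$ (from \eqref{om-a-x}) and the flat sections $\wt{x^a_{\al}}$ attached to the system of parameters on $U_{\al}$, one writes down a canonical ``Christoffel-type'' primitive $b_{\al}$ of the $t$-Hessian of $\om$ purely in terms of the Jacobian matrix $J_{x_{\al}}=\|\pa\wt{x^a_{\al}}/\pa t^b\|$ and its $t$-derivatives. On $U_{\al}\cap U_{\beta}$ one compares the two primitives through the transition rule $J_{x_{\al}}=I(\La_{\al\beta})\,J_{x_{\beta}}$ of \eqref{I-know}: the terms in which the $t$-derivatives fall on $I(\La_{\al\beta})$ assemble into $\tau$ of the \v{C}ech cocycle $(\La_{\beta\al})^{b_1}_a\,d(\La_{\al\beta})^a_{b_2}\,\pa_{x^{b_1}_{\beta}}\otimes dx^{b_2}_{\beta}$ of \eqref{Atiyah-cocycle}, while the remaining terms, being built from the global section $\om$, add up to $\tau$ of a \v{C}ech coboundary of $\cT^{1,2}_X$. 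Hence $c$ and $\sfA$ are cohomologous, as required. I expect the genuinely delicate points to be the sign bookkeeping in differentiating \eqref{om-is-flat1} twice and the precise matching of index contractions in this final comparison.
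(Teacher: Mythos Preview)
Your proof of the first assertion is essentially identical to the paper's: differentiate the flatness equation \eqref{om-is-flat1} twice in $t$ to get $(d+L_\om)\sfA_\om=0$, and use Corollary \ref{cor:gl-d-om} to see that $i_{\bv}\sfA_\om=0$ (since $i_{\bv}\om^a$ is linear in $t$), whence the second basic condition follows from closedness.

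For the second assertion your outline is correct, but the paper takes a more direct route that avoids the abstract detour through Theorem \ref{thm:Fedosov}. Rather than first invoking acyclicity to assert the \emph{existence} of primitives $b_\al$ and only afterwards making them explicit, the paper simply writes down the explicit \v{C}ech $0$-cochain
\[
\sfA'_\al \;=\; dt^{c}\,\bigl(J_{x_\al}^{-1}\,\partial_{t^c}J_{x_\al}\bigr)^{b_1}_{\ b_2}\,\partial_{t^{b_1}}\otimes dt^{b_2}
\]
and then verifies the two required identities directly. First, using $I(\La_{\beta\al})=J_{x_\beta}J_{x_\al}^{-1}$ (from \eqref{I-know}) and the compatibility of $\tau$ with the Schouten bracket, one computes $\tau(\sfA_{\al\beta})$ explicitly and finds it equals $\sfA'_\al-\sfA'_\beta$ on the nose, i.e.\ $\tau(\sfA)=-\cpa(\sfA')$ with no coboundary correction. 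Second, an explicit (if somewhat lengthy) computation using \eqref{pa-J}--\eqref{pa-J11} and \eqref{Jpa-t-is-flat} shows $(d+L_\om)\sfA'_\al=\sfA_\om$. This is exactly your ``Christoffel-type primitive'' made concrete, and the comparison on overlaps is exactly your ``$t$-derivatives falling on $I(\La_{\al\beta})$'' made precise---but there is no leftover coboundary term, because the paper's $\sfA'_\al$ is already the right choice. Your abstract existence argument is sound, but since the explicit formula is needed anyway to identify the class, it buys nothing; the paper's approach is the same computation stripped of the scaffolding.
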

\begin{proof}
To prove the first statement we need to show that 
$\sfA_{\om}$ satisfies these conditions 
\begin{equation}
\label{i-bv-L-bv-At-rep}
i_{\bv} (\sfA_{\om}) = 0\,,  \qquad 
[(d+L_{\om}), i_{\bv}]   (\sfA_{\om}) = 0\,, 
 \qquad \forall~~ \mv \in \mgl_d(\bbK)\,,
\end{equation}
and
\begin{equation}
\label{cAom-d-om-flat}
(d+ L_{\om}) \sfA_{\om} = 0\,.
\end{equation}

Applying $\pa_{t^{b_1}} \pa_{t^{b_2}}$ to 
equation \eqref{om-is-flat1} we get 
\begin{multline*}
0 = \pa_{t^{b_1}} \pa_{t^{b_2}} 
(d \om^a + \om^{a'} \pa_{t^{a'}} (\om^a)) = 
d \, \frac{\pa^2 \om^a} {\pa t^{b_1} \pa t^{b_2}} +
\om^{a'}  \frac{\pa}{\pa t^{a'}} 
\left( \frac{\pa^2 \om^{a}}{\pa t^{b_1} \pa t^{b_2}} \right)
+\\
+ \frac{\pa^2 \om^{a'}}{\pa t^{b_1} \pa t^{b_2}} 
\frac{\pa \om^a}{ \pa t^{a'}}  + 
\frac{\pa  \om^{a'}}{\pa t^{b_1}}
\frac{ \pa^2 \om^a}{\pa t^{a'} \pa t^{b_2}} +
\frac{\pa  \om^{a'}}{\pa t^{b_2}}
\frac{ \pa^2 \om^a}{\pa t^{a'} \pa t^{b_1}}=
- \big( d \sfA_{\om} + L_{\om}(\sfA_{\om}) \big)^a_{b_1 b_2}\,.
\end{multline*}
Thus equation \eqref{cAom-d-om-flat} holds. 

Due to Corollary \ref{cor:gl-d-om}, we have 
$$
i_{\bv} \, \frac{\pa^2 \om^a}{\pa t^{b_1} \pa t^{b_2}} =
 \frac{\pa^2 }{\pa t^{b_1} \pa t^{b_2}} (i_{\bv}(\om^a)) =
- \frac{\pa^2 }{\pa t^{b_1} \pa t^{b_2}} (\mv^a_b t^b) = 0\,. 
$$
Thus the first equation in \eqref{i-bv-L-bv-At-rep} holds. 

The second equation in  \eqref{i-bv-L-bv-At-rep} follows 
from the first one and  \eqref{cAom-d-om-flat}.

Next, we observe that, due to equation \eqref{I-know},
\begin{equation}
\label{I-La-al-beta}
\tau(\La_{\beta \al})  = I(\La_{\beta \al}) =  J_{x_{\beta}} \,  J_{x_{\al}}^{-1}
\end{equation}
on every  non-empty intersection $U_{\al} \cap U_{\beta}$\,. 

Furthermore, since $\tau$ is compatible with the Schouten bracket, we 
have 
\begin{align*}
\tau \big(dx^b_{\beta} \pa_{x^b_{\beta}} (\La_{\al \beta}) \big) &=
dt^{b'} (J_{\beta})^b_{b'}\, \tau ( \pa_{x^b_{\beta}} \La_{\al \beta}) =
dt^{b'} (J_{\beta})^b_{b'}\, \tau ( \pa_{x^b_{\beta}}) \big( \tau(\La_{\al \beta}) \big)
\\
&= dt^{b'} (J_{\beta})^b_{b'}\, (J^{-1}_{\beta})^{b''}_b \frac{\pa}{\pa t^{b''}} \big( I(\La_{\al \beta}) \big) =
d t^{b'} \frac{\pa}{\pa t^{b'}} \big( I(\La_{\al \beta}) \big) 
\\&= d t^{b} \frac{\pa}{\pa t^{b}} (J_{x_{\al}}
J^{-1}_{x_{\beta}})\,.
\end{align*}

Therefore
\begin{equation}
\label{tau-cA}
\begin{aligned}
\tau(\sfA_{\al \beta}) &= \tau\Big( (\La_{\beta \al})^{b_1}_a  \, d (\La_{\al \beta})^a_{b_2} \, 
 \pa_{x^{b_1}_{\beta}} \otimes d x^{b_2}_{\beta} \Big) 
\\
  &=\left( J_{x_{\beta}} \,  J_{x_{\al}}^{-1} d t^{c} \frac{\pa}{\pa t^{c}} (J_{x_{\al}}
J^{-1}_{x_{\beta}})  \right)^{b_1}_{~b_2}  \,
  (J^{-1}_{x_{\beta}})^{b'_1}_{b_1}  (J_{x_{\beta}})^{b_2}_{b'_2}
  \pa_{t^{b'_1}} \otimes d t^{b'_2} 
\\
&= d t^{c}  \big(J_{x_{\al}}^{-1} \frac{\pa}{\pa t^{c}} (J_{x_{\al}}) \big)^{b_1}_{~b_2} 
  \pa_{t^{b_1}} \otimes d t^{b_2} -
 d t^{c}  \big(J_{x_{\beta}}^{-1} \frac{\pa}{\pa t^{c}} (J_{x_{\beta}}) \big)^{b_1}_{~b_2} 
  \pa_{t^{b_1}} \otimes d t^{b_2}\,.
\end{aligned}
\end{equation}
It is not hard to see that for every affine chart $U_{\al}$ the section 
$$
 d t^{c}  \big(J_{x_{\al}}^{-1} \frac{\pa}{\pa t^{c}} (J_{x_{\al}}) \big)^{b_1}_{~b_2} 
  \pa_{t^{b_1}} \otimes d t^{b_2}  
$$
is $\mgl_d(\bbK)$-invariant. 

Hence computation \eqref{tau-cA} implies that 
\begin{equation}
\label{tau-cA-Cech}
\tau(\sfA) = - \cpa(\sfA')\,,  
\end{equation}
where $\sfA'$ is the \v{C}ech $0$-cochain of
$$
\Big( \Om^0(\cO^{\coord}_X) \otimes T^{1,2}(P) \Big)^{\mgl_d(\bbK)} =  
\Big( \Om^0(\cO^{\coord}_X)  \otimes  T^{1,2}(P) \Big)^{[\mgl_d(\bbK)]}
$$ 
given by the equation
\begin{equation}
\label{cA-prime}
\sfA'_{\al} =  d t^{c}  \big(J_{x_{\al}}^{-1} \frac{\pa}{\pa t^{c}} (J_{x_{\al}}) \big)^{b_1}_{~b_2} 
  \pa_{t^{b_1}} \otimes d t^{b_2}\,.
\end{equation}

Thus, equation \eqref{tau-cA-Cech} implies that $\tau(\sfA)$ is cohomologous to the 
cocycle $\sfA''$ with 
\begin{equation}
\label{d-Lom-cA-pr}
\sfA''_{\al} = (d + L_{\om})  \sfA'_{\al}
\end{equation}

For the components of $ (d + L_{\om})  \sfA'_{\al} $ we have
\begin{multline}\label{At-comp0} 
\Big( (d + L_{\om})  \sfA'_{\al} \Big)^a_{b_1 b_2} = 
(J^{-1}_{\al})^a_{a'} \frac{\pa^2}{ \pa t^{b_1}  \pa t^{b_2}} 
\, d\, \wt{x}^{a'} 
+   \frac{\pa^2  \wt{x}^{a'}}{ \pa t^{b_1}  \pa t^{b_2}} \, d (J^{-1}_{\al})^a_{a'}
\\
+ \om^c \pa_{t^c} \left(   \frac{\pa^2  \wt{x}^{a'}}{ \pa t^{b_1}  \pa t^{b_2}} \, (J^{-1}_{\al})^a_{a'}
\right) 
-   \frac{\pa^2  \wt{x}^{a'}}{ \pa t^{b_1}  \pa t^{b_2}} \, (J^{-1}_{\al})^c_{a'} \, \pa_{t^c} \om^a
\\
+ \pa_{t^{b_1}} \om^c   \frac{\pa^2  \wt{x}^{a'}}{ \pa t^{c}  \pa t^{b_2}} \, (J^{-1}_{\al})^a_{a'} +
  \pa_{t^{b_2}} \om^c   \frac{\pa^2  \wt{x}^{a'}}{ \pa t^{b_1}  \pa t^{c}} \, (J^{-1}_{\al})^a_{a'}\,.
\end{multline}

Using the equation $(d +\om^c \pa_{t^c}) \, \wt{x}^{a'}  = 0$ and combining 
the first and the third term in the right hand side of  \eqref{At-comp0} we get 
\begin{equation}
\label{At-comp1}
\begin{aligned}
&(J^{-1}_{\al})^a_{a'} \frac{\pa^2}{ \pa t^{b_1}  \pa t^{b_2}}  \, d\, \wt{x}^{a'}  
+  \om^c \pa_{t^c} \left(   \frac{\pa^2  \wt{x}^{a'}}{ \pa t^{b_1}  \pa t^{b_2}} \, (J^{-1}_{\al})^a_{a'} \right)
\\
&= \om^c \pa_{t^c} \left(   \frac{\pa^2  \wt{x}^{a'}}{ \pa t^{b_1}  \pa t^{b_2}} \, (J^{-1}_{\al})^a_{a'} \right)  
 - (J^{-1}_{\al})^a_{a'} \frac{\pa^2}{ \pa t^{b_1}  \pa t^{b_2}} \left( \om^c 
\frac{\pa \wt{x}^{a'}}{\pa t^c} \right) 
\\
&= 
\begin{multlined}
- \frac{\pa^2  \om^a }{ \pa t^{b_1}  \pa t^{b_2}} - (\pa_{t^{b_1}} \om^c) 
 \frac{\pa^2  \wt{x}^{a'}}{ \pa t^{c}  \pa t^{b_2}} \, (J^{-1}_{\al})^a_{a'} 
 -  (\pa_{t^{b_2}} \om^c) 
  \frac{\pa^2  \wt{x}^{a'}}{ \pa t^{b_1}  \pa t^{c}} \, (J^{-1}_{\al})^a_{a'}
 \\
  + \frac{\pa^2  \wt{x}^{a'}}{ \pa t^{b_1}  \pa t^{b_2}} \,   \om^c \pa_{t^c} (J^{-1}_{\al})^a_{a'} \,.
  \end{multlined}
\end{aligned}
\end{equation}

Therefore, we can rewrite equation \eqref{At-comp0} as follows:

\begin{multline}
\label{At-comp11} 
\Big( (d + L_{\om})  \sfA'_{\al} \Big)^a_{b_1 b_2} = - \frac{\pa^2  \om^a }{ \pa t^{b_1}  \pa t^{b_2}} 
\\
+ \frac{\pa^2  \wt{x}^{a'}}{ \pa t^{b_1}  \pa t^{b_2}} \,
\Big( d (J^{-1}_{\al})^a_{a'} +     \om^c \pa_{t^c} (J^{-1}_{\al})^a_{a'} 
-  (J^{-1}_{\al})^c_{a'} \, \pa_{t^c} \om^a
\Big)\,.
\end{multline}

Let us observe that the expression $ d (J^{-1}_{\al})^a_{a'} +     \om^c \pa_{t^c} (J^{-1}_{\al})^a_{a'} 
-  (J^{-1}_{\al})^c_{a'} \, \pa_{t^c} \om^a$ is the $a$-th component of 
$$
(d+ L_{\om})\big( (J^{-1}_{\al})^a_{a'} \pa_{t^a} \big)
$$
which is zero due to \eqref{Jpa-t-is-flat}.

Thus 
$$
\Big( (d + L_{\om})  \sfA'_{\al} \Big)^a_{b_1 b_2} = - \frac{\pa^2  \om^a }{ \pa t^{b_1}  \pa t^{b_2}} 
$$
and Theorem \ref{thm:At-Fed} follows. 
\end{proof}

\section{Reminder of Kontsevich's graph complex $\GC$} 
\label{sec:fGC}

\subsection{The operad $\Gra$ and its action on $T_{\poly}(P)$}
We start by recalling the graded operad $\Gra$ \cite[Section 7]{notes}, \cite{Thomas}. 

For this purpose, we introduce an auxiliary set $\gra_n$.  
An element of $\gra_{n}$ is a labelled graph $\G$
with $n$ vertices and with the additional piece 
of data: the set of edges of $\G$ is equipped with a 
total order. An example of an element in $\gra_4$ is 
shown on figure \ref{fig:exam}. 
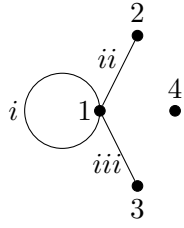
\begin{figure}[htp] 
\centering 
\begin{tikzpicture}[scale=0.5, >=stealth']
\tikzstyle{w}=[circle, draw, minimum size=4, inner sep=1]
\tikzstyle{b}=[circle, draw, fill, minimum size=4, inner sep=1]
\node [b] (b1) at (0,0) {};
\draw (-0.4,0) node[anchor=center] {{\small $1$}};
\node [b] (b2) at (1,2) {};
\draw (1,2.6) node[anchor=center] {{\small $2$}};
\node [b] (b3) at (1,-2) {};
\draw (1,-2.6) node[anchor=center] {{\small $3$}};
\node [b] (b4) at (2,0) {};
\draw (2,0.6) node[anchor=center] {{\small $4$}};
\draw (b1) edge (b2);
\draw (0.2,1.4) node[anchor=center] {{\small $ii$}};
\draw (b1) edge (b3);
\draw (0.2,-1.4) node[anchor=center] {{\small $iii$}};
\draw (-1,0) circle (1);
\draw (-2.3,0) node[anchor=center] {{\small $i$}};
\end{tikzpicture}
\caption{The Roman numerals 
indicate that we chose the total order on 
the set of edges $(1,1) < (1,2) < (1,3)$} \label{fig:exam}
\end{figure}
We will often use Roman numerals to specify total orders 
on sets of edges. Thus the Roman numerals on figure \ref{fig:exam} 
indicate that we chose the total order $(1,1) < (1,2) < (1,3)$\,.

The space  $\Gra(n)$ (for $n \ge 1$) is spanned by elements of 
$\gra_n$, modulo the relation $\G^{\si} = (-1)^{|\si|} \G$,
where the elements $\G^{\si}$ and $\G$ correspond to the same
labelled graph but differ only by permutation $\si$
of edges. We also declare that 
the degree of a graph $\G$ in $\Gra(n)$ equals 
$-e(\G)$, where $e(\G)$ is the number of edges in $\G$\,.
For example, the graph $\G$ on figure \ref{fig:exam} has 
$3$ edges. Thus its degree is $-3$\,. 

Finally, we set 
\begin{equation}
\label{Gra-0}
\Gra(0) = \bfzero\,.
\end{equation}

The symmetric group $S_n$ acts on $\Gra(n)$ in the obvious 
way by rearranging labels on vertices and 
elementary operadic insertions
$$
\circ_i : \Gra(n) \otimes \Gra(k) \to \Gra(n+k-1)
$$
are defined using natural operations with labeled graphs 
(we refer the reader for more details to \cite[Section 7]{notes}). 
  
To define an action of $\Gra$ on $T_{\poly}(P)$ (with $P=\bbK[[t^1, t^2, \dots, t^d]] $)
we identify $T_{\poly}(P)$ with the graded commutative algebra 
\begin{equation}
\label{T-poly-same}
P[\xi_1, \xi_2, \dots, \xi_d]\,,
\end{equation}
where $\xi_a$'s are degree $1$ auxiliary variables. 

Next, given an element $\G \in \gra_n$ and polyvectors 
$v_1, \dots, v_n \in P[\xi_1, \xi_2, \dots, \xi_d]$, we 
set\footnote{Note that, in computing the right hand side of 
equation \eqref{Gra-acts}, we use the Koszul rule of signs.} 
\begin{equation}
\label{Gra-acts}
\G(v_1, \dots, v_n) : = \mult_n  
\Big(
\Big[\prod_{(i,j) \in E(\G)} \Lap_{(i,j)} \Big]
\,(v_1  \otimes v_2 \otimes \dots \otimes v_n )\,
 \Big) \,,
\end{equation}
where   $\mult_n$ is the multiplication map 
$$
\mult_n : \big( T_{\poly}(P) \big)^{\otimes \, n}  \to 
T_{\poly}(P)\,,
$$ 
$E(\G)$ is the set of edges of $\G$,
\begin{equation}
\label{Lap}
\Lap_{(i,j)} = \sum_{a=1}^d 1 \otimes \dots \otimes 1 \otimes 
\underbrace{\pa_{\xi_a}}_{i\textrm{-th slot}} \otimes 1 \otimes \dots \otimes 1 
\otimes 
\underbrace{\pa_{x^a}}_{j\textrm{-th slot}}  \otimes 1  \otimes \dots  \otimes 1 +
\end{equation}
$$
\sum_{a=1}^d 1 \otimes \dots \otimes 1 \otimes 
\underbrace{\pa_{x^a}}_{i\textrm{-th slot}} \otimes 1 \otimes \dots \otimes 1 
\otimes 
\underbrace{\pa_{\xi_a}}_{j\textrm{-th slot}}  \otimes 1  \otimes \dots  \otimes 1
$$
and the order of operators $\Lap_{(i,j)}$ coincides with the total order 
on the set of edges of $\G$\,.

It is not hard to see that equation \eqref{Gra-acts} defines 
an action of the operad $\Gra$ on $T_{\poly}(P)$\,.

Let us also recall that the operad $\Gra$ receives a natural embedding 
\begin{equation}
\label{Ger-Gra}
\io : \Ger \to \Gra
\end{equation}
from the operad $\Ger$\,. Namely, the embedding $\io$ of $\Ger$ into 
$\Gra$ is defined on generators by the formulas: 
\begin{equation}
\label{iota-dfn}
\io(a_1 a_2) : = \G_{\bb}\,, \qquad 
\io(\{a_1, a_2\}) : = \G_{\ed}\,, 
\end{equation}
where 
\begin{equation}
\label{binary}
\G_{\bb} =   \begin{tikzpicture}[scale=0.5, >=stealth']
\tikzstyle{w}=[circle, draw, minimum size=4, inner sep=1]
\tikzstyle{b}=[circle, draw, fill, minimum size=4, inner sep=1]
\node [b] (b1) at (0,0) {};
\draw (0,0.6) node[anchor=center] {{\small $1$}};
\node [b] (b2) at (1.5,0) {};
\draw (1.5,0.6) node[anchor=center] {{\small $2$}};
\end{tikzpicture}
\qquad \quad
\G_{\ed} =   \begin{tikzpicture}[scale=0.5, >=stealth']
\tikzstyle{w}=[circle, draw, minimum size=4, inner sep=1]
\tikzstyle{b}=[circle, draw, fill, minimum size=4, inner sep=1]
\node [b] (b1) at (0,0) {};
\draw (0,0.6) node[anchor=center] {{\small $1$}};
\node [b] (b2) at (1.5,0) {};
\draw (1.5,0.6) node[anchor=center] {{\small $2$}};
\draw (b1) edge (b2);
\end{tikzpicture}
\end{equation}

The Gerstenhaber algebra structure on $T_{\poly}(P)$ is 
induced by the embedding \eqref{Ger-Gra}.

\begin{remark}
\label{rem:Gra-on-aff-space}
It is easy to see that equation \eqref{Gra-acts} defines 
an action of the operad $\Gra$ on polyvector fields 
on an affine space. Although equation \eqref{Gra-acts}
does not make sense for polyvector fields on an arbitrary (smooth) algebraic variety, 
the actions of $\G_{\bb}$ and $\G_{\ed}$ \eqref{binary} 
are well defined in this setting. 
\end{remark}

\subsection{The full graph complex $\fGC$}
Consider the convolution Lie algebra 
\begin{equation}
\label{coComm-Gra}
\Conv(\La^2 \coCom, \Gra) = \prod_{n \ge 1} 
\bs^{2n-2} \Big( \Gra(n) \Big)^{S_n}.
\end{equation}
The graph $\G_{\ed}$ of \eqref{binary} is symmetric under the interchange of the vertex labels and hence defines an element of $\Conv(\La^2 \coCom, \Gra)$, which we also denote by $\G_{\ed}$\,.  
One easily checks that $\G_{\ed}$ is a Maurer-Cartan element, i.e., it has degree $1$ in \eqref{coComm-Gra} and 
\begin{equation}
\label{MC-G-ed}
[\G_{\ed}, \G_{\ed} ] = 0\,.
\end{equation}

The {\it full graph complex} $\fGC$ is the cochain complex 
$\Conv(\La^2 \coCom, \Gra)$ with the differential
\begin{equation}
\label{diff-fGC}
\pa = [\G_{\ed}, ~]\,.
\end{equation}

In \cite[Section 5]{K-conj}, M. Kontsevich introduced the subcomplex
\begin{equation}
\label{GC}
\GC \subset \fGC
\end{equation}
which consists of infinite sums 
\begin{equation}
\label{vect-GC}
\sum_{n \ge 4} X_n\,, \qquad X_n \in \bs^{2n-2} \Big( \Gra(n) \Big)^{S_n} 
\end{equation}
where each graph $\G$ in the linear combination $X_n$ satisfies these 
properties 
\begin{itemize}

\item $\G$ is connected, 

\item $\G$ is 1-vertex irreducible\footnote{A connected graph $\G$ is called  
\emph{1-vertex irreducible} if the complement of each vertex of $\G$ is 
connected.}, and 

\item each vertex of $\G$ has valency $\ge 3$\,.

\end{itemize}

We call $\GC$ {\it Kontsevich's graph complex}.

\begin{example}
\label{exam:tetra}
Consider the tetrahedron in $\Gra(4)$ depicted 
on figure \ref{fig:tetra-here}. 
\begin{figure}[htp]
\centering 
\begin{tikzpicture}[scale=0.5, >=stealth']
\tikzstyle{ahz}=[circle, draw, fill=gray, minimum size=26, inner sep=1]
\tikzstyle{ext}=[circle, draw, minimum size=5, inner sep=1]
\tikzstyle{int}=[circle, draw, fill, minimum size=5, inner sep=1]
\node [int] (v1) at (2,0) {};
\draw (2,-0.6) node[anchor=center] {{\small $1$}};
\node [int] (v2) at (4,3) {};
\draw (4,3.6) node[anchor=center] {{\small $2$}};
\node [int] (v3) at (0,3) {};
\draw (0,3.6) node[anchor=center] {{\small $3$}};
\node [int] (v4) at (2,1.8) {};
\draw (2, 2.4) node[anchor=center] {{\small $4$}};
\draw (v1) edge (v2);
\draw (v1) edge (v3);
\draw (v1) edge (v4);
\draw (v2) edge (v3);
\draw (v2) edge (v4);
\draw (v3) edge (v4);
\end{tikzpicture}
~\\[0.3cm]
\caption{We may choose this order on the 
set of edges: $ (1,2) < (1,3) < (1,4) < (2,3)< (2,4) < (3,4)$ } \label{fig:tetra-here}
\end{figure}
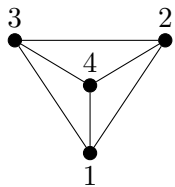 
This vector is invariant with 
respect to the action of $S_4$ and hence it can be 
viewed as vector in $\fGC$\,. It is not hard to see that 
this is a degree zero non-trivial cocycle in $\fGC$\,. Moreover 
the tetrahedron is connected, 1-vertex irreducible and
trivalent.  Thus this is an example of a non-trivial 
degree zero cocycle in $\GC$\,. 
\end{example}

\subsubsection{The Grothendieck-Teichm\"uller Lie algebra versus  $H^0(\GC)$}
\label{sec:grt}
In this small subsection, we give a very brief reminder 
of the Grothendieck-Teichm\"uller Lie algebra\footnote{Strictly speaking, $\grt$ 
is the pro-nilpotent part of the Lie algebra introduced by V. Drinfeld. In \cite{AT} and 
\cite{Drinfeld}, the Lie algebra we use here is denoted by $\grt_1$.} $\grt$\,. 
For more details we refer the reader to \cite{AT} or \cite{Drinfeld}.
We will conclude this subsection with a reminder of the necessary results 
from \cite{Thomas}. 

Let $m$ be an integer $\ge 2$. We denote by $\mt_m$ the Lie algebra  
generated by symbols $\{ t^{ij} = t^{ji} \}_{1 \le i \neq j \le m}$ subject to the following 
relations: 
\begin{multline}
\label{DK-relations} 
[t^{ij}, t^{ik} + t^{jk}] = 0 \qquad \textrm{for any triple of distinct indices } i,j,k\,,\\
[t^{ij}, t^{kl}] = 0 \qquad \textrm{for any quadruple of distinct indices } i,j,k,l\,.
\end{multline}
We also denote by $\hat{\mt}_m$ the degree completion of this Lie algebra. 

Let $\lie(x,y)$ be the degree completion of the free Lie algebra in two symbols $x$ and $y$\,.

As a graded vector space, $\grt$ consists of Lie series $\si(x,y)$ satisfying the 
following equations
\begin{equation}
\label{relations-grt}
\begin{array}{c}
\si(y,x) = - \si(x,y)\,, \\[0.4cm] 
\si(x,y) + \si(y, -x-y) + \si(-x-y, x) = 0\,, \\[0.4cm]
\si(t^{23}, t^{34}) - \si (t^{13} + t^{23}, t^{34}) +
\si(t^{12} + t^{13}, t^{24} + t^{34}) \\[0.2cm]
- \si(t^{12}, t^{23} + t^{24}) + \si(t^{12}, t^{23}) = 0\,.   
\end{array}
\end{equation}

The Lie bracket on $\grt$ is the Ihara bracket which is given by the formula: 
\begin{equation}
\label{Ihara}
[\si, \si']_{\Ih} := \de_{\si} (\si') - \de_{\si'}(\si) + [\si, \si']_{\lie(x,y)}\,,
\end{equation}
where $[~,~]_{\lie(x,y)}$ is the usual bracket on $\lie(x,y)$ and 
$\de_{\si}$ is the continuous derivation of $\lie(x,y)$ defined by 
the equations
$$
\de_{\si}(x) : = 0\,, \qquad \de_{\si}(y) : = [y, \si(x,y)]\,.
$$

Note that relations \eqref{relations-grt} are homogeneous in generators 
$x,y$. So it is sufficient to look only for homogeneous solutions. 

A direct computation shows that relations \eqref{relations-grt} have 
neither linear nor quadratic solutions, and the space of
solutions of degree $3$
is spanned by the element 
$$
\si_3(x,y) : = [x,[x,y]] -[y,[y,x]]\,.  
$$

More generally, we have 
\begin{prop}[Proposition 6.3, \cite{Drinfeld}]
\label{prop:DD-elements}
For every odd integer $n \ge 3$ there exists a non-zero vector 
$\si_n \in \grt$ of degree $n$ in symbols $x$ and $y$ such that 
\begin{equation}
\label{si-n-fix}
\si_n = \ad^{n-1}_{x}(y)  + \dots 
\end{equation}
where $\dots$ is a sum of Lie words of degrees $\ge 2$ in the symbol $y$\,.
\end{prop}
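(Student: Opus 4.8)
The plan is to reduce the statement, which concerns the Grothendieck--Teichm\"uller Lie algebra $\grt$, to a purely combinatorial fact about the defining relations \eqref{relations-grt} by using the grading of $\grt$ by degree in the symbols $x,y$. Since all three relations in \eqref{relations-grt} preserve the total degree in $x$ and $y$, the space $\grt$ is graded, $\grt = \bigoplus_{n \ge 1} \grt_n$, and it suffices to produce a non-zero element of $\grt_n$ for each odd $n \ge 3$ whose leading term (in the filtration by degree in $y$) is $\ad^{n-1}_x(y)$. First I would set up the ambient space: inside the degree-$n$ part of the free Lie algebra $\lie(x,y)$, consider the subspace spanned by Lie words that are of degree exactly $1$ in $y$; this subspace has a basis $\{\ad^k_x \ad_y \ad^{n-1-k}_x(\,\cdot\,)\}$, or more conveniently is spanned by $\ad^{k}_x(y)$ shifted around, and one checks it is one-dimensional modulo the words $[\ad^a_x(y),\ad^b_x(y)]$ that are of degree $2$ in $y$. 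So any candidate $\si_n$ is determined, up to its ``higher in $y$'' part, by the coefficient of $\ad^{n-1}_x(y)$.

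The core of the argument is then to verify that the three equations in \eqref{relations-grt}, restricted to the part linear in $y$, are solvable. The antisymmetry relation $\si(y,x) = -\si(x,y)$ is automatically satisfied to leading order by $\ad^{n-1}_x(y)$ precisely because $n-1$ is even (this is where oddness of $n$ enters the very first relation). The hexagon-type relation $\si(x,y) + \si(y,-x-y) + \si(-x-y,x) = 0$ becomes, in degree $n$ and linear order in $y$, a linear condition on the $y$-linear part of $\si_n$ whose leading term $\ad^{n-1}_x(y)$ satisfies it by a short computation with the substitution $x \mapsto -x-y$ expanded to first order in $y$. The pentagon relation, living in $\hat{\mt}_4$, similarly contributes a linear constraint; the point is that all of these are \emph{affine-linear} equations for the unknown higher-$y$-degree components once the leading coefficient is normalized to $1$, and one shows the associated homogeneous system together with these inhomogeneous terms is consistent. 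I would then invoke the standard fact (this is exactly the content of the cited \cite[Proposition 6.3]{Drinfeld}, and one may also cite \cite{AT}) that this consistency holds, so that a solution $\si_n$ of the full system \eqref{relations-grt} exists with $\si_n = \ad^{n-1}_x(y) + (\text{terms of degree} \ge 2 \text{ in } y)$; non-vanishing is then immediate since the leading term is non-zero in $\lie(x,y)$.

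The main obstacle is the solvability of the pentagon relation in degree $n$: one has to argue that imposing the pentagon equation in $\hat{\mt}_4$ does not over-determine the system and kill the leading term. The honest way to handle this is not a bare-hands degree-by-degree calculation but to use the known structural description of $\grt$, for instance the identification of $\grt$ with a space of ``special derivations'' or with $H^0$ of a graph-type complex as discussed around \eqref{H-GC-grt}, together with the fact that the Deligne--Drinfeld elements $\si_3, \si_5, \si_7, \dots$ are the generators of a free Lie subalgebra detected in low degrees. Concretely, I would point to the depth filtration: the associated graded of $\grt$ in depth $1$ is known (by work going back to Deligne--Ihara, and quoted in \cite{Brown}) to be non-zero exactly in odd degrees $\ge 3$, each time one-dimensional, and spanned by an element whose depth-$1$ (i.e.\ $y$-linear) representative is a non-zero multiple of $\ad^{n-1}_x(y)$. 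Rescaling gives \eqref{si-n-fix}. Thus the proof is essentially a bookkeeping reduction plus a citation of the depth-$1$ computation; I would present the reduction in detail and cite \cite{Drinfeld}, \cite{AT}, \cite{Brown} for the input that guarantees existence.
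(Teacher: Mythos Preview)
The paper does not give its own proof of this proposition: it is stated with the attribution ``Proposition 6.3, \cite{Drinfeld}'' and no argument is supplied, since this is Drinfeld's result being quoted as background. Your proposal ultimately amounts to the same thing --- after the motivational discussion of the degree and depth filtrations, you explicitly defer the actual existence statement to \cite{Drinfeld}, \cite{AT}, and \cite{Brown} --- so there is no discrepancy to report; your write-up simply adds context that the paper omits.
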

We call elements  $\{ \si_n \}_{n \textrm{ odd } \ge 3}$ {\it Deligne-Drinfeld elements} of $\grt$\,.
\begin{remark}
\label{rem:DD-elem}
There is no canonical choice of the elements $\{ \si_n \}_{n \textrm{ odd } \ge 3}$, i.~e., no canonical choice 
of the Lie words $\dots$ in \eqref{si-n-fix}. Nevertheless, there is an important conjecture \cite[Section 6]{Drinfeld}
(the Deligne-Drinfeld conjecture) which states that the Lie algebra $\grt$ 
is freely generated by elements  $\{ \si_n \}_{n \textrm{ odd } \ge 3}$ regardless of this choice. 
Recently, F. Brown \cite{Brown} proved that there exists a choice of Lie words $\dots$ in 
\eqref{si-n-fix} such that the elements  $\{ \si_n \}_{n \textrm{ odd } \ge 3}$ generate
a free Lie subalgebra $\mmu$ in $\grt$. However, it is still not known whether this subalgebra 
coincides with the full Lie algebra $\grt$. 
\end{remark}

For our purposes, we need the results from \cite{Thomas} which link 
the Lie algebra $\grt$ to $H^0(\GC)$. We assemble Theorem 1.1 
and Proposition 9.1 from \cite{Thomas} into the following 
statement:
\begin{thm}[\cite{Thomas}]
\label{thm:GC-grt}
Let $\GC$ be Kontsevich's graph complex and 
$\grt$ be the Grothendieck-Teichm\"uller  Lie algebra
defined above. Then we have an isomorphism of 
Lie algebras:
\begin{equation}
\label{H0GC-grt}
H^0(\GC) \cong \grt\,.
\end{equation}
Let $n$ be an odd integer $\ge 3$ and $\si_n$ be a homogeneous 
element of $\grt$ satisfying \eqref{si-n-fix}. 
If $\wt{\si}_n$ is the class in $H^0(\GC)$ corresponding to 
$\si_n \in \grt$, then each representative of $\wt{\si}_n$
has a non-zero coefficient in front of the graph shown on figure \ref{fig:wheel}. 
\end{thm}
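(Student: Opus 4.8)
\medskip

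\noindent\textbf{Proof proposal.}
The plan is to obtain $H^0(\GC)$ and $\grt$ as the degree-zero cohomology of one and the same deformation complex, and then to overlay a loop-order grading fine enough to read off the coefficient of the wheel.

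\emph{The isomorphism \eqref{H0GC-grt}.}
Twisting $\Gra$ by the Maurer--Cartan element $\G_{\ed}$ produces the graph operad $\Graphs_2$, which by Kontsevich's formality is a cofibrant model for the chains on the little $2$-disks operad, whose homology is $\Ger$. The dg Lie algebra of operadic derivations of $\Graphs_2$ fixing the copy of the associative operad $\As\hookrightarrow\Ger\hookrightarrow\Graphs_2$ has underlying complex quasi-isomorphic, after the twist of Section~\ref{sec:fGC}, to $\fGC$. I would prove this in three moves: (a) identify $\Conv(\La^2\coCom,\Gra)$ with the relevant part of that derivation complex; (b) split off, by a filtration, the acyclic subcomplex spanned by graphs with univalent or bivalent vertices, the lone surviving ``line'' class being absorbed on the $\As$-side; (c) check that graphs failing $1$-vertex irreducibility form a contractible piece. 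This realises $H^0(\GC)$ as $H^0$ of the reduced deformation complex $\Def(\Ger)$ relative to $\As$, which equals $\grt$ by Tamarkin's theorem that the torsor of formality morphisms for the little $2$-disks operad is a $\GRT$-torsor; a final check matches the graph-insertion bracket with the Ihara bracket \eqref{Ihara}.

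\emph{The loop-order grading.}
A connected graph with $v$ vertices and $e$ edges sits in degree $2v-2-e$ and has first Betti number $b_1=e-v+1$; since $\pa=[\G_{\ed},-]$ acts on admissible graphs by vertex splitting ($v\mapsto v+1$, $e\mapsto e+1$) it preserves $b_1$, and the bracket is additive in $b_1$. Hence $\GC=\prod_{g\ge 1}\GC^{(g)}$ as complexes, $[H^0(\GC^{(p)}),H^0(\GC^{(q)})]\subseteq H^0(\GC^{(p+q)})$, and a valence count gives $H^0(\GC^{(1)})=H^0(\GC^{(2)})=0$, the first nonzero class being the tetrahedron of Example~\ref{exam:tetra} in $g=3$. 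Under \eqref{H0GC-grt} this grading matches the grading of $\grt$ by degree in $x,y$, so a weight-$n$ element $\si_n$ gives a class $\wt{\si}_n\in H^0(\GC^{(n)})$. The graph of Figure~\ref{fig:wheel} is the wheel $W_n$ --- a hub of valence $n$ and $n$ trivalent rim vertices in a cycle --- which sits in degree zero in $\GC^{(n)}$, is closed, and is nonzero precisely because $n$ is odd (an even wheel is killed by its reflection symmetry).

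\emph{Nonvanishing of the wheel coefficient.}
Let $c_n$ extract the coefficient of $W_n$ from an element of $\GC^{(n)}$ of degree $0$. First, $c_n$ descends to $H^0(\GC^{(n)})$: were $W_n$ to appear in $\pa\gamma$ it would be a vertex splitting of a graph in $\gamma$, forcing that graph to be an edge-contraction of $W_n$; but every edge-contraction of $W_n$ has a double edge and so vanishes in $\Gra$. Second, $c_n$ annihilates decomposables: a term of $[\al,\be]$ with $\al\in\GC^{(p)}$, $\be\in\GC^{(q)}$, $p,q\ge 3$, contains an induced copy of $\be$ on $q+1$ of its vertices (or of $\al$ on $p+1$), whereas a short count shows every induced subgraph of $W_n$ on $q+1$ vertices has $b_1\le q-1<q=b_1(\be)$, so no such term is $W_n$. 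Finally --- and this is the main obstacle --- one must show $c_n(\wt{\si}_n)\neq 0$. The route I would take pushes everything through the hairy graph complex $\mathrm{HGC}_{2,1}$: $\GC$ maps to $\mathrm{HGC}_{2,1}$ by attaching one hair in all ways, and $\mathrm{HGC}_{2,1}$ maps to a model of $\grt$; tracking $W_n$ through this composite, the wheel with one hair corresponds to a nonzero rational multiple of $\ad_x^{n-1}(y)$, so that $c_n(\wt{\si}_n)$ is a nonzero multiple of the coefficient of $\ad_x^{n-1}(y)$ in $\si_n$, which is $1$ by \eqref{si-n-fix}. The delicate point is precisely the compatibility of the hair map with the equivalence \eqref{H0GC-grt} --- verifying that the class of $W_n$ really maps to the indicated Lie word and not to something cohomologous with a vanishing leading term --- and this is where one must invoke the explicit form of the formality equivalence (equivalently, the nonvanishing of a period of the Knizhnik--Zamolodchikov associator), rather than soft homological algebra.
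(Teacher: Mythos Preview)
The paper does not prove this theorem. The sentence immediately preceding the statement reads ``We assemble Theorem 1.1 and Proposition 9.1 from \cite{Thomas} into the following statement,'' and no argument is given; the result is imported as a black box. So there is no in-paper proof against which to compare your proposal.

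That said, your sketch is broadly in the spirit of \cite{Thomas}. The architecture for \eqref{H0GC-grt} is right, though each of your steps (a)--(c) hides a substantial computation; for instance, the passage from $\fGC$ to $\GC$ leaves behind a family of ``loop'' classes (polygon graphs) in negative degrees rather than a single ``line'' class, and one must check that these do not interfere with $H^0$. (Also, $\As$ does not embed in $\Ger$; presumably you mean $\Com$.) Your argument that $c_n$ descends to $H^0(\GC^{(n)})$ is correct: contracting any edge of $W_n$, rim or spoke, forces a pair of parallel edges, and such a graph is zero in $\Gra$ (or simply absent, in the simple-graph convention the paper adopts). Your observation that $c_n$ kills brackets is also correct --- it is exactly the content of Remark~\ref{rem:u-abelianization} later in the paper --- but it is not needed for the theorem as stated, which only asks that $c_n$ be well-defined on cohomology and nonzero on the single class $\wt{\si}_n$.

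The genuine gap is the last step, which you yourself flag: why $c_n(\wt{\si}_n)\ne 0$. Your proposed route --- through a hairy-graph model and the identification of the one-haired wheel with $\ad_x^{n-1}(y)$ --- is essentially what \cite[Proposition~9.1]{Thomas} does, and carrying it out requires tracing an explicit chain of quasi-isomorphisms all the way back to Drinfeld's presentation of $\grt$. Your sketch stops exactly where \cite{Thomas} begins the real computation, so what you have written is an outline of the published argument rather than an independent proof.
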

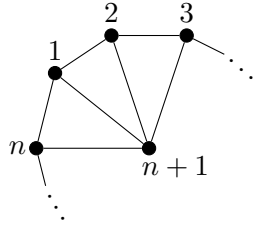
\begin{figure}[htp]
\centering 
\begin{tikzpicture}[scale=0.5, >=stealth']
\tikzstyle{ext}=[circle, draw, minimum size=5, inner sep=1]
\tikzstyle{int}=[circle, draw, fill, minimum size=5, inner sep=1]
\node [int] (v0) at (3,3) {};
\draw (3.7, 2.5) node[anchor=center] {{\small $n+1$}};
\node [int] (vn) at (0,3) {};
\draw (-0.5, 3) node[anchor=center] {{\small $n$}};
\node [int] (v1) at (0.5,5) {};
\draw (0.5, 5.6) node[anchor=center] {{\small $1$}};
\node [int] (v2) at (2,6) {};
\draw (2, 6.6) node[anchor=center] {{\small $2$}};
\node [int] (v3) at (4,6) {};
\draw (4, 6.6) node[anchor=center] {{\small $3$}};
\draw (5.4, 5.15) node[anchor=center, rotate=140] {{\small $\dots$}};
\draw (0.5, 1.5) node[anchor=center, rotate=120] {{\small $\dots$}};
\draw (vn) edge (0.25,2);
\draw (vn) edge (v1);
\draw (v1) edge (v2);
\draw (v2) edge (v3);
\draw (v3) edge (5,5.5);
\draw (vn) edge (v0);
\draw (v1) edge (v0);
\draw (v2) edge (v0);
\draw (v3) edge (v0);
\end{tikzpicture}
~\\[0.3cm]
\caption{Here $n$ is an odd integer $\ge 3$. (We do not specify the 
order on the set of edges.)} \label{fig:wheel}
\end{figure}

\subsection{The action of $\fGC$ and $\GC$ on $T_{\poly}(P)$} 

Let us view $T_{\poly}(P)$ as a $\La\Lie$-algebra. Then,
according to Appendix \ref{app:Def-comp}, the deformation complex of  
 $T_{\poly}(P)$ is 
\begin{equation}
\label{Def-Tpoly-P}
\Def_{\La\Lie}(T_{\poly}(P)) = \Conv(\La^2\coCom, \End_{T_{\poly}(P)})\,,
\end{equation}
where $\End_{T_{\poly}(P)}$ is the endomorphism operad of $T_{\poly}(P)$
and the differential is given by the adjoint action of the Maurer-Cartan element which 
corresponds to the composition 
$$
\Cobar(\La^2 \coCom) \to \La\Lie \to \End_{T_{\poly}(P)}\,. 
$$

The canonical operad morphism defined by \eqref{Gra-acts} 
\begin{equation}
\label{Gra-to-End-Tpoly}
\ma : \Gra \to \End_{T_{\poly}(P)}
\end{equation}
induces a morphism of graded Lie algebras: 
\begin{equation}
\label{fGC-to-DefTpoly}
\ma_* :  \Conv(\La^2\coCom, \Gra) \to 
 \Conv(\La^2\coCom, \End_{T_{\poly}(P)})\,.
\end{equation}

Furthermore, since for the generator $\{a_1, a_2\} \in \La\Lie(2)$, 
$\io(\{a_1, a_2\}) = \G_{\ed}$, the map $\ma_*$ sends the Maurer-Cartan element 
of $ \Conv(\La^2\coCom, \Gra)$ to the Maurer-Cartan element of 
$\Conv(\La^2\coCom,  \End_{T_{\poly}(P)} )$\,.
Hence $\ma_*$ is also a map of dg Lie algebras, and 
restricting $\ma_*$ to 
$$
\GC \subset   
\Conv(\La^2\coCom, \Gra ),
$$
we get a map (of dg Lie algebras) which we denote by the same letter $\ma_{*}$
\begin{equation}
\label{GC-to-DefTpoly}
\ma_{*} : \GC \to \Def_{\La\Lie}(T_{\poly}(P))\,. 
\end{equation}

\subsection{Dg Lie algebras related to $\fGC$}
This section is devoted to the auxiliary dg Lie algebras 
$\Conv(\Ger^{\vee}, \Ger)$ and $\Conv(\Ger^{\vee}, \Gra)$ 
which are used in proving a remarkable 
property of the map from Kontsevich's graph complex $\GC$ to 
the deformation complex of the sheaf of polyvector fields.

First, we recall that the cooperad $\Ger^{\vee}$ is obtained by 
taking the linear dual of the operad $\La^{-2}\Ger$\,. 
Hence, as graded vector spaces, 
\begin{equation}
\label{Conv-Ger-Ger}
\Conv(\Ger^{\vee}, \Ger)  \cong \prod_{n \ge 1} \Big( \Ger(n) \otimes \La^{-2} \Ger(n)  \Big)^{S_n}\,,
\end{equation}
and
\begin{equation}
\label{Conv-Ger-Gra}
\Conv(\Ger^{\vee}, \Gra)  \cong \prod_{n \ge 1} \Big( \Gra(n) \otimes \La^{-2} \Ger(n)  \Big)^{S_n}\,.
\end{equation}

Next,  we identify  $\Ger(n)$ with the subspace of the 
free Gerstenhaber algebra  $\Ger(a_1, \dots, a_n)$ spanned 
by  $\Ger$-monomials in which each symbol from 
the set $\{a_1, a_2, \dots, a_n\}$ appears exactly once.
We also identify  $\La^{-2}\Ger(n)$ with the subspace of the 
free  $\La^{-2}\Ger$-algebra  $\La^{-2}\Ger(b_1, \dots, b_n)$ spanned 
by  $\La^{-2}\Ger$-monomials in which each symbol from 
the set $\{b_1, b_2, \dots, b_n\}$ appears exactly once.  

Then vectors in \eqref{Conv-Ger-Ger} are infinite sums 
\begin{equation}
\label{sums-Ger-Ger}
\sum_{n\ge 1} Z_n\,,
\end{equation}
\begin{equation}
\label{Z-n}
Z_n = \sum_{j} Z_{n,j} \otimes w_{n,j} \in   \Big( \Ger(n) \otimes \La^{-2} \Ger(n)  \Big)^{S_n}\,,
\end{equation}
where $Z_{n,j} \in \Ger(n)$, 
$w_{n,j}$ is a $\La^{-2}\Ger$-monomial of the form
\begin{equation}
\label{Ger-monomials}
\vf_1(b_{i_{11}}, \dots, b_{i_{1 k_1}}) 
\vf_2(b_{i_{21}}, \dots, b_{i_{2 k_2}}) \dots
\vf_q(b_{i_{q1}}, \dots, b_{i_{q k_q}})\,, 
\end{equation}
$\vf_1, \dots, \vf_q$ are $\La^{-1}\Lie$-monomials, and each symbol
from the set $\{b_1, b_2, \dots, b_n\}$ appears in \eqref{Ger-monomials}
exactly once. 

Similarly, vectors in \eqref{Conv-Ger-Gra} are infinite sums 
\begin{equation}
\label{sums-Ger-Gra}
\sum_{n\ge 1} Y_n\,,
\end{equation}
\begin{equation}
\label{Y-n}
Y_n = \sum_{j} Y_{n,j} \otimes w_{n,j} \in   \Big( \Gra(n) \otimes \La^{-2} \Ger(n)  \Big)^{S_n}\,,
\end{equation}
where $Y_{n,j} \in \Gra(n)$ and $w_{n,j}$'s are as above. 

The canonical operad morphism 
\begin{equation}
\label{Cobar-Ger-Ger}
\Cobar(\Ger^{\vee}) \to \Ger
\end{equation}
corresponds to the Maurer-Cartan element 
\begin{equation}
\label{al-Ger}
\al_{\Ger} = \{a_1, a_2\} \otimes b_1 b_2 +  a_1 a_2 \otimes \{b_1, b_2\}\,,
\end{equation}
which allows us to equip the graded Lie algebra \eqref{Conv-Ger-Ger}
with the differential 
\begin{equation}
\label{diff-Ger-Ger}
[\al_{\Ger}, ~]\,.
\end{equation}
We recall \cite[Section 11]{notes} that the cochain complex  \eqref{Conv-Ger-Ger}
with the differential \eqref{diff-Ger-Ger} is called the {\it extended deformation 
complex} of the operad $\Ger$. 

Using the map 
$$
\io_* : \Conv(\Ger^{\vee}, \Ger) \to \Conv(\Ger^{\vee}, \Gra)
$$
induced by $\io$ \eqref{Ger-Gra} we get the following 
Maurer-Cartan element 
\begin{equation}
\label{al}
\al : = \io_*(\al_{\Ger}) =  \G_{\ed} \otimes b_1 b_2 + \G_{\bb} \otimes \{b_1, b_2\}
\end{equation}
in the graded Lie algebra $\Conv(\Ger^{\vee}, \Gra)$.

Furthermore, just as for $\Conv(\Ger^{\vee}, \Ger)$, we use $\al$ to 
equip the graded Lie algebra  $\Conv(\Ger^{\vee}, \Gra)$ with the differential:
\begin{equation}
\label{diff-Ger-Gra}
[\al, ~]\,.
\end{equation}

Let us observe that, using the map \eqref{Ger-Gra},  we can 
embed the vector spaces $\Ger(n) \otimes \Ger^{\vee}(n)$ and 
$\Gra(n) \otimes \Ger^{\vee}(n)$ in the vector space
\begin{equation}
\label{Gra-Gra}
\Gra(n) \otimes \La^{-2}\Gra(n)\,.
\end{equation}
Furthermore, it is convenient to represent vectors in (\ref{Gra-Gra}) by formal 
linear combinations of labeled graphs with two types 
of edges:  solid edges for left tensor factors and
dashed edges for right tensor factors. 

Using this interpretation, we introduce the following 
subspaces of  $\Conv(\Ger^{\vee}, \Ger)$ and  $\Conv(\Ger^{\vee}, \Gra)$, 
respectively: 
\begin{equation}
\label{Ger-Ger-conn}
\Conv(\Ger^{\vee}, \Ger)_{\conn} \subset  \Conv(\Ger^{\vee}, \Ger)\,,
\end{equation}
\begin{equation}
\label{Ger-Gra-conn}
\Conv(\Ger^{\vee}, \Gra)_{\conn} \subset \Conv(\Ger^{\vee}, \Gra)\,.
\end{equation}
Here $\Conv(\Ger^{\vee}, \Ger)_{\conn}$ (resp. $\Conv(\Ger^{\vee}, \Gra)_{\conn}$) 
consists of vectors \eqref{sums-Ger-Ger} (resp. \eqref{sums-Ger-Gra}) for which 
images of $Z_n$ (resp. $Y_n$) in \eqref{Gra-Gra} are sums of connected graphs. 
For example, the vectors $\al_{\Ger}$ and $\al$ belong to  $\Conv(\Ger^{\vee}, \Ger)_{\conn}$ 
and  $\Conv(\Ger^{\vee}, \Gra)_{\conn}$, respectively, while the vector 
$a_1 a_2 \otimes b_1 b_2  \in \big( \Ger(2) \otimes \La^{-2}\Ger(2) \big)^{S_2}$ 
does not belong to  $\Conv(\Ger^{\vee}, \Ger)_{\conn}$\,.    

It is easy to see that $\Conv(\Ger^{\vee}, \Ger)_{\conn}$ (resp. $\Conv(\Ger^{\vee}, \Gra)_{\conn}$) 
is a subcomplex of $\Conv(\Ger^{\vee}, \Ger)$ (resp. $\Conv(\Ger^{\vee}, \Gra)$).

For our purposes, we also need the subspace 
\begin{equation}
\label{Xi-conn}
\Xi_{\conn} \subset \Conv(\Ger^{\vee}, \Ger)_{\conn}\,.
\end{equation}
This subspace consists of sums  \eqref{sums-Ger-Ger} in 
$\Conv(\Ger^{\vee}, \Ger)_{\conn}$ for which each $\La^{-1}\Lie$-monomial 
in \eqref{Ger-monomials} has length $\ge 2$. 

Even though $\al_{\Ger} \notin \Xi_{\conn}$, the subspace
$\Xi_{\conn}$ is a subcomplex of $\Conv(\Ger^{\vee}, \Ger)_{\conn}$. 
This subcomplex plays an important role in establishing a link between 
the cohomology of the graph complex $\fGC$ and the cohomology of 
the extended deformation complex of $\Ger$.

Next, we consider the subspace of $\Conv(\Ger^{\vee}, \Gra)_{\conn}$ which 
consists of sums \eqref{sums-Ger-Gra} in $\Conv(\Ger^{\vee}, \Gra)_{\conn}$
satisfying the following property:
\begin{pty}
\label{pty:ge-3}
If a $\La^{-1} \Lie$-word $\varphi_r$ occurring in $w_{n,j}$ (as in \eqref{Ger-monomials}) has length $1$, i.e., if $\varphi_r=b_i$ for some $i$, then, in each graph of $Y_{n,j}$, the vertex with label $i$
is at least trivalent. 
\end{pty}

It is not hard to see that this subspace is a subcomplex of  $\Conv(\Ger^{\vee}, \Gra)_{\conn}$
and we denote it by 
\begin{equation}
\label{ge-3}
\Conv(\Ger^{\vee}, \Gra)_{\ge 3}
\end{equation}

We introduce the map $\mR$ of cochain complexes: 
$$
\mR : \Conv(\Ger^{\vee}, \Gra)_{\ge 3} \to \fGC
$$
\begin{equation}
\label{mR}
\mR(f) = f \Big|_{\La^2 \coCom}
\end{equation}
and observe that the subcomplex 
\begin{equation}
\label{ker-mR}
\ker(\mR) \subset \Conv(\Ger^{\vee}, \Gra)_{\ge 3}
\end{equation}
receives an obvious map $\psi$ from the cochain complex 
$\Xi_{\conn}$ \eqref{Xi-conn} 
which is induced by the map $\io$ \eqref{Ger-Gra}.

We claim that 
\begin{prop}
\label{prop:Xi-ker-mR}
The map 
\begin{equation}
\label{Xi-ker-mR}
\psi: \Xi_{\conn} \to \ker(\mR)
\end{equation}
induces an isomorphism on the level of cohomology.
\end{prop}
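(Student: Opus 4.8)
The plan is to prove that the (injective) map $\psi$ is a quasi-isomorphism by showing that its mapping cone, equivalently the quotient complex, is acyclic, using a spectral sequence attached to a suitable filtration.

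First I would note that $\psi$ is injective: the embedding $\io\colon \Ger\to\Gra$ is injective, hence so is the induced map $\io_*$ on convolution algebras, and $\psi$ is the restriction of $\io_*$ to $\Xi_{\conn}$. Thus $\psi$ realizes $\Xi_{\conn}$ as a subcomplex of $\ker(\mR)$, and it suffices to show that the quotient complex $Q:=\ker(\mR)/\psi(\Xi_{\conn})$ is acyclic. Recall that a vector of $\ker(\mR)$ is a sum of decorated labelled graphs $\G\otimes w$, where $\G$ is a summand of $\Gra(n)$ drawn with solid edges, $w$ is a symmetric product of $\La^{-1}\Lie$-words in the dashed variables $b_1,\dots,b_n$ which are \emph{not all} of length $1$, and Property~\ref{pty:ge-3} holds; the image $\psi(\Xi_{\conn})$ consists of those combinations in which $\G$ lies in the image of $\io$ (in particular is a forest) and \emph{every} $\La^{-1}\Lie$-word of $w$ has length $\ge 2$. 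Hence $Q$ is spanned by the classes of those $\G\otimes w$ for which either $\G$ has a nonzero number of loops, or $w$ contains at least one dashed word of length $1$ (and then, by Property~\ref{pty:ge-3}, the corresponding vertex of $\G$ is at least trivalent).

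Next I would equip all three complexes with a decreasing filtration, compatible with $\psi$ and with $[\al,-]$, chosen so that the associated graded of $[\al,-]$ on $Q$ retains only the part of the differential coming from the term $\G_{\ed}\otimes b_1b_2$ (resp. $\G_{\bb}\otimes\{b_1,b_2\}$) of $\al$ which pairs a length-$1$ dashed word with an adjacent solid edge while leaving the isomorphism type of the solid graph unchanged; the natural first candidates for the filtration degree are the number of dashed words of length $1$, or that number corrected by the number of loops of $\G$. On the associated graded this leading differential acts on the decoration of each fixed graph by a fixed $(anti)$derivation; after passing to coinvariants for the symmetric groups (permissible since $\bbK$ has characteristic zero), I would identify it with a Koszul-type differential on a tensor product of a polynomial-type algebra (the $\La^{-1}\Lie$-words of length $\ge 2$ together with the solid graph data) and an exterior-type algebra (the length-$1$ dashed words), and deduce the acyclicity of $\Gr(Q)$ from the standard acyclicity of the Koszul complex, the excluded monomial $b_1\cdots b_n$ playing the role of the augmentation that has been removed. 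Since the filtration is exhaustive, complete, and bounded on each arity $n$, the spectral sequence converges and $H^{\bullet}(\Gr(Q))=0$ forces $H^{\bullet}(Q)=0$, so $\psi$ induces an isomorphism on cohomology.

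I expect the main obstacle to be the precise identification of the leading differential: one must track how the operadic insertions on the solid side (splitting a vertex, or attaching a univalent vertex by a single edge) combine with the Leibniz-type cocompositions on the dashed $\La^{-2}\Ger$-side, and pin down exactly which combinations preserve the chosen filtration degree. A related subtlety is verifying that, after this identification, the length-$1$ dashed words really do behave as the exterior generators and that Property~\ref{pty:ge-3} exactly matches the boundary terms in which a vertex of $\G$ would otherwise drop below valence $3$ — this is where the precise definition of the subcomplex $\Conv(\Ger^{\vee},\Gra)_{\ge 3}$ (and of $\ker(\mR)$ and $\Xi_{\conn}$) is used. Once the leading differential is written down correctly, the acyclicity of $\Gr(Q)$ is a routine Koszul-complex computation and the convergence is automatic from the grading by arity.
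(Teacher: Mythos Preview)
Your proposal has a genuine gap. The heart of the matter is not a Koszul-type cancellation between length-$1$ dashed words and solid edges; it is the nontrivial fact that the embedding of $\Ger$ into the graph operad is a quasi-isomorphism at the level of the relevant dg operads. Concretely, consider the part of your quotient $Q$ sitting in filtration level ``no length-$1$ dashed words'': there you have elements $\G\otimes w$ with every $\La^{-1}\Lie$-word of length $\ge 2$, but with $\G$ an arbitrary element of $\Gra(n)$ not in $\io(\Ger(n))$. Your Koszul differential, which pairs a length-$1$ dashed word with an adjacent solid edge, is identically zero on this piece of the associated graded, so your argument gives no mechanism to kill it. (Note also that $\io(\Ger(n))$ is a \emph{proper} subspace of the span of forests for $n\ge 4$, so your description of $Q$ as ``$\G$ has a loop, or $w$ has a length-$1$ word'' already understates what must be shown to be exact.)

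What actually makes those classes exact is that the inclusion $\Ger\hookrightarrow\graphs$ (the dg operad obtained from $\Gra$ by twisting) is a quasi-isomorphism of dg operads; this is a substantial theorem, not a Koszul computation. The paper's proof invokes exactly this: it cites Proposition~13.1 of \cite{notes}, whose proof runs through the quasi-isomorphism $\Ger\stackrel{\sim}{\to}\graphs$, and notes that the only modification needed here is to work with $\graphs$ rather than $\fgraphs$ (the two being quasi-isomorphic). If you want a self-contained argument, you would need to import that operadic quasi-isomorphism as input; the filtration and Koszul steps you outline cannot substitute for it.
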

\begin{proof}
This statement is an analogue of Proposition 13.1 in \cite{notes}.  

To prove this statement we only need the following minor modification
of the proof of  \cite[Proposition 13.1]{notes}: we need to replace in {\it loc. cit.} the dg operad 
$\fgraphs$ by the dg operad $\graphs$ (see also \cite[Section 9.4]{notes}). 
Thus, since the embedding $\graphs \hookrightarrow \fgraphs$ is a quasi-isomorphism,
the modified proof goes through. 
\end{proof}

\section{The chain map $\Te : \GC  \to  \Def_{\La\Lie}(\FR)$}
\label{sec:GC-DefTpoly}

Let us recall that, due to Theorem \ref{thm:Fed-Tpoly}, 
the sheaf of dg  Gerstenhaber algebras
\begin{equation}
\label{FR}
\FR := \Big( \Omb(\cO_X^{\coord}) \otimes T_{\poly}(P) \Big)^{[\mgl_d(\bbK)]}
\end{equation}
is a resolution of the sheaf of  Gerstenhaber algebras $\cT_{\poly}$ on a 
smooth algebraic variety $X$. 

In this section, we view $\FR$ as the sheaf of $\La\Lie$-algebras
and construct a map of dg Lie algebras
\begin{equation}
\label{GC-Def-T-poly}
\Te : \GC \to \Def_{\La\Lie}(\FR)\,. 
\end{equation}

For this purpose, we denote by $\FR'$ the following sheaf of 
dg $\La\Lie$ algebras
\begin{equation}
\label{FR-pr}
\FR' =  \Omb(\cO_X^{\coord}) \otimes T_{\poly}(P) 
\end{equation}
with the de Rham differential $d$\,.

We recall that the sheaf of dg $\La\Lie$ algebras
$\FR$ \eqref{FR} is obtained from $\FR'$ in two steps. 
First, we twist\footnote{See Appendix \ref{app:twisting} for terminology and details 
of the twisting procedure.} the dg $\La\Lie$-algebra structure by 
the Maurer-Cartan element $\om$ introduced in Theorem \ref{thm:om}.
Under this procedure, the $\La\Lie$-bracket remains the same $\{~,~\}_{SN}$ 
and the differential $d$ gets replaced by 
\begin{equation}
\label{d-om}
d + \{\om, ~ \}_{SN}\,.
\end{equation}

Second, we apply trimming (see Section \ref{sec:trimming}) to the resulting 
sheaf of dg $\La\Lie$-algebras with respect to the set of degree $-1$ derivations 
$i_{\bv}$, $\mv \in \mgl_d(\bbK)$ and get \eqref{FR}. 

So to construct \eqref{GC-Def-T-poly}, we  define   
an auxiliary map of dg Lie algebras
\begin{equation}
\label{Te-pr}
\Te' : \GC \to \coDer \big( \La^2 \coCom (\FR') \big)\,.
\end{equation}
via extending the map $\ma_{*}$ \eqref{GC-to-DefTpoly} by 
linearity over $ \Omb(\cO_X^{\coord}) $\,.
Here the codomain 
$$
\coDer \big( \La^2 \coCom (\FR') \big)
$$
carries the differential 
\begin{equation}
\label{diff-d-Q}
[d+ Q, ~]\,,
\end{equation}
with $d$ being the de Rham differential and $Q$ being the 
coderivation of $ \La^2 \coCom (\FR')$ coming from the $\La\Lie$-bracket 
$\{~,~\}_{SN}$\,.

Since $\Te'$ is obtained via extending $\ma_*$ \eqref{GC-to-DefTpoly} by 
linearity over $ \Omb(\cO_X^{\coord})$ and $\ma_*$ is a chain map, we
conclude that $\Te'$ intertwines the differential 
$\pa$ \eqref{diff-fGC} with the differential $d+ Q$, i.e. 
\begin{equation}
\label{CE-diff}
[ d+ Q,   \Te'(\ga)  ] = 
\Te'(\pa \ga)\,, \qquad \forall ~~\ga \in \GC\,.
\end{equation}

For the map $\Te'$, we have the following statement: 
\begin{prop}
\label{prop:Te-pr-tw}
Let  $\om = \om^a \pa_{t^a}$ be the global section of the sheaf
$$
 \Om^1(\cO_X^{\coord})  \otimes T^{1,0}(P)
$$
introduced in Theorem \ref{thm:om}. Then the formula  
\begin{equation}
\label{Te-pr-tw-dfn}
(\Te')^{\om}(\ga)  = e^{-\bs^{-2}\, \om}\, \Te'(\ga)\, e^{\bs^{-2}\, \om}\,, 
\qquad \ga \in \GC
\end{equation}
defines a map of dg Lie algebras 
\begin{equation}
\label{Te-pr-tw}
(\Te')^{\om} :   \GC \to \coDer \big( \La^2 \coCom (\FR') \big)\,,
\end{equation}
where $ \coDer \big( \La^2 \coCom (\FR') \big)$ is considered 
with the differential 
$$
[d +  \{\om, ~\}_{SN} + Q, ~]\,. 
$$
Furthermore, $(\Te')^{\om}$ descends to a map of dg Lie algebras
\begin{equation}
\label{Te-dfn-almost}
\GC   ~ \to ~   \coDer \big( \La^2 \coCom (\FR) \big)\,,
\end{equation}
where 
$$
\FR = \Big( \Omb(\cO_X^{\coord})  \otimes  T_{\poly}(P) \Big)^{[\mgl_d(\bbK)]}\,.
$$
\end{prop}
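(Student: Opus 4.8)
The plan is to derive both assertions from the twisting formalism of Appendix \ref{app:twisting}. The element $\om$ is a degree $1$ section of $\FR'$, and by Theorem \ref{thm:om} (see the discussion preceding \eqref{diff-resol-Tpoly}) one has $(d+\{\om,~\}_{SN})^2=0$, i.e. $\om$ satisfies the Maurer--Cartan equation $d\om+\tfrac12\{\om,\om\}_{SN}=0$ for the shifted Lie algebra $\FR'$. Accordingly $\om$ determines an automorphism $e^{\bs^{-2}\om}$ of the coalgebra $\La^2\coCom(\FR')$; since every insertion of $\om$ raises the exterior degree by one, this automorphism and conjugation by it are well defined in the completed setting of Appendix \ref{app:twisting}, and the basic twisting identity reads $e^{-\bs^{-2}\om}(d+Q)e^{\bs^{-2}\om}=d+\{\om,~\}_{SN}+Q$. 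Conjugation by a coalgebra automorphism is an automorphism of the graded Lie algebra $\coDer(\La^2\coCom(\FR'))$, so $(\Te')^{\om}$, being the composite of the dg Lie morphism $\Te'$ with such a conjugation, is a morphism of graded Lie algebras; and by \eqref{CE-diff},
\[
\big[\,d+\{\om,~\}_{SN}+Q,\ (\Te')^{\om}(\ga)\,\big]=e^{-\bs^{-2}\om}\big[d+Q,\ \Te'(\ga)\big]e^{\bs^{-2}\om}=e^{-\bs^{-2}\om}\,\Te'(\pa\ga)\,e^{\bs^{-2}\om}=(\Te')^{\om}(\pa\ga),
\]
which shows that $(\Te')^{\om}$ is a map of dg Lie algebras, i.e. \eqref{Te-pr-tw}.

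For the descent, recall that $\FR$ — carrying the twisted differential $d_{\om}:=d+\{\om,~\}_{SN}$ and the bracket $\{~,~\}_{SN}$ — is the sub-dg-$\La\Lie$-algebra of $\FR'$ of $\mgl_d(\bbK)$-basic sections, i.e. of those $w$ with $i_{\bv}w=0$ and $(d_{\om}i_{\bv}+i_{\bv}d_{\om})w=0$ for all $\mv=||\mv^a_b||\in\mgl_d(\bbK)$; by \eqref{L-v} the second operator equals $l_{\bv}-L_{\zeta_{\mv}}$, where $\zeta_{\mv}:=\mv^a_b t^b\pa_{t^a}$. Since $\La^2\coCom(\FR)$ is a sub-coalgebra of $\La^2\coCom(\FR')$, the coderivation $(\Te')^{\om}(\ga)$ restricts to $\La^2\coCom(\FR)$ as soon as its corestriction maps $\La^2\coCom(\FR)$ into $\FR$. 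Unwinding the conjugation exactly as in Kontsevich's construction, this corestriction sends a product $v_1\cdots v_n$ of local sections of $\FR$ to
\[
\sum_{k\ge 0}\frac{1}{k!}\,X_{n+k}\big(v_1,\dots,v_n,\underbrace{\om,\dots,\om}_{k}\big)\ \in\ \FR',
\]
where $X_m$ is the arity-$m$ component of $\ga$ acting via \eqref{Gra-acts} extended $\Omb(\cO_X^{\coord})$-linearly; crucially, every graph occurring in any $X_m$ has all vertices of valence $\ge 3$.

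It remains to check that this element is basic. The operators $i_{\bv}$ and $l_{\bv}$ act only on the $\Omb(\cO_X^{\coord})$-factor as derivations of its exterior multiplication, while each $L_{\zeta_{\mv}}$ is a derivation of the tensor operations on $T_{\poly}(P)$ — in particular of the operators $\Lap_{(i,j)}$ of \eqref{Lap}, which are coordinate-independent and hence commute with simultaneous Lie differentiation of their two inputs. Consequently $i_{\bv}$ and $l_{\bv}-L_{\zeta_{\mv}}$ are derivations of the operations defining the $X_m$, so applying them to the displayed sum produces terms in which one argument is replaced by its image under that operator. On a $v_i$ both operators give $0$, since $v_i$ is basic. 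On an inserted $\om$, the contraction gives $i_{\bv}\om=-\zeta_{\mv}$ by Corollary \ref{cor:gl-d-om}; but $\zeta_{\mv}$ is linear in the $t$'s and, in the model $T_{\poly}(P)=P[\xi_1,\dots,\xi_d]$, linear in the $\xi$'s, hence is annihilated by any three of the first-order operators occurring in \eqref{Lap}, and therefore kills every graph of $\ga$ (all of whose vertices are at least trivalent); and $(l_{\bv}-L_{\zeta_{\mv}})\om=0$ because the canonical connection $\om$, being uniquely characterised by $d\wt{f}+\om(\wt{f})=0$ (Theorem \ref{thm:om}) and those equations being preserved by the $\GL_d(\bbK)$-action $F\mapsto h\circ\msA_h(F)$ in view of \eqref{h-msA-h-wt-f}, is invariant under that action and hence under its differential. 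Thus both basic conditions hold, the corestriction of $(\Te')^{\om}(\ga)$ lands in $\FR$, and $(\Te')^{\om}(\ga)$ restricts to a coderivation of $\La^2\coCom(\FR)$; the assignment $\ga\mapsto(\Te')^{\om}(\ga)|_{\La^2\coCom(\FR)}$ is then the required map of dg Lie algebras \eqref{Te-dfn-almost}, as the restriction of \eqref{Te-pr-tw} along the inclusion of sub-dg-coalgebras.

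I expect the main obstacle to be the two points of the last paragraph: that feeding the linear vector field $i_{\bv}\om$ into a trivalent vertex yields zero — this is where the combinatorial conditions defining $\GC$ are used — and that the canonical connection $\om$, although not $\mgl_d(\bbK)$-horizontal, is $\mgl_d(\bbK)$-invariant for the combined action $l_{\bv}-L_{\zeta_{\mv}}$; the rest is unwinding the twisting and the bookkeeping that reduces "preserves basic sections" to a statement about corestrictions.
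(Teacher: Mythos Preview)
There is a real gap in the first assertion. Multiplication by $e^{\bs^{-2}\om}$ is \emph{not} a coalgebra automorphism of the reduced coalgebra $\La^2\coCom(\FR')=\bs^2\und{S}(\bs^{-2}\FR')$; a direct computation with the reduced comultiplication gives
\[
\Delta(e^{\bs^{-2}\om}W)=(e^{\bs^{-2}\om}\otimes e^{\bs^{-2}\om})\Delta(W)+(e^{\bs^{-2}\om}-1)\otimes e^{\bs^{-2}\om}W+e^{\bs^{-2}\om}W\otimes(e^{\bs^{-2}\om}-1),
\]
so conjugation by $e^{\bs^{-2}\om}$ does not automatically send coderivations to coderivations. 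This is precisely why Theorem~\ref{thm:twisting}(iii) and Corollary~\ref{cor:twisting} impose the side condition $\cD\big(\bs^2(\bs^{-2}\om)^n\big)=0$ for all $n\ge 1$. The paper verifies this (equation~\eqref{Te-pr-si-omomom}) via the combinatorial Claim~\ref{cl:graphs-ge-3}: since $\om$ is vector-valued and every vertex of every graph in $\ga\in\GC$ has valency $\ge 3$, no orientation can give each vertex exactly one outgoing edge, so $\ga(\om,\ldots,\om)=0$. You invoke the valency-$\ge 3$ hypothesis only in the descent argument, but it is already needed here to ensure that $(\Te')^{\om}(\ga)$ is a coderivation at all; without it your displayed chain-map identity is a formal equality of operators, not of elements of $\coDer\big(\La^2\coCom(\FR')\big)$.

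Your treatment of the descent, by contrast, is correct and for the second basic condition more direct than the paper's. Rather than expanding $(d+\{\om,~\}_{SN})v$ via the chain-map property (the long computation \eqref{playing}--\eqref{upshot}) and then contracting with $i_{\bv}$, you observe that $l_{\bv}-L_{\zeta_{\mv}}$ is a derivation of the graph operations (which are built from the $\GL_d$-invariant pairing in $\Lap_{(i,j)}$) and annihilates both the basic inputs $v_i$ and the connection $\om$ itself. The invariance $(l_{\bv}-L_{\zeta_{\mv}})\om=0$ does follow from your uniqueness argument, or can be checked directly from Corollary~\ref{cor:gl-d-om} together with \eqref{om-is-flat1}.
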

\begin{proof}
We remark that, using the degree of exterior forms, we equip the 
sheaf $\FR'$ of dg $\La\Lie$-algebras with the natural decreasing 
filtration. This filtration is complete, and hence, we may apply to $\FR'$ 
the operation of twisting (see Appendix \ref{app:twisting}).

Let us denote by $p$ the canonical projection
\begin{equation}
\label{p-FR-pr}
p  :  \La^2 \coCom (\FR')  \to  \FR' \,,
\end{equation}
and prove that for all $n\ge 1$
\begin{equation}
\label{Te-pr-si-omomom}
p \circ \Te'(\ga) \big( \bs^2 (\bs^{-2}\, \om)^n \big) = 0\,.
\end{equation}

For this purpose,  we recall that an action of 
a graph $\G$ on a collection of polyvector fields is expressed in 
terms of the operators \eqref{Lap}. 

So we will keep track of terms involving the sum
$$
 \sum_{a=1}^d 1 \otimes \dots \otimes 1 \otimes 
\underbrace{\pa_{\xi_a}}_{i\textrm{-th slot}} \otimes 1 \otimes \dots \otimes 1 
\otimes 
\underbrace{\pa_{x^a}}_{j\textrm{-th slot}}  \otimes 1  \otimes \dots  \otimes 1 
$$
by choosing a direction on the edge $(i,j)$ from vertex $i$ to vertex $j$. 
Similarly, we will keep track of terms involving the sum
$$ 
\sum_{a=1}^d 1 \otimes \dots \otimes 1 \otimes 
\underbrace{\pa_{x^a}}_{i\textrm{-th slot}} \otimes 1 \otimes \dots \otimes 1 
\otimes 
\underbrace{\pa_{\xi_a}}_{j\textrm{-th slot}}  \otimes 1  \otimes \dots  \otimes 1
$$   
by choosing a direction on the edge $(i,j)$ from vertex $j$ to vertex $i$. 

Since $\om$ is vector-valued, equation \eqref{Te-pr-si-omomom}
is a consequence of the following simple combinatorial fact: 
\begin{claim}
\label{cl:graphs-ge-3}
If each vertex of a graph $\G$ has valency $\ge 3$ then edges of 
$\G$ cannot be oriented in such a way that all vertices have exactly one 
outgoing edge. \qed
\end{claim}

Thus \eqref{Te-pr-si-omomom} indeed holds. 

Now, it is easy to see that equations \eqref{CE-diff}, \eqref{Te-pr-si-omomom}, 
and Corollary \ref{cor:twisting} from Appendix \ref{app:twisting} imply that 
formula \eqref{Te-pr-tw-dfn} indeed defines a map from the graph complex 
$\GC$ to the cochain complex $ \coDer \big( \La^2 \coCom (\FR') \big)$ with
the differential 
$$
[d +  \{\om, ~\}_{SN} + Q, ~]\,. 
$$
The compatibility of this map with the Lie brackets is obvious. 

It remains to prove that for every cochain $\ga \in \GC$ and 
for any set $v_1, \dots, v_n$ of local sections of $\FR$ \eqref{FR} the 
section 
\begin{equation}
\label{Te-pr-om-gagaga}
v = p \circ (\Te')^{\om}(\ga)\big( \bs^2 (\bs^{-2}\, v_1\, 
\bs^{-2}\, v_2 \, \dots\, \bs^{-2}\, v_n) \big)  
\end{equation}
satisfies the conditions 
\begin{equation}
\label{i-bv-v}
i_{\bv} (v) = 0
\end{equation}
and 
\begin{equation}
\label{LLL-bv-v}
i_{\bv} (d v + \{\om, v \}_{SN}) = 0 \qquad \forall ~~ \mv \in \mgl_d(\bbK).
\end{equation}

Let 
$$
\ga = \sum_{\tau \in S_N}  \tau(\G)
$$
for an element $\G \in \gra_N$. Then    
$$
v = \frac{1}{r!}\, p \circ \Te'(\ga)\big( \bs^2 ((\bs^{-2} \om)^r\, \bs^{-2}\, v_1\, 
\bs^{-2}\, v_2 \, \dots\, \bs^{-2}\, v_n) \big)=
$$ 
$$
 \frac{1}{r!} \, \ga (\underbrace{\om, \dots, \om}_{r \textrm{ times}}, v_1, \dots, v_n )
$$
where $r = N - n$ and the action of $\ga$ on local sections of $\FR$ is 
obtained via extending  \eqref{Gra-acts} by linearity over $\Omb(\cO_X^{\coord})$\,.

Since for all  $\mv \in \mgl_d(\bbK)$ we have $i_{\bv} v_j = 0$, therefore  
$$
i_{\bv} \big( \ga (\underbrace{\om, \dots, \om}_{r \textrm{ times}}, v_1, \dots, v_n )
\big)  =  
\sum_{k=0}^{r-1} \ga (\underbrace{\om, \dots, \om}_{k \textrm{ times}},
i_{\bv} \om,  \underbrace{\om, \dots, \om}_{r-k-1 \textrm{ times}},
v_1, \dots, v_n )\,.
$$

On the other hand, by Corollary \ref{cor:gl-d-om}, 
$$
i_{\bv}\, \om = - \mv^a_b t^b \frac{\pa}{\pa t^a}\,.
$$
Hence equation \eqref{i-bv-v} holds simply because all 
vertices of $\G$ have valency $\ge 3$\,.

To prove \eqref{LLL-bv-v}, we recall that $(\Te')^{\om}$ is a chain map 
from the graph complex $\GC$ to the cochain complex 
$\coDer \big( \La^2 \coCom (\FR') \big)$ with
the differential 
$$
[d + \{\om, ~\}_{SN} + Q, ~]\,. 
$$

Therefore\footnote{In computation \eqref{playing}, we put $\pm$ 
in front of terms for which sign factors do not play an important role.} 
\begin{equation}	
\label{playing}
\begin{aligned}
 &(d  + \{\om, ~ \}_{SN}) \circ p \circ  (\Te')^{\om}(\ga)\big( \bs^2 (\bs^{-2}\, v_1\, 
\bs^{-2}\, v_2 \, \dots\, \bs^{-2}\, v_n) \big)
\\&=
p \circ \Big( (d  + \{\om, ~ \}_{SN} ) \circ  (\Te')^{\om}(\ga)\big( \bs^2 (\bs^{-2}\, v_1\, 
\bs^{-2}\, v_2 \, \dots\, \bs^{-2}\, v_n) \big)  \Big) 
\\&=
\begin{aligned}[t]
&p \circ \Big( (d  + \{\om, ~ \}_{SN} + Q) \circ  (\Te')^{\om}(\ga)\big( \bs^2 (\bs^{-2}\, v_1\, 
\bs^{-2}\, v_2 \, \dots\, \bs^{-2}\, v_n) \big)  \Big) 
\\&
- (-1)^{|\ga|} p \circ \Big(
(\Te')^{\om}(\ga) \circ  (d  + \{\om, ~ \}_{SN} + Q) 
\big( \bs^2 (\bs^{-2}\, v_1\,  \bs^{-2}\, v_2 \, \dots\, \bs^{-2}\, v_n) \big)  \Big) 
\\&
+ (-1)^{|\ga|} p \circ \Big(
(\Te')^{\om}(\ga) \circ  (d  + \{\om, ~ \}_{SN} + Q) 
\big( \bs^2 (\bs^{-2}\, v_1\,  \bs^{-2}\, v_2 \, \dots\, \bs^{-2}\, v_n) \big)  \Big) 
\\&
- p \circ \Big( Q \circ  (\Te')^{\om}(\ga)\big( \bs^2 (\bs^{-2}\, v_1\, 
\bs^{-2}\, v_2 \, \dots\, \bs^{-2}\, v_n) \big)  \Big) 
\end{aligned}
\\&= 
\begin{aligned}[t]
&p \circ  (\Te')^{\om}(\pa \ga) \big( \bs^2 (\bs^{-2}\, v_1\,  \bs^{-2}\, v_2 \, 
\dots\, \bs^{-2}\, v_n) \big) 
\\&
+ \sum_{k=1}^{n} \pm \frac{1}{r!}
 \ga ( \underbrace{\om, \dots, \om}_{r \textrm{ times}}, v_1, 
\dots, v_{k-1},  d v_k  + \{\om, v_k \}_{SN} , v_{k+1}, \dots, v_n) 
\\&
+ \sum_{1 \le j < k \le n}  \pm  \frac{1}{(r+1)!}
\ga(\underbrace{\om, \dots, \om}_{(r+1) \textrm{ times}}, \{v_j, v_k\}_{SN}, 
v_1 \dots, v_{j-1}, v_{j+1}, \dots, v_{k-1}, v_{k+1}, \dots, v_n)
\\&
+ \sum_{k=1}^n (-1)^{\ve_k}  \pm  \frac{1}{(r+1)!}
\{ \ga(\underbrace{\om, \dots, \om}_{(r+1) \textrm{ times}}, v_1, \dots, v_{k-1}, v_{k+1}, 
\dots, v_n),  v_k \}_{SN}\,.
\end{aligned}
\end{aligned}
\end{equation}

Hence we get 
\begin{equation}
\label{upshot}
\begin{aligned}
& (d  + \{\om, ~ \}_{SN}) \circ p \circ  (\Te')^{\om}(\ga)\big( \bs^2 (\bs^{-2}\, v_1\, 
\bs^{-2}\, v_2 \, \dots\, \bs^{-2}\, v_n) \big) 
\\&=
\begin{aligned}[t]
&\frac{1}{r!} (\pa \ga)(\underbrace{\om, \dots, \om}_{r \textrm{ times}}, v_1, \dots, v_n) 
\\&
+ \sum_{k=1}^{n} \pm  \frac{1}{r!}
 \ga(\underbrace{\om, \dots, \om}_{r \textrm{ times}}, v_1, 
\dots, v_{k-1},  d v_k  + \{\om, v_k \}_{SN} , v_{k+1}, \dots, v_n) 
\\&
+ \sum_{1 \le j < k \le n}  \pm  \frac{1}{(r+1)!}
\ga(\underbrace{\om, \dots, \om}_{(r+1) \textrm{ times}}, \{v_j, v_k\}_{SN}, 
v_1 \dots, v_{j-1}, v_{j+1}, \dots, v_{k-1}, v_{k+1}, \dots, v_n)
\\&
+ \sum_{k=1}^n (-1)^{\ve_k}  \pm  \frac{1}{(r+1)!}
\{ \ga(\underbrace{\om, \dots, \om}_{(r+1) \textrm{ times}}, v_1, \dots, v_{k-1}, v_{k+1}, 
\dots, v_n), v_k \}_{SN}\,.
\end{aligned}\end{aligned}
\end{equation}

Thus we see that \eqref{LLL-bv-v} follows from equations \eqref{i-bv-v}, 
$$
i_{\bv}  \big( d v_k  + \{\om, v_k \}_{SN}  \big) = 0
$$
and the fact that $i_{\bv}$ is a derivation of the bracket $\{~,~\}_{SN}$.

Proposition \ref{prop:Te-pr-tw} is proven. 
\end{proof}

Composing the map of dg Lie algebras \eqref{Te-dfn-almost} with the
canonical morphism (see Appendix \ref{app:Canon-homo})
$$ 
\coDer \big( \La^2 \coCom (\FR) \big) \to \Def_{\La\Lie}(\FR),  
$$
we get the desired map of dg Lie algebras 
\begin{equation}
\label{GC-Def-T-poly-here}
\Te : \GC \to \Def_{\La\Lie}(\FR)\,. 
\end{equation}

\subsection{Extending $\Te$ to $\Conv(\Ger^{\vee}, \Gra)_{\ge 3}$}
\label{sec:aux}

Although $\FR$ is also a sheaf of Gerstenhaber algebras,
there is no natural way of extending 
the map $\Te$ \eqref{GC-Def-T-poly-here} to
a map
\begin{equation}
\label{not-possible}
\Conv(\Ger^{\vee}, \Gra) \to \Def_{\Ger}(\FR)
\end{equation}
from the dg Lie algebra $\Conv(\Ger^{\vee}, \Gra)$ to the deformation 
complex $\Def_{\Ger}(\FR)$ of $\FR$ (viewed as the sheaf of Gerstenhaber algebras). 

However, it is possible to extend  $\Te$ \eqref{GC-Def-T-poly-here} 
to a map from a dg Lie subalgebra $\Conv(\Ger^{\vee}, \Gra)_{\ge 3}$ \eqref{ge-3} to
$\Def_{\Ger}(\FR)$\,. 

To construct this map, we extend $\ma$ \eqref{Gra-to-End-Tpoly} by linearity 
over  $\Omb(\cO_X^{\coord})$ to 
\begin{equation}
\label{Gra-End-FRpr}
\Gra(n)  \to \Hom\big( (\FR')^{\otimes\, n} , \FR'\big)\,,
\end{equation}
where $\FR'$ is the auxiliary sheaf of dg  Gerstenhaber algebras defined 
in \eqref{FR-pr}\,.

Next, using \eqref{Gra-End-FRpr}, we get an auxiliary map of dg Lie algebras 
\begin{equation}
\label{Te-pr-Ger}
\Te'_{\Ger} : \Conv(\Ger^{\vee}, \Gra) \to \coDer \big( \Ger^{\vee} (\FR') \big)\,,
\end{equation}
where the codomain 
$$
\coDer \big( \Ger^{\vee} (\FR') \big)
$$
carries the differential 
\begin{equation}
\label{diff-d-Q-Ger}
[d+ Q_{\Ger}, ~]\,,
\end{equation}
with $d$ being the de Rham differential and $Q_{\Ger}$ being the 
coderivation of $ \Ger^{\vee} (\FR')$ coming from the  Gerstenhaber algebra 
structure on $\FR'$. 

We now recall that the sheaf of  Gerstenhaber algebras $\FR$ is obtained from 
$\FR'$ \eqref{FR-pr} in two steps. First, we need to twist $\FR'$ by the Maurer-Cartan 
element $\om$ defined in Theorem \ref{thm:om}. Second, we apply trimming with 
respect to the derivations coming from the action of $\mgl_d(\bbK)$\,.   

We have the following analog of Proposition \ref{prop:Te-pr-tw}:
\begin{prop}
\label{prop:Te-pr-tw-Ger}
Let  $\om = \om^a \pa_{t^a}$ be the global section of the sheaf
$$
 \Om^1(\cO_X^{\coord})  \otimes  T^{1,0}(P)
$$
introduced in Theorem \ref{thm:om} and $\Conv(\Ger^{\vee}, \Gra)_{\ge 3}$ be
the dg Lie subalgebra of $\Conv(\Ger^{\vee}, \Gra)$ introduced in \eqref{ge-3}.
Then the formula  
\begin{equation}
\label{Te-pr-tw-Ger-dfn}
(\Te'_{\Ger})^{\om}(\ga)  = e^{-\bs^{-2}\, \om}\, \Te'_{\Ger}(\ga)\, e^{\bs^{-2}\, \om}\,, 
\qquad \ga \in  \Conv(\Ger^{\vee}, \Gra)_{\ge 3}
\end{equation}
defines a map of dg Lie algebras 
\begin{equation}
\label{Te-pr-tw-Ger}
(\Te')^{\om}_{\Ger} :  \Conv(\Ger^{\vee}, \Gra)_{\ge 3} \to \coDer \big( \Ger^{\vee} (\FR') \big)\,,
\end{equation}
where $ \coDer \big( \Ger^{\vee} (\FR') \big)$ is considered 
with the differential 
\begin{equation}
\label{d-om-Q-Ger}
[d +  \{\om, ~\}_{SN} + Q_{\Ger}, ~]\,. 
\end{equation}
Furthermore, $(\Te')^{\om}_{\Ger}$ descends to a map of dg Lie algebras
\begin{equation}
\label{Te-Ger-dfn-almost}
 \Conv(\Ger^{\vee}, \Gra)_{\ge 3}  ~ \to ~   \coDer \big( \Ger^{\vee} (\FR) \big)\,,
\end{equation}
where 
$$
\FR = \Big( \Omb(\cO_X^{\coord})  \otimes  T_{\poly}(P) \Big)^{[\mgl_d(\bbK)]}\,.
$$
\end{prop}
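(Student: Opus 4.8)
The plan is to run the argument of Proposition \ref{prop:Te-pr-tw} almost verbatim, making the substitutions $\La^2\coCom$ to $\Ger^\vee$, $Q$ to $Q_{\Ger}$, $\GC$ to $\Conv(\Ger^\vee,\Gra)_{\ge 3}$, and ``every vertex of the graph is at least trivalent'' to Property \ref{pty:ge-3}. First I would note that, exactly as in the proof of Proposition \ref{prop:Te-pr-tw}, the exterior-form degree equips $\FR'$ with a complete decreasing filtration with respect to which the Maurer--Cartan element $\om$ lies in positive degree and $Q_{\Ger}$ is filtered, so the twisting procedure of Appendix \ref{app:twisting} applies and the conjugated coderivation in \eqref{Te-pr-tw-Ger-dfn} is well defined. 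Since $\ma$ \eqref{Gra-to-End-Tpoly} is a morphism of graded operads carrying the Maurer--Cartan element $\al$ to the one encoding the Gerstenhaber structure on $\FR'$, its $\Omb(\cO_X^{\coord})$-linear extension $\Te'_{\Ger}$ \eqref{Te-pr-Ger} is a morphism of dg Lie algebras intertwining the differential $[\al,~]$ of $\Conv(\Ger^\vee,\Gra)_{\ge 3}$ (a dg Lie subalgebra by Section \ref{sec:aux}) with $[d+Q_{\Ger},~]$; this is the analogue of \eqref{CE-diff}.

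The combinatorial core is the analogue of \eqref{Te-pr-si-omomom}: writing $p\colon \Ger^\vee(\FR')\to\FR'$ for the projection onto cogenerators, $p\circ\Te'_{\Ger}(\ga)$ must annihilate every element of $\Ger^\vee(\FR')$ built by inserting only copies of $\bs^{-2}\om$. Because the automorphism $e^{\bs^{-2}\om}$ grafts the $\om$'s into $\Ger^\vee(\FR')$ through the commutative product, i.e.\ as new length-$1$ $\La^{-1}\Lie$-factors, applying the coderivation and projecting back to the cogenerators annihilates every component $Y_{n,j}\otimes w_{n,j}$ of $\ga$ except those with $w_{n,j}=b_1 b_2\cdots b_n$. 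For such a component Property \ref{pty:ge-3} forces each vertex of $Y_{n,j}$ to be at least trivalent, and then $Y_{n,j}(\om,\dots,\om)=0$ by Claim \ref{cl:graphs-ge-3}, since the $\xi$-degree-$1$ element $\om$ can contribute at most one outgoing edge at any vertex. Combining this vanishing with the analogue of \eqref{CE-diff} and Corollary \ref{cor:twisting}, formula \eqref{Te-pr-tw-Ger-dfn} yields the asserted chain map \eqref{Te-pr-tw-Ger} to $\coDer(\Ger^\vee(\FR'))$ with the differential \eqref{d-om-Q-Ger}; compatibility with the Lie brackets is immediate since $(\Te'_{\Ger})^\om$ is conjugation of $\Te'_{\Ger}$ by a fixed automorphism.

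Finally, to descend the map to the trimmed sheaf $\FR$, I would check, for $\ga\in\Conv(\Ger^\vee,\Gra)_{\ge 3}$, local sections $v_1,\dots,v_n$ of $\FR$, and any cogenerator-component $v$ of $(\Te'_{\Ger})^\om(\ga)$ evaluated on them, the two basicness conditions $i_{\bv}(v)=0$ and $i_{\bv}\big((d+\{\om,~\}_{SN})v\big)=0$ for all $\mv\in\mgl_d(\bbK)$. Expanding the conjugation, $v$ is a finite sum of expressions $\pm\tfrac{1}{r!}\,\ga(\om,\dots,\om,v_1,\dots,v_n)$ in which the $r$ copies of $\om$ occupy exactly the slots of $\ga$ labelled by length-$1$ $\La^{-1}\Lie$-words. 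Using $i_{\bv}v_j=0$, Corollary \ref{cor:gl-d-om}, and the fact that $i_{\bv}$ is an $\Omb(\cO_X^{\coord})$-antiderivation commuting with the graph operations, $i_{\bv}v$ is a sum of terms in which one such $\om$ has been replaced by $i_{\bv}\om=-\mv^a_b t^b\pa_{t^a}$; as this is a \emph{linear} vector field, its coefficient is killed by a second $\pa_{t}$ and its single $\xi$ by a second $\pa_{\xi}$, so the vertex at that slot, which is trivalent by Property \ref{pty:ge-3}, would have to be at most bivalent, and the term vanishes. Hence $i_{\bv}v=0$, exactly as in Proposition \ref{prop:Te-pr-tw}. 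The second identity then follows from the chain-map property just established, from $i_{\bv}\big((d+\{\om,~\}_{SN})v_j\big)=0$ (basicness of the $v_j$), and from $i_{\bv}$ being a derivation of the entire Gerstenhaber structure on $\FR'$; the only new feature relative to the $\La\Lie$ case is that the analogue of \eqref{upshot} now also contains product terms $v_j\cdot v_k$ and $\ga(\dots)\cdot v_k$, which are handled identically. I expect the one genuinely delicate point to be the assertion used above that $e^{\bs^{-2}\om}$ feeds the $\om$'s precisely into the length-$1$-$\La^{-1}\Lie$-word slots: this is exactly what makes Property \ref{pty:ge-3}, rather than global trivalence, the correct hypothesis, and hence why the domain of the construction must be the subalgebra $\Conv(\Ger^\vee,\Gra)_{\ge 3}$ rather than all of $\Conv(\Ger^\vee,\Gra)$.
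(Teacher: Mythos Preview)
Your proposal is correct and follows essentially the same route as the paper, which proves the vanishing \eqref{Te-pr-Ger-si-omomom} via Claim~\ref{cl:graphs-ge-3} and Property~\ref{pty:ge-3}, then invokes the twisting theorem, and finally refers to the proof of Proposition~\ref{prop:Te-pr-tw} \emph{mutatis mutandis} for the descent to $\FR$. Your write-up is in fact more detailed than the paper's, particularly in explaining why only the $w_{n,j}=b_1\cdots b_n$ components contribute when all inputs are $\om$'s, and in spelling out the trimming argument.

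One small correction: you cite Corollary~\ref{cor:twisting}, which is the $\La\Lie$ statement; the paper invokes Theorem~\ref{thm:twisting-Ger}, the Gerstenhaber analogue, and this is what you actually need, since the conjugated operator must also respect the cobracket on $\Ger^{\vee}(\FR')$ (handled in the proof of Theorem~\ref{thm:twisting-Ger} via \eqref{cobrack-exp}). This is only a citation slip, not a mathematical gap.
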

\begin{proof}
Just as in the proof of Proposition \ref{prop:Te-pr-tw}, 
Claim \ref{cl:graphs-ge-3} implies that for every vector 
$\ga \in  \Conv(\Ger^{\vee}, \Gra)_{\ge 3}$, we have 
\begin{equation}
\label{Te-pr-Ger-si-omomom}
p \circ \Te'_{\Ger}(\ga) \big( \bs^2 (\bs^{-2}\, \om)^n \big) = 0\,,
\end{equation}
where $p$ is the canonical projection 
\begin{equation}
\label{p-FR-pr-Ger}
p  :  \Ger^{\vee} (\FR')  \to  \FR' \,,
\end{equation}
and $ \bs^2 (\bs^{-2}\, \om)^n$ is a global section of 
$\La^{2}\coCom(\FR') \subset \Ger^{\vee}(\FR')$\,.

Hence Theorem \ref{thm:twisting-Ger} from Appendix \ref{app:tw-Ger}
implies that the assignment 
\begin{equation}
\label{ga-to-coder}
\ga ~\mapsto~  e^{-\bs^{-2}\, \om}\, \Te'_{\Ger}(\ga)\, e^{\bs^{-2}\, \om}
\end{equation}
is a map of dg Lie algebras from $ \Conv(\Ger^{\vee}, \Gra)_{\ge 3} $ to 
$ \coDer \big( \Ger^{\vee} (\FR') \big)$, where the codomain is considered with the 
differential \eqref{d-om-Q-Ger}.

It remains to prove that for every cochain $\ga \in  \Conv(\Ger^{\vee}, \Gra)_{\ge 3}$ 
and any local section $W \in \Ger^{\vee}(\FR)$ the 
section of $\FR'$ 
\begin{equation}
\label{Te-pr-Ger-om-gagaga}
v = p \circ (\Te'_{\Ger})^{\om}(\ga) \big( W \big)  
\end{equation}
satisfies the conditions 
\begin{equation}
\label{i-bv-v-Ger}
i_{\bv} (v) = 0
\end{equation}
and 
\begin{equation}
\label{LLL-bv-v-Ger}
i_{\bv} (d v + \{\om, v \}_{SN}) = 0 \qquad \forall ~~ \mv \in \mgl_d(\bbK).
\end{equation}

The latter can be shown by going through the corresponding steps {\it mutatis mutandis} 
in the proof of Proposition \ref{prop:Te-pr-tw}.
\end{proof}

\section{For every cocycle $\ga \in \GC$ the cocycle $\Te(\ga)$ induces a derivation of the Gerstenhaber algebra 
$H^{\bul}(X, \cT_{\poly} )$}
\label{sec:cD-ga-deriv}

Let us recall (see Appendix \ref{app:O-deriv-sheaves}) that for every degree $k$ cocycle $\ga \in \GC$
the cocycle $\Te(\ga) \in  \Def_{\La\Lie}(\FR)$ induces a degree $k$ derivation of the $\La\Lie$-algebra 
\begin{equation}
\label{H-Tpoly}
H^{\bul}(X, \cT_{\poly})\,.
\end{equation}
We will denote this derivation by $\sfD_{\ga}$\,.

More precisely, if $v$ is a cocycle in 
\begin{equation}
\label{Cech-FR}
\cCb(X, \FR)
\end{equation}
representing a class $[v]$ in \eqref{H-Tpoly} then $\sfD_{\ga}([v])$ 
is represented by 
\begin{equation}
\label{sfD-ga-dfn}
\sum_{n \ge 1} \frac{1}{n!} \ga (\, \underbrace{\om, \dots, \om}_{n \textrm{ times}},  v \,) 
\end{equation}
where the action of $\ga$ on local sections of $\FR$ is 
obtained via extending  \eqref{Gra-acts} by linearity over $\Omb(\cO_X^{\coord})$
and  $\om$ is the global section of the sheaf
$\Om^1(\cO_X^{\coord}) \otimes  T^{1,0}(P)$
introduced in Theorem \ref{thm:om}. 

Our goal is to prove that 
\begin{thm}
\label{thm:mB-ga-Ger-deriv}
For every cocycle $\ga \in \GC$, the map 
\begin{equation}
\label{sfD-ga}
\sfD_{\ga} : H^{\bul}(X, \cT_{\poly}) \to H^{\bul}(X, \cT_{\poly})
\end{equation}
defined by \eqref{sfD-ga-dfn} is a derivation of the Gerstenhaber 
algebra structure on $H^{\bul}(X, \cT_{\poly})$
\end{thm}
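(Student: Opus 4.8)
The plan is to lift the cohomology class of $\Te(\ga)$ from the deformation complex $\Def_{\La\Lie}(\FR)$, where it controls only the shifted Lie bracket, to the deformation complex $\Def_{\Ger}(\FR)$ of $\FR$ regarded as a sheaf of Gerstenhaber algebras. Once this is done the statement follows from the general machinery recalled in Appendices \ref{app:sheaves} and \ref{app:O-deriv-sheaves}: a degree $k$ cocycle $c\in\Def_{\Ger}(\FR)$ induces a degree $k$ derivation of the Gerstenhaber algebra $H^{\bul}(X,\cT_{\poly})$, and under the forgetful map $\Def_{\Ger}(\FR)\to\Def_{\La\Lie}(\FR)$ (restriction along $\La^{2}\coCom\to\Ger^{\vee}$) the image of $c$ induces the underlying derivation of the $\La\Lie$-structure. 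Since $\sfD_{\ga}$ depends only on the cohomology class of $\Te(\ga)$ and is precisely the $\La\Lie$-derivation attached to that class, it suffices to exhibit a cocycle $c\in\Def_{\Ger}(\FR)$ whose image in $\Def_{\La\Lie}(\FR)$ is cohomologous to $\Te(\ga)$.

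To produce $c$ I would use the extension built in Section \ref{sec:aux}. Proposition \ref{prop:Te-pr-tw-Ger} gives a map of dg Lie algebras $(\Te'_{\Ger})^{\om}\colon\Conv(\Ger^{\vee},\Gra)_{\ge 3}\to\coDer\big(\Ger^{\vee}(\FR)\big)$, which we compose with the canonical map to $\Def_{\Ger}(\FR)$; let $\mR\colon\Conv(\Ger^{\vee},\Gra)_{\ge 3}\to\fGC$ be the chain map \eqref{mR}. Because $(\Te'_{\Ger})^{\om}$ is obtained from $\ma$ exactly as $\Te$ is — with identical twisting by $\om$ and trimming by $\mgl_{d}(\bbK)$ — and because $\mR$ and the forgetful map are both restriction along $\La^{2}\coCom\to\Ger^{\vee}$, one obtains a commuting square relating $(\Te'_{\Ger})^{\om}$, $\mR$, $\Te$ and the forgetful map. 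Hence it is enough to find a cocycle $\tilde\ga\in\Conv(\Ger^{\vee},\Gra)_{\ge 3}$ with $\mR(\tilde\ga)$ cohomologous to $\ga$ in $\fGC$; then $c$ is the image of $\tilde\ga$.

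Finally, I would construct $\tilde\ga$ by correcting the obvious candidate. Since every vertex of every graph occurring in $\ga$ is at least trivalent, the element $\hat\ga$ obtained by tensoring $\ga$ with the product $b_{1}b_{2}\cdots b_{n}$ of all the Lie-symbols lies in $\Conv(\Ger^{\vee},\Gra)_{\ge 3}$ (Property \ref{pty:ge-3} holds) and satisfies $\mR(\hat\ga)=\ga$. As $\mR$ is a chain map intertwining $[\al,-]$ with $[\G_{\ed},-]$, we get $\mR\big([\al,\hat\ga]\big)=[\G_{\ed},\ga]=0$, so $[\al,\hat\ga]$ is a cocycle of the subcomplex $\ker(\mR)\subset\Conv(\Ger^{\vee},\Gra)_{\ge 3}$; its class in $H^{\bul}(\ker(\mR))$ is the sole obstruction to finding $\xi\in\ker(\mR)$ with $[\al,\xi]=[\al,\hat\ga]$, after which $\tilde\ga:=\hat\ga-\xi$ is an honest cocycle still satisfying $\mR(\tilde\ga)=\ga$. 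By Proposition \ref{prop:Xi-ker-mR} this obstruction lives, after identification, in $H^{\bul}(\Xi_{\conn})$, and \textbf{the main work is to show that it vanishes}: I would do this by computing $H^{\bul}(\Xi_{\conn})$ via the comparison results of \cite{Thomas} (and \cite{notes}) and checking that the class of $[\al,\hat\ga]$ — which is supported on graphs obtained from $\ga$ by splitting a vertex or by attaching a univalent leg carrying a length-two Lie-word — maps to zero there, or equivalently by resumming these correction graphs into an explicit primitive $\xi$. With $\tilde\ga$ in hand, the commuting square of the previous paragraph together with the Appendix statement on derivations induced by $\Def_{\Ger}$-cocycles completes the proof.
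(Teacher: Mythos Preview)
Your strategy differs from the paper's, and the step you flag as ``the main work'' is a real gap: the paper neither proves nor needs that $[\partial\hat\ga]$ vanishes in $H^{\bullet}(\ker\mR)$, and it never lifts $\ga$ to a cocycle in $\Conv(\Ger^{\vee},\Gra)_{\ge 3}$ or in $\Def_{\Ger}(\FR)$. Your $\hat\ga$ is the paper's $\wt\ga$, and Proposition~\ref{prop:tech} gives only the weaker decomposition $\partial\wt\ga=\psi(x)+\partial\theta$ with $\theta\in\ker(\mR)$ and $x\in\Xi_{\conn}\cap\cF_{m+1}$ a cocycle; there is no claim that $x$ is exact. The paper instead works directly with the defect: for cocycles $v^1,v^2\in\cCb(X,\FR)$ one rewrites the representative of $\sfD_\ga([v^1]\cup[v^2])-\sfD_\ga([v^1])\cup[v^2]\mp[v^1]\cup\sfD_\ga([v^2])$ as (up to sign) $p\circ e^{-\bs^{-2}\om}\Te'_{\Ger}(\partial\wt\ga)e^{\bs^{-2}\om}$ evaluated on the binary element $\langle v^1,v^2\rangle\in\Ger^{\vee}(\FR)$ (equations \eqref{v-Cech-new}--\eqref{v-Cech-more}). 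After substituting $\partial\wt\ga=\psi(x)+\partial\theta$, the $\psi(x)$ contribution vanishes on $\langle v^1,v^2\rangle$ simply because $x\in\cF_3$ has arity $\ge 4$ (using $\GC=\cF_2\GC$), while the $\partial\theta$ contribution unwinds, via the twisted differential and the \v{C}ech cocycle conditions on $v^1,v^2$, into an explicit coboundary (equations \eqref{v-Cech-x-te}--\eqref{u-dfn}). So only pushing the obstruction past arity~$2$ is needed, not killing it.

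Your route would give the stronger statement that $\Te(\ga)$ lifts to $\Def_{\Ger}(\FR)$, but ``computing $H^{\bullet}(\Xi_{\conn})$ and checking the class maps to zero'' is a plan, not a proof, and nothing in this paper (or the parts of \cite{Thomas} cited here) supplies that vanishing directly. If you pursue it you must actually show the connecting map kills $[\ga]$, or construct the primitive $\xi$ explicitly; absent that, the proposal does not close.
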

A proof of this theorem is given in Section \ref{sec:proof} below. 
It is based on a technical claim which we present now.

First, we recall that the dg Lie algebras $\fGC$, $\GC$, $\Conv(\Ger^{\vee}, \Ger)$, 
and  $\Conv(\Ger^{\vee}, \Gra)$  carry a natural decreasing filtration ``by arity'':
 \begin{equation}
\label{cF-m-fGC}
\cF_m \fGC  :=  \prod_{n \ge m+1} 
\bs^{2n-2} \Big( \Gra(n) \Big)^{S_n}\,,
\qquad 
\cF_m \GC = \GC ~ \cap ~ \cF_m \fGC\,,
\end{equation}
\begin{equation}
\label{filtr-Conv-Ger-Ger}
\cF_m \Conv(\Ger^{\vee}, \Ger)  :=
 \prod_{n \ge m+1} \Big( \Ger(n) \otimes \La^{-2} \Ger(n)  \Big)^{S_n}\,,
\end{equation}
and
\begin{equation}
\label{filtr-Conv-Ger-Gra}
\cF_m \Conv(\Ger^{\vee}, \Gra)  : = \prod_{n \ge m+1} \Big( \Gra(n) \otimes \La^{-2} \Ger(n)  \Big)^{S_n}\,.
\end{equation}

Second, we observe that every cochain $\ga \in \fGC$ may be extended 
``by zero'' to a cochain $\wt{\ga}$ in \eqref{Conv-Ger-Gra}. 
Indeed, the desired element $\wt{\ga}$ is 
defined by declaring that it vanishes on all 
vectors in $\Ger^{\vee}$ which involve at least one $\La\coLie$-monomial of length $\ge 2$, 
and setting
\begin{equation}
\label{wt-ga-ga}
\wt{\ga} \Big|_{\La^2\coCom} = \ga\,.
\end{equation}
It is obvious that for every cochain $\ga \in \GC$ we have 
$$
\wt{\ga} \in \Conv(\Ger^{\vee}, \Gra)_{\ge 3}\,.
$$

Finally, we formulate the technical statement which is 
used in the proof of Theorem \ref{thm:mB-ga-Ger-deriv}:
\begin{prop}
\label{prop:tech}
Let 
$$
\mR : \Conv(\Ger^{\vee}, \Gra)_{\ge 3} \to \fGC
$$
be the map of cochain complexes defined in \eqref{mR} and 
$\ga$ be a degree $q$ cocycle in $\cF_m \GC$\,.   
There exists a degree $q$ cochain $ \te \in \ker(\mR)$ and a degree $q+1$ cocycle
$ x \in \Xi_{\conn}  \cap \cF_{m+1} \, \Conv(\Ger^{\vee}, \Ger) $
such that 
\begin{equation}
\label{wt-ga-x-te}
\pa  \wt{\ga}  = \psi(x) + \pa \te\,,
\end{equation}
where the vector $\wt{\ga} \in \Conv(\Ger^{\vee}, \Gra)_{\ge 3}$ is obtained 
via extending $\ga$ ``by zero'' and $\psi$ is the embedding  \eqref{Xi-ker-mR}.
\end{prop}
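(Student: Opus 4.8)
The plan is to leverage the structural results already set up in Section~\ref{sec:fGC}, in particular Proposition~\ref{prop:Xi-ker-mR}. First I would observe that $\wt{\ga}$, the ``extension by zero'' of the cocycle $\ga \in \cF_m\GC$, need not itself be a cocycle in $\Conv(\Ger^{\vee},\Gra)_{\ge 3}$: although $\ga$ is closed for $\pa = [\G_{\ed},~]$, the extended element $\wt{\ga}$ is closed only for the part of the differential that involves $\G_{\ed}\otimes b_1 b_2$; the term $\G_{\bb}\otimes\{b_1,b_2\}$ in the Maurer--Cartan element $\al$ \eqref{al} produces a correction. Thus $\pa\wt{\ga}$ is an element of $\Conv(\Ger^{\vee},\Gra)_{\ge 3}$ all of whose graphs, by construction, involve at least one dashed ($\La^2\coLie$) edge, i.e.\ it lies in $\ker(\mR)$ since $\mR(\pa\wt{\ga}) = \pa\mR(\wt{\ga}) = \pa\ga = 0$. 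Moreover, because $\ga\in\cF_m\GC$ and the correction strictly increases arity by one (it comes from composing with a binary operation), $\pa\wt{\ga}$ actually lies in $\cF_{m+1}\ker(\mR)$.

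Next I would apply Proposition~\ref{prop:Xi-ker-mR}: the map $\psi\colon \Xi_{\conn}\to\ker(\mR)$ is a quasi-isomorphism. Since $\pa\wt{\ga}$ is a cocycle in $\ker(\mR)$ (it is $\pa$-closed because $\pa^2=0$), there exists a cocycle $x\in\Xi_{\conn}$ with $\psi(x)$ cohomologous to $\pa\wt{\ga}$ in $\ker(\mR)$, i.e.\ $\pa\wt{\ga} = \psi(x) + \pa\te$ for some $\te\in\ker(\mR)$. Degree bookkeeping gives $\deg\te = q$ and $\deg x = q+1$ as claimed. The filtration statement $x\in\cF_{m+1}\Conv(\Ger^{\vee},\Ger)$ requires checking that $\psi$ and the chosen homotopy in the proof of Proposition~\ref{prop:Xi-ker-mR} are compatible with the arity filtrations $\cF_m$; this should follow since both $\psi$ and the contracting homotopy in \cite[Proposition 13.1]{notes} (adapted to $\graphs$) are arity-graded, so that $\pa\wt{\ga}\in\cF_{m+1}$ forces its cohomology class and the primitive to live in $\cF_{m+1}$ as well. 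Here I would spell out that the spectral sequence / filtered quasi-isomorphism argument restricts to each $\cF_{m+1}$, so one may choose $x$ and $\te$ inside $\cF_{m+1}$.

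The main obstacle, I expect, is the careful verification that the decomposition \eqref{wt-ga-x-te} can be arranged with $x$ genuinely in $\Xi_{\conn}$ (i.e.\ every $\La^{-1}\Lie$-monomial has length $\ge 2$) and simultaneously in $\cF_{m+1}$, rather than merely in $\ker(\mR)$. This amounts to tracking connectivity and the ``all $\La^{-1}\Lie$-words of length $\ge 2$'' condition through the identification of $\ker(\mR)$ with the image of $\psi$ up to homotopy. Connectivity is automatic because $\ga\in\GC$ consists of connected graphs, extending by zero preserves connectivity of the underlying solid-edge graph, and $\pa$ (being built from $\G_{\ed}$ and $\G_{\bb}$, which glue graphs along a new edge or at a vertex) preserves connectedness of the combined solid-plus-dashed graph. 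The length-$\ge 2$ condition is precisely what Property~\ref{pty:ge-3} and the definition of $\Conv(\Ger^{\vee},\Gra)_{\ge 3}$ ensure survives under $\pa$, and it matches the defining condition of $\Xi_{\conn}$; so once $\psi$ is known to be a quasi-isomorphism onto $\ker(\mR)$ respecting the filtration, the existence of $x$ and $\te$ with all required properties is a formal consequence. I would therefore present the proof as: (i) $\pa\wt{\ga}\in\cF_{m+1}\ker(\mR)$ is a cocycle; (ii) invoke the filtered version of Proposition~\ref{prop:Xi-ker-mR}; (iii) extract $x\in\Xi_{\conn}\cap\cF_{m+1}\Conv(\Ger^{\vee},\Ger)$ and $\te\in\ker(\mR)$ of the stated degrees satisfying \eqref{wt-ga-x-te}.
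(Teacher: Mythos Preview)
Your proposal is correct and follows essentially the same approach as the paper. The only organizational difference is that you propose establishing a filtered version of Proposition~\ref{prop:Xi-ker-mR} (i.e.\ that $\psi$ restricts to a quasi-isomorphism on each $\cF_{m+1}$) and then applying it once to the cocycle $\pa\wt{\ga}\in\cF_{m+1}\ker(\mR)$, whereas the paper applies Proposition~\ref{prop:Xi-ker-mR} twice: first to obtain some $x'\in\Xi_{\conn}$ and $\te'\in\ker(\mR)$ with no filtration control, then---after observing that $\psi(x')$ restricted to arities $\le m+1$ is exact in the quotient---a second time to correct $x'$ to $x\in\Xi_{\conn}\cap\cF_{m+1}$ via $\psi(x)=\psi(x')-\pa\te''$, setting $\te=\te'+\te''$. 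Both routes ultimately rely on the same fact, namely that $\psi$ (being induced by the arity-preserving map $\io$) is compatible with the arity filtration; your version makes this explicit up front, while the paper's two-step correction uses it implicitly in the ``second application.'' Your approach also yields the slightly stronger (but unneeded) conclusion $\te\in\cF_{m+1}\ker(\mR)$.
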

\begin{proof} 
Since $\ga \in  \cF_m \GC$,  
\begin{equation}
\label{wtga-in-cF-m}
\wt{\ga} \in \cF_m  \Conv(\Ger^{\vee}, \Gra)_{\ge 3}\,.
\end{equation}
Furthermore, $\ga$ is a cocycle in $\GC$. Hence 
\begin{equation}
\label{diff-wtga-ker-mR}
\pa \, \wt{\ga} \in \ker(\mR)\,.
\end{equation}

Therefore, by Proposition \ref{prop:Xi-ker-mR}, there exists 
a degree $q$ cochain 
$ \te' \in \ker(\mR)$ and a degree $(q+1)$  cocycle
$x' \in \Xi_{\conn}  \cap \Conv(\Ger^{\vee}, \Ger) $ such that
\begin{equation}
\label{wt-ga-x-pr-te}
\pa \wt{\ga}  = \psi(x') + \pa \te'\,.
\end{equation}

Let us now observe that 
\begin{equation}
\label{diff-cF-m}
\pa \Big( \cF_m  \Conv(\Ger^{\vee}, \Gra) \Big) \subset  \cF_{m+1}  \Conv(\Ger^{\vee}, \Gra)\,.
\end{equation}
Hence, using inclusion \eqref{wtga-in-cF-m}, we deduce that
the restriction of $\psi(x')$ to $\Ger^{\vee}(n)$ for $n \le m+1$ gives 
us an exact cocycle in 
\begin{equation}
\label{quotient}
\prod_{n=2}^{m+1} \Big( \Gra(n) \otimes \La^{-2}\Ger(n) \Big)^{S_n}
\cap \ker(\mR)
\end{equation}

Therefore, applying  Proposition \ref{prop:Xi-ker-mR} again, we 
conclude that there exists a degree $q$ cochain 
$\te'' \in \ker(\mR)$ and a degree $q+1$ cocycle
$ x \in \Xi_{\conn}  \cap \cF_{m+1} \, \Conv(\Ger^{\vee}, \Ger) $
such that 
$$
\psi (x) = \psi(x') - \pa(\te'')\,.  
$$

Thus setting $\te = \te' + \te''$ we get the desired equation 
\eqref{wt-ga-x-te}.
\end{proof}

\subsection{Proof of Theorem \ref{thm:mB-ga-Ger-deriv}}
\label{sec:proof}

The map \eqref{sfD-ga} is a derivation of the $\La\Lie$-bracket on $ H^{\bul}(X, \cT_{\poly})$ 
since $\sfD_{\ga}$  comes from a cocycle in  $\Def_{\La\Lie}(\FR)$. Thus it remains to prove that 
$\sfD_{\ga}$ is a derivation for the commutative algebra structure on  $H^{\bul}(X, \cT_{\poly})$\,.

For this purpose we start with two cocycles in the \v{C}ech complex 
$$
v^1,v^2 \in \cCb(X, \FR)
$$
and consider the class
$$
\sfD_{\ga}([v^1] \cup [v^2]) - \sfD_{\ga}([v^1]) \cup [v^2] -
(-1)^{|v^1| |\ga|} [v^1] \cup \sfD_{\ga}([v^2]) ~\in~ H^{\bul}(X, \FR)
$$
where $\cup$ is the multiplication on $H^{\bul}(X, \FR)$ induced 
by the $\wedge$-product on $\FR$. 

This class is represented by the cocycle $v$ given 
by the equation
\begin{equation}
\label{v-Cech-dfn}
v_{\al_0 \dots \al_t} : = 
\begin{aligned}[t]
&\sum_{0 \le k \le t}
\sum_{n \ge 1} \frac{(-1)^{k |v^2|}}{n!} \ga (\, \underbrace{\om, \dots, \om}_{n \textrm{ times}},  
v^1_{\al_0 \dots \al_k} v^2_{\al_k \dots \al_t} \,) 
\\&
- \sum_{0 \le k \le t}
\sum_{n \ge 1} 
 \frac{(-1)^{k |v^2|}}{n!} \ga (\, \underbrace{\om, \dots, \om}_{n \textrm{ times}},  
v^1_{\al_0 \dots \al_k} ) v^2_{\al_k \dots \al_t} 
\\&
- (-1)^{|\ga| |v^1|} \sum_{0 \le k \le t}
\sum_{n \ge 1} \frac{(-1)^{k (|v^2| + |\ga|)}}{n!}
 v^1_{\al_0 \dots \al_k}
 \ga (\, \underbrace{\om, \dots, \om}_{n \textrm{ times}},  
 v^2_{\al_k \dots \al_t}  )\,. 
 \end{aligned}
\end{equation}
So, our goal is to show that the cocycle $v$ is exact. 

To prove this claim, we rewrite \eqref{v-Cech-dfn}  
using the fact that the element 
$$
\wt{\ga} \in \Conv(\Ger^{\vee}, \Gra)_{\ge 3}
$$
is obtained via extending $\ga \in \GC$ ``by zero'':
\begin{equation}
\label{v-Cech-new}
\begin{aligned}
v_{\al_0 \dots \al_t} &=
\begin{aligned}[t]
&\sum_{0 \le k \le t} (-1)^{k |v^2|}
p \circ  e^{-\bs^{-2} \om} \Te'_{\Ger}(\wt{\ga}) e^{\bs^{-2} \om} 
(v^1_{\al_0 \dots \al_k}  v^2_{\al_k \dots \al_t})
\\&
- \sum_{0 \le k \le t} (-1)^{k |v^2|}
\big( p \circ  e^{-\bs^{-2} \om} \Te'_{\Ger}(\wt{\ga}) e^{\bs^{-2} \om} 
(v^1_{\al_0 \dots \al_k}) \big)\, v^2_{\al_k \dots \al_t}
\\&
-(-1)^{|\wt{\ga}| |v^1|} \sum_{0 \le k \le t} (-1)^{k (|v^2|+|\wt{\ga}|)}
v^1_{\al_0 \dots \al_k} \,
\big( p \circ  e^{-\bs^{-2} \om} \Te'_{\Ger}(\wt{\ga}) e^{\bs^{-2} \om} 
( v^2_{\al_k \dots \al_t}) \big)
\end{aligned}
\\&=
\begin{aligned}[t]
&\sum_{0 \le k \le t} (-1)^{k |v^2|+ (|v^1|-k-1)}
p \circ  e^{-\bs^{-2} \om} \Te'_{\Ger}(\wt{\ga}) e^{\bs^{-2} \om} \circ 
\msQ^{\om}_{\Ger} \big( \L v^1_{\al_0 \dots \al_k},  v^2_{\al_k \dots \al_t} \R \big)
\\&
- \sum_{0 \le k \le t} (-1)^{k |v^2|+ (|v^1|-k-1)}
p \circ  e^{-\bs^{-2} \om} \Te'_{\Ger}(\wt{\ga}) e^{\bs^{-2} \om} 
 \big( \L d v^1_{\al_0 \dots \al_k} + \{\om, v^1_{\al_0 \dots \al_k}\}_{SN},  v^2_{\al_k \dots \al_t} \R \big)
\\&
- \sum_{0 \le k \le t} (-1)^{k (|v^2|+ 1)}
p \circ  e^{-\bs^{-2} \om} \Te'_{\Ger}(\wt{\ga}) e^{\bs^{-2} \om} 
 \big( \L v^1_{\al_0 \dots \al_k}  ,  d v^2_{\al_k \dots \al_m} + \{\om, v^2_{\al_k \dots \al_t}\}_{SN} \R \big)
\\&
+ \sum_{0 \le k \le t} (-1)^{k |v^2| + |v^1|-k + |\wt{\ga}|}
p \circ \msQ^{\om}_{\Ger} \circ  e^{-\bs^{-2} \om} \Te'_{\Ger}(\wt{\ga}) e^{\bs^{-2} \om} 
\L v^1_{\al_0 \dots \al_k} , v^2_{\al_k \dots \al_t} \R 
\end{aligned}
\\
&=
\begin{aligned}[t]
&- \sum_{0 \le k \le t} (-1)^{k |v^2|+ (|v^1|-k)}
p \circ \Big( e^{-\bs^{-2} \om} \Te'_{\Ger}(\wt{\ga}) e^{\bs^{-2} \om} 
\msQ^{\om}_{\Ger}  
\\&
- (-1)^{|\wt{\ga}|} \msQ^{\om}_{\Ger} 
\circ
 e^{-\bs^{-2} \om} \Te'_{\Ger}(\wt{\ga}) e^{\bs^{-2} \om} \Big)
\L v^1_{\al_0 \dots \al_k},  v^2_{\al_k \dots \al_t} \R \,,
\end{aligned}\end{aligned}
\end{equation}
where
$$
\L s_1, s_2 \R : = \{b_1,b_2\}^* \otimes s_1 \otimes s_2 \in \Ger^{\vee} (\FR) 
$$
for two local sections $s_1, s_2$ of $\FR$ and $\msQ^{\om}_{\Ger}$ is 
the codifferential of $\Ger^{\vee} (\FR)$ given by 
\begin{equation}
\label{msQ-om-Ger}
\msQ^{\om}_{\Ger} : =  e^{-\bs^{-2} \om} \circ (d+Q_{\Ger}) \circ  e^{\bs^{-2} \om}  = d + \{\om, ~\}_{SN}  + Q_{\Ger}\,.
\end{equation}

Thus we get 
\begin{multline}
\label{v-Cech-more}
v_{\al_0 \dots \al_t} =
\\
(-1)^{|\ga|} \sum_{0 \le k \le t} (-1)^{k |v^2|+ (|v^1|-k)}
p \circ  e^{-\bs^{-2} \om} \Te'_{\Ger}(\pa \wt{\ga}) e^{\bs^{-2} \om} 
\big( \L v^1_{\al_0 \dots \al_k},  v^2_{\al_k \dots \al_t} \R \big)\,. 
\end{multline}

Hence, due to Proposition \ref{prop:tech}, 
\begin{equation}
\label{v-Cech-x-te}
v_{\al_0 \dots \al_t} =
\begin{aligned}[t]
&
(-1)^{|\ga|} \sum_{0 \le k \le t} (-1)^{k |v^2|+ (|v^1|-k)}
p \circ  e^{-\bs^{-2} \om} \Te'_{\Ger}(\psi(x)) e^{\bs^{-2} \om} 
\big( \L v^1_{\al_0 \dots \al_k},  v^2_{\al_k \dots \al_t} \R \big) 
\\&
+(-1)^{|\ga|} \sum_{0 \le k \le t} (-1)^{k |v^2|+ (|v^1|-k)}
p \circ  e^{-\bs^{-2} \om} \Te'_{\Ger}(\pa \te) e^{\bs^{-2} \om} 
\big( \L v^1_{\al_0 \dots \al_k},  v^2_{\al_k \dots \al_t} \R \big)\,,
\end{aligned}\end{equation}
where $ x \in \Xi_{\conn}  \cap \cF_{m+1} \, \Conv(\Ger^{\vee}, \Ger) $
and $\te \in \ker(\mR)$\,.

Using the inclusion $x \in \Xi_{\conn}$ and the compatibility of $\Te'_{\Ger}$ with 
the differentials we conclude that
 \begin{equation}
\label{v-Cech-x-te1}
v_{\al_0 \dots \al_t} =
\begin{aligned}[t]
&
(-1)^{|\ga|} \sum_{0 \le k \le t} (-1)^{k |v^2|+ (|v^1|-k)}
p \circ  \Te'_{\Ger}(\psi(x))
\big( \L v^1_{\al_0 \dots \al_k},  v^2_{\al_k \dots \al_t} \R \big) 
\\&
+(-1)^{|\ga|} \sum_{0 \le k \le t} (-1)^{k |v^2|+ (|v^1|-k)}
p \circ  \msQ^{\om}_{\Ger} \circ e^{-\bs^{-2} \om} \Te'_{\Ger}(\te) e^{\bs^{-2} \om} 
\big( \L v^1_{\al_0 \dots \al_k},  v^2_{\al_k \dots \al_t} \R \big)
\\&
+ \sum_{0 \le k \le t} (-1)^{k |v^2|+ (|v^1|-k)} 
p \circ
e^{-\bs^{-2} \om} \Te'_{\Ger}(\te) e^{\bs^{-2} \om} \circ \hat{\msQ}^{\om}_{\Ger} 
\big( \L v^1_{\al_0 \dots \al_k},  v^2_{\al_k \dots \al_t} \R \big)
\end{aligned}\end{equation}

Since $\GC = \cF_2 \GC$ we have  $x \in \Xi_{\conn}  \cap \cF_{3} \, \Conv(\Ger^{\vee}, \Ger) $
and hence 
$$
  \Te'_{\Ger}(\psi(x))
\big( \L v^1_{\al_0 \dots \al_k},  v^2_{\al_k \dots \al_t} \R \big) =0\,.
$$

In addition, $\te \in \ker(\mR)$. Therefore,

\begin{equation}
\label{v-Cech-x-te11}
v_{\al_0 \dots \al_t} =
\begin{aligned}[t]
&
(-1)^{|\ga|} \sum_{0 \le k \le t} (-1)^{k |v^2|+ (|v^1|-k)}
(d + \{\om,~\}_{SN})
\Big( p \circ \Te'_{\Ger}(\te) e^{\bs^{-2} \om} 
\big( \L v^1_{\al_0 \dots \al_k},  v^2_{\al_k \dots \al_t} \R \big) \Big)
\\&
- \sum_{0 \le k \le t} (-1)^{k |v^2|+ (|v^1|-k)} 
p \circ \Te'_{\Ger}(\te) e^{\bs^{-2} \om} \circ 
\big( \L  (d v^1_{\al_0 \dots \al_k} + \{\om, v^1_{\al_0 \dots \al_k}\}_{SN}),  
v^2_{\al_k \dots \al_t} \R \big)
\\&
- \sum_{0 \le k \le t} (-1)^{k |v^2|} 
p \circ \Te'_{\Ger}(\te) e^{\bs^{-2} \om} \circ 
\big( \L  v^1_{\al_0 \dots \al_k} ,
(d v^2_{\al_k \dots \al_t} + \{\om, v^2_{\al_k \dots \al_t}\}_{SN}) \R \big)\,.\end{aligned}
\end{equation}

Since both $v^1$ and $v^2$ are cocycles in the \v{C}ech complex $\cCb(X, \FR)$, 
we have 
$$
d v^1_{\al_0 \dots \al_k} + \{\om, v^1_{\al_0 \dots \al_k}\}_{SN} = 
- (\cpa v^1)_{\al_0 \dots \al_k} 
$$
and 
$$
d v^2_{\al_k \dots \al_t} + \{\om, v^2_{\al_k \dots \al_t}\}_{SN} = 
- (\cpa v^2)_{\al_k \dots \al_t}\,. 
$$

~

Hence, equation \eqref{v-Cech-x-te11} implies that

\begin{equation}
\label{v-Cech-exactly}
\begin{aligned}
v_{\al_0 \dots \al_t} &=
\begin{aligned}[t]
&
(-1)^{|\ga|} \sum_{0 \le k \le t} (-1)^{k |v^2|+ (|v^1|-k)}
(d + \{\om,~\}_{SN})
\Big( p \circ \Te'_{\Ger}(\te) e^{\bs^{-2} \om} 
\big( \L v^1_{\al_0 \dots \al_k},  v^2_{\al_k \dots \al_t} \R \big) \Big)
\\&+
\sum_{0 \le k \le t} (-1)^{k |v^2|+ (|v^1|-k)} 
p \circ \Te'_{\Ger}(\te) e^{\bs^{-2} \om} \circ 
\big( \L  (\cpa v^1)_{\al_0 \dots \al_k} ,  
v^2_{\al_k \dots \al_t} \R \big)
\\&
+ \sum_{0 \le k \le t} (-1)^{k |v^2|} 
p \circ \Te'_{\Ger}(\te) e^{\bs^{-2} \om} \circ 
\big( \L  v^1_{\al_0 \dots \al_k} ,
(\cpa v^2)_{\al_k \dots \al_t} \R \big) 
\end{aligned}
\\&=
d u_{\al_0 \dots \al_t} + \{\om, u_{\al_0 \dots \al_t} \}_{SN} + 
(\cpa u)_{\al_0 \dots \al_t}\,,
\end{aligned}\end{equation}
where
\begin{equation}
\label{u-dfn}
 u_{\al_0 \dots \al_t} : =
(-1)^{|\ga|} \sum_{0 \le k \le t} (-1)^{k |v^2|+ (|v^1|-k)}
p \circ \Te'_{\Ger}(\te)  \big( e^{\bs^{-2} \om} 
 \L v^1_{\al_0 \dots \al_k},  v^2_{\al_k \dots \al_t} \R \big) \,.
\end{equation}

Thus the cocycle $v$ is indeed exact and 
Theorem \ref{thm:mB-ga-Ger-deriv} follows. 

\qed

\section{The action of Deligne-Drinfeld elements}
\label{sec:DD-action}

Let $\grt$ be the Grothendieck-Teichm\"uller  Lie algebra and let  $\si_n$ ($n$ is odd $\ge 3$) be 
Deligne-Drinfeld elements of $\grt$ (see Proposition \ref{prop:DD-elements} in Subsection \ref{sec:grt}). 
Let us denote by $\ga_n$ any cocycle which represents 
the cohomology class $\wt{\si}_n \in H^0(\GC)$ corresponding to $\si_n \in \grt$
via the isomorphism \eqref{H0GC-grt} in Theorem \ref{thm:GC-grt}.

According to Theorem \ref{thm:mB-ga-Ger-deriv}, $\Te(\ga_n)$ induces 
a degree zero derivation $\cD_n$ of the Gerstenhaber algebra 
\begin{equation}
\label{H-X-Tpoly}
H^{\bul}(X, \cT_{\poly})\,.
\end{equation}

The following theorem gives a natural geometric interpretation of 
this derivation:
\begin{thm}
\label{thm:main}
Let $n$ be an odd integer $\ge 3$\,. Then the action of $\cD_n$ on $H^{\bul}(X, \cT_{\poly})$ is a non-zero scalar multiple of 
the contraction with the $n$-th component of the Chern character\footnote{We consider the Chern character with values 
in $\bigoplus_{k} H^{k}(X, \Om^k_X)$\,.} 
of the tangent bundle of $X$, 
regardless of the choice of Deligne-Drinfeld elements, i.~e., of Lie words $\dots$ in \eqref{si-n-fix}. 
\end{thm}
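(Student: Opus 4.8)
The plan is to reduce the statement to a local computation on a formal neighborhood and then transport the answer back to $X$ via the Fedosov resolution. First I would note that both sides are natural operations on $H^{\bul}(X,\cT_{\poly})$: the left-hand side $\cD_n$ is a derivation by Theorem \ref{thm:mB-ga-Ger-deriv}, while contraction with the $n$-th component of the Chern character $\mathrm{ch}_n(\cT_X)\in H^n(X,\Om^n_X)$ is also a derivation of the Gerstenhaber algebra (this is part of what we want, but the derivation property of a contraction with a $(n,n)$-form is elementary). Using the explicit Fedosov representative $\sfA_{\om}$ of the Atiyah class from Theorem \ref{thm:At-Fed}, one can write down an explicit global section of $\FR$ representing $\mathrm{ch}_n(\cT_X)$, namely (up to a normalization constant) the trace of the $n$-fold ``wedge power'' of $\sfA_{\om}$, contracted appropriately; this lives in $\big(\Om^n(\cO_X^{\coord})\otimes\bs\,\cT_X$-part$\big)^{[\mgl_d(\bbK)]}$ and is $(d+L_\om)$-closed by the Bianchi-type identity \eqref{om-is-flat1}. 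The key point is that both $\cD_n$ applied to a class $[v]$ and the contraction $\iota_{\mathrm{ch}_n}[v]$ are computed by plugging $\om$ repeatedly into a graph and then feeding in $v$, so both are governed by the same universal formula on the formal disk $P=\bbK[[t^1,\dots,t^d]]$.

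Second, I would isolate the universal statement on the formal disk. The operadic action \eqref{Gra-acts} of $\GC$ on $T_{\poly}(P)$, composed with inserting $r$ copies of a formal connection one-form $\om^a\pa_{t^a}$, gives for each cocycle $\ga_n\in\GC$ a universal $L_\infty$-derivation of $T_{\poly}(P)$ valued in forms. The crucial input is Theorem \ref{thm:GC-grt}: every representative $\ga_n$ of $\wt\si_n$ has a \emph{nonzero} coefficient in front of the wheel graph with $n+1$ vertices pictured in Figure \ref{fig:wheel} (the $n$-wheel with a hub). When one inserts the connection $\om$ into all ``rim'' vertices of a wheel and then applies the resulting operator to a polyvector field $v$, the wheel with $n$ spokes produces precisely the trace of the $n$-th power of the curvature (via the second derivatives $\pa^2\om^a/\pa t^b\pa t^c$, i.e.\ the local expression for $\sfA_\om$ in \eqref{At-rep-ve}), which is the $n$-th Chern-character form acting by contraction; this is exactly Kontsevich's original affine-space computation from \cite[Section 5]{K-conj}, which I would invoke or reproduce. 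All the non-wheel graphs in $\ga_n$, after insertion of $\om$, contribute terms that one checks are \emph{exact} in the relevant complex: either they vanish for valence/orientation reasons as in Claim \ref{cl:graphs-ge-3}, or the corresponding operation on cohomology is forced to be zero because there is no other natural derivation of $H^{\bul}(X,\cT_{\poly})$ of the right degree built from $\om$ alone — more precisely, one argues that the only cohomology class one can manufacture from the connection/curvature data that is $\mgl_d(\bbK)$-basic, $(d+L_\om)$-closed, global, and of the correct symmetry type is a multiple of $\mathrm{ch}_n$, by the classification of $\mathrm{GL}_d$-invariants (characteristic classes of the tangent bundle), together with the fact that in cohomology all characteristic classes of $\cT_X$ of the relevant degree other than $\mathrm{ch}_n$ already occur in degree $<n$ and hence are killed by the arity filtration / connectivity of $\ga_n$.

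Third, having shown $\cD_n=c_n\,\iota_{\mathrm{ch}_n(\cT_X)}$ as operators on $H^{\bul}(X,\cT_{\poly})$ for some universal constant $c_n$ depending only on the chosen representative $\ga_n$ and the normalization, I would argue $c_n\ne 0$. This is where the nonvanishing of the wheel coefficient in Theorem \ref{thm:GC-grt} is used decisively: one exhibits a single smooth variety $X$ (e.g.\ a projective space, or more robustly a Calabi--Yau complete intersection as in Section \ref{sec:examples}) on which $\mathrm{ch}_n(\cT_X)$ is nonzero in $H^n(X,\Om^n_X)$ and acts nontrivially by contraction on some explicit polyvector class, and checks that the wheel graph alone already yields a nonzero contribution to $\cD_n$ on that class, with no cancellation coming from the other graphs (the other graphs, by the argument above, contribute exact cochains, hence nothing on cohomology). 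Since the constant $c_n$ is universal — independent of $X$ — its nonvanishing on one example gives nonvanishing always. Finally, the phrase ``regardless of the choice of Deligne--Drinfeld elements'' follows because changing the Lie words $\dots$ in \eqref{si-n-fix} changes $\si_n$ by an element of $[\grt,\grt]$ of the same degree, hence changes $\ga_n$ by a coboundary plus possibly lower-wheel contributions; the coboundary changes $\Te(\ga_n)$ by a coboundary in $\Def_{\La\Lie}(\FR)$, which does not affect the induced derivation on cohomology, and Theorem \ref{thm:GC-grt} guarantees the wheel coefficient stays nonzero, so $c_n$ stays nonzero.

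\textbf{Main obstacle.} The hard part will be the second step — pinning down that the \emph{only} contribution of $\ga_n$ at the level of cohomology is the wheel term, i.e.\ that all the ``correction'' graphs assemble into an exact cochain in the \v{C}ech complex $\cCb(X,\FR)$. On the affine space this is Kontsevich's computation, but globally one must control the interaction with the nontrivial connection $\om$ and the twisting/trimming; the clean way to do this is to combine the $\mathrm{GL}_d$-invariance (forcing the answer to be a polynomial in Chern classes of $\cT_X$), the degree bookkeeping via the arity filtration $\cF_m\GC$, and the exactness arguments already developed in Section \ref{sec:cD-ga-deriv} (the same mechanism by which $\Te'_{\Ger}$ of a coboundary produces a \v{C}ech coboundary), rather than a brute-force graph-by-graph cancellation.
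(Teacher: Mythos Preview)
Your overall architecture is right: compute $\sfD_{\ga_n}$ on a \v{C}ech cocycle via \eqref{sfD-ga-dfn}, identify the wheel contribution with contraction by $\mathrm{ch}_n$ using the Fedosov representative $\sfA_{\om}$, and use the nonzero wheel coefficient from Theorem~\ref{thm:GC-grt} for $c_n\neq 0$. But your treatment of the non-wheel graphs is a genuine gap, and your workaround is both harder and shakier than what actually happens.

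You propose that non-wheel graphs ``either vanish for valence/orientation reasons as in Claim~\ref{cl:graphs-ge-3}, or'' are cohomologically trivial by a $\GL_d$-invariant-theory argument forcing the answer to be a polynomial in Chern classes. The first alternative is the correct mechanism, but Claim~\ref{cl:graphs-ge-3} is far too weak: it only rules out orientations where \emph{every} vertex has exactly one outgoing edge, which is what you need when inserting $\om$ into \emph{all} slots. Here you insert $\om$ into $n$ slots and $v$ into one, so the constraint is that vertices labeled $\le n$ have at most one outgoing edge while vertex $n{+}1$ is unconstrained. The paper's Claim~\ref{cl:wheels-only} is the sharp combinatorial fact: a connected, $1$-vertex irreducible graph with all vertices trivalent or more admits such an orientation \emph{only} if it is (a relabeling of) the wheel $\G^{wheel}_n$. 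Consequently every non-wheel $\G_i$ in \eqref{ga-n} satisfies $\G_i(\om,\dots,\om,v)=0$ \emph{pointwise}, not merely up to a coboundary. This makes your $\GL_d$-invariance route unnecessary --- which is fortunate, because as stated that route is incomplete: you would need to rule out contractions with other degree-$n$ classes in $H^n(X,\Om^n_X)$ (products of lower Chern classes), and your ``arity filtration kills them'' remark does not do this, since all graphs in $\ga_n$ already have $n{+}1$ vertices and land in the same degree.

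Two smaller points. First, your Step~3 detour through an example variety to show $c_n\neq 0$ is superfluous: once non-wheel terms vanish identically, the computation \eqref{w-n-even-better}--\eqref{la-pr-Chern-n} gives $c_n$ as an explicit nonzero multiple of the wheel coefficient $\la$, which is nonzero by Theorem~\ref{thm:GC-grt}. Second, ``regardless of the choice of Deligne--Drinfeld elements'' needs no separate coboundary argument: Theorem~\ref{thm:GC-grt} guarantees \emph{every} representative of $\wt{\si}_n$ has nonzero wheel coefficient, and the proof above uses only that.
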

Theorem \ref{thm:main} has the following obvious corollary: 
\begin{cor}
\label{cor:main}
Let $X$ be a smooth algebraic variety over $\bbK$ and $n$ be an 
odd integer $\ge 3$.
Then the contraction with the $n$-th component of the Chern character induces 
a derivation of the Gerstenhaber algebra $H^{\bul}(X, \cT_{\poly})$. \qed
\end{cor}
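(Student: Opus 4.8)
The plan is to obtain Corollary \ref{cor:main} as an immediate consequence of Theorem \ref{thm:main} and Theorem \ref{thm:mB-ga-Ger-deriv}; beyond a rescaling, no new work is needed. First I would fix an odd integer $n \ge 3$ and, using Proposition \ref{prop:DD-elements}, a Deligne-Drinfeld element $\si_n \in \grt$ of degree $n$. Then, via the isomorphism $H^0(\GC) \cong \grt$ of Theorem \ref{thm:GC-grt}, I would choose a degree zero cocycle $\ga_n \in \GC$ representing the cohomology class $\wt{\si}_n \in H^0(\GC)$ that corresponds to $\si_n$. This is precisely the cocycle with respect to which the operator $\cD_n$ of Section \ref{sec:DD-action} is defined, namely $\cD_n = \sfD_{\ga_n}$, the derivation of $H^{\bul}(X, \cT_{\poly})$ induced by the cocycle $\Te(\ga_n) \in \Def_{\La\Lie}(\FR)$ through formula \eqref{sfD-ga-dfn}.

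Next I would apply Theorem \ref{thm:mB-ga-Ger-deriv} to the cocycle $\ga = \ga_n$. That theorem asserts that $\sfD_{\ga_n}$ is a derivation of the full Gerstenhaber algebra structure on $H^{\bul}(X, \cT_{\poly})$: it is automatically a derivation of the Schouten bracket because $\sfD_{\ga_n}$ is induced by a cocycle in $\Def_{\La\Lie}(\FR)$, and compatibility with the cup product is exactly the additional content of the theorem. Hence $\cD_n$ is a Gerstenhaber derivation of $H^{\bul}(X, \cT_{\poly})$.

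Finally I would invoke Theorem \ref{thm:main}, which identifies $\cD_n$ with $\la_n$ times the contraction with the $n$-th component of the Chern character of $\cT_X$, for a scalar $\la_n \in \bbK$ that is non-zero and independent of the choices made. Since $\la_n \ne 0$ and $\bbK$ has characteristic zero, the operator $\la_n^{-1}\cD_n$ is well defined, equals the contraction with the $n$-th Chern character component, and is a scalar multiple of the derivation $\cD_n$ — hence itself a derivation of the Gerstenhaber algebra $H^{\bul}(X, \cT_{\poly})$. This is the assertion of Corollary \ref{cor:main}.

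The corollary itself presents no real obstacle; all the difficulty is packed into the two theorems it cites, which are established in Sections \ref{sec:cD-ga-deriv} and \ref{sec:DD-action}. The two points one must be slightly careful about in the deduction are that Theorem \ref{thm:main} yields an \emph{honest equality} of operators on $H^{\bul}(X, \cT_{\poly})$ (rather than agreement only modulo lower-order terms), so that dividing by $\la_n$ is legitimate, and that $\la_n \ne 0$ — which is ultimately guaranteed by the non-vanishing of the coefficient of the wheel graph of Figure \ref{fig:wheel} in any representative of $\wt{\si}_n$, as in Theorem \ref{thm:GC-grt}.
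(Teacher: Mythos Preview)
Your proposal is correct and matches the paper's own reasoning: the corollary is stated as an immediate consequence of Theorem~\ref{thm:main} (with $\cD_n$ already known to be a Gerstenhaber derivation by Theorem~\ref{thm:mB-ga-Ger-deriv}), so dividing by the non-zero scalar $\la_n$ gives the result. There is nothing to add.
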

\begin{remark}
\label{rem:K-motives}
Corollary \ref{cor:main} implies that the contractions with odd components of 
the Chern character are derivations of the cup product on  $H^{\bul}(X, \cT_{\poly})$. 
This statement was formulated without a proof in 
\cite[Theorem 9]{K-motives}. 
\end{remark}

\begin{remark}
\label{rem:u-abelianization}
The proof of Theorem \ref{thm:main} is essentially based on the combinatorial Claim 
\ref{cl:wheels-only} given below. This claim implies that the derivation 
$\cD$ of the Gerstenhaber algebra $H^{\bul}(X, \cT_{\poly})$ corresponding 
to a cocycle $\ga \in \GC$ is non-zero if and only if $\ga$ involves 
the graph $\G^{wheel}_n$ shown on figure \ref{fig:wheel} for some 
odd integer $n \ge 3$.  
It is not hard to see that for any pair of vectors 
$\ga, \ga' \in \GC$ the Lie bracket $[\ga, \ga']$ does not involve graphs of 
the form $\G^{wheel}_n$, since $\G^{wheel}_n$ does not have a subgraph 
whose contraction would yield a non-zero vector in $\GC$. This observation implies 
that the action of $\grt$ on $H^{\bul}(X, \cT_{\poly})$ factors through its 
abelianization\footnote{This was conjectured by the anonymous referee.}  
$\grt^{ab} = \grt \big/ [\grt, \grt]$. Furthermore, note that the operations of contraction with the Chern characters pairwise commute, and hence it follows once again that the $\grt$ action is indeed an action.
\end{remark}

We will prove Theorem \ref{thm:main} in Section \ref{sec:proof-main} and 
now we will present a combinatorial fact which is used in the proof of 
this theorem.
\begin{claim}
\label{cl:wheels-only}
Let $\G$ be a connected, 1-vertex irreducible labeled graph with 
$n+1$ vertices, such that each vertex of $\G$ has valency $\ge 3$.
In addition, let $\G^{wheel}_n$ be the labeled graph depicted on figure \ref{fig:wheel}.
If $\G$ admits an orientation for which each vertex with label 
$\le n$ has at most one out-going edge, then
$\G = \si (\G^{wheel}_n)$ for 
some permutation\footnote{The group $S_n$ is tacitly identified with 
the stabilizer of $(n+1)$ in $S_{n+1}$.} $\si \in S_n$\,. Furthermore, $\G^{wheel}_n$
has exactly two orientations which satisfy the above condition. These orientations are shown 
on figures \ref{fig:clock}  and \ref{fig:counterclock}. 
\end{claim}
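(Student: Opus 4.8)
The plan is to analyze the orientation constraint purely combinatorially and exploit the valency and 1-vertex irreducibility hypotheses. First I would count edges: if $\G$ has $n+1$ vertices and admits an orientation in which each of the $n$ vertices labeled $\le n$ has at most one out-going edge, then the total number of edges is at most $n$ plus the out-valency of vertex $n+1$. On the other hand, connectedness forces at least $n$ edges, and the valency $\ge 3$ condition forces $2 e(\G) = \sum_v \mathrm{val}(v) \ge 3(n+1)$, i.e. $e(\G) \ge \lceil 3(n+1)/2 \rceil$. Combining these bounds will pin down $e(\G)$ rather tightly; the key observation is that a vertex with label $\le n$ which has at most one out-going edge but valency $\ge 3$ must have at least two in-coming edges, and the out-going edges of the $n$ ``spoke'' vertices can only land on vertex $n+1$ or cycle among themselves. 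I would show that the out-going edges of vertices $1,\dots,n$ together with vertex $n+1$'s edges force exactly the wheel: each of vertices $1,\dots,n$ sends its unique out-going edge into the hub $n+1$, contributing $n$ spokes, and the remaining edges among $1,\dots,n$ must form a structure in which every such vertex still has valency $\ge 3$, i.e. at least two more incident edges, which (given at most one of them can be out-going per vertex) forces a single oriented cycle $1 \to 2 \to \cdots$ through all of them, i.e. the rim. That gives $e(\G) = 2n$ and $\G \cong \G^{wheel}_n$ up to relabeling the rim vertices, i.e. $\G = \si(\G^{wheel}_n)$ for some $\si \in S_n$.

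The step I expect to be the genuine obstacle is ruling out ``degenerate'' rim configurations: a priori the edges among $\{1,\dots,n\}$ need not form one long cycle — they could form several shorter oriented cycles, or a union of cycles joined by extra edges, and one must use \emph{1-vertex irreducibility} to exclude these. Here the argument is that deleting the hub $n+1$ must leave a connected graph on $\{1,\dots,n\}$; since each of these vertices has out-valency $\le 1$ within the full graph and already spent its out-edge on the hub (or, if it did not, the bookkeeping above shows it cannot spend one on the rim either without violating valency), the induced subgraph on $\{1,\dots,n\}$ has every vertex of out-valency $0$ after removing the spoke — so it must be that the rim edges were oriented to give each spoke-vertex out-valency $1$ on the rim and the spoke was \emph{not} out-going, contradiction — careful case analysis of which edge at each spoke-vertex is the out-going one is needed. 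Once one fixes, consistently, that the spoke-edges point toward the hub, connectivity of the hub-deleted graph together with the degree constraints leaves only the $n$-cycle, which is the rim.

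Finally, for the count of valid orientations of $\G^{wheel}_n$ itself: an orientation satisfying the condition is determined by orienting the $n$ spokes and the $n$ rim edges. Each spoke-vertex $i$ has valency $3$ (two rim edges and one spoke) and at most one out-going edge, so exactly two of its three incident edges are in-coming. Tracing this constraint around the rim, if the spoke at vertex $i$ is out-going then both rim edges at $i$ are in-coming, which forces the rim edges at the two neighbors to be oriented away from $i$; propagating, one finds the spoke must be out-going at \emph{every} rim vertex (forcing the rim to be a coherently oriented cycle, two choices of direction) — these are precisely the orientations of figures \ref{fig:clock} and \ref{fig:counterclock}. The complementary possibility, that no spoke is out-going, would make the hub's in-valency $n$ and force each rim vertex to have its out-edge on the rim, again yielding a coherently oriented rim but now with all spokes pointing outward from the hub; checking that this coincides (after reversing the role of ``out'') with one of the two already-listed orientations, or is itself excluded because then vertex $n+1$ has out-valency $0$ while we only constrained vertices $\le n$ — wait, vertex $n+1$ is unconstrained, so this is a third possibility and must be examined: but here every rim vertex has out-valency exactly $1$ (on the rim) and the hub has out-valency $0$, and this configuration is exactly figure \ref{fig:counterclock} (resp. \ref{fig:clock}) read with spokes reversed, so it does not produce a new orientation once one accounts for the global rim-reversal symmetry. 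Tallying, exactly two orientations survive, completing the claim.
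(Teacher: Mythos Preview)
Your edge-counting opening is sound and, if pushed through correctly, yields a proof arguably cleaner than the paper's. Summing out-valencies gives $e(\G) \le n + \mathrm{outval}(n{+}1) \le n + \mathrm{val}(n{+}1)$, while summing valencies gives $2e(\G) \ge 3n + \mathrm{val}(n{+}1)$; since $\G$ has no multiple edges, $\mathrm{val}(n{+}1) \le n$, and these inequalities then force $\mathrm{val}(n{+}1) = n$, $\mathrm{outval}(n{+}1) = n$, $e(\G) = 2n$, and valency exactly $3$ with out-valency exactly $1$ at every vertex $\le n$. In particular \emph{every} edge at the hub is out-going, so each spoke points \emph{from} the hub \emph{to} a rim vertex. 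The induced subgraph on $\{1,\dots,n\}$ is then $2$-regular with out-valency $1$ at every vertex, hence a disjoint union of directed cycles; $1$-vertex irreducibility (delete the hub) makes it a single $n$-cycle. That is the whole argument.

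Your proposal draws the opposite conclusion at the crucial step: you write ``each of vertices $1,\dots,n$ sends its unique out-going edge into the hub $n{+}1$''. This is impossible --- if every rim vertex spent its out-edge on a spoke, any edge between two rim vertices would be incoming at both endpoints, so there would be no rim edges and the rim vertices would have valency $1$. You in fact run into this in your middle paragraph (``the spoke was \emph{not} out-going, contradiction'') but do not reverse course; you retreat to ``careful case analysis \dots\ is needed'' and then immediately reassert ``the spoke-edges point toward the hub''. So the structural argument never closes. The orientation count inherits the same error: the case ``spoke at $i$ is out-going'' should terminate in a contradiction (propagating the forced rim directions once around the cycle makes the edge $n1$ point both ways), so every spoke must be incoming at its rim vertex, all spokes point out from the hub, and the rim is a coherently oriented cycle --- two choices, as claimed. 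Your ``third possibility'' paragraph is tangled because you have the hub's in- and out-valencies reversed throughout.

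For comparison, the paper does not count edges. It instead shows by direct path-tracing that (a) vertex $n{+}1$ has no incoming edge and (b) no vertex $\le n$ receives two edges from other vertices $\le n$: in each case one follows incoming edges backwards and argues that the resulting oriented walk can never close up except at $n{+}1$, contradicting finiteness. From (a) and (b) the local picture at each rim vertex is pinned down (one outgoing rim edge, one incoming rim edge, one incoming spoke), so $\G$ is a ``join of wheels'' sharing the hub, and $1$-vertex irreducibility collapses this to a single wheel. Your counting route is shorter once the spoke direction is fixed; the paper's route is more hands-on and avoids the global inequality bookkeeping.
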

\begin{figure}[htp]
\centering 
\begin{minipage}[t]{0.45\linewidth} 
\centering 
\begin{tikzpicture}[scale=0.5, >=stealth']
\tikzstyle{ext}=[circle, draw, minimum size=5, inner sep=1]
\tikzstyle{int}=[circle, draw, fill, minimum size=5, inner sep=1]
\node [int] (v0) at (3,3) {};
\draw (3.7, 2.5) node[anchor=center] {{\small $n+1$}};
\node [int] (vn) at (0,3) {};
\draw (-0.5, 3) node[anchor=center] {{\small $n$}};
\node [int] (v1) at (0.5,5) {};
\draw (0.5, 5.6) node[anchor=center] {{\small $1$}};
\node [int] (v2) at (2,6) {};
\draw (2, 6.6) node[anchor=center] {{\small $2$}};
\node [int] (v3) at (4,6) {};
\draw (4, 6.6) node[anchor=center] {{\small $3$}};
\draw (5.4, 5.15) node[anchor=center, rotate=140] {{\small $\dots$}};
\draw (0.5, 1.5) node[anchor=center, rotate=120] {{\small $\dots$}};
\draw  [<-] (vn) edge (0.25,2);
\draw [->] (vn) edge (v1);
\draw [->] (v1) edge (v2);
\draw  [->] (v2) edge (v3);
\draw  (v3) edge (5,5.5);
\draw [<-] (vn) edge (v0);
\draw [<-] (v1) edge (v0);
\draw [<-] (v2) edge (v0);
\draw [<-] (v3) edge (v0);
\end{tikzpicture}
~\\[0.3cm]
\caption{Each vertex with label $\le n$ has at most $1$ out-going edge} \label{fig:clock}
\end{minipage} ~
\begin{minipage}[t]{0.45\linewidth} 
\centering 
\begin{tikzpicture}[scale=0.5, >=stealth']
\tikzstyle{ext}=[circle, draw, minimum size=5, inner sep=1]
\tikzstyle{int}=[circle, draw, fill, minimum size=5, inner sep=1]
\node [int] (v0) at (3,3) {};
\draw (3.7, 2.5) node[anchor=center] {{\small $n+1$}};
\node [int] (vn) at (0,3) {};
\draw (-0.5, 3) node[anchor=center] {{\small $n$}};
\node [int] (v1) at (0.5,5) {};
\draw (0.5, 5.6) node[anchor=center] {{\small $1$}};
\node [int] (v2) at (2,6) {};
\draw (2, 6.6) node[anchor=center] {{\small $2$}};
\node [int] (v3) at (4,6) {};
\draw (4, 6.6) node[anchor=center] {{\small $3$}};
\draw (5.4, 5.15) node[anchor=center, rotate=140] {{\small $\dots$}};
\draw (0.5, 1.5) node[anchor=center, rotate=120] {{\small $\dots$}};
\draw (vn) edge (0.25,2);
\draw  [<-] (vn) edge (v1);
\draw [<-] (v1) edge (v2);
\draw [<-] (v2) edge (v3);
\draw [<-] (v3) edge (5,5.5);
\draw [<-] (vn) edge (v0);
\draw [<-] (v1) edge (v0);
\draw [<-] (v2) edge (v0);
\draw [<-] (v3) edge (v0);
\end{tikzpicture}
~\\[0.3cm]
\caption{Each vertex with label $\le n$ has at most $1$ out-going edge} \label{fig:counterclock}
\end{minipage}
\end{figure} 
\begin{proof} {\it The vertex with label $(n+1)$ has no incoming edges.}

Let us  assume that the vertex of $\G$ with label $(n+1)$ has 
an incoming edge which originates, say, at vertex $i_1$. Since vertex $i_1$ 
is at least trivalent and it has at most one outgoing edge, this vertex has at least two 
incoming edges. One of these edges originates at, say, vertex $i_2$. 
Since $\G$ does not have double edges, $i_2 \le n$ so we may apply the same 
argument to vertex $i_2$ and choose an edge which terminates at vertex $i_2$ 
and originates, say, at vertex $i_3$. 

Continuing this process we will get an oriented path which returns to vertex 
$(n+1)$. Indeed, since the graph is finite and each vertex with label $\le n$ 
cannot have more than one out-going edge, the path must come back to vertex $(n+1)$. 
See figure \ref{fig:cycle1}.
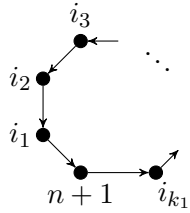
\begin{figure}[htp]
\centering 
\begin{tikzpicture}[scale=0.5, >=stealth']
\tikzstyle{ext}=[circle, draw, minimum size=5, inner sep=1]
\tikzstyle{int}=[circle, draw, fill, minimum size=5, inner sep=1]
\node [int] (vn1) at (0,0) {};
\draw (0, -0.6) node[anchor=center] {{\small $n+1$}};
\node [int] (vi1) at (-1,1) {};
\draw (-1.6, 1) node[anchor=center] {{\small $i_1$}};
\node [int] (vi2) at (-1,2.5) {};
\draw (-1.6, 2.5) node[anchor=center] {{\small $i_2$}};
\node [int] (vi3) at (0,3.5) {};
\draw (0, 4.2) node[anchor=center] {{\small $i_3$}};
\node [int] (vik1) at (2,0) {};
\draw (2.5, -0.6) node[anchor=center] {{\small $i_{k_1}$}};
\draw  [<-] (vn1) edge (vi1);
\draw  [<-] (vi1) edge (vi2);
\draw  [<-] (vi2) edge (vi3);
\draw  [<-] (vi3) edge (1,3.5);
\draw (2, 3) node[anchor=center,  rotate=140] {{\small $\dots$}};
\draw  [->] (vn1) edge (vik1);
\draw  [->] (vik1) edge (2.6,0.6);
\end{tikzpicture}
\caption{The oriented path returns to the vertex with label $(n+1)$} \label{fig:cycle1}
\end{figure}

Since vertex $i_{k_1}$ on figure \eqref{fig:cycle1} is at least trivalent and has at most 
one out-going edge, it has at least one more incoming edge which originates, say, at vertex 
$j_1$. Since $\G$ does not have double edges, $j_1 \le n$. Hence, we can pick an edge which 
terminates at vertex $j_1$ and originates, say, at vertex $j_2$. We continue this process and get another 
oriented path which is shown on figure \ref{fig:cycle-path}. 
\begin{figure}[htp]
\centering 
\begin{tikzpicture}[scale=0.5, >=stealth']
\tikzstyle{ext}=[circle, draw, minimum size=5, inner sep=1]
\tikzstyle{int}=[circle, draw, fill, minimum size=5, inner sep=1]
\node [int] (vn1) at (0,0) {};
\draw (0, -0.6) node[anchor=center] {{\small $n+1$}};
\node [int] (vi1) at (-1,1) {};
\draw (-1.6, 1) node[anchor=center] {{\small $i_1$}};
\node [int] (vi2) at (-1,2.5) {};
\draw (-1.6, 2.5) node[anchor=center] {{\small $i_2$}};
\node [int] (vi3) at (0,3.5) {};
\draw (0, 4.2) node[anchor=center] {{\small $i_3$}};
\node [int] (vik1) at (2,0) {};
\draw (1.8, 0.7) node[anchor=center] {{\small $i_{k_1}$}};
\node [int] (vj1) at (3,-1) {};
\draw (3.6, -1) node[anchor=center] {{\small $j_1$}};
\node [int] (vj2) at (3,-2.5) {};
\draw (3.6, -2.5) node[anchor=center] {{\small $j_2$}};
\draw  [<-] (vn1) edge (vi1);
\draw  [<-] (vi1) edge (vi2);
\draw  [<-] (vi2) edge (vi3);
\draw  [<-] (vi3) edge (1,3.5);
\draw (2, 3) node[anchor=center,  rotate=140] {{\small $\dots$}};
\draw  [->] (vn1) edge (vik1);
\draw  [->] (vik1) edge (2.6,0.6);
\draw  [<-] (vik1) edge (vj1);
\draw  [<-] (vj1) edge (vj2);
\draw  [<-] (vj2) edge (2, -2.5);
\draw (1, -2.5) node[anchor=center] {{\small $\dots$}};
\end{tikzpicture}
\caption{The oriented path returns to the vertex with label $(n+1)$} \label{fig:cycle-path}
\end{figure}
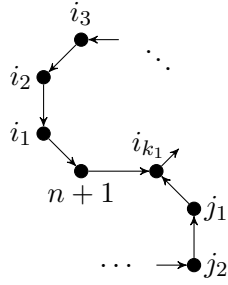 

The path
\begin{equation}
\label{path2}
i_{k_1} \leftarrow j_1 \leftarrow j_2 \leftarrow \dots
\end{equation}
 cannot arrive at any of vertices $i_1, i_2, \dots, i_{k_1}$ because vertices 
with labels $\le n$ have at most one out-going edge. For the same reason, it cannot return 
to any of the vertices $j_1, j_2, \dots $. Hence path \eqref{path2} will eventually return to 
the vertex with label $(n+1)$ and we get another oriented path from vertex  $(n+1)$
to vertex $i_{k_1}$\,.
This path is shown on figure \ref{fig:cycle2}.
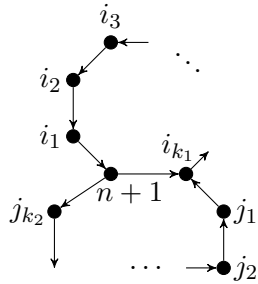
\begin{figure}[htp]
\centering 
\begin{tikzpicture}[scale=0.5, >=stealth']
\tikzstyle{ext}=[circle, draw, minimum size=5, inner sep=1]
\tikzstyle{int}=[circle, draw, fill, minimum size=5, inner sep=1]
\node [int] (vn1) at (0,0) {};
\draw (0.5, -0.5) node[anchor=center] {{\small $n+1$}};
\node [int] (vjk2) at (-1.5,-1) {};
\draw (-2.2, -1) node[anchor=center] {{\small $j_{k_2}$}};

\node [int] (vi1) at (-1,1) {};
\draw (-1.6, 1) node[anchor=center] {{\small $i_1$}};
\node [int] (vi2) at (-1,2.5) {};
\draw (-1.6, 2.5) node[anchor=center] {{\small $i_2$}};
\node [int] (vi3) at (0,3.5) {};
\draw (0, 4.2) node[anchor=center] {{\small $i_3$}};
\node [int] (vik1) at (2,0) {};
\draw (1.8, 0.7) node[anchor=center] {{\small $i_{k_1}$}};
\node [int] (vj1) at (3,-1) {};
\draw (3.6, -1) node[anchor=center] {{\small $j_1$}};
\node [int] (vj2) at (3,-2.5) {};
\draw (3.6, -2.5) node[anchor=center] {{\small $j_2$}};
\draw  [<-] (vn1) edge (vi1);
\draw  [->] (vn1) edge (vjk2);
\draw  [->] (vjk2) edge (-1.5,-2.5);

\draw  [<-] (vi1) edge (vi2);
\draw  [<-] (vi2) edge (vi3);
\draw  [<-] (vi3) edge (1,3.5);
\draw (2, 3) node[anchor=center,  rotate=140] {{\small $\dots$}};
\draw  [->] (vn1) edge (vik1);
\draw  [->] (vik1) edge (2.6,0.6);
\draw  [<-] (vik1) edge (vj1);
\draw  [<-] (vj1) edge (vj2);
\draw  [<-] (vj2) edge (2, -2.5);
\draw (1, -2.5) node[anchor=center] {{\small $\dots$}};
\end{tikzpicture}
\caption{We found another oriented path from vertex $i_{k_1}$ to vertex $(n+1)$} \label{fig:cycle2}
\end{figure} 

Let us now observe that $j_{k_2} \neq i_{k_1}$. Hence, applying the above argument once again 
we construct another oriented path which starts at vertex $(n+1)$, terminates at
vertex $j_{k_2}$, and has length $>1$. 

This process of building oriented paths will not terminate and this contradicts to the fact 
that the graph $\G$ is finite. 

Thus the vertex with label $(n+1)$ does not have incoming edges. 

~\\
{\it A vertex with label $i \le n$ cannot have two incoming edges which originate 
at vertices with labels $\le n$.}

~\\

Indeed, let us consider an edge which originates, say, at vertex $i_1 \le n$ and 
terminates at $i$. Then vertex $i_1$ has at least two incoming edges. 
At least one of these edges originates at vertex $i_2 \le n$. We pick this edge and 
find an edge which terminates at $i_2$ and originates at a vertex with label $i_3 \le n$. 

Continuing this process we find an oriented path which goes only through vertices with 
labels $\le n$ and terminates at $i$. Since the graph $\G$ is finite, we can complete
the path
$$
i \leftarrow i_1 \leftarrow i_2 \leftarrow i_3 \leftarrow \dots
$$
to the cycle:
\begin{equation}
\label{cycle-i}
i \leftarrow i_1 \leftarrow i_2 \leftarrow i_3 \leftarrow \dots \leftarrow i_{k} \leftarrow i
\end{equation}
with $i,i_1, i_2, \dots, i_k \le n$\,.

If there is another edge which terminates at vertex $i$ and originates, say, at vertex $j_1 \le n$
then we may repeat the same process and find another oriented path which terminates at $i$ 
and goes only through vertices with labels $\le n$: 
\begin{equation}
\label{path-j}
i \leftarrow j_1 \leftarrow j_2 \leftarrow j_3 \leftarrow \dots 
\end{equation} 

Since each vertex with label $\le n$ has at most one out-going edge, the set 
of vertices
$$
\{ j_1, j_2, j_3, \dots \}
$$
must have the empty intersection with the set of vertices in the cycle \eqref{cycle-i}. 
In addition, the path \eqref{path-j} cannot return to any of the vertices 
$ j_1, j_2, j_3, \dots $. This observation contradicts to the fact that the graph $\G$ is 
finite. 

Thus, a vertex with label $i \le n$ cannot have two incoming edges which 
originate at vertices with labels $\le n$. 

On the hand, every vertex with label $i \le n$ has at least two incoming edges
and at most one out-going edge. Therefore every vertex with label $i \le n$ has 
valency $3$. It has exactly one out-going edge which terminates at a vertex 
with label $\le n$; it has exactly one incoming edge which originates at a vertex with label 
$\le n$; and it has exactly one incoming edge which originates at the vertex with 
label $(n+1)$. 

We conclude that the graph $\G$ is a ``join of wheels'' shown 
on figure \ref{fig:join}.

\begin{figure}[htp]
\centering 
\begin{tikzpicture}[ext/.style={draw, circle,minimum size=5, inner sep=0},
 int/.style={draw, circle, fill,minimum size=5, inner sep=0}, scale=.7]
\node [int] (v0) at (0,0) {};
\draw (0, -0.6) node[anchor=center] {{\small $n+1$}};

\begin{scope}[shift={(-3,1)}, scale=.7]
\node [int] (v5) at (-144:1.5) {};
\node [int] (v3) at (0:1.5) {};
\node [int] (v2) at (72:1.5) {};
\node [int] (v1) at (144:1.5) {};
\node [int] (v4) at (-72:1.5) {};
\end{scope}
\draw (v1) edge (v2);
\draw (v2) edge (v3);
\draw (v3) edge (v4);
\draw (v4) edge (v5);
\draw (v5) edge (v1);
\draw (v1) edge (v0);
\draw (v3) edge (v0);
\draw (v2) edge (v0);
\draw (v0) edge (v4);
\draw (v5) edge (v0);

\begin{scope}[shift={(2,1)}, scale=0.7]
\node [int] (v15) at (-120:1.5) {};
\node [int] (v13) at (0:1.5) {};
\node [int] (v12) at (120:1.5) {};
\end{scope}

\draw (v0) edge (v12);
\draw (v15) edge (v13);
\draw (v13) edge (v12);
\draw (v15) edge (v12);
\draw (v15) edge (v0);
\node at (-0.2984,2.0464) {$\cdots$};
\end{tikzpicture}
\caption{All unlabeled vertices on the picture should carry labels $\le n$} \label{fig:join}
\end{figure}
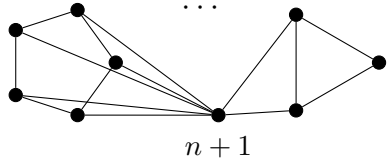

Since $\G$ is 1-vertex irreducible, we conclude immediately that $\G= \si(\G^{wheel}_n)$
for some permutation $\si \in S_n$.

It is obvious that the orientations shown 
on figures \ref{fig:clock}  and \ref{fig:counterclock} are 
the only possible orientations of $\G^{wheel}_n$ satisfying the 
condition stated in the claim. 

Claim \ref{cl:wheels-only} is proven. 
\end{proof}

\subsection{The proof of Theorem \ref{thm:main}}
\label{sec:proof-main}
Let $\G$ be an element of $\gra_{n+1}$ whose underlying graph is 
connected, 1-vertex irreducible, and each vertex of $\G$ has valency 
$\ge 3$. 

Then, for a local section $v$ of the sheaf $\FR'$ \eqref{FR-pr} we consider the local section
\begin{equation}
\label{G-omomom-ga}
\G (\underbrace{\om, \dots, \om}_{n \textrm{ times}}, v) 
\end{equation}
of $\FR'$, where $\om$ is the global section of the sheaf
$\Om^1(\cO_X^{\coord})  \otimes  T^{1,0}(P)$
introduced in Theorem \ref{thm:om} and
the action of $\Gra(n)$ on $\FR'$ is obtained via extending 
$\ma$  \eqref{Gra-to-End-Tpoly} by linearity over  $\Omb(\cO_X^{\coord})$\,.

Let us prove that 
\begin{claim}
\label{cl:Gomom-v-zero}
The section \eqref{G-omomom-ga} is zero 
unless there exists a permutation $\si \in S_n$ such 
that the underlying labeled graph for $\G$ equals 
$\si(\G^{wheel}_n)$, where $\G^{wheel}_n$ is shown on 
figure \ref{fig:wheel}.  
\end{claim}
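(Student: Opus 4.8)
The plan is to analyze the action \eqref{Gra-acts} of $\G \in \gra_{n+1}$ on the tuple $(\om, \dots, \om, v)$ by tracking which derivation operators $\pa_{\xi_a}$ and $\pa_{x^a}$ land in which slot. First I would recall that $\om = \om^a\,\pa_{t^a}$ is vector-valued, i.e. as an element of $\FR'$ it is homogeneous of polyvector-degree $1$, which in the identification \eqref{T-poly-same} means each copy of $\om$ fed into a slot contributes exactly one $\xi$-variable. Now fix any edge $(i,j)$ of $\G$. By \eqref{Lap}, the operator $\Lap_{(i,j)}$ acting on $v_1\otimes\dots\otimes v_{n+1}$ is a sum of two terms: one with $\pa_{\xi_a}$ in slot $i$ and $\pa_{x^a}$ in slot $j$, the other with the roles reversed. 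Following the convention introduced in the proof of Proposition \ref{prop:Te-pr-tw}, I would encode the first possibility by orienting the edge $(i,j)$ out of vertex $i$, and the second by orienting it out of vertex $j$. Thus each nonzero monomial contribution to \eqref{G-omomom-ga} determines an orientation of all edges of $\G$.

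The key observation is the counting constraint on slots $1,\dots,n$, each occupied by a copy of $\om$: since $\om$ supplies exactly one $\xi$-variable and the multiplication $\mult_{n+1}$ followed by differentiation must produce a nonzero result, each such slot must have exactly one $\pa_{\xi_a}$ applied to it — any more and we differentiate $\xi_a\xi_b\cdots$ down to zero (the $\xi$'s are odd, so $\pa_{\xi_a}\pa_{\xi_a}=0$ and there is only one $\xi$ present), and any fewer leaves a leftover $\xi$ making the contribution land in a higher polyvector-degree but, more to the point, the operator $\pa_{\xi_a}$ at vertex $i$ corresponds precisely to an \emph{out-going} edge at $i$ in our orientation convention. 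Hence in any orientation arising from a nonzero monomial, \emph{each vertex with label $\le n$ has exactly one out-going edge}, in particular at most one. (One should be slightly careful: a priori an edge could hit slot $i$ with $\pa_{x^a}$ rather than $\pa_{\xi_a}$; the point is that a copy of $\om$ has no $x$-dependence beyond what sits inside $\om^a$, but $\om^a \in \Om^1(\cO^{\coord}_X)[[t]]$ does depend on the coordinate-jet variables, so $\pa_{x^a}$ can act nontrivially on it. However $\pa_{x^a}$ applied to $\om$ does not consume the $\xi$, so to get a nonzero top result we still need the single $\xi$ in slot $i$ to be consumed by \emph{some} edge, i.e. by exactly one out-going $\pa_{\xi}$-edge. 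Writing this out carefully is the one genuinely fiddly point.)

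Given this, I would invoke Claim \ref{cl:wheels-only}: if $\G$ is connected, $1$-vertex irreducible, with all valencies $\ge 3$, and admits an orientation in which every vertex labelled $\le n$ has at most one out-going edge, then $\G = \si(\G^{wheel}_n)$ for some $\si \in S_n$. Contrapositively, if $\G$ is \emph{not} of the form $\si(\G^{wheel}_n)$, then no such orientation exists, so every monomial contribution to \eqref{G-omomom-ga} vanishes, whence the section \eqref{G-omomom-ga} is zero. That is exactly the assertion of the claim.

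The main obstacle I anticipate is precisely the bookkeeping flagged above: making airtight the statement that ``a nonzero monomial forces each $\om$-slot to receive exactly one $\pa_\xi$-edge,'' as opposed to the na\"ive count which would also have to rule out contributions where an $\om$-slot receives zero $\pa_\xi$-edges and several $\pa_x$-edges (these do not vanish for silly reasons, they simply produce a result of polyvector-degree $> 1$ in that tensor factor before multiplication — but then the $\xi$ survives and, since it is the unique $\xi$ coming from that slot, the orientation argument still needs that edge to be oriented \emph{in}, contradicting nothing by itself). The clean way around this is to observe that $\mult_{n+1}$ lands in $T_{\poly}(P)$ and we only care about whether the total expression is zero; one tracks the $\xi$-degree slot-by-slot and notes that the unique $\xi$ in an $\om$-slot must be killed by \emph{exactly one} $\pa_{\xi_a}$ (odd-variable nilpotency) for the monomial to be both nonzero and of the correct total degree — and in fact for any surviving monomial the set of $\pa_\xi$-edges at that vertex has size exactly $1$. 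Once this is pinned down, the rest is an immediate application of Claim \ref{cl:wheels-only}, and the proof of Theorem \ref{thm:main} in the following subsection can proceed from there.
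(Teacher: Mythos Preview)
Your approach is correct and essentially identical to the paper's: decompose \eqref{G-omomom-ga} as a sum over orientations of $\G$, observe that since $\om$ is linear in the $\xi$-variables any orientation with two or more out-going edges at an $\om$-slot contributes zero, and then invoke Claim~\ref{cl:wheels-only}.

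The bookkeeping you flag as the ``main obstacle'' is not actually an obstacle. You only need \emph{at most one} out-going edge at each vertex labelled $\le n$, not \emph{exactly one}, and ``at most one'' is immediate: $\om$ is degree~$1$ in $\xi$, so two applications of $\pa_{\xi}$ kill it. Orientations in which some $\om$-slot has zero out-going edges are perfectly allowed by the hypothesis of Claim~\ref{cl:wheels-only} and need not be ruled out --- the contrapositive of that claim already says that if $\G$ is not a permuted wheel then \emph{no} orientation with the ``at most one'' property exists, so every orientation has some $\om$-vertex with $\ge 2$ out-going edges and hence contributes zero. Drop the last paragraph and the parenthetical caveat, and your argument is the paper's proof verbatim.
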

\begin{proof}
Recall that an action of $\G$ on a collection of polyvector fields is expressed in 
terms of the operators \eqref{Lap}. 

So we will keep track of terms involving the sum
$$
 \sum_{a=1}^d 1 \otimes \dots \otimes 1 \otimes 
\underbrace{\pa_{\xi_a}}_{i\textrm{-th slot}} \otimes 1 \otimes \dots \otimes 1 
\otimes 
\underbrace{\pa_{x^a}}_{j\textrm{-th slot}}  \otimes 1  \otimes \dots  \otimes 1 
$$
by choosing a direction on the edge $(i,j)$ from vertex $i$ to vertex $j$. 
Similarly, we will keep track of terms involving the sum
$$ 
\sum_{a=1}^d 1 \otimes \dots \otimes 1 \otimes 
\underbrace{\pa_{x^a}}_{i\textrm{-th slot}} \otimes 1 \otimes \dots \otimes 1 
\otimes 
\underbrace{\pa_{\xi_a}}_{j\textrm{-th slot}}  \otimes 1  \otimes \dots  \otimes 1
$$   
by choosing a direction on the edge $(i,j)$ from vertex $j$ to vertex $i$. 

We see that $\G (\underbrace{\om, \dots, \om}_{n \textrm{ times}}, v)$ splits into the 
sum over all possible orientations of $\G$ and, since $\om$ is vector-valued, 
a summand corresponding a given orientation of $\G$ is zero if this orientation 
does not satisfy this property: {\it each vertex of $\G$ with label $\le n$ has 
at most one out-going edge.}

Thus Claim \ref{cl:wheels-only} implies the desired statement. 
\end{proof}

Let us now recall that the class $\wt{\si}_n \in H^0(\GC)$ can be represented 
by a cocycle $\ga_n$ of the form 
\begin{equation}
\label{ga-n}
\ga_n = \la \sum_{\si \in S_{n+1}} \si(\G^{wheel}_{n}) + \sum _{i=1}^k \la_i \G_i \in \bs^{2n} \big( \Gra(n+1) \big)^{S_{n+1}}\,,
\end{equation}
where $\la, \la_i$ are non-zero scalars, $\G^{wheel}_n$ is the graph shown 
on figure \ref{fig:wheel} and for each index $i$ the underlying unlabeled 
graph $\G_i$ is not isomorphic to $\G^{wheel}_n$\,.
In addition, every graph $\G_i$ is connected, 1-vertex irreducible and each vertex 
of $\G_i$ has valency $\ge 3$\,. 

Let $v$ be a cocycle in $\cCb(X, \FR)$ which represents 
a cohomology class in \eqref{H-X-Tpoly}. 

Then the cohomology class $\sfD_{\ga_n} ([v])$ is represented by 
the \v{C}ech cocycle $w^n$ with 
\begin{equation}
\label{w-n}
w^n_{\al_0 \dots \al_m} : =  \frac{\la}{n!} \sum_{\si \in S_{n+1}} \si(\G^{wheel}_{n}) (\underbrace{\om, \dots, \om}_{n \textrm{ times}}, 
v_{\al_0 \dots \al_m} ) ~ + ~
 \sum _{i=1}^k \frac{\la_i}{n!}  \G_i (\underbrace{\om, \dots, \om}_{n \textrm{ times}}, 
v_{\al_0 \dots \al_m} )\,. 
\end{equation}

Claim \ref{cl:Gomom-v-zero} implies that  
$$
 \G_i (\underbrace{\om, \dots, \om}_{n \textrm{ times}}, 
v_{\al_0 \dots \al_m} ) = 0
$$
for all $i$ and 
$$
 \si(\G^{wheel}_{n}) (\underbrace{\om, \dots, \om}_{n \textrm{ times}}, 
v_{\al_0 \dots \al_m} ) = 0
$$
unless $\si(n+1) = n+1$\,.

Thus, 
\begin{equation}
\label{w-n-better}
w^n_{\al_0 \dots \al_m}  =  \frac{\la}{n!} \sum_{\si \in S_{n}} \si(\G^{wheel}_{n}) (\underbrace{\om, \dots, \om}_{n \textrm{ times}}, 
v_{\al_0 \dots \al_m} )
\end{equation}
or equivalently, 
\begin{equation}
\label{w-n-even-better}
w^n_{\al_0 \dots \al_m}  = 
\la\, \G^{wheel}_{n} (\underbrace{\om, \dots, \om}_{n \textrm{ times}}, 
v_{\al_0 \dots \al_m})\,.
\end{equation}

Claim \ref{cl:wheels-only} implies that an orientation of $\G^{wheel}_n$
gives the zero contribution to the right hand side of \eqref{w-n-even-better}, 
unless this is the orientation shown on figure \ref{fig:clock} or the orientation 
shown on figure \ref{fig:counterclock}.

Using this observation, it is not hard to see that $w^n_{\al_0 \dots \al_m} $
is obtained by contracting $v_{\al_0 \dots \al_m}$ with the global section 
\begin{equation}
\label{la-pr-Chern-n}
\la' ~ \sum_{1 \le a_1,\dots, a_{n} \le d}~\sum_{1 \le b_1,\dots, b_{n} \le d} ~
\frac{\pa^2 \om^{a_1}}{\pa t^{a_2} \pa t^{b_1}} 
\frac{\pa^2 \om^{a_2}}{\pa t^{a_3} \pa t^{b_2}}
\cdots
\frac{\pa^2 \om^{a_{n}}}{\pa t^{a_{1}} \pa t^{b_{n}}}
dt^{b_1}\cdots dt^{b_{n}}
\end{equation}
of the sheaf $\Omb(\cO_X^{\coord})\,  \otimes\,  \Om^n_{\bbK}(P)$\,, where 
$\la'$ is a non-zero scalar. 

By Theorem  \ref{thm:At-Fed}, the global section  
\begin{equation}
\label{Chern-n}
- \frac{1}{n!} \sum_{1 \le a_1,\dots, a_{n} \le d}~\sum_{1 \le b_1,\dots, b_{n} \le d} ~
\frac{\pa^2 \om^{a_1}}{\pa t^{a_2} \pa t^{b_1}} 
\frac{\pa^2 \om^{a_2}}{\pa t^{a_3} \pa t^{b_2}}
\cdots
\frac{\pa^2 \om^{a_{n}}}{\pa t^{a_{1}} \pa t^{b_{n}}}
dt^{b_1}\cdots dt^{b_{n}}
\end{equation}
represents the $n$-th component of the Chern character on $X$. 

Thus, using Theorem \ref{thm:Fedosov}, we conclude that the contraction 
of the class $[v]$ of $v$ with the $n$-th component of  the Chern character on $X$
is indeed proportional to  $\sfD_{\ga_n} ([v])$ (with a non-zero coefficient).

Theorem \ref{thm:main} is proven. \qed

\section{Application: Isomorphisms between harmonic and Hochschild structures}
\label{sec:harm-Hoch}

Let $X$ be a smooth algebraic variety (over 
an algebraically closed field of characteristic zero). Let, as above, $\cT_{\poly}$ be 
the sheaf of polyvector fields on $X$ and $\Cbu(\cO_X)$ be
the sheaf of polydifferential operators on $X$\,.

Following \cite{Cald}, we call the Gerstenhaber algebras 
\begin{equation}
\label{H-T-poly}
H^{\bul}(X,\cT_{\poly})
\end{equation}
and
\begin{equation}
\label{H-C-OX}
H^{\bul}(X, \Cbu(\cO_X))
\end{equation}
the {\it harmonic structure} and the {\it Hochschild structure} of 
$X$, respectively. 

Due to the Hochschild-Kostant-Rosenberg (HKR) theorem \cite{HKR}, 
the canonical embedding 
\begin{equation}
\label{HKR-map}
\cT_{\poly} \hookrightarrow \Cbu(\cO_X)
\end{equation}
induces an isomorphism of Gerstenhaber algebras from
\eqref{H-T-poly} to \eqref{H-C-OX}, provided $X$ is affine.

In general, the HKR map \eqref{HKR-map} does not induce 
an  isomorphism of Gerstenhaber algebras from
\eqref{H-T-poly} to \eqref{H-C-OX}. However, according to 
\cite[Theorem 1.3]{Damien},  the correction of the HKR map by the ``square root of'' the Todd class 
of $X$ does induce an isomorphism\footnote{The existence of such an isomorphism 
also follows from the results of \cite{DTT}.}
of Gerstenhaber algebras  \eqref{H-T-poly} and \eqref{H-C-OX}.

Let us recall that the ``square root of''  the Todd class of $X$ is given by the formula: 
\begin{equation}
\label{Todd-root}
\Td^{1/2}(X) = \det(\wt{q} ([\sfA]))\,,
\end{equation}
where 
\begin{equation}
\label{q-Todd-root}
\wt{q}(t) = \left( \frac{t}{1-e^{-t}} \right)^{1/2}\,,
\end{equation}
and $[\sfA]$ denotes the Atiyah class of $X$\,. In equation \eqref{Todd-root},  
the expression $\det(\wt{q} ([\sfA]))$ is defined by the formula 
\begin{equation}
\label{det-smthng}
\det(\wt{q} ([\sfA])) = \exp \Big( \tr \log \big( \wt{q} ([\sfA]) \big) \Big)\,, 
\end{equation}
where $[\sfA]$ is considered as the element of the algebra 
$$
\bigoplus_{n \ge 0} H^n(X, \Om^n_X \otimes_{\cO_X} \End(\cT_X))
$$
and $\tr$ is the natural trace map 
$$
\tr : H^n(X, \Om^n_X \otimes_{\cO_X} \End(\cT_X)) \to H^n(X, \Om^n_X)\,.
$$
  
It was proven in \cite{Damien} that the correction of the  HKR map \eqref{HKR-map} 
by the $\hat{A}$-genus 
\begin{equation}
\label{A-genus}
\hat{A}(X) = \det(q ([\sfA]))\,, \qquad 
q(t) = \left( \frac{t}{e^{t/2} - e^{-t/2}} \right)^{1/2}
\end{equation}
also induces an isomorphism of  Gerstenhaber algebras  \eqref{H-T-poly} 
and \eqref{H-C-OX}\,.

Let us observe that the function $q(t)$ is even and 
the function $\wt{q}(t)$ is related to $q(t)$ by the formula 
\begin{equation}
\label{q-wt-q}
\log(q(t)) = \frac{1}{2} \big( \log(\wt{q}(t)) + \log(\wt{q}(-t)) \big)\,.
\end{equation}

This observation motivates us to introduce the notion of generalized $\hat{A}$-genus:  
\begin{defi}
\label{dfn:gen-A-genus}
Let $f(t)$ be a formal power series in $1 + t \bbK[[t]]$ for which the 
even part of $\log(f(t))$ coincides with the Taylor series of the function
\begin{equation}
\label{the-even-part}
\frac{1}{2} \log  \left( \frac{t}{e^{t/2} - e^{-t/2}} \right) 
\end{equation}
at $t=0$. Then the \emph{generalized $\hat{A}$-genus} $\hat{A}_f(X)$ 
of $X$ corresponding to the series $f$ is defined by the equation 
\begin{equation}
\label{gen-A-genus}
\hat{A}_f(X) = \det(f ([\sfA]))\,,
\end{equation}
where $[\sfA]$ is the Atiyah class of $X$\,.
\end{defi}

Theorem \ref{thm:main} implies the following remarkable statement:
\begin{thm}
\label{thm:A-hat}
Let $X$ be a smooth algebraic variety over an algebraically closed field $\bbK$ of 
characteristic zero and let  $f(t)$ be a formal power series in $1 + t \bbK[[t]]$ for which the 
even part of $\log(f(t))$ coincides with the Taylor expansion of \eqref{the-even-part}. 
Then the correction of the HKR map by the generalized $\hat{A}$-genus $\hat{A}_f(X)$
induces an isomorphism of Gerstenhaber algebras  \eqref{H-T-poly} 
and \eqref{H-C-OX}\,.
\end{thm}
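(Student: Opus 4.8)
The plan is to reduce the statement to the already-established case of the ordinary $\hat{A}$-genus from \cite{Damien} by showing that $\hat{A}_f(X)$ and $\hat{A}(X)$ differ by a factor whose action, via contraction, is a \emph{derivation} of the Gerstenhaber algebra $H^{\bul}(X,\cT_{\poly})$; such a factor can be absorbed by composing with an automorphism of the harmonic structure. First I would write $\log(f(t)) = \log(q(t)) + g(t)$, where $q(t)$ is as in \eqref{A-genus} and $g(t)$ is, by hypothesis on the even part of $\log f$, an \emph{odd} power series in $t\bbK[[t]]$ — i.e.\ $g(t) = \sum_{n \textrm{ odd}\ge 3} c_n t^n$ (the coefficient of $t$ vanishes because the degree-one part of the Atiyah class contributes nothing to $\det$ in the graded sense relevant here, or can be arranged away). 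Then, using the definition \eqref{det-smthng} of $\det$ through $\exp\circ\tr\circ\log$, we get the multiplicative decomposition
\begin{equation}
\label{A-f-decompos}
\hat{A}_f(X) = \hat{A}(X)\cdot \exp\Big( \sum_{n \textrm{ odd}\,\ge 3} c_n \,\tr\big([\sfA]^n\big)\Big)\,,
\end{equation}
and each $\tr([\sfA]^n) \in \bigoplus_k H^k(X,\Om^k_X)$ is, up to a nonzero scalar, the $n$-th component $\mathrm{ch}_n(X)$ of the Chern character (this is the standard Newton-identity relation between power sums of Chern roots and the Chern character components, combined with Theorem~\ref{thm:At-Fed}).

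Next I would invoke Corollary~\ref{cor:main}: for each odd $n\ge 3$ the contraction with $\mathrm{ch}_n(X)$ is a derivation of the Gerstenhaber algebra $H^{\bul}(X,\cT_{\poly})$. Since a sum of derivations is a derivation, the operator $\iota_\xi$ of contraction with $\xi := \sum_{n \textrm{ odd}\,\ge 3} c_n\,\mathrm{ch}_n(X)$ is a derivation of degree $0$ of the Gerstenhaber algebra $H^{\bul}(X,\cT_{\poly})$; because $\xi$ lies in positive cohomological degree, $\iota_\xi$ is locally nilpotent, so $\exp(\iota_\xi)$ is a well-defined \emph{automorphism} of the Gerstenhaber algebra $H^{\bul}(X,\cT_{\poly})$. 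Correcting the HKR map by $\hat{A}_f(X)$ means, by \eqref{A-f-decompos}, first multiplying by $\exp(\iota_\xi)$ and then applying the $\hat{A}(X)$-corrected HKR map; the former is an isomorphism of Gerstenhaber algebras by the above, and the latter is an isomorphism of Gerstenhaber algebras onto the Hochschild structure by \cite[Theorem~1.3]{Damien}. The composition is therefore an isomorphism of Gerstenhaber algebras from \eqref{H-T-poly} to \eqref{H-C-OX}, which is exactly the claim.

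The main obstacle I anticipate is the bookkeeping in \eqref{A-f-decompos}: one must be careful that "the even part of $\log f$ equals \eqref{the-even-part}" translates \emph{precisely} into "$\log f - \log q$ is odd", using the identity \eqref{q-wt-q} relating $q$ and $\wt q$ and the fact that $\log q$ is even (since $q$ is even). One also must check that the expansion of $\det(f([\sfA]))$ through $\exp\circ\tr\circ\log$ genuinely factors as a product over the graded pieces, i.e.\ that $\log(f([\sfA])) = \log(q([\sfA])) + g([\sfA])$ as operators, with $\mathrm{tr}$ applied termwise — this is formal but needs the commutativity of $\bigoplus_k H^k(X,\Om^k_X)$ and the multiplicativity of $\det$. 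A minor secondary point is to verify that contraction with $\mathrm{ch}_n$ and contraction with $\mathrm{ch}_m$ commute (so that $\exp(\iota_\xi) = \prod_n \exp(c_n \iota_{\mathrm{ch}_n})$ is unambiguous), which follows since both come from contractions with classes in the graded-commutative algebra $\bigoplus_k H^k(X,\Om^k_X)$ and contraction is an $\bigoplus_k H^k(X,\Om^k_X)$-linear operation. Everything else is a direct application of Theorem~\ref{thm:main}, Corollary~\ref{cor:main}, and \cite[Theorem~1.3]{Damien}.
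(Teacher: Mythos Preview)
Your overall strategy coincides with the paper's: write $\log f = \log q + g$ with $g$ odd, interpret the extra factor $\exp\big(\tr\, g([\sfA])\big)$ as acting by contraction with a sum of odd Chern-character components, and conclude that this contraction exponentiates to a Gerstenhaber automorphism of $H^{\bul}(X,\cT_{\poly})$, after which one applies the $\hat A$-corrected HKR isomorphism of \cite{Damien}. That is exactly how the paper proceeds.

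There is, however, a genuine gap in your handling of the linear term of $g$. You write $g(t)=\sum_{n\text{ odd}\,\ge 3} c_n t^n$ and claim that ``the coefficient of $t$ vanishes because the degree-one part of the Atiyah class contributes nothing to $\det$ in the graded sense relevant here, or can be arranged away.'' This is not correct: the hypothesis on $f$ constrains only the even part of $\log f$, so the odd part $g$ may contain an arbitrary linear term $c_1 t$ (e.g.\ $f=q\cdot e^{t}$), and $\tr([\sfA])$ is precisely $\mathrm{ch}_1(X)$, which need not vanish. Corollary~\ref{cor:main} covers only odd $n\ge 3$, so it does not tell you that contraction with $\mathrm{ch}_1(X)$ is a Gerstenhaber derivation. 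The paper closes this gap by treating $n=1$ separately: that contraction with $\mathrm{ch}_1(X)$ exponentiates to a Gerstenhaber automorphism of $H^{\bul}(X,\cT_{\poly})$ is established in \cite[Section~10.3]{Damien}. Once you insert that citation for the $n=1$ case, your argument is complete and matches the paper's.
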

\begin{proof}
Let us recall that the $n$-th component  $c_n(X)$  of the Chern 
character of $X$ is represented by 
$$
\frac{1}{n!} \tr\, \sfA^n\,,
$$
where $\sfA$ is any representative of the Atiyah class. 
To prove Theorem \ref{thm:A-hat}, it suffices to show that for every odd $n \ge 1$ 
$$
\exp \left( \frac{1}{n!} \tr\, \sfA^n  \right)
$$
induces an automorphism of the  Gerstenhaber algebra  \eqref{H-T-poly}. 

For $n=1$, this fact is proven in \cite[Section 10.3]{Damien} and the 
remaining cases $n$ odd $\ge 3$ are covered by Theorems \ref{thm:mB-ga-Ger-deriv} and \ref{thm:main}.
\end{proof}

\begin{remark}
\label{rem:relation-to-Kapranov}
This statement is very similar to Proposition 6.2 in \cite{AT}. 
We suspect, based on this, that there may exist a deep link between 
solutions of the generalized Kashiwara-Vergne problem 
\cite{AT} and the above isomorphisms between harmonic 
and Hochschild structures on an algebraic variety.
It is likely that this link can be found using 
the ideas developed in \cite{Calaque}, \cite{CCT}, \cite{ChenXuStienon},  \cite{Kapranov}, 
and \cite{Markarian}.
\end{remark}

\section{Examples}
\label{sec:examples}
It is not easy to find examples of varieties $X$ for which 
the sheaf cohomology of the sheaf of polyvector fields has been computed explicitly in the literature.
Furthermore, the authors do not know any general tools to determine the Gerstenhaber algebra structure on 
$H^{\bul}(X, \cT_\poly)$. This is unfortunate, since this Gerstenhaber algebra structure is an invariant of the variety, 
potentially containing valuable information.

Corollary \ref{cor:main} shall be seen as a first step towards the determination of this Gerstenhaber structure. It gives a very general constraint on the possible products and brackets.

Here we give some examples, for which at least the dimensions of $H^{q}(X, \cT^k_\poly)$ 
may be computed explicitly. 
In this section, we assume that $\bbK= \bbC$ and we use freely tools of 
complex algebraic geometry.
Our goal is to compute
\begin{equation}
\label{equ:Hqtpoly}
H^q(X, \cT_\poly^k) \cong H^q(X, \Omega_X^{d-k}\otimes K_X^{-1})
\end{equation}
where $K_X$ is the canonical bundle of $X$ and $d = \dimens_{\bbC} X$\,. 

\begin{enumerate}
\item For $\mathbb{P}^d$, Grassmannians, and some simple enough flag manifolds the sheaf cohomology of the polyvector fields can be deduced from the Borel-Weil-Bott theorem. Unfortunately, by the same theorem the cohomology is concentrated in degree $q=0$ and the statement of Corollary \ref{cor:main} is trivial.

\item For Calabi-Yau varieties the canonical bundle $K_X$ is trivial and the numbers\\ $\dimens  H^q(X, \cT_\poly^k) = h^{d-k,q}$ agree with the Hodge numbers of $X$. 

\item For complete intersections in $\mathbb{P}^N$ there are explicit formulas for all the twisted Hodge numbers  $h^{p,q}_j = \dimens H^q(X, \Omega_X^{p}(j))$. They have been computed by P. Br\"uckmann \cite[Satz 3]{B3} (see also \cite{B1, B2}). Together with the adjunction formula it follows that $H^q(X, \cT_\poly^k)$ can be computed explicitly in this case.
\end{enumerate}

We will focus on the latter class of examples. Since these results seem not so well known, let us sketch 
a possible way to compute the twisted Hodge diamond (i. e., the numbers $h^{p,q}_j$) for smooth complete intersections 
$X=Y_1\cap \cdots\cap Y_r \subset \mathbb{P}^{d+r}$. The twisted Hodge diamond has the following general form 
(see \cite[Folgerung 2]{B3}):
\usetikzlibrary{matrix,arrows}
\[
\begin{tikzpicture}[description/.style={fill=white,inner sep=2pt}]
\matrix (m) [matrix of math nodes, row sep=1em,
column sep=1em, text height=1.5ex, text depth=0.25ex]
{
& & & & - & & & &\\
& & & & \ds & - & & &\\
& & & & \vdots & & \ddots & &\\
& & & & \ds & &  &  - &\\
* & * & \cdots& *& * &* &\cdots  &* &* \\
& +& & & \ds & &  &  &\\
& & \ddots & & \vdots & &  &  &\\
& & & + & \ds & &  &  &\\
& & & &+  & &  &  &\\
 };
\draw (-1,-4) edge[-triangle 60] node[below] {p}  (-2,-3);
\draw (1,-4) edge[-triangle 60] node[below] {q}  (2,-3);
\end{tikzpicture}
\]
The numbers not shown are all zero. The symbols $*$, $+$, $-$ stand for some possibly non-zero numbers.\footnote{In particular not all $*$'s are the same in general etc.} The numbers $\ds$ are only present for $j=0$. The numbers $+$ are zero for $j<0$ and the numbers $-$ are zero for $j>0$. In fact, by the weak Lefschetz Theorem, for $j=0$ all the $+$ and $-$ are zero, except for the ones in the central column, which are 1.  There are explicit formulas for all numbers $+$, $-$, $*$. To get them, one may proceed as follows. For $j=0$ the explicit formula was given by Hirzebruch \cite[Section 22]{H}. It is a classical application of the Riemann-Roch-Hirzebruch Theorem. For $j<0$ one may use the Serre duality to restrict to the case $j>0$. 
For $j>0$ the numbers $-$ vanish. The numbers $+$ are the numbers of global holomorphic sections, which are not difficult to compute. The remaining numbers $*$ can be computed from the explicit formula for the Euler characteristic of 
$H^\bullet(X, \Omega_X^{p}(j))$ given by Hirzebruch, see \cite{H}, eqn. (2) on page 160. 
The resulting expressions for the $h_j^{p,q}$ are explicit, but neither pretty nor relevant here, 
so we refer to \cite[Satz 3]{B3} and \cite{B1, B2} instead. 

Now let us consider polyvector fields on a complete intersection $X$. 
By the adjunction formula, $\deg K_X = -d-r-1+\sum_j d_j$, where $d_j$  is the degree of $Y_j$. 
Hence, using \eqref{equ:Hqtpoly}, we see that 
\[
\dimens H^q(X, \cT_\poly^k)= h^{d-k,q}_{d+r+1-\sum_j d_j}.
\]

\subsection{The case of Calabi-Yau complete intersections}
A smooth complete intersection $X$ is Calabi-Yau if and only if 
\begin{equation}
\label{CY-cond}
d+r+1-\sum_j d_j = 0\,.
\end{equation}
In this case, the only nontrivial entries of the Hodge diamond are the numbers $*$, and the numbers $\ds$. 
The corresponding classes of polyvector fields live in $H^k(X, \Tpoly^{k})$ (corresponding to the $*$ entries) and $H^q(X, \Tpoly^{d-q})$ (corresponding to the $\ds$ entries). Since, in the Calabi-Yau case, the Lie bracket on $ H^{\bul}(X, \Tpoly^{\bul}) $
is identically zero (see \cite[Section 2.1]{BK}), we only consider the wedge products. One has the following potentially non-trivial components.
\begin{align*}
\wedge \colon H^k(X, \Tpoly^{k}) \times H^{k'}(X, \Tpoly^{k'}) &\to  H^{k+k'}(X, \Tpoly^{k+k'}) \\
\wedge \colon H^q(X, \Tpoly^{d-q}) \times H^{q'}(X, \Tpoly^{d-q'}) &\to  H^{d}(X, \Tpoly^{d})\cong \bbC & \text{for $q+q'=d$}.
\end{align*}
Here we consider the product with $1\in H^0(X, \Tpoly^{0})$ as a trivial operation.

Let us now show that among Calabi-Yau complete intersections we have  
plenty of examples $X$ for which the odd components $ch_{2l+1}(X)$, $l \ge 1$ of the 
Chern character are non-zero and they act non-trivially on the cohomology
\begin{equation}
\label{H-X-Tpoly-here}
H^{\bul}(X, \cT_\poly)\,.
\end{equation}
 
For this purpose, we observe that the  $n$-th component $ch_n(X)$ of the Chern character $ch(X)$
can be expressed in terms of the first Chern class $h$ of the hyperplane bundle. Namely, 
\begin{equation}
\label{ch-n}
ch_{n}(X) = (d+r+1-\sum_j d^n_j)\frac{h^n}{n!} ~\in~  H^{n}(X, \Omega^{n})\,.
\end{equation}
If at least one degree $d_j$ is $> 1$ and $n >1$ then
the coefficient  $(d+r+1-\sum_j d^n_j)$ is non-zero in virtue 
of \eqref{CY-cond}.

Next, we remark that the classes $h^n$ come from algebraic cycles on 
$X$ and vectors in $H^q(X, \Tpoly^{d-q})$, $q=0,1,\dots$ correspond 
to classes in $H^{q}(X, \Omega^{q})$\,.

On the other hand, if classes $\eta \in  H^{n}(X, \Omega^{n})$ and $v \in H^q(X, \Tpoly^{d-q})$
come from algebraic cycles $\eta'$ and $v'$ on $X$ then the contraction of $\eta$ with $v$ gives us a class 
in $H^{q+n}(X, \Tpoly^{d-q-n})$ which corresponds intersection of the cycles $\eta'$ 
and $v'$. Hence this class is non-trivial provided $n+q \le d$\,.  

Thus we see that, for a large set of tuples $d,r,d_1,\dots, d_r$, the components $ch_n(X)$, $n > 1$
of the Chern character of $X$ act non-trivially on  \eqref{H-X-Tpoly-here}.

Combining these considerations with Theorem \ref{thm:main}, we see that 
Calabi-Yau complete intersections provide us with a large supply of non-trivial representations 
of the Grothendieck-Teichm\"uller Lie algebra $\grt$.

\subsection{The case of Fano complete intersections}
\label{sec:Fano}

We now consider the case $d+r+1-\sum_j d_j > 0$, i.e., $X$ being a Fano variety\footnote{The case of 
$d+r+1-\sum_j d_j<0$ can also be considered, of course, but it adds nothing new to the discussion.}. 
In this case the numbers $-$ are all zero, while the numbers $+$ and $*$ in general are not. 
The Gerstenhaber structure on $H^{\bul}(X, \cT_\poly)$ reduces to the following data: 
First, we have the commutative sub-algebra 
\begin{equation*}
A = \bigoplus_k H^{k}(X, \cT_\poly^k).
\end{equation*}
Second, holomorphic polyvector fields 
form the Gerstenhaber sub-algebra
\[
B := \bigoplus_k H^0(X, \cT_\poly^k)\,.
\] 
The only potentially nontrivial Gerstenhaber operations between elements of $A$ and $B$ 
(counting the product with $1$ as trivial) are the Lie brackets of elements of $H^0(X, \cT_\poly^1)$ 
with elements of $A$.

Unfortunately, in this case, contractions with the odd components $ch_{2l+1}(X)$, $l\geq 1$,
of the Chern character are trivial.

\appendix

\section{Homotopy $O$-algebras. Deformation complex of an $O$-algebra}
\label{app:Def-comp}

Let $O$ be an operad (possibly with a non-zero differential). 
We assume that $O$ admits a cobar resolution
\begin{equation}
\label{Cobar-C-O}
\vf_{O} : \Cobar(C) \stackrel{\sim}{\longrightarrow} O\,,
\end{equation}
where $C$ is a coaugmented dg cooperad satisfying the following 
technical condition: {\it the cokernel $C_{\c}$ of the coaugmentation 
carries an ascending filtration}
\begin{equation}
\label{C-circ-filtr}
\bfzero  = \cF^0 C_{\c} \subset  \cF^1 C_{\c} \subset 
  \cF^2 C_{\c} \subset \dots 
\end{equation}
{\it which is compatible with the pseudo-operad structure on $C_{\c}$, and 
\begin{equation}
\label{C-cocomplete}
C_{\c}(n)  = \bigcup_m \cF^m C_{\c} (n)\,, \qquad \forall ~~ n \ge 0\,. 
\end{equation}
}

For example, if the dg cooperad $C$ has the properties
\begin{equation}
\label{C-conditions}
C(1) \cong \bbK, \qquad \qquad C(0) = \bfzero
\end{equation} 
then the filtration ``by arity'' on $C_{\c}$ satisfies the above technical condition. 

In this paper, we mostly use $O = \Ger$ or $O= \La\Lie$\,. In the former 
case, $C$ is the linear dual to $\La^{-2} \Ger$ and in the latter case 
$C = \La^2 \coCom$\,. It is clear that, in both cases, condition \eqref{C-conditions}
is satisfied.  

Recall that, for a cochain complex $\cV$, we denote by 
\begin{equation}
\label{app-C-cV}
C(\cV) := \bigoplus_{n \ge 1} \Big( C(n) \otimes \cV^{\otimes\, n}  \Big)^{S_n}
\end{equation}
the ``cofree'' $C$-coalgebra co-generated by $\cV$\,.

We also denote by 
\begin{equation}
\label{coDer-C-cV}
\coDer \big(C(\cV)\big)
\end{equation}
the cochain complex of coderivations of the $C$-coalgebra $C(\cV)$\,.

In other words, $\coDer \big(C(\cV)\big)$
consists of $\bbK$-linear maps 
\begin{equation}
\label{cD-coder-C-cV}
\cD : C(\cV) \to C(\cV)
\end{equation}
which are compatible with the $C$-coalgebra structure on $C(\cV)$ in the 
following sense: 
\begin{equation}
\label{coder-axiom}
\D_n \circ \cD  = \sum_{i=1}^n \big( \id_C \otimes \id_{\cV}^{\otimes (i-1)} \otimes \cD \otimes 
\id_{\cV}^{n-i} \big) \circ \D_n
\end{equation}
where $\D_n$ is the comultiplication map 
$$
\D_n :  C(\cV) \to \Big( C(n) \otimes  \big( C(\cV) \big)^{\otimes n} \Big)^{S_n}\,.
$$ 
The $\bbZ$-graded vector space \eqref{coDer-C-cV} carries the natural differential
$\pa$ which comes from those on $C$ and $\cV$\,.

Since the commutator of two coderivations is again a coderivation, the 
cochain complex \eqref{coDer-C-cV} is naturally a dg Lie algebra.

We denote by 
\begin{equation}
\label{coDer-pr-C-cV}
\coDer' \big(C(\cV)\big)
\end{equation}
the dg Lie subalgebra of coderivations $\cD \in \coDer \big(C(\cV)\big)$
satisfying the additional technical condition
\begin{equation}
\label{pr-cond}
\cD \Big|_{\cV} = 0\,.
\end{equation}

Recall that, since the $C$-coalgebra $C(\cV)$ is cofree, every
coderivation $\cD : C(\cV) \to C(\cV) $ is uniquely determined by 
its composition $p_{\cV} \circ \cD$ with the canonical projection:
\begin{equation}
\label{p-cV}
p_{\cV} : C(\cV) \to \cV\,.
\end{equation}

It is not hard to see that the map 
$$
\cD \mapsto p_{\cV} \circ \cD
$$
induces isomorphisms of dg Lie algebras
\begin{equation}
\label{coDer-Conv}
\coDer \big(C(\cV) \big) \cong \Conv(C, \End_{\cV})\,,
\end{equation}
and 
\begin{equation}
\label{coDer-pr-Conv}
\coDer' \big(C(\cV) \big) \cong \Conv(C_{\circ}, \End_{\cV})\,,
\end{equation}
where the differential $\pa$ on $\Conv(C, \End_{\cV})$
and $\Conv(C_{\circ}, \End_{\cV})$ comes solely 
from the differential on $C$ and $\cV$\,. 

Recall that \cite[Proposition 5.2]{notes} 
$\Cobar(C)$-algebra structures on a cochain complex $\cV$
are in bijection with degree $1$ coderivations 
\begin{equation}
\label{Q-C-cV}
Q \in  \coDer' \big(C(\cV) \big)
\end{equation}
satisfying the Maurer-Cartan equation 
\begin{equation}
\label{MC-Q}
\pa Q + \frac{1}{2}[Q, Q] = 0\,.
\end{equation}

Hence, given a $\Cobar(C)$-algebra structure on $\cV$, we may consider 
the dg Lie algebras  \eqref{coDer-Conv}, \eqref{coDer-pr-Conv} and the $C$-coalgebra $C(\cV)$
with the new differentials 
\begin{equation}
\label{diff-tw-Q}
\pa + [Q, ~]\,,
\end{equation}
and
\begin{equation}
\label{diff-tw-Q1}
\pa + Q\,,
\end{equation}
respectively.

Any dg $O$-algebra $\cV$ is naturally a $\Cobar(C)$-algebra. 
Thus, any $O$-algebra structure on $\cV$ gives us a Maurer-Cartan element
\eqref{Q-C-cV} and hence the new differential \eqref{diff-tw-Q} on 
\begin{equation}
\label{Def-comp}
\coDer \big(C(\cV) \big)  \cong \Conv(C, \End_{\cV})\,.
\end{equation}

\begin{defi}
\label{dfn:Def-comp}
The cochain complex \eqref{Def-comp} with the differential \eqref{diff-tw-Q}
is called the \emph{deformation complex} of the $O$-algebra $\cV$. We denote this 
complex by $\Def_{O}(\cV)$ or simply $\Def(\cV)$ when the operad $O$
is clear from the context.
\end{defi}
For more details about the deformation complex and its properties we refer the 
reader to papers \cite{DefCompHi} and \cite{MV}.

For example, if $O= \La\Lie$ then, we may choose $C = \La^2 \coCom$ and, in 
this case,  $\Def_{O}(\cV)$ is the truncated version of the Chevalley-Eilenberg cochain 
complex of  $\cV$ with coefficients in $\cV$. 

It turns out that the deformation complex is a homotopy invariant of 
an $O$-algebra. More precisely,  Theorem 3.1 in \cite{DefCompHi} 
implies that 
\begin{thm}
\label{thm:DefComp-Hi}
If dg $O$-algebras $\cA$ and $\cB$ are quasi-isomorphic then the dg Lie 
algebra $\Def_{O}(\cA)$ is quasi-isomorphic to the 
dg Lie algebra $\Def_{O}(\cB)$\,.  \qed
\end{thm}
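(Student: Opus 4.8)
\textbf{Proof proposal for Theorem \ref{thm:DefComp-Hi}.}

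The plan is to reduce the statement to the invariance of deformation complexes under $\infty$-quasi-isomorphisms, which is the content of \cite[Theorem 3.1]{DefCompHi}. First I would recall that a quasi-isomorphism of dg $O$-algebras $\cA \to \cB$ can be promoted to an $\infty$-quasi-isomorphism of $\Cobar(C)$-algebras: indeed, by \cite[Proposition 5.2]{notes}, an $O$-algebra structure on $\cV$ is the same as a Maurer-Cartan element $Q_{\cV} \in \coDer'\big(C(\cV)\big)$, and a strict morphism of $O$-algebras is in particular a morphism of the associated $\Cobar(C)$-algebras; since it is a quasi-isomorphism on underlying complexes, it is an $\infty$-quasi-isomorphism in the homotopical sense. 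Here the technical condition on the filtration \eqref{C-circ-filtr}--\eqref{C-cocomplete} (which holds for $C = \La^2\coCom$ and $C = \La^{-2}\Ger{}^*$ by \eqref{C-conditions}) guarantees that all the infinite sums defining the relevant twisted differentials and the transferred structures converge, so the cited results apply.

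Next I would invoke \cite[Theorem 3.1]{DefCompHi} directly: it states that an $\infty$-quasi-isomorphism $\cA \rightsquigarrow \cB$ of $\Cobar(C)$-algebras induces an $L_\infty$-quasi-isomorphism, and in particular a quasi-isomorphism of dg Lie algebras, between $\Def_O(\cA) = \big(\Conv(C, \End_{\cA}), \pa + [Q_{\cA}, \cdot\,]\big)$ and $\Def_O(\cB) = \big(\Conv(C, \End_{\cB}), \pa + [Q_{\cB}, \cdot\,]\big)$. Concretely, the morphism of deformation complexes is built by pre- and post-composing a coderivation of $C(\cA)$ with the coalgebra morphisms $C(\cA) \to C(\cB)$ and a homotopy inverse $C(\cB) \to C(\cA)$, which on the convolution-algebra side amounts to conjugating by the components of the $\infty$-quasi-isomorphism; one then checks this conjugation intertwines the twisted differentials and is a quasi-isomorphism by a standard spectral-sequence argument with respect to the arity filtration, the $E_1$-page being controlled by the underlying quasi-isomorphism $\cA \to \cB$.

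The main obstacle — really the only nontrivial point once \cite[Theorem 3.1]{DefCompHi} is granted — is verifying that the convergence hypotheses of that theorem are met in the unbounded / sheaf-of-complexes setting we use, i.e. that the filtration condition \eqref{C-circ-filtr}--\eqref{C-cocomplete} is exactly what makes the transferred $\Cobar(C)$-structures and the comparison maps well defined. Since in all our applications $C$ is one of $\La^2\coCom$ or $(\La^{-2}\Ger)^*$, which satisfy \eqref{C-conditions} and hence admit the arity filtration, this is immediate, and no further work is required; the theorem then follows. \qed
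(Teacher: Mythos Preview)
Your proposal is correct and follows exactly the same route as the paper: both reduce the statement to \cite[Theorem~3.1]{DefCompHi}, which is precisely what the sentence preceding the theorem (``Theorem 3.1 in \cite{DefCompHi} implies that'') and the bare \qed\ indicate. Your additional remarks about promoting a strict quasi-isomorphism to an $\infty$-quasi-isomorphism of $\Cobar(C)$-algebras and about the role of the filtration \eqref{C-circ-filtr}--\eqref{C-cocomplete} are accurate elaborations, but they do not change the approach.
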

\begin{remark}
\label{rem:replace-C}
The construction of the deformation complex $\Def_{O}(\cV)$ depends on
the choice of the cooperad $C$ in \eqref{Cobar-C-O}. However, using homological properties 
\cite[Section 4.4]{notes}  of the bi-functor $\Conv$, it is not hard to prove that, if dg cooperads 
$C$ and $\wt{C}$ are quasi-isomorphic, then the dg Lie algebras $\Conv(C, \End_{\cV})$
and  $\Conv(\wt{C}, \End_{\cV})$ are also quasi-isomorphic. Here, the dg Lie algebras 
$\Conv(C, \End_{\cV})$ and  $\Conv(\wt{C}, \End_{\cV})$ are considered with the differentials 
coming from the $O$-algebra structure on $\cV$\,.
\end{remark}

\subsection{A cocycle in $\Def_{O}(\cV)$ induces a derivation of the $O$-algebra $H^{\bul}(\cV)$}
\label{app:O-deriv}

Let $O$ be an augmented operad in the category of graded vector spaces and $\cV$
be a dg $O$-algebra. 
In this subsection, we show that any cocycle in the deformation complex $\Def_{O}(\cV)$ 
induces a derivation of the $O$-algebra $H^{\bul}(\cV)$\,. 

Let $\cD$ be a cochain in the deformation complex $\Def_O(\cV)$
and $v$ be a cochain in $\cV$\,.

We claim that 
\begin{prop}
\label{prop:cD-de}
The equation 
\begin{equation}
\label{cD-de}
\mB_{\cD} (v) := \cD(v)
\end{equation}
defines a chain map 
\begin{equation}
\label{mB-dfn}
\mB : \Def_O(\cV) \to \Hom (\cV, \cV)\,.
\end{equation}
For any cocycle $\cD$ in $\Def_O(\cV) $ the induced 
map 
\begin{equation}
\label{deriv-of-H-cV}
H^{\bul}(\cV) \to H^{\bul}(\cV) 
\end{equation}
is a derivation of the $O$-algebra $H^{\bul}(\cV)$\,.
\end{prop}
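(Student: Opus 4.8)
The plan is to unpack what a deformation complex cochain does and then track how cocycles interact with the operadic structure. First I would recall that, by the identification $\Def_O(\cV)\cong\Conv(C,\End_{\cV})$ of \eqref{Def-comp}, a cochain $\cD$ of the deformation complex determines (via the correspondence $\cD\mapsto p_{\cV}\circ\cD$ and its inverse) a coderivation of $C(\cV)$, and in particular its ``arity one component'' is precisely a $\bbK$-linear map $\cV\to\cV$. That is the map $\mB_{\cD}$ of \eqref{cD-de}. To check that $\mB$ in \eqref{mB-dfn} is a chain map, I would compute the differential on $\Def_O(\cV)$, which by Definition \ref{dfn:Def-comp} is $\pa+[Q,\,]$ with $Q$ the Maurer-Cartan element encoding the $O$-algebra structure on $\cV$. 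Restricting $[Q,\cD]$ to the arity-one component only picks up the part of $Q$ corresponding to the (internal) differential of $\cV$ together with the part of $\cD$ of arity one; a short bookkeeping argument shows that the arity-one component of $(\pa+[Q,\,])\cD$ equals $[\pa_{\cV},\mB_{\cD}]$ (graded commutator of maps $\cV\to\cV$), which is exactly the differential on $\Hom(\cV,\cV)$. Hence $\mB$ commutes with differentials, so a cocycle $\cD$ gives a chain map $\mB_{\cD}:\cV\to\cV$ and therefore an induced map on cohomology \eqref{deriv-of-H-cV}.

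The substantive part is showing that the induced map on $H^{\bullet}(\cV)$ is a derivation of the $O$-algebra structure. For each operation $\mu\in O(n)$ and homogeneous cocycles $v_1,\dots,v_n\in\cV$, I need
$$
\mB_{\cD}\big(\mu(v_1,\dots,v_n)\big)=\sum_{i=1}^n \pm\,\mu\big(v_1,\dots,\mB_{\cD}(v_i),\dots,v_n\big)
$$
up to a coboundary, with Koszul signs. The way I would extract this is by looking not merely at the arity-one component of the coderivation associated to $\cD$, but at the full compatibility \eqref{coder-axiom} of that coderivation with the comultiplication $\D_n$ of $C(\cV)$, dualized against $O$ via the twisting morphism $\Cobar(C)\to O$. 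Concretely: the $O$-algebra structure on $\cV$ is a map $O(n)\otimes\cV^{\otimes n}\to\cV$, and the cocycle condition $(\pa+[Q,\,])\cD=0$ translates — after pairing with the generators of $\Cobar(C)$ that map to $\mu$ — into the statement that $\mB_{\cD}$ satisfies the Leibniz identity for $\mu$ \emph{up to} terms that are either internal coboundaries in $\cV$ or that vanish on cocycles. Passing to cohomology kills these correction terms, yielding the derivation property. This is the step I expect to be the main obstacle, because it requires carefully organizing the higher-arity components of the coderivation and keeping the signs straight; the cleanest route is probably to observe that $\cD$, being a coderivation of $C(\cV)$, automatically acts as a ``derivation up to homotopy'' of the induced $\Cobar(C)$-algebra structure, and then invoke that $O$-operations are represented on $H^\bullet(\cV)$ through the given augmentation $O$ is an operad in graded vector spaces so the homotopies become genuine equalities on cohomology.

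Finally I would assemble these two pieces: $\mB$ is a morphism of cochain complexes (first paragraph), and for a cocycle the induced endomorphism of $H^\bullet(\cV)$ is compatible with every operadic operation (second paragraph), which is by definition what it means to be a derivation of the $O$-algebra $H^\bullet(\cV)$. I would also note explicitly, since it is used in the body of the paper (e.g. for $O=\La\Lie$ and later $O=\Ger$), that the construction is natural in $\cD$ in the obvious sense and descends to cohomology classes $[\cD]\in H^\bullet(\Def_O(\cV))$ — if $\cD=(\pa+[Q,\,])\cD'$ then $\mB_{\cD}$ is chain-homotopic to zero via the endomorphism $\mB_{\cD'}$, so it induces the zero derivation on $H^\bullet(\cV)$. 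No genuinely new input beyond the identifications already recalled in this appendix is needed; the work is entirely in the sign- and arity-bookkeeping for the coderivation $\cD$ against the comultiplication of $C(\cV)$.
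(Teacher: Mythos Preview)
Your outline matches the paper's approach, and you correctly identify where the work lies. But the step you flag as ``the main obstacle'' is not just sign- and arity-bookkeeping, and your resolution of it has a genuine gap. You write that for each $\mu\in O(n)$ you will pair against ``the generators of $\Cobar(C)$ that map to $\mu$''. For an arbitrary cooperad $C$ appearing in a resolution $\Cobar(C)\to O$, there is no reason such a generator exists: the restriction of $\vf_O$ to generators is the twisting morphism $C_{\c}\to O$, which is typically far from surjective (e.g.\ for a Koszul $O$ with $C=O^{\vee}$ it only hits the operadic generators). So for a composite operation $\mu$ your proposed pairing is undefined, and the ``Leibniz up to coboundary'' identity does not fall out of the cocycle condition as you suggest.

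The paper closes this gap by invoking Remark~\ref{rem:replace-C} to replace $C$ by $\Bar(O)$. This buys two things at once. First, every $\beta\in O_{\c}(n)$ has an obvious lift $\bsi\beta\in\Bar(O)(n)$ (the length-one bar word), so the pairing you want is now available for \emph{every} operation. Second, and this is what makes the computation one line rather than a mess, the element $\bsi\beta$ has vanishing elementary co-insertions $\D_{i,q}(\bsi\beta)=0$. Consequently, when you apply the coderivation $\cD$ to $(\bsi\beta;v_1,\dots,v_n)$ the only surviving terms are the arity-one piece $p_{\cV}\circ\cD(\bsi\beta;v_1,\dots,v_n)$ and the terms where $\cD$ hits a single $v_i$; similarly $p_{\cV}\circ Q$ on this element is exactly $\beta(v_1,\dots,v_n)$. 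Projecting the cocycle identity $(\pa+[Q,\,])\cD=0$ by $p_{\cV}$ then yields the derivation identity for $\beta$ with an explicit coboundary correction $\pa\big(p_{\cV}\circ\cD(\bsi\beta;v_1,\dots,v_n)\big)$. This is the concrete mechanism your proposal gestures at but does not supply; once you make the choice $C=\Bar(O)$, the rest of your plan goes through exactly as you describe.
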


\begin{proof}
The compatibility of $\mB$ with the differentials follows directly from definitions.

To prove that \eqref{deriv-of-H-cV} is a derivation, we recall that $O$ receives a quasi-isomorphism
$\vf_{O}$  \eqref{Cobar-C-O} from $\Cobar(C)$.

Due to Remark \ref{rem:replace-C}, we have a freedom of choosing a convenient 
$\Cobar$-resolution of $O$.  So we choose $C= \Bar(O)$ and observe that 
every vector $\beta$ in $O_{\c}(n)$ gives us a cocycle 
\begin{equation}
\label{beta-pr}
\beta' : =\bs\,(\bsi \beta) \in \Cobar \big( \Bar(O) \big)
\end{equation}
which satisfies the property 
\begin{equation}
\label{beta-pr-P}
\vf_O(\beta') = \beta\,.
\end{equation}

Next, for every $n$-tuple of cocycles 
$$
v_1, v_2, \dots, v_n \in \cV
$$
we consider the cocycle 
\begin{equation}
\label{s-beta-vvv}
(\bsi \beta; v_1, v_2, \dots, v_n)
\end{equation}
in the ``cofree'' $\Bar(O)$-coalgebra $\Bar(O)(\cV)$\,.

Since the coderivation $\cD$ is closed with respect to the 
differential $\pa + [Q, ~]$, we conclude that 
\begin{equation}
\label{cocyc-cond-cD}
\pa \circ \cD (\bsi \beta; v_1, v_2, \dots, v_n) + Q\circ  \cD  (\bsi \beta; v_1, v_2, \dots, v_n)
\end{equation}
$$
- (-1)^{|\cD|}  \cD \circ Q  (\bsi \beta; v_1, v_2, \dots, v_n) = 0\,.
$$

Using the fact that for every elementary co-insertion 
$$
\D_{i,q} : \Bar(O)(n) \to  \Bar(O)(n-q+1) \otimes  \Bar(O)(q)  
$$
$$
\D_{i,q} (\bsi \beta) = 0\,,
$$
we deduce that
\begin{equation}
\label{cD-acts}
\cD  (\bsi \beta; v_1, v_2, \dots, v_n) = p_{\cV} \circ \cD  (\bsi \beta; v_1, v_2, \dots, v_n) 
\end{equation}
$$
+  \sum_{i=1}^n (-1)^{|\cD|(|\beta|-1+|v_1| + \dots +|v_{i-1}|)} (\bsi \beta; v_1, \dots, v_{i-1}, 
\cD(v_i), v_{i+1}, \dots,  v_n)\,,
$$
and 
\begin{equation}
\label{Q-acts}
Q (\bsi \beta; v_1, v_2, \dots, v_n) = p_{\cV} \circ Q  (\bsi \beta; v_1, v_2, \dots, v_n) = 
\beta (v_1, v_2, \dots, v_n)\,. 
\end{equation}

Thus, applying the projection $p_{\cV}$ to both sides of \eqref{cocyc-cond-cD} and 
moving terms around, we get 
\begin{equation}
\label{H-cD-deriv}
 \cD \circ \beta(v_1, v_2, \dots, v_n) - 
  \sum_{i=1}^n (-1)^{|\cD|(|\beta|+ |v_1| + \dots +|v_{i-1}|)} \beta( v_1, \dots, v_{i-1}, 
\cD(v_i), v_{i+1}, \dots,  v_n) =  
\end{equation}
$$
(-1)^{|\cD|}\pa  \big( p_{\cV} \circ \cD (\bsi \beta; v_1, v_2, \dots, v_n) \big)\,.
$$

Proposition \ref{prop:cD-de} is proven.
\end{proof}

\section{Sheaves of algebras over an operad}
\label{app:sheaves}

\subsection{Reminder of the Thom-Sullivan normalization}
\label{app:TS}
Let $\sfDel$ be the simplicial category. In other words, objects 
of $\sfDel$ are ordered sets $[n] : = \{0 < 1 < \dots < n\}$, $n \ge 0$
and morphisms are non-decreasing functions
$$
\vf : \{0 < 1 < \dots < k\} \to \{0 < 1 < \dots < n\}\,.
$$

For the geometric $n$-simplex 
\begin{equation}
\label{Delta-n}
\bDel_n = \{(u_0, u_1, \dots, u_n) \in \bbR^{n+1} \quad | \quad u_i \ge 0, \quad u_0 + u_1 + \dots + u_n = 1\}
\end{equation}
we denote by $C^{\bul}_{simp}(\bDel_n)$ the normalized simplicial cochain complex  and by 
$\Om^{\bul}_{\poly}(\bDel_n)$ the dg commutative algebra of polynomial exterior 
forms on $\bDel_n$\,. Both with coefficients in $\bbK$\,.
It is easy to see that the collection 
\begin{equation}
\label{C-simp-Del-n}
C^{\bul}_{simp}(\bDel_n)
\end{equation}
is a simplicial object in the category of dg associative algebras over $\bbK$ and 
\begin{equation}
\label{Om-bul-Del-n}
\Om^{\bul}_{\poly}(\bDel_n)
\end{equation}
is a simplicial object in the category of dg commutative algebras over $\bbK$. 

The formal integration of polynomial exterior forms gives us a 
natural map of cosimplicial objects 
\begin{equation}
\label{mI-star}
\mI_* : \Om^{\bul}_{\poly}(\bDel_*) \to C^{\bul}_{simp}(\bDel_*)
\end{equation}
and the Stokes theorem implies that this map is compatible with the 
differentials. Furthermore, \cite[Proposition 3.3]{BG} implies that there exists 
a sequence of maps $(n \ge 0)$
\begin{equation}
\label{chi-n}
\chi_n :  \Om^{\bul}_{\poly}(\bDel_n) \otimes  \Om^{\bul}_{\poly}(\bDel_n) \to 
 C^{\bul}_{simp}(\bDel_n)
\end{equation}
of degree $-1$ which are compatible with faces and degeneracies, and 
\begin{equation}
\label{mI-chi}
\mI_n(\eta_1 \eta_2)- \mI_n(\eta_1) \, \mI_n (\eta_2) = d \chi_n(\eta_1, \eta_2) + 
   \chi_n(d \eta_1, \eta_2) + (-1)^{|\eta_1|} \chi_n(\eta_1, d \eta_2)
\end{equation}
for all $\eta_1, \eta_2 \in  \Om^{\bul}_{\poly}(\bDel_n)$\,.
In particular, it means that the multiplication on \eqref{C-simp-Del-n} is 
commutative up to homotopy.

Let us now consider a topological space $X$ with a fixed locally finite 
cover $\{U_{\al}\}_{\al \in \cI}$  by open subsets. 
Then to any dg sheaf $\cV$ on $X$, we may assign the 
cosimplicial cochain complex $\cC(\cV)$ whose $n$-th level is 
$$
\cC(\cV)_n : = \prod_{\al_0, \dots, \al_n} \G(U_{\al_0} \cap \dots \cap U_{\al_n}, \cV)\,.
$$
The $i$-th co-face comes from omitting $U_{\al_i}$ and the $i$-th degeneracy 
comes from repeating $U_{\al_i}$ twice. 

It is not hard to see that $\cC$ is a functor 
from the category of dg sheaves on $X$ to the category of cosimplicial 
cochain complexes. 

The normalized \v{C}ech complex of a dg sheaf $\cV$ can be defined as 
\begin{equation}
\label{Cech-cV-dfn}
\cCb(\cV) : = C^{\bul}_{simp}(\bDel_*) \, \otimes_{\sfDel}\, \cC(\cV)_*\,.
\end{equation}
In other words, $\cCb(\cV)$ is the subspace of vectors 
$$
\sum_{n\ge 0} \ka_n \otimes w_n ~ \in ~  \prod_{n \ge 0}  C^{\bul}_{simp}(\bDel_n) \, \otimes \, \cC(\cV)_n
$$
satisfying the condition 
$$
\ka_k \otimes  \vf_*(w_k) = \vf^*(\ka_n) \otimes w_n \qquad \textrm{in} \qquad 
 C^{\bul}_{simp}(\bDel_k) \otimes \cC(\cV)_n
$$ 
for every morphism $\vf : [k] \to [n]$ in the category $\sfDel$. 

It is not hard to show that $\cCb(\cV)$ is isomorphic to 
\begin{equation}
\label{Cech-cV-conven}
\cCb(\cV) :=  \prod_{\al_0, \dots, \al_n} \bs^n \G(U_{\al_0} \cap \dots \cap U_{\al_n}, \cV)\,,
\end{equation}
where the product is taken over all $n$-tuples $(\al_0, \dots, \al_n) \in \cI$ for which
$\al_i \neq \al_j$ if $i \neq j$\,. 

The differential on \eqref{Cech-cV-conven} is the sum of the differential 
$\pa_{\cV}$ coming from $\cV$ and the \v{C}ech differential 
\begin{equation}
\label{Cech-diff}
\cpa(f)_{\al_0 \al_1 \dots \al_n} = 
\sum_{j=0}^n (-1)^{|f|+j} f_{\al_0 \dots \wh{\al_j} \dots \al_n}\,. 
\end{equation}

\begin{remark}
\label{rem:covers}
Here we tacitly assume that the chosen cover of $X$ is acyclic for the 
class of dg sheaves we consider. So we can always use the Cech resolution 
for computing the sheaf cohomology.  In this article, $X$ is usually a smooth algebraic 
variety considered with the Zariski topology. For our purposes, any cover of $X$ by open 
affine subsets, each of which admits a global system of parameters, suffices. 
\end{remark}

Let $\cV_1, \cV_2$ be a pair of dg sheaves.
The structure of associative algebra on \eqref{C-simp-Del-n} gives 
us the map of cochain complexes: 
\begin{equation}
\label{AW} 
\AW : \cCb(\cV_1) \otimes  \cCb(\cV_2)  \to \cCb(\cV_1 \otimes \cV_2) 
\end{equation}
which is given by the formula: 
\begin{equation}
\label{AW-dfn-here}
\AW(f^1, f^2)_{\al_0 \dots \al_m} : = 
\sum_{0 \le k \le m}
(-1)^{|f^2| k} f^1_{\al_0 \dots \al_k} \otimes f^2_{\al_k \dots \al_m}\,.
\end{equation}
We call $\AW$ the {\it Alexander-Whitney map}.

It is not hard to see that $\AW$ equips $\cCb$ with a natural structure of 
a monoidal functor from the category of dg sheaves to the category of cochain 
complexes. Unfortunately, the transformation $\AW$ is compatible with the 
braiding only up to homotopy. So, in general, $\AW$ cannot be used to pull an algebraic 
structure from a dg sheaf $\cV$ to its \v{C}ech complex $\cCb(\cV)$\,.

It is the Thom-Sullivan normalization $\cN^{\TS}$ \cite{BG}, \cite[Section 1]{H-Yekutieli}, 
\cite[Appendix A]{VdB}, which repairs this defect. 
Namely,  $\cN^{\TS}$ is a functor from the category of cosimplicial
cochain complexes to the category of cochain complexes defined by the formula 
\begin{equation}
\label{N-TS}
\cN^{\TS}(\mS) :=  \Om^{\bul}_{\poly}(\bDel_*) \, \otimes_{\sfDel}\, \mS_*\,,
\end{equation}
where $\mS$ is a cosimplicial cochain complex. (For example, $\mS = \cC(\cV)$ 
for a dg sheaf $\cV$ on $X$). 

Composing $\cN^{\TS}$ with $\cC$, we get a functor from the category of 
dg sheaves on $X$ to the category of cochain complexes. 
This composition $\cN^{\TS} \circ \cC$  satisfies the following remarkable properties:
\begin{pty}
\label{P:tensor-functor} 
The natural transformation 
\begin{equation}
\label{mu-TS}
\mu^{\TS} : \cN^{\TS} \circ \cC(\cV_1) \otimes  \cN^{\TS} \circ \cC(\cV_2) \to 
 \cN^{\TS} \circ \cC(\cV_1 \otimes \cV_2) 
\end{equation}
coming from the multiplication of exterior forms on geometric simplices 
equips $\cN^{\TS} \circ \cC$ with a structure a monoidal functor 
from the category of dg sheaves to the category cochain complexes. 
This functor respects the braidings ``on the nose''. 
\end{pty}
\begin{pty}
\label{P:q-iso-Cech}
If $\cV$ is a cochain complex of sheaves with degree bounded 
below then the map
$$
\mI^{\cC}_{\cV} :  \cN^{\TS} \circ \cC(\cV) \to \cCb(\cV)
$$
induced by the natural transformation 
\begin{equation}
\label{N-TS-to-Cech}
\mI^{\cC} : \cN^{\TS} \circ \cC \to \cCb
\end{equation}
is a quasi-isomorphism of cochain complexes.  
\end{pty}
\begin{pty}
\label{P:preserve-q-iso} 
The functor $\cN^{\TS} \circ \cC$ preserves quasi-isomorphisms. 
\end{pty}
Property \ref{P:tensor-functor} follows immediately from the construction and 
Property \ref{P:q-iso-Cech} is a consequence of \cite[Theorem 2.2]{BG}. Finally, 
Property \ref{P:preserve-q-iso} follows from Property \ref{P:q-iso-Cech} and the 
fact that the functor $\cCb$ sends quasi-isomorphisms of sheaves to quasi-isomorphisms 
of cochain complexes. 
 
In addition, we remark that the existence of maps \eqref{chi-n} satisfying 
\eqref{mI-chi} implies that 
\begin{prop}
\label{prop:BG}
For every pair of dg sheaves $\cV_1$, $\cV_2$ on $X$ the diagram 
\begin{equation}
\label{diag-BG}
 \begin{tikzpicture}
\matrix (m) [matrix of math nodes, column sep=1.6em, row sep=1.6em]
{\cN^{\TS}\circ \cC(\cV_1) \otimes \cN^{\TS}\circ \cC(\cV_2) &  \cN^{\TS}\circ \cC(\cV_1 \otimes \cV_2)  \\
\cCb(\cV_1) \otimes  \cCb(\cV_2)  &  \cCb(\cV_1 \otimes \cV_2)\\};
\path[->,font=\scriptsize]
(m-1-1) edge node[auto] {$\mu^{\TS}$} (m-1-2)   edge node[auto] {$\mI_{\cV_1} \otimes \mI_{\cV_2}$} (m-2-1)  
(m-2-1)  edge node[auto] {$\AW$} (m-2-2)   (m-1-2)  edge node[auto] {$\mI_{\cV_1 \otimes \cV_2}$} (m-2-2);
\end{tikzpicture} 
\end{equation}
commutes up to homotopy. $\Box$
\end{prop}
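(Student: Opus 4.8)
The statement to prove is Proposition \ref{prop:BG}: the square relating the monoidal structure $\mu^{\TS}$ on the Thom-Sullivan normalization to the Alexander-Whitney map $\AW$ commutes up to homotopy. The plan is to unwind both functors to their explicit ``tensor over $\sfDel$'' descriptions and reduce the homotopy-commutativity of the square of functors to the already-cited homotopy-commutativity of the integration map $\mI_*$ with respect to products, i.e. to the existence of the maps $\chi_n$ in \eqref{chi-n} satisfying \eqref{mI-chi}.

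\medskip

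\textbf{Step 1: Assemble the local homotopy into a global one.} I would first record that all four maps in the diagram are natural in the dg sheaves $\cV_1,\cV_2$ and are built levelwise out of the cosimplicial data. Concretely, $\cN^{\TS}\circ\cC(\cV) = \Om^{\bullet}_{\poly}(\bDel_*)\otimes_{\sfDel}\cC(\cV)_*$ and $\cCb(\cV) = C^{\bullet}_{simp}(\bDel_*)\otimes_{\sfDel}\cC(\cV)_*$, the map $\mI_{\cV}$ is $\mI_*\otimes_{\sfDel}\id$, the map $\mu^{\TS}$ comes from the product on $\Om^{\bullet}_{\poly}(\bDel_n)$ combined with the shuffle/Eilenberg-Zilber identification $\cC(\cV_1)_*\otimes\cC(\cV_2)_*$, and $\AW$ comes from the product on $C^{\bullet}_{simp}(\bDel_n)$ combined with the same Eilenberg-Zilber map. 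Thus the failure of the square to commute is governed entirely by the difference between $\mI_n(\eta_1\eta_2)$ and $\mI_n(\eta_1)\mI_n(\eta_2)$, which by \eqref{mI-chi} equals $d\chi_n(\eta_1,\eta_2)+\chi_n(d\eta_1,\eta_2)+(-1)^{|\eta_1|}\chi_n(\eta_1,d\eta_2)$.

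\medskip

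\textbf{Step 2: Produce the explicit homotopy and check the bound.} I would define the chain homotopy $h\colon \cN^{\TS}\circ\cC(\cV_1)\otimes\cN^{\TS}\circ\cC(\cV_2)\to\cCb(\cV_1\otimes\cV_2)$ of degree $-1$ by the formula $h := (\chi_*\otimes_{\sfDel}\id)\circ(\text{Eilenberg-Zilber})$, i.e. on a simplex-level element $\eta_1\otimes w_1\otimes\eta_2\otimes w_2$ one shuffles $w_1,w_2$ as in $\mu^{\TS}$ and applies $\chi_n$ to the form part. Because the maps $\chi_n$ are compatible with faces and degeneracies (this is exactly the content of \cite[Proposition 3.3]{BG} quoted above \eqref{chi-n}), the collection $\{\chi_n\}$ descends to a well-defined map on the coequalizers $\otimes_{\sfDel}$, so $h$ is well-defined. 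A direct computation, using \eqref{mI-chi} together with the fact that $\mu^{\TS}$ and $\AW$ use the same Eilenberg-Zilber shuffle and that the differentials on $\cCb$ and $\cN^{\TS}\circ\cC$ are, up to the simplicial/\v{C}ech part (which cancels because both functors are strictly monoidal with respect to that part), the internal differentials of $\cV_i$, yields
\begin{equation}
\label{eq:htpy-identity}
\mI_{\cV_1\otimes\cV_2}\circ\mu^{\TS} - \AW\circ(\mI_{\cV_1}\otimes\mI_{\cV_2}) = \partial\circ h + h\circ\partial .
\end{equation}
The convergence issue raised in Remark \ref{rem:covers} is not a concern here since everything is a finite construction at each $\v{C}$ech degree; the product over $n$ is a completed direct product and all operations are levelwise.

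\medskip

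\textbf{The main obstacle.} The bookkeeping in Step 2 is the only real work: one must verify that the ``shuffle'' part of $\mu^{\TS}$ and of $\AW$ genuinely agree (so that it factors out of the comparison) and that the Koszul signs produced when sliding $\chi_n$ (a degree $-1$ operator) past the tensor factors and the face/degeneracy maps match exactly the signs in \eqref{eq:htpy-identity}. I expect this to be entirely routine given \eqref{mI-chi}, but it is the step where a careless sign would break the argument; I would do it by checking the identity first at the level of a fixed geometric simplex $\bDel_n$ (where it is literally \eqref{mI-chi} tensored with the identity on the sections), and only then invoke naturality and the $\otimes_{\sfDel}$ coequalizer to globalize. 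No further input beyond the results already recalled in this appendix is needed, so the proof is complete once \eqref{eq:htpy-identity} is established.
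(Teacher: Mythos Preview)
Your proposal is correct and is essentially the same approach as the paper's: the paper gives no argument beyond the sentence preceding the proposition, namely that the existence of the maps $\chi_n$ satisfying \eqref{mI-chi} (from \cite[Proposition 3.3]{BG}) implies the result, and you have simply unpacked how $\{\chi_n\}$ assembles into the required chain homotopy $h$ on the $\otimes_{\sfDel}$ construction.

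Two small terminological corrections that do not affect the argument: first, the paper's $\otimes_{\sfDel}$ is an \emph{end} (a subspace of a product, cf.\ the description after \eqref{Cech-cV-dfn}), not a coend/coequalizer, so in Step~2 you should say that the simplicial compatibility of $\chi_n$ ensures the levelwise formula restricts to the end rather than ``descends to the coequalizer''; second, the map on the \v{C}ech-section side combining $\cC(\cV_1)_n\otimes\cC(\cV_2)_n\to\cC(\cV_1\otimes\cV_2)_n$ is just the levelwise tensor of sections over the same multi-index, not an Eilenberg--Zilber shuffle --- the front-face/back-face combinatorics in $\AW$ live entirely in the cup product on $C^{\bullet}_{simp}(\bDel_n)$, which is exactly what makes the comparison reduce to \eqref{mI-chi} as you say.
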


\subsection{Derived global sections of a dg sheaf of $O$-algebras}
\label{app:RGamma}

Let $\cA$ be a dg sheaf on a topological space $X$. 
For $\cA$ we form a dg operad $\End_{\cA}$. As a graded 
vector space,
\begin{equation}
\label{End-cA}
\End_{\cA}(n) : = \bigoplus_m \Hom^{(m)} (\cA^{\otimes n}, \cA)\,,
\end{equation}
where $ \Hom^{(m)} (\cA^{\otimes n}, \cA)$ consists of $\bbK$-linear 
maps of sheaves of degree $m$, and the differential 
$$
\pa :  \Hom^{(m)} (\cA^{\otimes n}, \cA) \to  \Hom^{(m+1)} (\cA^{\otimes n}, \cA) 
$$
comes naturally from the differential on $\cA$.

Let $O$ be a dg operad. We recall that an $O$-algebra structure 
on a dg sheaf $\cA$ is a map of dg operads
\begin{equation}
\label{O-alg-on-cA}
O \to \End_{\cA}\,. 
\end{equation}

The monoidal structure on the functor $\cN^{\TS} \circ \cC$ gives 
us a canonical map of dg operads 
\begin{equation}
\label{End-cA-End-N-TS}
\End_{\cA} \to \End_{\cN^{\TS} \circ \cC(\cA)}\,. 
\end{equation}
Hence, for every dg sheaf $\cA$ of $O$-algebras the cochain complex 
\begin{equation}
\label{N-TS-cA}
\cN^{\TS} \circ \cC(\cA)
\end{equation}
is naturally an algebra over $O$\,.

We call the $O$-algebra \eqref{N-TS-cA} the {\it algebra of derived 
global sections of} $\cA$\,.

\subsection{Deformation complex of a sheaf of $O$-algebras}
\label{app:Def-comp-sheaf}
Deformation complex of an $O$-algebra admits a generalization 
to the setting of sheaves. We briefly describe this generalization here 
and refer the reader for more details to \cite[Section 4]{DefCompHi}. 

Let $\cA$ be a dg sheaf on a topological space $X$ equipped with an 
algebra structure over $O$\,. According to Subsection \ref{app:RGamma}, the 
cochain complex \eqref{N-TS-cA}
is naturally an algebra over the operad $O$ and hence an 
algebra over the operad $\Cobar(C)$\,.

Using this $\Cobar(C)$-algebra structure on \eqref{N-TS-cA}, we get 
a degree $1$ coderivation
\begin{equation}
\label{Q-for-N-TS}
Q^{\TS} \in \coDer' \Big( C \big(\, \cN^{\TS} \circ \cC(\cA)\, \big) \Big)
\end{equation}
of the ``cofree'' $C$-coalgebra 
$$
C \big(\, \cN^{\TS} \circ \cC(\cA)\, \big)
$$
satisfying the Maurer-Cartan equation.  

Using this coderivation $Q^{\TS}$, we equip the graded Lie algebra 
\begin{equation}
\label{coDer-N-TS-cA}
 \coDer \Big( C \big(\, \cN^{\TS} \circ \cC(\cA)\, \big) \Big)
\end{equation}
with the differential $\pa + [Q^{\TS}, ~]$\,. 
\begin{defi}
\label{dfn:DefComp-sheaves}
For a dg sheaf of $O$-algebras $\cA$ we call the cochain 
complex \eqref{coDer-N-TS-cA} with the differential $\pa + [Q^{\TS},~]$
the \emph{deformation complex} of $\cA$.  We denote this 
complex by $\Def_{O}(\cA)$ or simply $\Def(\cA)$ when the operad $O$
is clear from the context.
\end{defi}

According to \cite{DefCompHi} the deformation complex $\Def_O(\cA)$ is 
a homotopy invariant of $\cA$. More precisely, 
\begin{thm}{\bf \cite[Theorem 4.11]{DefCompHi}}
\label{thm:DefComp-Hi-sh}
Let $\cA$, $\cB$ be dg sheaves of $O$-algebras. 
If there exists a sequence of quasi-isomorphisms of sheaves 
of $O$-algebras connecting $\cA$ to $\cB$, then the dg Lie algebras
 $\Def_{O}(\cA)$ and $\Def_{O}(\cB)$ are 
quasi-isomorphic.  \qed
\end{thm}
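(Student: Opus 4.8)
The plan is to reduce Theorem \ref{thm:DefComp-Hi-sh} to its non-gerbed counterpart, Theorem \ref{thm:DefComp-Hi}, by transporting everything along the functor of derived global sections $\cN^{\TS}\circ\cC$ introduced in Subsection \ref{app:RGamma}. The starting point is a purely formal observation: unwinding Definition \ref{dfn:DefComp-sheaves}, the deformation complex $\Def_O(\cA)$ of a dg sheaf of $O$-algebras $\cA$ is, \emph{by construction}, exactly the deformation complex in the sense of Definition \ref{dfn:Def-comp} of the dg $O$-algebra $\cN^{\TS}\circ\cC(\cA)$. Indeed, the Maurer--Cartan coderivation $Q^{\TS}\in\coDer'\big(C(\cN^{\TS}\circ\cC(\cA))\big)$ and the dg Lie algebra $\coDer\big(C(\cN^{\TS}\circ\cC(\cA))\big)$ appearing in Definition \ref{dfn:DefComp-sheaves} are precisely the data used in Definition \ref{dfn:Def-comp} when applied to the $O$-algebra $\cN^{\TS}\circ\cC(\cA)$; so $\Def_O(\cA)=\Def_O\big(\cN^{\TS}\circ\cC(\cA)\big)$ literally as dg Lie algebras, and similarly for $\cB$.

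With this identification in hand, the argument proceeds as follows. Suppose we are given a zig-zag $\cA=\cA_0 \leftarrow \cA_1 \to \cA_2 \leftarrow \cdots \to \cA_k=\cB$ of quasi-isomorphisms of dg sheaves of $O$-algebras. Since $\cN^{\TS}\circ\cC$ is a monoidal functor which respects the braidings strictly (Property \ref{P:tensor-functor}), it carries dg $O$-algebras to dg $O$-algebras and morphisms of dg $O$-algebras to morphisms of dg $O$-algebras; moreover, by Property \ref{P:preserve-q-iso} it preserves quasi-isomorphisms. Applying $\cN^{\TS}\circ\cC$ to the given zig-zag therefore produces a zig-zag of quasi-isomorphisms of dg $O$-algebras connecting $\cN^{\TS}\circ\cC(\cA)$ to $\cN^{\TS}\circ\cC(\cB)$, i.e. these two dg $O$-algebras are quasi-isomorphic. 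Theorem \ref{thm:DefComp-Hi} then gives that $\Def_O\big(\cN^{\TS}\circ\cC(\cA)\big)$ and $\Def_O\big(\cN^{\TS}\circ\cC(\cB)\big)$ are quasi-isomorphic as dg Lie algebras, and by the identification of the previous paragraph this is precisely the assertion that $\Def_O(\cA)$ and $\Def_O(\cB)$ are quasi-isomorphic.

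The main obstacle—the one place requiring genuine care rather than formal bookkeeping—is the claim that $\cN^{\TS}\circ\cC$ upgrades a morphism of sheaves of $O$-algebras to a \emph{morphism of $O$-algebras}, that is, that the induced structure maps $O\to\End_{\cN^{\TS}\circ\cC(\cA)}$ and $O\to\End_{\cN^{\TS}\circ\cC(\cB)}$ are intertwined by $\cN^{\TS}\circ\cC(f)$. This hinges on the strict compatibility of the monoidal transformation $\mu^{\TS}$ with the symmetry and with composition, together with its naturality in both arguments; the analogous statement fails for the Alexander--Whitney map $\AW$, which is only braided-monoidal up to homotopy (Proposition \ref{prop:BG}), and this is exactly why the Thom--Sullivan normalization must be used in place of the ordinary \v{C}ech complex. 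Once this categorical point is settled, nothing further is needed: the cobar-resolution hypotheses on $O$ and the filtration on $C_{\c}$ imposed at the start of Appendix \ref{app:Def-comp} are fixed once and for all and are untouched by the construction.
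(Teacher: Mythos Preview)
Your argument is correct, but note that the paper does not actually prove this theorem: it is stated with a terminal \qedsymbol\ and attributed to \cite[Theorem 4.11]{DefCompHi} as a black-box citation. So there is no proof in the paper to compare against.

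That said, your reduction is exactly the one suggested by the paper's own setup. Definition~\ref{dfn:DefComp-sheaves} literally defines $\Def_O(\cA)$ as $\coDer\big(C(\cN^{\TS}\circ\cC(\cA))\big)$ with differential $\pa+[Q^{\TS},\,\cdot\,]$, which is verbatim the deformation complex of the $O$-algebra $\cN^{\TS}\circ\cC(\cA)$ in the sense of Definition~\ref{dfn:Def-comp}. Once this identification is made, Properties~\ref{P:tensor-functor} and~\ref{P:preserve-q-iso} do the rest, and Theorem~\ref{thm:DefComp-Hi} finishes the job. Your remark that the strict symmetric-monoidality of $\mu^{\TS}$ is the essential point---and the reason one cannot run the same argument with the \v{C}ech complex and $\AW$---is well placed. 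The only small addendum is that Property~\ref{P:preserve-q-iso} as stated in the paper is deduced from Property~\ref{P:q-iso-Cech}, which carries a bounded-below hypothesis; this is harmless in the paper's applications but worth flagging if one wants the statement in full generality.
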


\subsection{A canonical homomorphism from $\coDer(C(\cA))$ to $\Def_O(\cA)$}
\label{app:Canon-homo}

For a dg sheaf $\cA$ we denote by $C(\cA)$ the dg sheaf of 
$C$-coalgebras ``cofreely'' cogenerated by $\cA$. We also denote by
\begin{equation}
\label{coDer-C-cA}
\coDer \big(C(\cA)\big)
\end{equation}
the cochain complex of coderivations of $C(\cA)$\,. 
In other words, $\coDer \big(C(\cA)\big)$
consists of maps of sheaves 
\begin{equation}
\label{cD-coder}
\cD : C(\cA) \to C(\cA)
\end{equation}
which are compatible with the $C$-coalgebra structure on $C(\cA)$ in the 
following sense: 
\begin{equation}
\label{coder-axiom-sheaves}
\D_n \circ \cD  = \sum_{i=1}^n \big( \id_C \otimes \id_{\cA}^{\otimes (i-1)} \otimes \cD \otimes 
\id_{\cA}^{n-i} \big) \circ \D_n
\end{equation}
where $\D_n$ is the comultiplication map 
$$
\D_n :  C(\cA) \to \Big( C(n) \otimes  \big( C(\cA) \big)^{\otimes n} \Big)^{S_n}\,.
$$ 
The graded vector space \eqref{coDer-C-cA} carries the obvious Lie bracket 
and the natural differential $\pa$ which comes from those on $C$ and $\cA$\,.

We denote by 
\begin{equation}
\label{coDer-pr}
\coDer' \big(C(\cA)\big)
\end{equation}
the dg Lie subalgebra of \eqref{coDer-C-cA} which consists of coderivations $\cD$
satisfying the additional technical condition 
$$
\cD \Big|_{\cA} = 0\,.
$$

Let us denote by $p_{\cA}$ the canonical projection
$$
p_{\cA} :   C(\cA) \to \cA\,.
$$  

It is not hard to see that the assignment 
$$
\cD \mapsto p_{\cA} \circ \cD
$$
gives us isomorphisms of dg Lie algebras 
\begin{equation}
\label{coDer-Conv-sheaf}
\coDer \big(C(\cA)\big) \cong \Conv(C, \End_{\cA})\,,
\end{equation}
and
\begin{equation}
\label{coDer-pr-Conv-sheaf}
\coDer' \big(C(\cA)\big) \cong \Conv(C_{\c}, \End_{\cA})\,,
\end{equation}
where all the dg Lie algebras are considered with the differentials 
coming solely from the ones on $C$ and $\cA$\,.

Using the map of dg operads \eqref{End-cA-End-N-TS}, we get 
the canonical morphism of dg Lie algebras 
\begin{equation}
\label{the-one}
\Psi^{\hs} : \Conv(C, \End_{\cA}) \to  \Conv \big( C,  \End_{\cN^{\TS} \circ \cC(\cA)} \big)\,,
\end{equation}
where, again, the differentials come solely from the ones on $C$, $\cA$, and
$\cN^{\TS} \circ \cC$\,.

If $\cA$ is, in addition, a sheaf of $O$-algebras, then $\cA$ is also a sheaf 
of $\Cobar(C)$-algebras and $\cN^{\TS} \circ \cC(\cA)$ is a $\Cobar(C)$-algebra.
Hence, due to \cite[Proposition 5.2]{notes}, we get the Maurer-Cartan element
\begin{equation}
\label{cQ}
\cQ \in  \Conv(C_{\c}, \End_{\cA})
\end{equation}
and hence a new differential $\pa + [\cQ, ~]$ on \eqref{coDer-Conv-sheaf}.

Let us recall that the graded Lie algebras
\begin{equation}
\label{Conv-N-TS}  
\Conv(C, \End_{\cN^{\TS} \circ \cC(\cA)})
\end{equation}
and 
$$
\coDer \Big( C \big( \End_{\cN^{\TS} \circ \cC(\cA)} \big) \Big)
$$
are isomorphic. Furthermore, it is not hard to see that, 
the Maurer-Cartan element $Q^{\TS}$ \eqref{Q-for-N-TS}
is related to $\cQ$ via the equation
\begin{equation}
\label{Q-TS-cQ}
Q^{\TS} = \Psi^{\hs} (\cQ)\,. 
\end{equation}

Therefore, the same map $\Psi^{\hs}$ \eqref{the-one} gives 
us a morphism of dg Lie algebras 
\begin{equation}
\label{Psi-heart}
\Psi^{\hs} : \Big( \Conv(C, \End_{\cA}), \pa + [\cQ, ~] \Big)
\to  \Big( \Conv \big( C,  \End_{\cN^{\TS} \circ \cC(\cA)} \big), \pa + [Q^{\TS}, ~] \Big)
\end{equation}
and, since the target is canonically isomorphic to the deformation complex 
$\Def_{O}(\cA)$, $\Psi^{\hs}$ can be viewed as a map of dg Lie algebras
\begin{equation}
\label{heart}
\Psi^{\hs} :  \coDer(C(\cA))
\to  \Def_{O}(\cA)\,.
\end{equation}

\subsection{A cocycle in $\coDer(C(\cA))$ induces a derivation of the $O$-algebra $H^{\bul}(X, \cA)$}
\label{app:O-deriv-sheaves}

Let us now adapt the construction of Subsection \ref{app:O-deriv} to the setting of sheaves. 
Just as in Subsection \ref{app:O-deriv}, we assume that the operad $O$ carries the zero differential.

Let $\cA$ be a dg sheaf of $O$-algebras and  $\cD$ be a cocycle in $\coDer(C(\cA))$, where 
$\coDer(C(\cA))$ is considered with the differential $\pa + [\cQ, ~]$\,. According to the previous 
subsection, 
\begin{equation}
\label{Psi-hs-cD}
\Psi^{\hs} (\cD)
\end{equation}
is a cocycle in the deformation complex  
\begin{equation}
\label{Def-cA-here}
 \Def_{O}(\cA) \cong   \Big( \Conv \big( C,  \End_{\cN^{\TS} \circ \cC(\cA)} \big), \pa + [Q^{\TS}, ~] \Big)\,.
\end{equation}
 
Therefore, by Proposition \ref{prop:cD-de}, the map 
\begin{equation}
\label{mB-Psi-cD}
\mB_{\Psi^{\hs}(\cD)} : \cN^{\TS} \circ \cC(\cA) \to \cN^{\TS} \circ \cC(\cA)
\end{equation}
induces a derivation on the $O$-algebra 
\begin{equation}
\label{H-N-TS-cA}
H^{\bul} \Big( \cN^{\TS} \circ \cC(\cA) \Big)\,.
\end{equation}

On the other hand,  the cochain complex $\cN^{\TS} \circ \cC(\cA)$ computes the sheaf 
cohomology $H^{\bul}(X, \cA)$\,. Hence, the map \eqref{mB-Psi-cD} induces a derivation of the 
$O$-algebra  $H^{\bul}(X, \cA)$\,.

For our purposes, we need an explicit way of computing this derivation in terms of conventional 
\v{C}ech cochains. This is given by the following proposition. 
\begin{prop}
\label{prop:O-deriv}
Let  $\cA$ be a dg sheaf of $O$-algebras, $\cD$ be a cocycle in $\coDer(C(\cA))$, and 
$v$ be a cochain in the \v{C}ech complex $\cCb(\cA)$. The formula 
\begin{equation}
\label{mB-Cech}
\cmB_{\cD}(v)_{\al_0 \al_1 \dots \al_m} : = \cD(v_{\al_0 \al_1 \dots \al_m})
\end{equation}
defines degree $|\cD|$ map 
$$
\cmB_{\cD} : \cCb(\cA) \to \cCb(\cA)
$$
which intertwines the differentials and such that the corresponding 
map 
$$
H^{\bul}(X, \cA) \to H^{\bul}(X, \cA)
$$
coincides with the derivation induced by \eqref{mB-Psi-cD}.  
\end{prop}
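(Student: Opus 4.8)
The plan is to reduce the statement to the already-established Proposition~\ref{prop:cD-de} by comparing the conventional \v{C}ech complex with the Thom-Sullivan normalization $\cN^{\TS}\circ\cC(\cA)$. First I would check the elementary claims: that \eqref{mB-Cech} is well defined (a coderivation of $C(\cA)$ is in particular a map of sheaves, so it evaluates on local sections over each $U_{\al_0}\cap\dots\cap U_{\al_m}$), that it has degree $|\cD|$, and that it commutes with the total differential $\pa_{\cA}+\cpa$ on $\cCb(\cA)$. Commutation with the \v{C}ech differential $\cpa$ is immediate since $\cD$ is applied componentwise and $\cpa$ only reshuffles and signs the components; commutation with $\pa_{\cA}$ plus the internal twist is exactly the cocycle condition $(\pa+[\cQ,~])\cD=0$ read componentwise, where one uses that, on a single open set, the $\Cobar(C)$-structure $\cQ$ acts by the same formulas that define the $O$-algebra structure on $\cA$. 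Hence $\cmB_\cD$ descends to a well-defined operator on $H^{\bul}(X,\cA)=H^{\bul}(\cCb(\cA))$.

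Next I would set up the comparison. Recall from Property~\ref{P:q-iso-Cech} that the natural transformation $\mI^{\cC}_{\cA}:\cN^{\TS}\circ\cC(\cA)\to\cCb(\cA)$ is a quasi-isomorphism (for sheaves with degree bounded below, which covers our cases). The key point is that $\mI^{\cC}$ is induced levelwise by the integration map $\mI_*:\Om^{\bul}_{\poly}(\bDel_*)\to C^{\bul}_{simp}(\bDel_*)$, which does \emph{nothing} to the $\cC(\cA)_*$-factor. Consequently, for any coderivation $\cD$ of $C(\cA)$, the square
\begin{equation}
\label{plan-square}
\begin{tikzpicture}[baseline=(m-1-1.base)]
\matrix (m) [matrix of math nodes, column sep=2.2em, row sep=1.6em]
{\cN^{\TS}\circ\cC(\cA) & \cN^{\TS}\circ\cC(\cA)\\
\cCb(\cA) & \cCb(\cA)\\};
\path[->,font=\scriptsize]
(m-1-1) edge node[auto] {$\mB_{\Psi^{\hs}(\cD)}$} (m-1-2)
        edge node[auto] {$\mI^{\cC}_{\cA}$} (m-2-1)
(m-1-2) edge node[auto] {$\mI^{\cC}_{\cA}$} (m-2-2)
(m-2-1) edge node[auto] {$\cmB_{\cD}$} (m-2-2);
\end{tikzpicture}
\end{equation}
commutes on the nose. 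Indeed, $\mB_{\Psi^{\hs}(\cD)}$ is by Proposition~\ref{prop:cD-de} just the application of the coderivation corresponding to $\Psi^{\hs}(\cD)$, and $\Psi^{\hs}$ is built from the monoidal structure map $\End_{\cA}\to\End_{\cN^{\TS}\circ\cC(\cA)}$, which is compatible with $\mI^{\cC}$ precisely because $\mI_*$ is a map of cosimplicial objects fixing the $\cC$-factor. I would spell this out by evaluating both composites on a generator $\ka_n\otimes w_n$ with $w_n\in\cC(\cA)_n$ and observing that $\cD$ commutes past $\ka_n$ (it acts only on $w_n$) on both sides.

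Given the commuting square \eqref{plan-square} and the fact that $\mI^{\cC}_{\cA}$ is a quasi-isomorphism, passing to cohomology yields a commuting square of maps on $H^{\bul}(X,\cA)$ in which the vertical arrows are isomorphisms. Therefore the map $H^{\bul}(X,\cA)\to H^{\bul}(X,\cA)$ induced by $\cmB_{\cD}$ is conjugate to, hence equal to, the map induced by $\mB_{\Psi^{\hs}(\cD)}$, which by Proposition~\ref{prop:cD-de} (applied to the $O$-algebra $\cN^{\TS}\circ\cC(\cA)$ with the cocycle $\Psi^{\hs}(\cD)\in\Def_O(\cA)$) is a derivation of the $O$-algebra $H^{\bul}(\cN^{\TS}\circ\cC(\cA))\cong H^{\bul}(X,\cA)$. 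This gives the proposition.

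\textbf{Main obstacle.} The only genuinely delicate point is the strict commutativity of \eqref{plan-square}: one must be careful that $\cmB_\cD$ defined by the naive componentwise formula \eqref{mB-Cech} really corresponds, under the isomorphism $\cCb(\cA)\cong C^{\bul}_{simp}(\bDel_*)\otimes_{\sfDel}\cC(\cA)_*$, to ``$\id\otimes(\text{coderivation on }\cC(\cA)_*)$'', and that $\Psi^{\hs}(\cD)$ unwinds to the analogous operator with $\Om^{\bul}_{\poly}$ in place of $C^{\bul}_{simp}$. Since neither $\mI_*$ nor the passage from $\cC$ to $\cN^{\TS}\circ\cC$ touches the sheaf-section factor, there is no homotopy correction needed here (unlike in Proposition~\ref{prop:BG}, which concerned the \emph{product}); I would just verify this bookkeeping explicitly. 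Everything else is formal from the results already in the appendix.
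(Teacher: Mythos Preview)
Your proposal is correct and follows essentially the same approach as the paper: the paper's proof also consists of the observation that compatibility with the differentials is immediate from $\cD$ being a cocycle, followed by the claim that the square \eqref{plan-square} (which is exactly the paper's diagram \eqref{mB-mB-Cech}) commutes on the nose by unfolding the definitions, whence the conclusion via the quasi-isomorphism $\mI^{\cC}_{\cA}$. Your write-up is simply a more explicit unpacking of the phrase ``unfolding the definitions.''
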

\begin{proof}
The compatibility of $\cmB_{\cD}$ with the differentials follows easily from the fact 
that $\cD$ is a cocycle in  $\coDer(C(\cA))$.

Next, unfolding the definitions, we see that the diagram 
\begin{equation}
\label{mB-mB-Cech}
 \begin{tikzpicture}
\matrix (m) [matrix of math nodes, column sep=3.6em, row sep=1.6em]
{ \cN^{\TS} \circ \cC(\cA) & \cN^{\TS} \circ \cC(\cA)\\
 \cCb(\cA) &  \cCb(\cA)\\};
\path[->,font=\scriptsize]
(m-1-1) edge node[auto] {$\mB_{\Psi^{\hs}(\cD)}$} (m-1-2)   edge node[auto] {$\mI_{\cA}$} (m-2-1)  
(m-2-1)  edge node[auto] {$\cmB_{\cD}$} (m-2-2)   (m-1-2)  edge node[auto] {$\mI_{\cA}$} (m-2-2);
\end{tikzpicture} 
\end{equation}
commutes. So the second claim of the proposition follows as well.
\end{proof}

\section{Operations of twisting}
\label{app:twisting}

In this section, we recall twisting of $\La\Lie$-algebras and  Gerstenhaber algebras 
by Maurer-Cartan elements. We also extend the twisting operation to a subspace 
of cochains in the deformation complex of a $\La\Lie$-algebra and 
a Gerstenhaber algebra. For more details about the twisting procedure we 
refer the reader to \cite{DeligneTwist}.

\subsection{Twisting operation for Chevalley-Eilenberg cochains}
\label{app:tw-LaLie}

Let $\cV$ be a dg $\La\Lie$-algebra equipped with a complete 
descending filtration
\begin{equation}
\label{filtr-cV}
\cV \supset \dots \supset \cF_0 \cV \supset \cF_1 \cV \supset  \cF_2 \cV \supset \dots\,,
\end{equation}
\begin{equation}
\label{cV-complete}
\cV = \lim_k \cV ~ \big/  ~\cF_k \cV\,,
\end{equation}
which is compatible with the dg $\La\Lie$-structure. 

We say that a degree $2$ vector $\al \in \cV$ is a {\it Maurer-Cartan 
element}\footnote{Condition \eqref{al-filtr-1} is sometimes omitted.} if 
\begin{equation}
\label{al-filtr-1}
\al \in \cF_1 \cV
\end{equation}
and 
\begin{equation}
\label{MC-al}
\pa \al + \frac{1}{2}\{\al, \al\} = 0\,.
\end{equation}

For every Maurer-Cartan element $\al$ of a $\La\Lie$-algebra $\cV$,
the equations
\begin{equation}
\label{tw-diff}
\pa^{\al}_{\cV} : = \pa_{\cV} + \{\al, ~\}
\end{equation}
and
\begin{equation}
\label{tw-bracket}
\{~,~\}^{\al} = \{~,~\}
\end{equation}
define a new dg $\La\Lie$-structure on $\cV$\,. 

We denote this new  $\La\Lie$-algebra by $\cV^{\al}$ and say 
that $\cV^{\al}$ is obtained from $\cV$ via {\it twisting} by the Maurer-Cartan element 
$\al$\,. 

Now, for a given  Maurer-Cartan element $\al$ of $\cV$, we consider
the following element of the completion of $\La^2 \coCom(\cV)$ 
\begin{equation}
\label{the-elem}
\bs^2\, (e^{\bs^{-2}\, \al} -1) = \sum_{n=1}^{\infty} \frac{1}{n!} 
 \bs^2\,(\bs^{-2}\, \al)^{n}
\end{equation}
and define the subspace 
of coderivations $\cD \in  \coDer \big( \La^2 \coCom(\cV) \big)$ 
satisfying the additional condition\footnote{We tacitly assume that our 
coderivations are compatible with the filtration on $ \La^2 \coCom(\cV)$
coming from  \eqref{filtr-cV}.}
\begin{equation}
\label{do-not-move}
\cD\, \bs^2 \, ( e^{\bs^{-2}\, \al} -1) = 0\,.
\end{equation}
This subspace is obviously closed with respect to the commutator. 
Furthermore, we have the following theorem: 
\begin{thm}
\label{thm:twisting}
Let $\cV$ be a filtered $\La\Lie$-algebra, $\al$ be a Maurer-Cartan 
element of $\cV$\,, $p_{\cV} : \La^2 \coCom(\cV) \to \cV$ be the 
canonical projection,
and $\msQ$ (resp. $\msQ^{\al}$) be the codifferential 
on  $\La^2 \coCom(\cV)$   (resp.  $\La^2 \coCom(\cV^{\al})$) corresponding 
to the dg $\La\Lie$-structures on $\cV$ (resp. $\cV^{\al}$). 
Let us also denote by 
\begin{equation}
\label{coder-cond}
\coDer \big( \La^2 \coCom(\cV) \big)_{\al} 
\end{equation}
the subspace of coderivations of $\La^2 \coCom(\cV)$ satisfying 
condition \eqref{do-not-move}. Then  

\begin{itemize}

\item[{\bf i)}] Condition \eqref{do-not-move} on coderivations is equivalent to 
\begin{equation}
\label{do-not-move1}
\sum_{n=1}^{\infty}\frac{1}{n!}\,
p_{\cV} \circ \cD \,  \big( \bs^2\, (\bs^{-2}\, \al)^{n} \big) = 0\,.
\end{equation}

\item[{\bf ii)}]  The codifferential $\msQ$
satisfies Condition  \eqref{do-not-move}.

\item[{\bf iii)}] For every coderivation $\cD$ in \eqref{coder-cond}  
the operation 
\begin{equation}
\label{cD-tw}
e^{ - \bs^{-2}\, \al} \, \cD \,  e^{\bs^{-2}\, \al} :  
 \La^2 \coCom(\cV) \to  \La^2 \coCom(\cV)
\end{equation}
is a coderivation of  $\La^2 \coCom(\cV)$.

\item[{\bf iv)}]  The codifferential $\msQ^{\al}$ is related 
to $\msQ$ by the formula: 
\begin{equation}
\label{msQ-tw}
\msQ^{\al} ~ = ~ 
e^{ - \bs^{-2}\, \al} \, \msQ \,  e^{\bs^{-2}\, \al}\,. 
\end{equation}

\item[{\bf v)}]  The subspace \eqref{coder-cond} is a subcomplex
of the deformation complex for $\cV$\,.
Furthermore, the assignment 
\begin{equation}
\label{CE-twisting}
\cD ~ \mapsto ~ \cD^{\al} =
e^{ - \bs^{-2}\, \al} \, \cD \,  e^{\bs^{-2}\, \al}  
\end{equation}
defines a map of cochain complexes 
\begin{equation}
\label{CE-twisting-map}
\coDer \big( \La^2 \coCom(\cV) \big)_{\al} ~ \to ~
  \coDer \big( \La^2 \coCom(\cV^{\al}) \big)
\end{equation}
from \eqref{coder-cond} to the deformation complex 
$\coDer \big( \La^2 \coCom(\cV^{\al}) \big)$ of the $\La\Lie$-algebra 
$\cV^{\al}$. 

\end{itemize}
\end{thm}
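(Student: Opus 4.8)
The plan is to prove the five items of Theorem~\ref{thm:twisting} essentially in the order stated, since each one feeds into the next. The underlying principle is that all computations take place in the (completed) cofree $\La^2\coCom$-coalgebra $\La^2\coCom(\cV)$, and that the element $\bs^2(e^{\bs^{-2}\al}-1)$ is a group-like element of this coalgebra (up to the shift conventions), so that conjugation by $e^{\bs^{-2}\al}$ is a coalgebra automorphism whenever it is well-defined; the filtration \eqref{filtr-cV} together with $\al\in\cF_1\cV$ guarantees all the relevant infinite sums converge. Throughout I would use that a coderivation of a cofree coalgebra is determined by its corepresentation $p_\cV\circ\cD$, which is the content of \eqref{coDer-Conv}.

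First, for {\bf i)} I would spell out the comultiplication of the group-like element $\bs^2 e^{\bs^{-2}\al}$: schematically $\D\big(\bs^2 e^{\bs^{-2}\al}\big)$ is built from tensor powers of $\bs^2 e^{\bs^{-2}\al}$ itself, so applying a coderivation $\cD$ and then comparing corepresentations, the condition $\cD\,\bs^2(e^{\bs^{-2}\al}-1)=0$ is equivalent to the vanishing of $p_\cV\circ\cD$ on $\bs^2(e^{\bs^{-2}\al}-1)$, which unwinds to \eqref{do-not-move1}. For {\bf ii)} I would use that $p_\cV\circ\msQ$ restricted to cogenerators-squared is (a shift of) the bracket and on cogenerators is $\pa_\cV$, so $\sum_n \tfrac1{n!}p_\cV\circ\msQ(\bs^2(\bs^{-2}\al)^n) = \pa_\cV\al + \tfrac12\{\al,\al\}$ up to sign, which is zero by the Maurer--Cartan equation \eqref{MC-al}; this is exactly the classical computation showing that the twisted differential squares to zero. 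For {\bf iii)}, since $e^{\bs^{-2}\al}$ acts on $\La^2\coCom(\cV)$ as translation by the group-like element $\bs^2 e^{\bs^{-2}\al}$ (equivalently, as a coalgebra automorphism once \eqref{do-not-move}-type convergence is checked), the conjugate $e^{-\bs^{-2}\al}\cD\,e^{\bs^{-2}\al}$ is again a coderivation; here the point is that condition \eqref{do-not-move} is precisely what makes the translation automorphism interact well with $\cD$ so that the conjugate does not acquire a constant (degree-zero, non-coderivation) term. Item {\bf iv)} is the special case $\cD=\msQ$ of {\bf iii)} combined with the standard identification of twisting of $L_\infty$-structures with conjugation of the codifferential by $\exp$ of the Maurer--Cartan element; I would verify it by checking corepresentations on low arities, i.e. that $p_\cV\circ e^{-\bs^{-2}\al}\msQ e^{\bs^{-2}\al}$ reproduces $\pa_\cV + \{\al,-\}$ on cogenerators and $\{-,-\}$ on cogenerators-squared.

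For {\bf v)}, I would first check that \eqref{coder-cond} is closed under the deformation-complex differential $[\,\msQ,-\,]$: if $\cD$ kills $\bs^2(e^{\bs^{-2}\al}-1)$ then so does $[\msQ,\cD]=\msQ\cD-(-1)^{|\cD|}\cD\msQ$, using {\bf ii)} (that $\msQ$ kills it too) and the Leibniz-type compatibility; this uses that $\msQ\big(\bs^2(e^{\bs^{-2}\al}-1)\big)=0$ in the completed coalgebra, which is again a reformulation of Maurer--Cartan. Then the map \eqref{CE-twisting-map} is $\cD\mapsto e^{-\bs^{-2}\al}\cD\,e^{\bs^{-2}\al}$, which lands in $\coDer(\La^2\coCom(\cV^\al))$ by {\bf iii)} (noting $\cV$ and $\cV^\al$ have the same underlying graded vector space, so $\La^2\coCom(\cV)=\La^2\coCom(\cV^\al)$ as coalgebras), and it is a chain map because, by {\bf iv)}, $[\msQ^\al, e^{-\bs^{-2}\al}\cD e^{\bs^{-2}\al}] = e^{-\bs^{-2}\al}[\msQ,\cD]e^{\bs^{-2}\al}$ — conjugation by a fixed element is a Lie algebra homomorphism, so it intertwines $[\msQ,-]$ with $[\msQ^\al,-]$.

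The main obstacle I anticipate is bookkeeping rather than conceptual: getting the shift conventions $\bs^{\pm2}$ and the Koszul signs consistent across \eqref{the-elem}, \eqref{do-not-move}, \eqref{do-not-move1}, and \eqref{msQ-tw}, and making sure every manipulation of the infinite series $e^{\bs^{-2}\al}$ is justified by completeness of the filtration (so one is always working in $\lim_k \La^2\coCom(\cV)/\cF_k$, not formally). A secondary subtlety is verifying that conjugation by $e^{\bs^{-2}\al}$ really is well-defined as an operator on the completed coalgebra and is invertible with inverse $e^{-\bs^{-2}\al}$ — this is where $\al\in\cF_1\cV$ is essential, since it forces $\bs^{-2}\al$ to raise filtration degree so the exponential converges and the geometric-series inverse makes sense. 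Once those foundational points are nailed down, {\bf i)}--{\bf v)} follow by the corepresentation arguments sketched above with only routine (if sign-heavy) computation.
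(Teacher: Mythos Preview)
Your proposal is correct and follows essentially the same route as the paper: both argue via corepresentations using \eqref{coDer-Conv}, deduce {\bf ii)} from the Maurer--Cartan equation, obtain {\bf iv)} by checking $p_\cV\circ(\cdot)$ on low arities, and get {\bf v)} formally from {\bf ii)}--{\bf iv)}. The one place where the paper is more explicit than your sketch is {\bf iii)}: multiplication by $e^{\bs^{-2}\al}$ is \emph{not} a coalgebra morphism for the truncated comultiplication $\D$ on $\La^2\coCom(\cV)=\bs^2\und{S}(\bs^{-2}\cV)$, so the paper first extends $\cD$ to the full counital coalgebra $\bs^2 S(\bs^{-2}\cV)$ (setting $\cD(\bs^2 1)=0$), where $e^{\bs^{-2}\al}$ \emph{is} group-like for the extended comultiplication $\wt{\D}$, and then uses $\D(W)=\wt{\D}(W)-\bs^2 1\otimes W - W\otimes\bs^2 1$ to see that the boundary terms that survive are exactly $\cD\,\bs^2(e^{\bs^{-2}\al}-1)$, which vanishes by \eqref{do-not-move}; this is precisely the ``constant term'' you anticipated, made rigorous.
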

\begin{proof}
Using the compatibility of $\cD$ with the comultiplication on 
$\La^2 \coCom(\cV)$, it is not hard to show that  
\begin{equation}
\label{cD-exp}
\cD\,   \bs^2 \, ( e^{\bs^{-2}\, \al} -1) = 
 e^{\bs^{-2}\, \al}\, p_{\cV} \circ \cD\big( \bs^2 \, ( e^{\bs^{-2}\, \al} -1) \big)\,.
\end{equation}
Hence, \eqref{do-not-move} is equivalent to \eqref{do-not-move1} 
and claim {\bf i)} follows. 

Since $\al$ satisfies the Maurer-Cartan equation \eqref{MC-al}, we have 
$$
p_{\cV} \circ \msQ \,  \bs^2 \, ( e^{\bs^{-2}\, \al} -1) = 0\,.
$$
Thus,  claim {\bf i)} implies claim  claim {\bf ii)}. 

To prove  claim {\bf iii)}, we recall that the comultiplication $\D$ on 
$$ 
\La^2 \coCom(\cV)  = \bs^2 \und{S}(\bs^{-2} \, \cV)
$$
is given by the formula: 
\begin{equation}
\label{Delta-coCom}
\D (\bs^2\, w_1 w_2 \dots w_n ) =
\end{equation}
$$
 \sum_{p=1}^{n-1} \sum_{\tau \in \Sh_{p, n-p}}
 (-1)^{\ve(\tau, w_1, \dots, w_n)} 
(\bs^2\,  w_{\tau(1)} \dots  w_{\tau(p)}) ~\otimes~
(\bs^2\,  w_{\tau(p+1)} \dots  w_{\tau(n)})\,,
$$
$$
 w_1, w_2, \dots, w_n \in \bs^{-2} \, \cV\,,
$$
where the sign factor $ (-1)^{\ve(\tau, w_1, \dots, w_n)} $ is determined by 
the usual Koszul rule of signs. 

Let us extend $\cD$ to the space of the full symmetric algebra 
\begin{equation}
\label{full-symm}
 \bs^2  S(\bs^{-2} \, \cV)
\end{equation}
by requiring that 
\begin{equation}
\label{cD-1}
\cD(\bs^2 \,1) = 0\,. 
\end{equation}

Then $\cD$ respects the following comultiplication on \eqref{full-symm} 
$$
\wt{\D} (\bs^2 \, 1) = \bs^2 \, 1 ~ \otimes ~\bs^2 \, 1 \,,
$$
\begin{equation}
\label{Delta-full-symm}
\wt{\D} (\bs^2\, w_1 \dots w_n ) =
\end{equation}
$$
\bs^2\, 1 \otimes (\bs^2\, w_1 \dots w_n ) +
 \sum_{p=1}^{n-1} \sum_{\tau \in \Sh_{p, n-p}}
 (-1)^{\ve(\tau, w_1, \dots, w_n)} 
(\bs^2\,  w_{\tau(1)} \dots  w_{\tau(p)}) ~\otimes~
(\bs^2\,  w_{\tau(p+1)} \dots  w_{\tau(n)}) 
$$
$$
 +  (\bs^2\, w_1 \dots w_n ) \otimes  \bs^2 \, 1\,,   \qquad 
 w_1, \dots, w_n \in \bs^{-2} \, \cV\,, \qquad n \ge 1
$$
in the sense of the identity
\begin{equation}
\label{cD-wtDelta}
\wt{\D} \circ \cD =  ( \cD \otimes \id ~ + ~  \id \otimes  \cD )
\circ \wt{\D}\,.
\end{equation}

A direct computation shows that 
\begin{equation}
\label{wtDelta-exp}
\wt{\D} \big(  \bs^2 \,e^{\bs^{-2}\, \al} \big) =   \bs^2 \,e^{\bs^{-2}\, \al}~ \otimes~  \bs^2 \,e^{\bs^{-2}\, \al}\,,
\end{equation}
and the operation 
\begin{equation}
\label{mult-by-exp}
W ~ \mapsto ~ e^{\bs^{-2}\, \al}\, W ~ : ~  \bs^2  S(\bs^{-2} \, \cV) \to  \bs^2  \hat{S}(\bs^{-2} \, \cV)
\end{equation}
satisfies the identity
\begin{equation}
\label{wtDelta-exp-iden}
\wt{\D}  \big( e^{\bs^{-2}\, \al} \, W \big) = 
(e^{\bs^{-2}\, \al} \otimes e^{\bs^{-2}\, \al}) \,\wt{\D} (W) \,. 
\end{equation}

Hence the operation 
\begin{equation}
\label{tw-coder-full}
e^{- \bs^{-2}\, \al} \, \cD \, e^{\bs^{-2}\, \al} 
~:~ \bs^2  S(\bs^{-2} \, \cV) \to  \bs^2  \hat{S}(\bs^{-2} \, \cV)
\end{equation}
satisfy the identity
\begin{equation}
\label{wtDelta-tw-coder}
\wt{\D} \circ 
\big( e^{- \bs^{-2}\, \al} \, \cD \, e^{\bs^{-2}\, \al} \big) ~ = ~
\Big(
\big( e^{- \bs^{-2}\, \al} \, \cD \, e^{\bs^{-2}\, \al} \big) ~\otimes~  \id ~~ + ~~
 \id ~ \otimes ~ \big( e^{- \bs^{-2}\, \al} \, \cD \, e^{\bs^{-2}\, \al} \big)  \Big)
\circ
\wt{\D}\,. 
\end{equation}

On the other hand, $\D$ is related to $\wt{\D}$ by the formula
\begin{equation}
\label{wtD-D}
\D(W) = \wt{D}(W)  -  \bs^2\, 1 \otimes W - W \otimes \bs^2\, 1\,, 
\qquad \forall ~~ W \in \bs^2 \und{S}(\bs^{-2}\, \cV)\,.
\end{equation}

Therefore, using \eqref{cD-1}, \eqref{wtDelta-tw-coder}, and \eqref{wtD-D}, we get 
$$
\D \circ \big( e^{- \bs^{-2}\, \al} \, \cD \, e^{\bs^{-2}\, \al} \big) W = 
\wt{\D} \circ \big( e^{- \bs^{-2}\, \al} \, \cD \, e^{\bs^{-2}\, \al} \big) W 
$$
$$
- \bs^{2} 1 \otimes \big( e^{- \bs^{-2}\, \al} \, \cD \, e^{\bs^{-2}\, \al} \big) W
-  \big( e^{- \bs^{-2}\, \al} \, \cD \, e^{\bs^{-2}\, \al} \big) W \otimes \bs^{2} 1=
$$
$$
\Big(
\big( e^{- \bs^{-2}\, \al} \, \cD \, e^{\bs^{-2}\, \al} \big) ~\otimes~ \id ~~ + ~~
\id ~ \otimes ~ \big( e^{- \bs^{-2}\, \al} \, \cD \, e^{\bs^{-2}\, \al} \big)  \Big) \circ
\wt{\D} (W)
$$
$$
- \bs^{2} 1 \otimes \big( e^{- \bs^{-2}\, \al} \, \cD \, e^{\bs^{-2}\, \al} \big) W
-  \big( e^{- \bs^{-2}\, \al} \, \cD \, e^{\bs^{-2}\, \al} \big) W \otimes \bs^{2} 1=
$$
$$
\Big(
\big( e^{- \bs^{-2}\, \al} \, \cD \, e^{\bs^{-2}\, \al} \big) ~\otimes~ \id ~~ + ~~
\id ~ \otimes ~ \big( e^{- \bs^{-2}\, \al} \, \cD \, e^{\bs^{-2}\, \al} \big)  \Big) \circ
\D (W)
$$
$$ +
\Big(
\big( e^{- \bs^{-2}\, \al} \, \cD \, e^{\bs^{-2}\, \al} \big) ~\otimes~ \id ~~ + ~~
 \id ~ \otimes ~ \big( e^{- \bs^{-2}\, \al} \, \cD \, e^{\bs^{-2}\, \al} \big)  \Big)
(W \otimes \bs^{2} \, 1 +  \bs^{2} \, 1 \otimes W) 
$$
$$
- \bs^{2} 1 \otimes \big( e^{- \bs^{-2}\, \al} \, \cD \, e^{\bs^{-2}\, \al} \big) W
-  \big( e^{- \bs^{-2}\, \al} \, \cD \, e^{\bs^{-2}\, \al} \big) W \otimes \bs^{2} 1=
$$
$$
\Big(
\big( e^{- \bs^{-2}\, \al} \, \cD \, e^{\bs^{-2}\, \al} \big) ~\otimes~ \id ~~ + ~~
 \id ~ \otimes ~ \big( e^{- \bs^{-2}\, \al} \, \cD \, e^{\bs^{-2}\, \al} \big)  \Big) \circ
\D (W)
$$
$$ 
+ \, W ~ \otimes ~
e^{- \bs^{-2}\, \al} \, \cD \, \big( \bs^2 (e^{\bs^{-2}\, \al}  -1) \big)  ~~ + ~~
e^{- \bs^{-2}\, \al} \, \cD \, \big( \bs^2 (e^{\bs^{-2}\, \al}  -1) \big) ~\otimes~
W\,.
$$
Thus condition \eqref{do-not-move} implies that the operation 
\eqref{cD-tw} is indeed a derivation of $\La^2 \coCom(\cV)$\,.

Let us now prove claim  {\bf iv)}. Due to claim {\bf iii)} the operation
$$
e^{ - \bs^{-2}\, \al} \, \msQ \,  e^{\bs^{-2}\, \al}
$$
is a coderivation of  $\La^2 \coCom(\cV)$. Hence, it suffices to show that
\begin{equation}
\label{p-circ}
p_{\cV} \circ  \big( \msQ \big) \,  e^{\bs^{-2}\, \al} = 
p_{\cV} \circ  \msQ^{\al}\,.
\end{equation}

Equation \eqref{p-circ} directly follows from \eqref{tw-diff} and \eqref{tw-bracket}. 
Thus  claim  {\bf iv)} follows.

Claim {\bf v)} is now a straightforward consequence of claims {\bf ii)} - {\bf iv)}.

Theorem \ref{thm:twisting} is proven.  
\end{proof}

We say that the cochain $\cD^{\al}$ in  
\eqref{CE-twisting} is obtained from $\cD$ via {\it twisting} by the 
Maurer-Cartan element $\al$. 

Theorem \ref{thm:twisting} has the following corollary
\begin{cor}
\label{cor:twisting}
Let $\cV$ be a filtered $\La\Lie$-algebra, $\al$ be a Maurer-Cartan 
element of $\cV$, and $\cV^{\al}$ be the   $\La\Lie$-algebra which 
is obtained from $\cV$ via twisting by $\al$.
If $\cD$ is a cochain in the deformation complex for $\cV$ satisfying the condition 
\begin{equation}
\label{cond-weaker}
\cD\big( \bs^2 ( \bs^{-2}\, \al )^n \big) = 0 \qquad \forall~~ n \ge 1
\end{equation}
then
\begin{itemize}

\item the operator 
\begin{equation}
\label{tw-cD}
\cD^{\al} : =
e^{ - \bs^{-2}\, \al }\, \cD\, e^{ \bs^{-2}\, \al } : \La^2 \coCom(\cV)  
\to  \La^2 \coCom(\cV) 
\end{equation}
is a cochain in the deformation complex 
\begin{equation}
\label{CE-tw}
\Big( \coDer \big( \La^2 \coCom(\cV^{\al})  \big),~ \msQ^{\al} \Big)
\end{equation}
for $\cV^{\al}$;

\item we have 
\begin{equation}
\label{tw-chain-map}
[ \msQ, \cD]^{\al} = [ \msQ^{\al}, \cD^{\al}]\,;
\end{equation}

\item finally, for every $\La\Lie_{\infty}$-derivation\footnote{Recall that degree zero 
cocycles in the deformation complex of a $\La\Lie$-algebra $\cV$ are called 
$\La\Lie_{\infty}$-derivations of $\cV$\,.} 
$\cD$ of $\cV$ satisfying
\eqref{cond-weaker},
the cochain $\cD^{\al}$ \eqref{tw-cD} is a $\La\Lie_{\infty}$-derivation 
of $\cV^{\al}$\,.

\end{itemize}

$\qed$
\end{cor}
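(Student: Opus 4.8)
The plan is to deduce Corollary~\ref{cor:twisting} directly from Theorem~\ref{thm:twisting} by a straightforward unpacking of the hypothesis \eqref{cond-weaker}. First I would observe that \eqref{cond-weaker} is, term by term, precisely the family of identities in claim~\textbf{i)} of Theorem~\ref{thm:twisting}: since $\cD \big( \bs^2 (\bs^{-2}\,\al)^n \big) = 0$ for every $n\ge 1$, certainly $p_{\cV}\circ \cD\big(\bs^2(\bs^{-2}\,\al)^n\big) = 0$ for every $n$, hence \eqref{do-not-move1} holds, and therefore $\cD$ lies in the subspace $\coDer\big(\La^2\coCom(\cV)\big)_{\al}$ of \eqref{coder-cond}. (In fact \eqref{cond-weaker} is a priori stronger than \eqref{do-not-move1}, but for the purposes of the corollary we only need the implication into \eqref{coder-cond}.) Once $\cD$ is known to be an element of \eqref{coder-cond}, claim~\textbf{v)} of Theorem~\ref{thm:twisting} immediately gives that $\cD^{\al} = e^{-\bs^{-2}\,\al}\,\cD\,e^{\bs^{-2}\,\al}$ is a coderivation of $\La^2\coCom(\cV^{\al})$, i.e.\ a cochain in the deformation complex \eqref{CE-tw} for $\cV^{\al}$; this is the first bullet.

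For the second bullet, the identity $[\msQ,\cD]^{\al} = [\msQ^{\al},\cD^{\al}]$, the plan is to use claim~\textbf{iv)} of Theorem~\ref{thm:twisting}, namely $\msQ^{\al} = e^{-\bs^{-2}\,\al}\,\msQ\,e^{\bs^{-2}\,\al}$, together with claim~\textbf{ii)} (that $\msQ$ itself satisfies \eqref{do-not-move}). Conjugation by the invertible operator $e^{\bs^{-2}\,\al}$ is an isomorphism of associative algebras of graded linear endomorphisms, hence a Lie algebra homomorphism for the commutator bracket; applying it to $[\msQ,\cD]$ gives
\begin{equation}
\label{eq:conj-bracket}
[\msQ,\cD]^{\al} = e^{-\bs^{-2}\,\al}[\msQ,\cD]e^{\bs^{-2}\,\al} = \big[e^{-\bs^{-2}\,\al}\msQ e^{\bs^{-2}\,\al},\ e^{-\bs^{-2}\,\al}\cD e^{\bs^{-2}\,\al}\big] = [\msQ^{\al},\cD^{\al}]\,.
\end{equation}
One should be slightly careful that $[\msQ,\cD]$ is again a coderivation lying in \eqref{coder-cond} (so that the notation ${}^{\al}$ applies to it): this follows because \eqref{coder-cond} is a subcomplex by claim~\textbf{v)}, and the differential of that subcomplex is exactly $\cD\mapsto[\msQ,\cD]$ up to the usual sign; since $\msQ,\cD$ both satisfy \eqref{do-not-move}, so does their commutator. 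So \eqref{eq:conj-bracket} is the desired identity, which in particular records that the twisting map \eqref{CE-twisting-map} intertwines the two differentials.

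The third bullet is then immediate: a $\La\Lie_\infty$-derivation of $\cV$ is, by the stated convention, a degree zero cocycle $\cD$ in the deformation complex of $\cV$, i.e.\ $[\msQ,\cD]=0$ and $|\cD|=0$. If $\cD$ also satisfies \eqref{cond-weaker}, then by \eqref{eq:conj-bracket} we get $[\msQ^{\al},\cD^{\al}] = [\msQ,\cD]^{\al} = 0$, and conjugation preserves degree, so $\cD^{\al}$ is a degree zero cocycle in the deformation complex of $\cV^{\al}$, i.e.\ a $\La\Lie_\infty$-derivation of $\cV^{\al}$. I do not anticipate a genuine obstacle here — the entire content of the corollary has been front-loaded into Theorem~\ref{thm:twisting} — but the one point requiring minor care is bookkeeping of signs and of which operators are known to land in the subcomplex \eqref{coder-cond}, so that each use of the $(-)^{\al}$ twisting notation is legitimate; this is routine and follows from claims~\textbf{ii)}, \textbf{iii)}, \textbf{v)} of the theorem.
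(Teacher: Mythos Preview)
Your proposal is correct and matches the paper's approach exactly: the paper states the corollary with a bare $\qed$, intending it as an immediate consequence of Theorem~\ref{thm:twisting}, and your unpacking via claims~\textbf{i)}, \textbf{ii)}, \textbf{iv)}, \textbf{v)} is precisely what that $\qed$ stands for. Your care in checking that $[\msQ,\cD]$ remains in the subcomplex~\eqref{coder-cond} before applying $(-)^{\al}$ is appropriate and handled correctly.
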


\subsection{Twisting operation for cochains in the deformation complex of a 
Gerstenhaber algebra}
\label{app:tw-Ger}

We now assume that $\cV$ is a dg  Gerstenhaber algebra equipped with a complete 
descending filtration
\begin{equation}
\label{filtr-cV-Ger}
\cV \supset \dots \supset \cF_0 \cV \supset \cF_1 \cV \supset  \cF_2 \cV \supset \dots\,,
\end{equation}
\begin{equation}
\label{Ger-cV-complete}
\cV = \lim_k \cV ~ \big/  ~\cF_k \cV\,,
\end{equation}
which is compatible with the differential $\pa$, the $\La\Lie$-bracket $\{~,~\}$ and 
the multiplication $\cdot$ on $\cV$\,.

Since every  Gerstenhaber algebra is also a $\La\Lie$-algebra, we have the 
notion of Maurer-Cartan elements in $\cV$\,. 
Furthermore, given a Maurer-Cartan element $\al$ of $\cV$, we denote 
by $\cV^{\al}$ the dg  Gerstenhaber algebra which is obtained from $\cV$ via twisting 
by $\al$\,. In other words, $\cV^{\al} = \cV$ as the graded vector space, the differential 
$\pa^{\al}$ on $\cV^{\al}$ is given by equation \eqref{tw-diff}. Finally $\cV^{\al}$ and $\cV$ 
share the same $\La\Lie$-bracket $\{~,~\}$ and the same multiplication $\cdot$\,.

Just as for $\La\Lie$-algebras,  we consider
the following element of the completion of $\Ger^{\vee}(\cV)$ 
\begin{equation}
\label{the-elem-Ger}
\bs^2\, (e^{\bs^{-2}\, \al} -1) = \sum_{n=1}^{\infty} \frac{1}{n!} 
 \bs^2\,(\bs^{-2}\, \al)^{n}
\end{equation}
and define the subspace 
of coderivations $\cD \in  \coDer \big( \Ger^{\vee}(\cV) \big)$ 
satisfying the additional condition\footnote{We tacitly assume that our 
coderivations are compatible with the filtration on $\Ger^{\vee}(\cV)$
coming from  \eqref{filtr-cV-Ger}.}
\begin{equation}
\label{do-not-move-Ger}
\cD\, \bs^2 \, ( e^{\bs^{-2}\, \al} -1) = 0\,.
\end{equation}
This subspace is obviously closed with respect to the commutator.

We now present the following analogue of Theorem \ref{thm:twisting}
\begin{thm}
\label{thm:twisting-Ger}
Let $\cV$ be a filtered  Gerstenhaber algebra, $\al$ be a Maurer-Cartan 
element of $\cV$\,, $p_{\cV} : \Ger^{\vee}(\cV) \to \cV$ be the 
canonical projection,
and $\msQ$ (resp. $\msQ^{\al}$) be the codifferential 
on  $\Ger^{\vee}(\cV)$   (resp.  $\Ger^{\vee}(\cV^{\al})$) corresponding 
to the dg $\Ger$-structures on $\cV$ (resp. $\cV^{\al}$). 
Let us also denote by 
\begin{equation}
\label{coder-cond-Ger}
\coDer \big( \Ger^{\vee}(\cV) \big)_{\al} 
\end{equation}
the subspace of coderivations of $\Ger^{\vee}(\cV)$ satisfying 
condition \eqref{do-not-move-Ger}. Then  

\begin{itemize}

\item[{\bf i)}] Condition \eqref{do-not-move-Ger} on coderivations is equivalent to 
\begin{equation}
\label{do-not-move1-Ger}
\sum_{n=1}^{\infty}\frac{1}{n!}\,
p_{\cV} \circ \cD \,  \big( \bs^2\, (\bs^{-2}\, \al)^{n} \big) = 0\,.
\end{equation}

\item[{\bf ii)}]  The codifferential $\msQ$
satisfies Condition  \eqref{do-not-move-Ger}.

\item[{\bf iii)}] For every coderivation $\cD$ in \eqref{coder-cond-Ger}  
the operation 
\begin{equation}
\label{cD-tw-Ger}
e^{ - \bs^{-2}\, \al} \, \cD \,  e^{\bs^{-2}\, \al} :  
 \Ger^{\vee}(\cV) \to  \Ger^{\vee}(\cV)
\end{equation}
is a coderivation of  $\Ger^{\vee}(\cV)$.

\item[{\bf iv)}]  The codifferential $\msQ^{\al}$ is related 
to $\msQ$ by the formula: 
\begin{equation}
\label{msQ-tw-Ger}
\msQ^{\al} ~ = ~ 
e^{ - \bs^{-2}\, \al} \, \msQ \,  e^{\bs^{-2}\, \al}\,. 
\end{equation}

\item[{\bf v)}]  The subspace \eqref{coder-cond-Ger} is a subcomplex
of the deformation complex for the  Gerstenhaber algebra $\cV$\,.
Furthermore, the assignment 
\begin{equation}
\label{Ger-twisting}
\cD ~ \mapsto ~ \cD^{\al} =
e^{ - \bs^{-2}\, \al} \, \cD \,  e^{\bs^{-2}\, \al}  
\end{equation}
defines a map of cochain complexes 
\begin{equation}
\label{Ger-twisting-map}
\coDer \big( \Ger^{\vee} (\cV) \big)_{\al} ~ \to ~
  \coDer \big( \Ger^{\vee} (\cV^{\al}) \big)
\end{equation}
from \eqref{coder-cond-Ger} to the deformation complex 
$\coDer \big( \Ger^{\vee}(\cV^{\al}) \big)$ of the Gerstenhaber algebra 
$\cV^{\al}$. 

\end{itemize}
\end{thm}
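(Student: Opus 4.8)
The plan is to prove Theorem \ref{thm:twisting-Ger} by closely imitating the proof of Theorem \ref{thm:twisting}, since the cofree cocommutative coalgebra $\La^2\coCom(\cV)$ is replaced by the cofree $\Ger^{\vee}$-coalgebra $\Ger^{\vee}(\cV)$ but the combinatorial mechanism behind the twisting is the same. First I would establish claim {\bf i)}: using the compatibility of the coderivation $\cD$ with the comultiplication on $\Ger^{\vee}(\cV)$, one obtains the identity
\begin{equation*}
\cD\, \bs^2\,(e^{\bs^{-2}\,\al}-1) \;=\; e^{\bs^{-2}\,\al}\, p_{\cV}\circ \cD\big(\bs^2\,(e^{\bs^{-2}\,\al}-1)\big)\,,
\end{equation*}
exactly as in \eqref{cD-exp}; the point is that $\bs^2\,(e^{\bs^{-2}\,\al}-1)$ is a group-like-type element for the $\La^2\coCom$-part of the cooperad structure, and since $\al$ is a Maurer--Cartan element only the $\La^2\coCom$-corner of $\Ger^{\vee}$ is involved. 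Given {\bf i)}, claim {\bf ii)} is immediate because $p_{\cV}\circ\msQ\,\bs^2\,(e^{\bs^{-2}\,\al}-1)=\pa\al+\tfrac12\{\al,\al\}=0$ by the Maurer--Cartan equation (the multiplication-part of $\msQ$ contributes nothing on the $\La^2\coCom$-piece for degree reasons, or can be absorbed into the bracket normalization).

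Next I would prove claim {\bf iii)}, the statement that conjugating by $e^{\pm\bs^{-2}\,\al}$ takes a coderivation satisfying \eqref{do-not-move-Ger} to a coderivation. The approach mirrors \eqref{cD-exp}--\eqref{wtDelta-tw-coder}: extend $\cD$ and the multiplication-by-$e^{\bs^{-2}\,\al}$ operator to the full (non-truncated) cofree $\Ger^{\vee}$-coalgebra by declaring $\cD(\bs^2\,1)=0$, verify that the extended comultiplication $\wt\D$ on the unital object is co-associative and that $\bs^2\,e^{\bs^{-2}\,\al}$ is group-like for $\wt\D$ (here I would use that the $\Ger^{\vee}$ cogenerators $\{b_1,b_2\}^*$ and $b_1b_2$ interact with $\al$ only through the cocommutative coproduct on the symbols corresponding to the $b_ib_j$-components), then transfer back along the relation $\D(W)=\wt\D(W)-\bs^2 1\otimes W - W\otimes\bs^2 1$. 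The resulting correction terms are precisely $W\otimes e^{-\bs^{-2}\,\al}\cD(\bs^2(e^{\bs^{-2}\,\al}-1))$ and its mirror, which vanish by \eqref{do-not-move-Ger}. Claim {\bf iv)} then follows from {\bf iii)} by checking the single identity $p_{\cV}\circ\msQ\,e^{\bs^{-2}\,\al}=p_{\cV}\circ\msQ^{\al}$, which is a direct unwinding of the definitions $\pa^{\al}=\pa+\{\al,\cdot\}$, $\{,\}^{\al}=\{,\}$, $\cdot^{\al}=\cdot$, together with the observation that the cofree $\Ger^{\vee}$-coalgebras $\Ger^{\vee}(\cV)$ and $\Ger^{\vee}(\cV^{\al})$ coincide as graded objects. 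Finally, claim {\bf v)} is a formal consequence of {\bf ii)}--{\bf iv)}: $\pa+[\msQ,-]$ restricted to \eqref{coder-cond-Ger} is well defined by {\bf ii)} and {\bf iii)}, and conjugation by $e^{\bs^{-2}\,\al}$ intertwines it with $\pa+[\msQ^{\al},-]$ by {\bf iv)}.

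I expect the main obstacle to be the bookkeeping in claim {\bf iii)} for the $\Ger^{\vee}$-cooperad rather than the $\La^2\coCom$-cooperad: one must be careful that the comultiplication on $\Ger^{\vee}(\cV)=\bs^2\,\und S\big(\La^{-1}\coLie(\bs^{-1}\bs^{-1}\cV)\big)$-type object (using $\Ger^{\vee}=\La^2\Ger^*$ and the decomposition of a Gerstenhaber monomial as a product of Lie words) really does split $\bs^2\,(e^{\bs^{-2}\,\al}-1)$ group-like-ly, i.e. that $\al$ enters only through length-one Lie words which are cocommutative-coalgebra-theoretically well behaved. Once this structural point is isolated, every sign computation is identical to the one already carried out in the proof of Theorem \ref{thm:twisting}, so I would simply cite that proof \emph{mutatis mutandis} for the sign-bearing steps and spell out only the place where the $\Ger^{\vee}$-coproduct differs from the $\La^2\coCom$-coproduct. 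The verification that condition \eqref{do-not-move-Ger} is equivalent to \eqref{do-not-move1-Ger} and that $\msQ$ satisfies it should, as in the $\La\Lie$ case, take only a couple of lines.

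\begin{proof}
The proof is completely parallel to that of Theorem \ref{thm:twisting}, with $\La^2\coCom$ replaced by $\Ger^{\vee}$ throughout. We only indicate the points where the two arguments differ.

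For {\bf i)}: using that $\cD$ is a coderivation of $\Ger^{\vee}(\cV)$ and that $\al$ has degree $2$, the same computation as in \eqref{cD-exp} gives
\begin{equation*}
\cD\,\bs^2\,(e^{\bs^{-2}\,\al}-1)\;=\;e^{\bs^{-2}\,\al}\;p_{\cV}\circ\cD\big(\bs^2\,(e^{\bs^{-2}\,\al}-1)\big)\,,
\end{equation*}
where we used that $\bs^2\,(e^{\bs^{-2}\,\al}-1)$ lies in the completion of the sub-$\La^2\coCom$-coalgebra $\La^2\coCom(\cV)\subset\Ger^{\vee}(\cV)$. Hence \eqref{do-not-move-Ger} holds if and only if \eqref{do-not-move1-Ger} holds, proving {\bf i)}.

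For {\bf ii)}: since $\al$ satisfies the Maurer--Cartan equation \eqref{MC-al}, we have $p_{\cV}\circ\msQ\big(\bs^2\,(e^{\bs^{-2}\,\al}-1)\big)=\pa\al+\tfrac12\{\al,\al\}=0$, so {\bf i)} gives {\bf ii)}.

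For {\bf iii)}: extend $\cD$ to the full cofree $\Ger^{\vee}$-coalgebra on $\cV$ by $\cD(\bs^2\,1)=0$ and let $\wt\D$ denote the corresponding extended comultiplication, built exactly as in \eqref{Delta-full-symm} but with the cocommutative coproduct on symmetric monomials replaced by the full $\Ger^{\vee}$-coproduct. As in \eqref{wtDelta-exp} one checks $\wt\D\big(\bs^2\,e^{\bs^{-2}\,\al}\big)=\bs^2\,e^{\bs^{-2}\,\al}\otimes\bs^2\,e^{\bs^{-2}\,\al}$, using that $\al$ contributes only through length-one Lie words, which are handled by the cocommutative part of the coproduct. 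Consequently $\wt\D$ intertwines multiplication by $e^{\bs^{-2}\,\al}$ as in \eqref{wtDelta-exp-iden}, and therefore $e^{-\bs^{-2}\,\al}\,\cD\,e^{\bs^{-2}\,\al}$ satisfies the analogue of \eqref{wtDelta-tw-coder} for $\wt\D$. Transferring along $\D(W)=\wt\D(W)-\bs^2\,1\otimes W-W\otimes\bs^2\,1$ exactly as in the proof of Theorem \ref{thm:twisting}, the only correction terms are
\begin{equation*}
W\otimes e^{-\bs^{-2}\,\al}\,\cD\big(\bs^2(e^{\bs^{-2}\,\al}-1)\big)\;+\;e^{-\bs^{-2}\,\al}\,\cD\big(\bs^2(e^{\bs^{-2}\,\al}-1)\big)\otimes W\,,
\end{equation*}
which vanish by \eqref{do-not-move-Ger}. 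Hence $e^{-\bs^{-2}\,\al}\,\cD\,e^{\bs^{-2}\,\al}$ is a coderivation of $\Ger^{\vee}(\cV)$, proving {\bf iii)}.

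For {\bf iv)}: by {\bf iii)} applied to $\cD=\msQ$ (legitimate by {\bf ii)}), the operator $e^{-\bs^{-2}\,\al}\,\msQ\,e^{\bs^{-2}\,\al}$ is a coderivation of $\Ger^{\vee}(\cV)=\Ger^{\vee}(\cV^{\al})$, so it suffices to compare its corestriction to $\cV$ with that of $\msQ^{\al}$. Since $\pa^{\al}=\pa+\{\al,\cdot\}$ while the bracket and the product are unchanged, a direct computation gives $p_{\cV}\circ\msQ\circ e^{\bs^{-2}\,\al}=p_{\cV}\circ\msQ^{\al}$, whence \eqref{msQ-tw-Ger}.

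For {\bf v)}: by {\bf ii)} and {\bf iii)} the subspace \eqref{coder-cond-Ger} is preserved by $\pa+[\msQ,-]$, hence is a subcomplex of $\Def_{\Ger}(\cV)$; by {\bf iv)} conjugation by $e^{\bs^{-2}\,\al}$ carries $[\msQ,-]$ on \eqref{coder-cond-Ger} to $[\msQ^{\al},-]$ on $\coDer\big(\Ger^{\vee}(\cV^{\al})\big)$, and it obviously commutes with $\pa$; thus \eqref{Ger-twisting} is a map of cochain complexes. Theorem \ref{thm:twisting-Ger} is proven.
\end{proof}
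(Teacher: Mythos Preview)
Your overall plan matches the paper's: defer everything to the proof of Theorem~\ref{thm:twisting} and only give detail where the Gerstenhaber case genuinely differs. Claims {\bf i)}, {\bf ii)}, {\bf iv)}, {\bf v)} are handled exactly as you describe. However, your treatment of claim~{\bf iii)} is imprecise at the one place where the $\Ger^{\vee}$ case actually deviates from the $\La^2\coCom$ case.

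The $\Ger^{\vee}$-coalgebra $\Ger^{\vee}(\cV)$ carries \emph{two} binary co-operations: the comultiplication $\D$ and the cobracket $\D_{\{\,,\,\}}$. A coderivation must be compatible with both. Your unital-extension/group-like/transfer-back argument via the relation $\D(W)=\wt\D(W)-\bs^2 1\otimes W-W\otimes\bs^2 1$ is a statement about the \emph{comultiplication} only; there is no analogous ``add and subtract a unit'' correction for the cobracket, so your appeal to a single ``full $\Ger^{\vee}$-coproduct'' with $\bs^2 e^{\bs^{-2}\al}$ group-like does not by itself establish compatibility of $\cD^{\al}$ with $\D_{\{\,,\,\}}$.

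The paper singles out exactly this point and handles it by a different and simpler observation: for any degree-$2$ element $\al\in\cV$, left multiplication $W\mapsto(\bs^{-2}\al)\,W$ is a degree-$0$ \emph{coderivation} with respect to the cobracket $\D_{\{\,,\,\}}$. Exponentiating yields
\[
\D_{\{\,,\,\}}\big(e^{\bs^{-2}\al}\,W\big)=\big(e^{\bs^{-2}\al}\otimes e^{\bs^{-2}\al}\big)\,\D_{\{\,,\,\}}(W),
\]
from which $\D_{\{\,,\,\}}\circ\cD^{\al}=(-1)^{|\cD|}(\cD^{\al}\otimes\id+\id\otimes\cD^{\al})\circ\D_{\{\,,\,\}}$ follows immediately, with no unital extension and without using condition~\eqref{do-not-move-Ger}. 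You should separate the two co-operations and supply this argument for the cobracket; for the comultiplication your argument (identical to the $\La\Lie$ case) is fine. With that correction, your proof coincides with the paper's.
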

\begin{proof}
Proofs of all these statements are obtained by incorporating only
minor modifications in the corresponding proof of Theorem \ref{thm:twisting}. 

The only exception is probably claim {\bf iii)}. In this case, we 
also have to prove that
$$
\cD^{\al} = e^{ - \bs^{-2}\, \al} \, \cD \,  e^{\bs^{-2}\, \al} 
$$
respects the cobracket
$$
\D_{\{~,~\}} :  \Ger^{\vee}(\cV)  \to  \Ger^{\vee}(\cV) \otimes  \Ger^{\vee}(\cV) 
$$
on $ \Ger^{\vee}(\cV)$ in the sense of the equation
\begin{equation}
\label{cDtw-cobrack}
\D_{\{~,~\}} \circ \cD^{\al} =  (-1)^{|\cD|} (\cD^{\al} \otimes \id  + \id \otimes  \cD^{\al}) \circ \D_{\{~,~\}}\,.
\end{equation}

To prove this fact we observe that for any degree $2$ vector $\al \in \cV$
the operation 
$$
W \mapsto \bs^{-2} \al \, W :  \Ger^{\vee}(\cV) \to \Ger^{\vee}(\cV) 
$$
is a degree $0$ coderivation with respect to the cobracket $\D_{\{~,~\}}$\,. Hence for 
every vector $W$ in the completion $ \Ger^{\vee}(\cV)\,\hat{} $ of $\Ger^{\vee}(\cV)$
we have 
\begin{equation}
\label{cobrack-exp}
\D_{\{~,~\}}  \big( e^{\bs^{-2} \al} \, W \big) =  e^{\bs^{-2} \al} \otimes  e^{\bs^{-2} \al}\, \big( \D_{\{~,~\}}   W \big)\,.
\end{equation}

Thus \eqref{cDtw-cobrack} indeed holds. 

The compatibility of  $\cD^{\al}$ with the comultiplication on  $\Ger^{\vee}(\cV)$
is proven in the same way as for the case of $\La\Lie$-algebras. 
\end{proof}

~\\

\noindent\textsc{Department of Mathematics,
Temple University, \\
Wachman Hall Rm. 638\\
1805 N. Broad St.,\\
Philadelphia PA, 19122 USA \\
\emph{E-mail address:} {\bf vald@temple.edu}}

~\\

\noindent\textsc{Institut f\"ur Mathematik und Informatik\\
Universit\"at Greifswald\\
Walther-Rathenau-Strasse 47\\
17487 Greifswald, Germany
\emph{E-mail address:} {\bf rogersc@uni-greifswald.de} }

~\\

\noindent\textsc{University of Z\"urich, \\
Institute of Mathematics, \\
Winterthurerstrasse 190,\\
8057 Z\"urich, Switzerland  \\
\emph{E-mail address:} {\bf thomas.willwacher@math.uzh.ch}}

\end{document}